\title[Approximating $L^2$-invariants by their classical counterparts]
{Survey on approximating $L^2$-invariants by their classical counterparts: Betti numbers, torsion invariants and homological growth}
\author{L\"uck, W.}
        \address{Mathematical Institute of the Univerisity of Bonn\\
                Endenicher Allee 60\\
                53115 Bonn, Germany}
         \email{wolfgang.lueck@him.uni-bonn.de}
          \urladdr{http://www.him.uni-bonn.de/lueck}
         \date{October, 2016}
              \keywords{$L^2$-invariants,  approximation, homological growth}
     \subjclass[2010]{59Q99,22D25,46L99, 58J52}
\DeclareMathAlphabet\EuR{U}{eur}{m}{n}
\SetMathAlphabet\EuR{bold}{U}{eur}{b}{n}
\theoremstyle{plain}
\newtheorem{theorem}{Theorem}[section]
\newtheorem{lemma}[theorem]{Lemma}
\newtheorem{corollary}[theorem]{Corollary}
\newtheorem{conjecture}[theorem]{Conjecture}
\newtheorem{problem}[theorem]{Problem}
\newtheorem{question}[theorem]{Question}
\theoremstyle{definition}
\newtheorem{assumption}[theorem]{Assumption}
\newtheorem{definition}[theorem]{Definition}
\newtheorem{example}[theorem]{Example}
\newtheorem{remark}[theorem]{Remark}
\newtheorem{notation}[theorem]{Notation}
\newtheorem{setup}[theorem]{Setup}
\global\let\c@equation=\c@theorem}
\newcommand{\comsquare}[8]                   
{\begin{CD}
#1 @>#2>> #3\\
@V{#4}VV @V{#5}VV\\
#6 @>#7>> #8
\end{CD}
}
\newcommand{\xycomsquare}[8]                   
{\xymatrix
{#1 \ar[r]^{#2} \ar[d]^{#4} &
#3 \ar[d]^{#5}  \\
#6\ar[r]^{#7} &
#8
}
}
\newcommand{\xycomsquareminus}[8]                      
{\xymatrix{#1 \ar[r]^-{#2} \ar[d]^-{#4} &
#3 \ar[d]^-{#5}  \\
#6\ar[r]^-{#7} &
#8
}
}
\newcommand{\calcw}{\mathcal{CW}}
\newcommand{\calh}{\mathcal{H}}
\newcommand{\calf}{\mathcal{F}}
\newcommand{\calp}{\mathcal{P}}
\newcommand{\caln}{{\mathcal N}}
\newcommand{\IC}{{\mathbb C}}
\newcommand{\IF}{{\mathbb F}}
\newcommand{\IQ}{{\mathbb Q}}
\newcommand{\IR}{{\mathbb R}}
\newcommand{\IZ}{{\mathbb Z}}
\newcommand{\an}{\operatorname{an}}
\newcommand{\coker}{\operatorname{coker}}
\newcommand{\colim}{\operatorname{colim}}
\newcommand{\cone}{\operatorname{cone}}
\newcommand{\cyl}{\operatorname{cyl}}
\newcommand{\cost}{\operatorname{cost}}
\newcommand{\ev}{\operatorname{ev}}
\newcommand{\GL}{\operatorname{GL}}
\newcommand{\id}{\operatorname{id}}
\newcommand{\im}{\operatorname{im}}
\newcommand{\Int}{\operatorname{Int}}
\newcommand{\odd}{\operatorname{odd}}
\newcommand{\Ore}{\operatorname{Ore}}
\newcommand{\pr}{\operatorname{pr}}
\newcommand{\RG}{\operatorname{RG}}
\newcommand{\rk}{\operatorname{rk}}
\newcommand{\sign}{\operatorname{sign}}
\newcommand{\topo}{\operatorname{top}}
\newcommand{\tors}{\operatorname{tors}}
\newcommand{\tr}{\operatorname{tr}}
\newcommand{\trun}{\operatorname{trun}}
\newcommand{\vol}{\operatorname{vol}}
\newcommand{\higherlim}[3]{{\setbox1=\hbox{\rm lim}
        \setbox2=\hbox to \wd1{\leftarrowfill} \ht2=0pt \dp2=-1pt
        \mathop{\vtop{\baselineskip=5pt\box1\box2}}
        _{#1}}^{#2}#3}
\newcommand{\version}[1]                       
{\begin{center} last edited on #1\\
last compiled on \today\\
name of texfile: \jobname
\end{center}
}
\newcounter{commentcounter}
\begin{document}




\typeout{---------------------------- L2approx_survey.tex ----------------------------}


\typeout{------------------------------------ Abstract  ----------------------------------------}

\begin{abstract}
In this paper we discuss open problems concerning $L^2$-invariants focusing on
approximation by towers of finite coverings.
\end{abstract}

\maketitle


\typeout{------------------------------- Section 0: Introduction   --------------------------------}

\setcounter{section}{-1}
\section{Introduction}

We want to study in this paper the following general situation:

\begin{setup}\label{set:restricted}
Let $G$ be a (discrete) group together with a descending chain of subgroups
\begin{eqnarray}
  & G = G_0 \supseteq G_1 \supseteq G_2 \supseteq \cdots &
  \label{normal_chain}
\end{eqnarray}
such that $G_i$ is normal in $G$, the index $[G:G_i]$ is finite and $\bigcap_{i \ge 0} G_i = \{1\}$. 

Let $p \colon \overline{X} \to X$ be a $G$-covering. Put $X[i] := G_i\backslash \overline{X}$. 
\end{setup}

We obtain a $[G:G_i]$-sheeted covering $p[i] \colon X[i] \to X$. Its total space $X[i]$
inherits the structure of a finite $CW$-complex, a closed manifold or a closed Riemannian
manifold respectively if $X$ has the structure of a finite $CW$-complex, a closed manifold
or a closed Riemannian manifold respectively.

Let $\alpha$ be a classical
topological invariant such as the Euler characteristic, the signature, the $n$th Betti
number with coefficients in the field $\IQ$ or $\IF_p$, torsion in the sense of
Reidemeister or Ray-Singer, the minimal number of generators of the fundamental group, the
minimal number of generators of  the $n$th homology group with integral coefficients,
or the logarithm of the cardinality of the torsion subgroup of the $n$th homology group
with integral coefficients. We want to study the sequence
\begin{eqnarray*}
  \left(\frac{\alpha(X[i])}{[G:G_i]}\right)_{i \ge 0}.
  \label{alpha(X[i])/[G:G_i]}
\end{eqnarray*}

\begin{problem}[Approximation Problem]\
  \label{pro:approximation_problem}
  \begin{enumerate}

  \item Does the sequence converge?

  \item If yes, is the limit independent of the chain?

  \item If yes, what is the limit?

  \end{enumerate}
\end{problem}

The hope is that the answer to the first two  questions is yes and the limit turns out to
be an $L^2$-analogue $\alpha^{(2)}$ of $\alpha$ applied to the $G$-space $\overline{X}$, i.e., one can prove
an equality of the type
\begin{eqnarray}
 \lim_{i \to \infty} \frac{\alpha(X[i])}{[G:G_i]} = \alpha^{(2)}(\overline{X};\caln(G)).
\label{expected_formula}
\end{eqnarray}
Here $\caln(G)$ stands for the group von Neumann algebra and is a reminiscence of the  fact that the $G$-action on
$\overline{X}$ plays a role. 
Equation~\eqref{expected_formula} is often used to compute the $L^2$-invariant  $\alpha^{(2)}(\overline{X};\caln(G))$ by its
finite-dimensional analogues $\alpha(X[i])$.
On the other hand, it implies the existence of finite  coverings with large $\alpha(X[i])$, if  $\alpha^{(2)}(\overline{X};\caln(G))$
is known to be positive.

For some important invariants $\alpha$ one can prove~\eqref{expected_formula}, for
instance for $\alpha$ the Euler characteristic, the signature or the $n$-th Betti number
with rational coefficients. In other very interesting cases
Problem~\ref{pro:approximation_problem} and the equality~\eqref{expected_formula} are open, 
and hence there is the intriguing and hard challenge to
find a proof. Here we are thinking of $\alpha$ as one of the following invariants:
\begin{itemize}
\item the $n$-th Betti number $b_n(X[i];\IF_p)$ of $X[i]$ with coefficients in the field $\IF_p$ for a prime $p$;
\item  the minimal number of generators $d(G_i)$ or the deficiency ${\rm def}(G_i)$
 of $G_i = \pi_1(X[i])$, if $\overline{X} $ is contractible;
\item 
Reidemeister or Ray-Singer torsion $\rho_{\an}(X[i])$ if $X$ is a closed Riemannian manifold;
\item the logarithm of the cardinality of the torsion in the $n$-th singular homology with
  integer coefficients $\ln\big(\bigl|\tors\bigl(H_n(X[i])\bigr)\bigr|\bigr)$, 
  if $X$ is an aspherical closed manifold and $\overline{X}$ its  universal covering.
\end{itemize}

Here are two highlights of open problems which will be treated in more detail later in the manuscript.
\\[3mm]
\begin{bf} Question~\ref{que:Rank_gradient_cost_first_L2_Betti_number_and_approximation} \end{bf}
(Rank gradient, cost, first $L^2$-Betti number and approximation)
  
  Let $G$ be a finitely presented residually finite group. Let $(G_i)$ be a
  descending chain of normal subgroups of  finite index  of $G$ with $\bigcap_{i \ge 0 } G_i=\{1\}$. 
  Let $F$ be any field. 

   When do we have 
  \[
  \lim_{i \to \infty} \frac{b_1(G_i;F)-1 }{[G:G_i]} = b_1^{(2)}(G) - b_0^{(2)}(G)= {\rm cost}(G)-1 =   \RG(G;(G_i)_{i \ge 0}),
  \]
  where $ {\rm cost}(G)$ denotes the cost and $\RG(G;(G_i)_{i \ge 0})$ the rank gradient?
\\[5mm]
\begin{bf} Conjecture~\ref{con:Homological_growth_and_L2-torsion_for_aspherical_manifolds} \end{bf}
(Homological growth and $L^2$-torsion for aspherical closed manifolds)

  Let $M$ be an aspherical closed manifold of dimension $d \ge 1$ and fundamental group $G =  \pi_1(M)$. 
  Let $\widetilde{M}$ be its universal covering.  Then

  \begin{enumerate}

  \item 
    For any natural number $n$ with $2n \not= d$ we get 
    \[
    b_n^{(2)}(\widetilde{M}) = 0.
    \]
    \noindent
    If $d = 2n$, we have
    \[
    (-1)^{n} \cdot \chi(M)  = b_n^{(2)}(\widetilde{M}) \ge 0.
    \]
     \noindent
    If  $d = 2n$ and $M$ carries a Riemannian metric of negative sectional curvature, then
    \[
    (-1)^n \cdot \chi(M) = b_n^{(2)}(\widetilde{M}) > 0;
    \]

   \item 
    Let $(G_i)_{i \ge 0}$ be  any chain of normal subgroups $G_i \subseteq G$ of finite index $[G:G_i]$
    and trivial intersection $\bigcap_{i \ge 0} G_i = \{1\}$. Put $M[i] = G_i \backslash \widetilde{M}$.

    Then we get for any natural number $n$ and any field $F$
    \[
    b_n^{(2)}(\widetilde{M}) = \lim_{i \to \infty} \frac{b_n(M[i];F)}{[G:G_i]} = \lim_{i \to \infty} \frac{d\bigl(H_n(M[i];\IZ)\bigr)}{[G:G_i]},
    \]
    where $d\bigl(H_n(M[i];\IZ)\bigr)$ is the minimal numbers of generators of $H_n(M[i];\IZ)$,
    and for $n = 1$
    \begin{multline*}
    \quad \quad \quad \quad \quad \quad 
    b_1^{(2)}(\widetilde{M}) = \lim_{i \to \infty} \frac{b_1(M[i];F)}{[G:G_i]} = \lim_{i \to \infty} \frac{d\bigl(G_i/[G_i,G_i]\bigr)}{[G:G_i]} 
    \\
    = RG(G,(G_i)_{i \ge 0}) 
    = \begin{cases} 0 & d \not= 2; \\ -\chi(M) & d = 2; \end{cases}
  \end{multline*}

    \item We get for the truncated Euler characteristic in dimension $m$

    \[
    \lim_{i \to \infty} \frac{\chi_m^{\trun}(M[i])}{[G:G_i]} 
    = \begin{cases}
    \chi(M) & \text{if} \; d \;\text{is even and}\; 2m \ge d;
    \\
    0 & \text{otherwise};
   \end{cases}
    \]

    \item 
    If $d = 2n+1$ is odd, we get for the $L^2$-torsion
    \[
     (-1)^n \cdot \rho^{(2)}_{\an}\bigl(\widetilde{M}\bigr) \ge 0;
    \]
    If $d = 2n+1$ is odd and $M$ carries a Riemannian metric with negative sectional curvature, we have
    \[
     (-1)^n \cdot \rho^{(2)}_{\an}\bigl(\widetilde{M}\bigr) > 0;
    \]

  \item 
    Let $(G_i)_{i \ge 0}$ be any chain of normal subgroups $G_i \subseteq G$ of finite index $[G:G_i]$
    and trivial intersection $\bigcap_{i \ge 0} G_i = \{1\}$. Put $M[i] = G_i \backslash \widetilde{M}$.

   Then we get for  any natural number $n$ with $2n +1 \not= d$
    \[
    \lim_{i \to \infty} \;\frac{\ln\big(\bigl|\tors\bigl(H_n(M[i])\bigr)\bigr|\bigr)}{[G:G_i]} = 0,
    \]
    and we get in the case $d = 2n+1$
    \[
    \lim_{i \to \infty} \;\frac{\ln\big(\bigl|\tors\bigl(H_n(M[i])\bigr)\bigr|\bigr)}{[G:G_i]} = (-1)^n \cdot
    \rho^{(2)}_{\an}\bigl(\widetilde{M}\bigr) \ge 0.
    \]
  \end{enumerate}

The earliest reference, where a version of Problem~\ref{pro:approximation_problem} appears,
is to the best of our knowledge Kazhdan \cite{Kazhdan(1975)}, where for  a closed manifold  $X$ the inequality
$\limsup_{i \to \infty} \frac{b_n(X[i];F)}{[G:G_i]} \le b^{(2)}_n(\overline{X};\caln(G))$
is discussed; see also  Gromov~\cite[pages 13, 153]{Gromov(1993)}.

Commencing with Section~\ref{sec:Dropping_the_finite_index_condition} we will drop the condition
that $[G:G_i]$ is finite.

We assume that the reader is familiar with basic concepts concerning
$L^2$-Betti numbers and $L^2$-torsion.  More information about these notions can be found for instance
in~\cite{Lueck(2002),Lueck(2009algview)}.

Most of the article consists of surveys of open problems and known results. There are 
a few new aspects in this manuscript:

\begin{itemize}

\item In Section~\ref{subsec:Truncated_Euler_characteristics} we introduce the truncated
  Euler characteristic which leads to a high-dimensional version of the rank gradient and
  to Question~\ref{que:asymptotic_Morse_equality_general} about asymptotic Morse
  inequalities;

\item In Theorem~\ref{the:the_uniform_logarithmic_estimate} we discuss a strategy to prove 
the Approximation Conjecture for Fuglede-Kadison determinants under a uniform logarithmic estimate;

\item The vanishing of the regulators on the homology comparing the inner product coming from
a Riemannian metric with the one coming from a triangulation in the $L^2$-acyclic case, see
Theorem~\ref{the:comparing_torsion_and_FK-determinants}, or more generally 
Theorem~\ref{the:comparing_analytic_and_chain_complexes}.

\item The question whether $L^2$-torsion can be approximated by integral torsion depends only on the 
$\IQ G$-chain homotopy type of a  finite based free $L^2$-acyclic $\IZ G$-chain complex,
see Lemma~\ref{lem:invariance_of_homological_conjecture_under_Q-homotopy_equivalence}.

\end{itemize}

{\bf Acknowledgments.}  This paper is financially supported by the Leibniz-Preis of the
author, granted by the Deutsche Forschungsgemeinschaft {DFG}. The author thanks Nicolas Bergeron,
Florian Funke, Holger Kammeyer, Clara L\"oh, Roman Sauer, and Andreas Thom  for their useful comments.

The paper is organized as follows:
\tableofcontents


\typeout{------------------------ Section 1: Euler characteristic and signature  --------------------}

\section{Euler characteristic and signature}
\label{sec:Euler_characteristic_and_signature}


\subsection{Euler characteristic}
\label{subsec:Euler_characteristic}

Let us begin with one of the oldest and most famous invariants, the \emph{Euler characteristic}
$\chi(X)$ for a finite $CW$-complex.  It is defined as $\sum_{n \ge 0 } (-1)^n \cdot c_n$, where 
$c_n$ is the number of $n$-cells.  It is easy to see that it is
multiplicative under finite coverings. Since this implies $\chi(X) =
\frac{\chi(X[i])}{[G:G_i]}$, the answer in this case is yes to the  questions
appearing in Problem~\ref{pro:approximation_problem}, and the limit is
\begin{eqnarray}
  \lim_{i \to \infty} \frac{\chi(X[i])}{[G:G_i]} & = & \chi(X).
  \label{solution_for_chi(X)}
\end{eqnarray}


\subsection{Signature of closed oriented manifolds}
\label{subsec:Signature_of_closed_oriented_manifolds}

Next we consider the \emph{signature} of a closed oriented topological $4k$-dimensional manifold $M$.
It is defined as the signature of the non-degenerate symmetric bilinear $\IR$-pairing
given by the intersection form
\[
H^{2k}(M;\IR) \times H^{2k}(M;\IR) \to \IR, \quad (x,y) \mapsto \langle x \cup
y,[M]_{\IR}\rangle.
\]
It is known that it is multiplicative under finite coverings, however, the proof is more involved
than the one for the Euler characteristic. It follows for instance from Hirzebruch's
Signature Theorem, see~\cite{Hirzebruch(1970)}, or Atiyah's $L^2$-index
theorem~\cite[(1.1)]{Atiyah(1976)} in the smooth case; for closed topological manifolds 
see Schafer~\cite[Theorem 8]{Schafer(1970)}. 
Since this implies $\sign(X) = \frac{\sign(X[i])}{[G:G_i]}$, each of the 
questions appearing in Problem~\ref{pro:approximation_problem} has a positive answer, and the limit is
\begin{eqnarray}
  \lim_{i \to \infty} \frac{\sign(X[i])}{[G:G_i]} & = & \sign(X).
  \label{solution_for_sign(X)_manifolds}
\end{eqnarray}


\subsection{Signature of finite Poincar\'e complexes}
\label{subsec:Signature_of_finite_Poincare_complexes}

The next level of generality is to pass from a topological manifold to a finite Poincar\'e
complex whose definition is due to Wall~\cite{Wall(1967)}.  For them the signature is
still defined if the dimension is divisible by $4$.  There are Poincar{\'e} complexes $X$ for which
the signature is not multiplicative under finite coverings,
see~\cite[Example~22.28]{Ranicki(1992)},~\cite[Corollary~5.4.1]{Wall(1967)}.  Hence the
situation is more complicated here.  Nevertheless, it turns out that the answer in this case each of the 
questions appearing in Problem~\ref{pro:approximation_problem} has a positive answer,  and the limit is
\begin{eqnarray}
  \lim_{i \to \infty} \frac{\sign(X[i])}{[G:G_i]} & = & \sign^{(2)}(X;\caln(G)),
  \label{solution_for_signi(M)_Poincare}
\end{eqnarray}
where $\sign^{(2)}(\overline{X};\caln(G))$ denotes the $L^2$-signature, which is in general
different from $\sign(X)$ for a finite Poincar\'e complex $X$.

Actually, for any closed oriented $4k$-dimensional topological manifold $M$ one has
$\sign(M) = \sign^{(2)}(\overline{M};\caln(G))$. Equation~\eqref{solution_for_signi(M)_Poincare} 
for finite Poincar\'e complexes extends to finite Poincar\'e pairs. For a detailed discussion of these notions
and results we refer to~\cite{Lueck-Schick(2003),Lueck-Schick(2005)}.


\typeout{------------------------ Section 2: Betti numbers --------------------}

\section{Betti numbers}
\label{sec:Betti_numbers}


\subsection{Characteristic zero}
\label{subsec:characteristic_zero}
Fix a field $F$ of characteristic zero. We consider the \emph{$n$-th Betti number
  with $F$-coefficients} $b_n(X;F) := \dim_F(H_n(X;F))$.  Notice that $b_n(X;F) =
b_n(X;\IQ) = \rk_{\IZ}(H_n(X;\IZ))$ holds, where $\rk_{\IZ}$ denotes the rank of a finitely
generated abelian group. In this case each of the 
questions appearing in Problem~\ref{pro:approximation_problem} has a positive answer by the main
result of L\"uck~\cite{Lueck(1994c)}.

\begin{theorem}\label{the:approx_Betti_char_zero}
  Let $F$ be a field of characteristic zero and let $X$ be a finite   $CW$-complex. Then we have
  \[
  \lim_{i \to \infty} \frac{b_n(X[i];F)}{[G:G_i]} = b^{(2)}_n(\overline{X};\caln(G)),
  \]
  where $b^{(2)}(\overline{X};\caln(G))$ denotes the $n$-th $L^2$-Betti number.
\end{theorem}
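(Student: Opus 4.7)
The plan is to follow L\"uck's spectral approach via the cellular Laplacian, reducing the theorem to a convergence statement for spectral density functions of self-adjoint matrices over $\IZ G$. Since Betti numbers with coefficients in a field of characteristic zero depend only on $b_n(-;\IQ)$, I may assume $F=\IQ$. Lift the cellular chain complex of $X$ to a finite free $\IZ G$-chain complex $C_*(\overline{X})$ and form the combinatorial Laplacian $\Delta_n \in M_{d_n}(\IZ G)$. Passing to $G_i$-coinvariants yields a matrix $\Delta_n[i]$ over $\IZ[G/G_i]$, acting as a positive semidefinite symmetric endomorphism of $\IQ^{d_n[G:G_i]}$, and
\[
\frac{b_n(X[i];\IQ)}{[G:G_i]} = \frac{\dim_\IQ \ker(\Delta_n[i])}{[G:G_i]}, \qquad b_n^{(2)}(\overline{X};\caln(G)) = \dim_{\caln(G)} \ker\bigl(\Delta_n^{(2)}\bigr),
\]
where $\Delta_n^{(2)}$ is the bounded operator on $\ell^2(G)^{d_n}$ induced by $\Delta_n$.

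Next I introduce the spectral density functions $F_i(\lambda) := \frac{1}{[G:G_i]}\dim_\IQ E_{[0,\lambda]}(\Delta_n[i])$ and $F(\lambda) := \dim_{\caln(G)} E_{[0,\lambda]}(\Delta_n^{(2)})$. Both are monotone, right-continuous, bounded above by $d_n$, and $F_i(0)$, $F(0)$ are exactly the two quantities whose equality we must prove in the limit. The standing hypothesis $\bigcap_i G_i=\{1\}$ implies that for every $g\in G$, the $g$-coefficient of any $\IZ G$-matrix entry appears at most once in $\Delta_n[i]$ for $i$ large, so for every polynomial $p\in \IR[t]$,
\[
\frac{1}{[G:G_i]}\operatorname{tr}_\IC\bigl(p(\Delta_n[i])\bigr) \;\longrightarrow\; \operatorname{tr}_{\caln(G)}\bigl(p(\Delta_n^{(2)})\bigr).
\]
Since the operator norms $\|\Delta_n[i]\|$ are uniformly bounded by $K:=\|\Delta_n^{(2)}\|$, a Stone--Weierstrass argument upgrades this to weak convergence of the associated spectral measures on $[0,K]$, hence $F_i(\lambda)\to F(\lambda)$ at every continuity point of $F$. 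Feeding this into the trivial inequality $F_i(0)\le F_i(\lambda)$ and letting $\lambda\searrow 0$ through continuity points of $F$, right-continuity gives the ``Kazhdan half'' $\limsup_i F_i(0)\le F(0)$.

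The subtle direction is $\liminf_i F_i(0)\ge F(0)$, because $0$ is typically not a continuity point of $F$. Here I exploit the integrality of $\Delta_n[i]$: after extracting the factor $t^k$, the characteristic polynomial has integer coefficients with nonzero constant term, so the product of the nonzero eigenvalues of $\Delta_n[i]$ is an integer of absolute value at least $1$. Combined with the uniform upper bound $K$ on eigenvalues, for any $\epsilon\in(0,1)$ the number $s(i,\epsilon)$ of nonzero eigenvalues in $(0,\epsilon]$ satisfies $1\le \epsilon^{s(i,\epsilon)}\cdot K^{d_n[G:G_i]}$, so
\[
F_i(\epsilon)-F_i(0) \;\le\; d_n\cdot \frac{\log K}{\log(K/\epsilon)}.
\]
This is a uniform logarithmic modulus of continuity for $F_i$ at $0$, and will be the heart of the proof. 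Choosing $\epsilon$ to be a continuity point of $F$, passing to the limit, and then letting $\epsilon\searrow 0$ yields $\liminf_i F_i(0)\ge F(0)$, completing the proof.

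The main obstacle is the lower bound step, which relies on the integrality of $\Delta_n$ and cannot possibly work verbatim in positive characteristic (where $\epsilon^s$ has no meaning and the determinant-of-nonzero-eigenvalues trick fails); this is precisely why the characteristic-zero case is settled while the $\IF_p$-analogue remains one of the open problems emphasized in the introduction. A secondary technical point is the passage from polynomial trace convergence to convergence at continuity points of $F$; this is a standard Portmanteau-type argument, but one must verify that the exceptional set of discontinuities of $F$ is countable and avoid it when taking limits.
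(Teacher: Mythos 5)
Your proof is correct and follows the same route as L\"uck's original argument in the reference \cite{Lueck(1994c)} that the survey cites for this theorem: reduce to $F=\IQ$, pass to the combinatorial Laplacian over $\IZ G$, prove polynomial trace convergence from $\bigcap_i G_i=\{1\}$, get the Kazhdan inequality from weak convergence of spectral measures, and then obtain the reverse inequality from the uniform logarithmic bound on $F_i(\epsilon)-F_i(0)$ coming from the integrality of the characteristic polynomial (equivalently, $\det'\ge 1$). The survey itself does not reprove the theorem but later recovers it as the finite-quotient special case of Theorem~\ref{the:The_Determinant_Conjecture_implies_the_Approximation_Conjecture_for_L2-Betti_numbers}, where your integrality estimate is exactly what makes the Determinant Conjecture hold for the finite groups $G/G_i$; so the two presentations are the same argument at different levels of abstraction.
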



\subsection{Prime characteristic}
\label{subsec:prime_characteristic}

Fix a prime $p$. Let $F$ be a field of characteristic $p$. We consider the
\emph{$n$-th Betti number with $F$-coefficients} $b_n(X;F) := \dim_F(H_n(X;F))$.  Notice
that $b_n(X;F) = b_n(X;\IF_p)$ holds, where $\IF_p$ is the field of $p$-elements.  In this
setting a general answer to Problem~\ref{pro:approximation_problem} is only known in
special cases.  The main problem is that one does not have an analogue of the von Neumann
algebra in characteristic $p$ and the construction of an appropriate extended dimension
function, see~\cite{Lueck(1998a)}, is not known in general.

If $G$ is torsion-free elementary amenable, one gets the full positive answer 
by Linnell-L\"uck-Sauer~\cite[Theorem~0.2]{Linnell-Lueck-Sauer(2011)}, where we give more
explanations, e.g., about Ore localizations, and actually virtually torsion-free elementary
amenable groups are considered.

\begin{theorem}\label{the:dim_approximation_over_fields}
  Let $F$ be a field (of arbitrary characteristic) and $X$ be a connected finite
  $CW$-complex.  Let $G$ be a torsion-free elementary amenable group. Then:
  \[
  \dim_{FG}^{\Ore} \bigl(H_n(\overline{X};F)\bigr) = \lim_{n \to \infty}
  \frac{b_n(X[i];F)}{[G:G_n]}.
  \]
\end{theorem}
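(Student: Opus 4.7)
The plan is to prove the statement by transfinite induction on the elementary amenable class of $G$, using the structural characterization that every torsion-free elementary amenable group is built from finitely generated free abelian groups by alternating the operations of taking extensions (with torsion-free quotients) and forming directed unions.

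Before the induction, I would establish the algebraic framework. For any torsion-free elementary amenable group $G$ and any field $F$, the group ring $FG$ is an Ore domain; indeed, Kropholler--Linnell--Moody type arguments show it has no zero divisors, and amenability supplies the Ore condition. Hence the classical ring of quotients $FG S^{-1}$ exists as a skew field, and there is a well-behaved extended dimension function $\dim_{FG}^{\Ore}$ defined on arbitrary $FG$-modules via the Ore localization, which is additive on short exact sequences and continuous with respect to directed colimits of submodules. The target quantity $\dim_{FG}^{\Ore}\bigl(H_n(\overline{X};F)\bigr)$ is then meaningful.

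The main induction has three cases. \emph{Base case:} $G \cong \IZ^r$. Here $FG$ is a commutative Laurent polynomial ring, the chain $(G_i)$ can be analyzed by finite-index quotients of $\IZ^r$, and one shows the limit equals $\dim_{FG}^{\Ore}$ by computing Smith normal forms of boundary maps and passing to the limit; this is essentially a direct calculation. \emph{Extension step:} given $1 \to H \to G \to Q \to 1$ with $Q$ virtually cyclic (infinite), one uses a Lyndon--Hochschild--Serre-type spectral sequence, together with the transfer between the chain $(G_i)$ on $G$ and the induced chain $(G_i \cap H)$ on $H$, to reduce the claim for $G$ to the already-proved claim for $H$; the cofinality assumption $\bigcap G_i = \{1\}$ is preserved throughout. \emph{Directed union step:} if $G = \bigcup_\alpha G^{(\alpha)}$, then both the Ore dimension and the asymptotic Betti numbers behave continuously along the directed system, because homology of the finite cover $X[i]$ depends only on $X$ together with a finite-index quotient of $G$, so everything is governed by $G^{(\alpha)}$ for sufficiently large $\alpha$.

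The hard part, and where essentially all of the work lies, is the extension step in positive characteristic. In characteristic zero one can invoke Hilbert space methods, trace positivity, and spectral calculus for the von Neumann algebra $\caln(G)$; none of these tools are available over $\IF_p$. The substitute used in Linnell--L\"uck--Sauer is to prove an algebraic approximation statement for Ore dimensions: for a tower $(G_i)$ in $G$ with $\bigcap G_i = \{1\}$, the normalized $FG/F[G/G_i]$-dimensions of the induced complex converge to the $FG$-Ore-dimension. This reduces to controlling the ranks of matrices over $F[G/G_i]$ uniformly, which is carried out by showing that the Ore rank of a matrix $A \in M_{m,n}(FG)$ equals the limit of $\frac{1}{[G:G_i]}\rk_F(A \otimes_{FG} F[G/G_i])$. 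Once this algebraic approximation of ranks is in place, it propagates through the chain complex of $X[i]$ to give the stated formula for homology. I would therefore plan the write-up as: (i) the rank-approximation lemma, (ii) the induction scheme as above, and (iii) assembly from ranks to Betti numbers via the cellular chain complex of $\overline{X}$.
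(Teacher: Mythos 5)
The paper does not actually prove Theorem~\ref{the:dim_approximation_over_fields}: it is a survey, and the result is cited from Linnell--L\"uck--Sauer \cite{Linnell-Lueck-Sauer(2011)}. So there is no ``paper's own proof'' to compare against; I can only weigh your sketch against what that reference actually does. Your high-level strategy --- prove a rank-approximation statement for matrices over $FG$ with values in the Ore-localized dimension, then propagate through the cellular chain complex, organizing the argument by transfinite induction through the elementary amenable hierarchy --- is indeed the architecture of the Linnell--L\"uck--Sauer argument, and you have correctly identified the rank approximation as the place where all the substance lives. Your preliminary remarks about $FG$ being an Ore domain for torsion-free elementary amenable $G$ (Kropholler--Linnell--Moody for zero divisors, amenability for the Ore condition) and about the resulting dimension function being additive and continuous are also correct.

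The genuine gap is in the induction scheme itself. You assert that every torsion-free elementary amenable group is built up from finitely generated free abelian groups by extensions \emph{with torsion-free quotients} and directed unions, and your extension step assumes a quotient that is infinite virtually cyclic. Neither of these matches the actual Kropholler hierarchy, whose building blocks at each successor stage are finitely generated \emph{virtually abelian} groups, which typically carry torsion even when the ambient group $G$ is torsion-free (already $\IZ \twoheadrightarrow \IZ/2$ shows that a torsion-free group admits hierarchy decompositions with torsion quotients, and for general $G$ one cannot force all quotients to be torsion-free). Consequently the induction as you describe it will hit quotients $Q$ with $FQ$ not a domain, so the Ore-localization dimension function is unavailable at exactly the step you need it, and a Lyndon--Hochschild--Serre spectral sequence does not by itself supply a substitute. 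This is precisely why Linnell--L\"uck--Sauer prove the theorem for the broader class of \emph{virtually} torsion-free elementary amenable groups and build a dimension function on a rational (division) closure that makes sense beyond Ore domains; the proof runs at the level of a ring homomorphism $FG \to D$ with a compatible rank function and an approximation statement for ranks, rather than via a spectral sequence. You should also be more careful in the directed-union step: the chain $(G_i)$ lives in $G$, and when $G = \bigcup_\alpha G^{(\alpha)}$ the normalizing indices $[G^{(\alpha)}:G_i \cap G^{(\alpha)}]$ need not equal $[G:G_i]$, so the continuity claim requires an explicit comparison of the two normalizations rather than being automatic. Finally, the base case over $\IZ^r$ with $r\ge 2$ is not a Smith normal form computation, since $F[\IZ^r]$ is not a PID; the cleanest base case is $G=\IZ$, with higher rank built up through the extension step.
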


For a brief survey on elementary amenable groups we refer for instance 
to~\cite[Section~6.4.1 on page 256ff]{Lueck(2002)}. Solvable groups are examples of elementary amenable
groups. Every elementary amenable group is amenable, but the converse is not true in general.

Notice that Theorem~\ref{the:dim_approximation_over_fields} is consistent with
Theorem~\ref{the:approx_Betti_char_zero} since for a field $F$ of characteristic zero 
and  a torsion-free elementary amenable group $G$ we
have $b_n^{(2)}(\overline{X};\caln(G)) = \dim_{FG}^{\Ore}
\bigl(H_n(\overline{X};F)\bigr)$. The latter equality follows 
from~\cite[Theorem~6.37 on page~259, Theorem~8.29 on page~330, 
Lemma~10.16 on page~376, and Lemma~10.39 on page~388]{Lueck(2002)}. 

Here is another special case taken from 
Bergeron-L\"uck-Linnell-Sauer~\cite{Bergeron-Linnell-Lueck-Sauer(2014)}, 
(see also Calegari-Emerton~\cite{Calegari-Emerton(2009bounds), Calegari-Emerton(2011)}), where we know 
the answer only for special chains.  Let $p$ be a prime, let $n$ be a positive integer,
and let $\phi\colon G \rightarrow \GL_n (\IZ_p)$ be a homomorphism, where $\IZ_p$ denotes
the $p$-adic integers. The closure of the image of $\phi$, which is denoted by $\Lambda$,
is a $p$-adic analytic group admitting an exhausting filtration by open normal subgroups
$\Lambda_i = \ker \left( \Lambda \rightarrow \GL_n (\IZ / p^i \IZ) \right)$. Put $G_i = \phi^{-1} (\Lambda_i )$.

\begin{theorem} \label{the:BLLS} Let $F$ be a field (of arbitrary characteristic). Put $d= \dim  (\Lambda)$.  
  Let $X$ be a finite $CW$-complex.  Then for any integer $n$ and
  as $i$ tends to infinity, we have:
  \[
  b_n(X[i];F) = b_n^{(2)}(\overline{X};F) \cdot [G : G_i] + O\left([G :
    G_i]^{1-{1/d}} \right),
  \]
  where $b_n^{(2)}(\overline{X};F)$ is the $n$th mod $p$ $L^2$-Betti numbers occurring 
  in~\cite[Definition~1.3]{Bergeron-Linnell-Lueck-Sauer(2014)}.
  In particular
  \[
  \lim_{i \to \infty} \frac{b_n(X[i] ;F)}{[G:G_i]} = b_n^{(2)}(\overline{X};F).
  \]
\end{theorem}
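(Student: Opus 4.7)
The plan is to reduce Theorem~\ref{the:BLLS} to a quantitative Hilbert--Samuel-type dimension growth estimate for finitely generated modules over the Iwasawa algebra of the closure $\Lambda$. First I would work at the chain level: represent the cellular chains $C_* = C_*(\overline{X};F)$ as a finite complex of finitely generated free $FG$-modules, so that $C_*(X[i];F) = F[G/G_i] \otimes_{FG} C_*$. The task is then to control $\dim_F H_n$ of this tensor product as $i \to \infty$.

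Next, since $\phi$ has dense image in $\Lambda$ and each $\Lambda_i$ is an open normal subgroup with $[G:G_i] = [\Lambda:\Lambda_i]$, the functor $F[G/G_i] \otimes_{FG} {-}$ factors through the Iwasawa algebra $F[[\Lambda]] := \varprojlim_i F[\Lambda/\Lambda_i]$: one first induces $C_*$ up to a finite free $F[[\Lambda]]$-chain complex $\widehat C_*$ and then tensors with $F[\Lambda/\Lambda_i]$ over $F[[\Lambda]]$. By construction of the mod $p$ $L^2$-Betti numbers in \cite[Definition~1.3]{Bergeron-Linnell-Lueck-Sauer(2014)}, the target value $b_n^{(2)}(\overline{X};F)$ is precisely the ``generic dimension'' of $H_n(\widehat C_*)$ over $F[[\Lambda]]$.

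The core quantitative step is then to prove, for any finitely generated $F[[\Lambda]]$-module $M$, that
\begin{equation*}
\dim_F\bigl(F[\Lambda/\Lambda_i] \otimes_{F[[\Lambda]]} M\bigr) \;=\; \dim_{F[[\Lambda]]}(M) \cdot [\Lambda:\Lambda_i] \;+\; O\bigl([\Lambda:\Lambda_i]^{1-1/d}\bigr).
\end{equation*}
For free $M$ this is immediate; one extends to arbitrary finitely generated $M$ using a finite free resolution, which exists because $F[[\Lambda]]$ is Noetherian and Auslander-regular of dimension $d$ (possibly after passing to a uniform open pro-$p$ subgroup and averaging by the finite quotient). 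These error terms combine additively through the alternating sum of chain ranks, and a standard short exact sequence argument relating cycles, boundaries and homology converts the chain-level estimates into the asserted bound on $b_n(X[i];F)$; the convergence statement follows from the main estimate by dividing by $[G:G_i]$.

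The main obstacle is the sharpness of the $1/d$ exponent. Mere convergence $b_n(X[i];F)/[G:G_i] \to b_n^{(2)}(\overline{X};F)$ is a softer statement that follows essentially from flatness of the generic dimension functor. The polynomial remainder genuinely uses the filtration theory of $p$-adic analytic groups: the Lazard correspondence identifies (up to finite index) a uniform pro-$p$ subgroup of $\Lambda$ with a $\IZ_p$-Lie algebra and endows $F[[\Lambda]]$ with a good filtration whose associated graded is a polynomial algebra in $d$ variables. The Hilbert polynomial of this graded ring --- of degree $d$ in $p^i$, with leading coefficient capturing the generic rank of $M$ and subleading term of degree $d-1$ capturing boundary contributions --- is ultimately the source of the $O([G:G_i]^{1-1/d})$ error term, and extracting it from the chain complex cleanly (rather than losing a factor in each degree) is the technical heart of the argument.
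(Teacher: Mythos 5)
Theorem~\ref{the:BLLS} is quoted in the survey from Bergeron--Linnell--L\"uck--Sauer rather than proved there, so the comparison is with that reference, and your outline is a faithful reconstruction of its strategy: pass to the Iwasawa algebra $F[[\Lambda]]$, interpret the mod $p$ $L^2$-Betti number as a generic rank over $F[[\Lambda]]$ (via the Ore localization of $F[[\Lambda']]$ for a torsion-free uniform open subgroup $\Lambda'\subseteq\Lambda$, normalized by $[\Lambda:\Lambda']$), and deduce the theorem from a Hilbert--Samuel-type estimate for finitely generated $F[[\Lambda]]$-modules whose $1-1/d$ error exponent comes from the good filtration of $F[[\Lambda]]$ with polynomial associated graded, itself resting on Lazard's theory of uniform pro-$p$ groups.

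One logical imprecision is worth flagging. You cannot get the bound
$\dim_F\bigl(F[\Lambda/\Lambda_i]\otimes_{F[[\Lambda]]} M\bigr)=\mathrm{rk}(M)\cdot[\Lambda:\Lambda_i]+O\bigl([\Lambda:\Lambda_i]^{1-1/d}\bigr)$
for arbitrary finitely generated $M$ ``by a finite free resolution.'' A finite free resolution of $M$ only controls the Euler characteristic $\sum_{j\ge 0}(-1)^j\dim_F\mathrm{Tor}_j^{F[[\Lambda]]}\bigl(M,F[\Lambda/\Lambda_i]\bigr)$, which equals $\mathrm{rk}(M)\cdot[\Lambda:\Lambda_i]$ exactly; to isolate $\mathrm{Tor}_0$ (the quantity you need) you must bound each higher $\mathrm{Tor}$ term separately, and for that you need the Hilbert-polynomial estimate to hold \emph{directly} for every finitely generated module --- it is not a refinement of the resolution argument but a replacement for it. You do correctly identify the filtration-theoretic input as the technical heart; the point is only that it is the core of the dimension estimate for arbitrary $M$, not a second step used to sharpen the exponent after the resolution step. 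Apart from this reordering the proposal matches the actual argument.
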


Returning to the setting of arbitrary chains $(G_i)_{i \ge 0}$, we get by the universal coefficient theorem 
$b_n(X[i];\IQ) \le b_n(X[i];F)$ for any field
$F$ and hence by Theorem~\ref{the:approx_Betti_char_zero} the inequality 
\[
\liminf_{i \to  \infty} \frac{b_n(X[i] ;F)}{[G:G_i]} \ge b_n^{(2)}(\overline{X};\caln(G)).
\]  
If $p$ is a prime and we additionally assume that each index $[G:G_i]$ is a $p$-power, then the sequence
$\frac{b_n(X[i] ;F)}{[G:G_i]}$ is monotone decreasing and in particular 
$\lim_{i \to  \infty} \frac{b_n(X[i] ;F)}{[G:G_i]}$ exists, see~\cite[Theorem~1.6]{Bergeron-Linnell-Lueck-Sauer(2014)}.

\begin{conjecture}[Approximation in zero and prime characteristic]
  \label{con:Approximation_in_zero_and_prime_characteristic}
    We get
  \[
  \lim_{i \to \infty} \frac{b_n(X[i] ;F)}{[G:G_i]} = b_n^{(2)}(\overline{X};\caln(G))
  \]
  for all fields $F$ and $n \ge 0$, provided that $X$ is finite,  and $\overline{X}$ is contractible, or, equivalently,
that $X$ is aspherical, $G = \pi_1(X)$ and  $\overline{X}$ is the universal covering $\widetilde{X}$.
\end{conjecture}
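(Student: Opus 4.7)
The conjecture in characteristic zero is already subsumed by Theorem~\ref{the:approx_Betti_char_zero}, so the substantive content lies in prime characteristic; by extension of scalars it suffices to treat $F = \IF_p$. The plan is to reduce the conjecture to a statement about growth of $p$-torsion in integral homology, and then to combine the strategies used for Theorem~\ref{the:dim_approximation_over_fields} and Theorem~\ref{the:BLLS} by passing to a pro-$p$ completion.

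First, applying the Universal Coefficient Theorem to each $X[i]$ yields
\begin{equation*}
b_n(X[i];\IF_p) \;=\; b_n(X[i];\IQ) \;+\; t_n^{(p)}(X[i]) \;+\; t_{n-1}^{(p)}(X[i]),
\end{equation*}
where $t_m^{(p)}(X[i]) := \dim_{\IF_p}\bigl(\tors(H_m(X[i];\IZ))\otimes_{\IZ}\IF_p\bigr)$. Theorem~\ref{the:approx_Betti_char_zero} takes care of the rational term, so the conjecture is equivalent to the vanishing statement
\begin{equation*}
\lim_{i \to \infty} \frac{t_n^{(p)}(X[i])}{[G:G_i]} \;=\; 0 \qquad (n \ge 0)
\end{equation*}
under the aspherical hypothesis.

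Second, to attack this vanishing I would exploit the identification $H_n(X[i];\IZ) = H_n(G_i;\IZ)$ granted by asphericity and pass to the pro-$p$ completion $\widehat{G}^{(p)}$. For chains in which every index $[G:G_i]$ is a power of $p$, the quotients $G/G_i$ factor through $\widehat{G}^{(p)}$, so the argument behind Theorem~\ref{the:BLLS} applies essentially verbatim as soon as one has an extended dimension function on finitely generated modules over the Iwasawa algebra $\IF_p[[\widehat{G}^{(p)}]]$ that is continuous under inverse limits along the filtration $(G_i)$. When $\widehat{G}^{(p)}$ is $p$-adic analytic, this is exactly Theorem~\ref{the:BLLS}; the general aspherical case demands an extension of the extended-dimension-function machinery of~\cite{Lueck(1998a)} to the Iwasawa-algebra setting. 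The passage from $p$-power chains to an arbitrary normal chain should then be obtainable by interleaving $(G_i)$ with the mod-$p$ derived series $G_i \cap G_{[p,j]}$ (where $G_{[p,j]}$ denotes the $j$-th term of the lower central mod-$p$ series) and invoking the monotonicity statement~\cite[Theorem~1.6]{Bergeron-Linnell-Lueck-Sauer(2014)} on each resulting sub-chain together with a diagonal/cofinality argument.

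Third, the main obstacle, as the excerpt emphasizes, is the absence of a noncommutative dimension theory in characteristic $p$ outside the elementary amenable and $p$-adic analytic cases. Even for free groups the claim is nontrivial and is essentially the mod-$p$ analogue of L\"uck approximation; any proof must control the contribution of the higher $\Tor$-groups $\Tor_*^{\IZ G_i}(\IF_p,\IZ)$ uniformly as $i \to \infty$. A natural intermediate target is the class of residually-$p$ groups whose pro-$p$ completion has finite rank in the sense of Lubotzky--Mann, which is strictly larger than that handled by Theorem~\ref{the:BLLS}. Beyond that, one probably needs a fundamentally new source of sub-multiplicativity for $p$-torsion growth: either a cell-trading/cheap-rebuilding inequality adapted to $\IF_p$-coefficients, or an $\IF_p$-analogue of the determinant-class framework that would play the role of the Fuglede--Kadison determinant in positive characteristic. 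It is the step of producing such a dimension function, rather than the algebraic reductions of the first two paragraphs, that I expect to be the genuine bottleneck.
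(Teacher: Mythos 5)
This statement is a \emph{conjecture}: the paper does not prove it, and the mod-$p$ Betti number case is explicitly listed in the introduction among the open problems motivating the survey. Your text is accordingly a strategy outline rather than a proof, and you say so yourself in the final paragraph, which is honest. The Universal Coefficient reduction to the vanishing of $\lim_i t_n^{(p)}(X[i])/[G:G_i]$ is correct, and your invocations of the known special cases — Linnell--L\"uck--Sauer for torsion-free elementary amenable $G$ and Bergeron--Linnell--L\"uck--Sauer for $p$-adic analytic towers — are accurate. But the two middle steps are not merely unfinished; they run into concrete obstructions. For the pro-$p$ completion step, the Iwasawa algebra $\IF_p[[\widehat{G}^{(p)}]]$ is Noetherian (and carries a usable dimension theory) essentially only when $\widehat{G}^{(p)}$ is $p$-adic analytic, which is exactly the hypothesis of Theorem~\ref{the:BLLS}; for a general residually-$p$ fundamental group of an aspherical closed manifold there is no known replacement for the extended dimension function of~\cite{Lueck(1998a)}, and this is precisely why the conjecture is open rather than a corollary of existing technology.

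The proposed passage from $p$-power chains to arbitrary normal chains is also not a reduction one can take for granted. Interleaving $(G_i)$ with $G_i \cap G_{[p,j]}$ does not in general yield a chain with trivial intersection (the original chain need not be residually-$p$), the resulting indices need not be $p$-powers of one another, and there is no inequality comparing $b_n(G_i;\IF_p)/[G:G_i]$ with the analogous quotient along the interleaved chain. The monotonicity of~\cite[Theorem~1.6]{Bergeron-Linnell-Lueck-Sauer(2014)} applies within a single $p$-power tower and says nothing about comparing two different towers, let alone about a cofinality argument across a directed family of sub-chains. What would actually be needed is a uniform sub-multiplicativity estimate on $p$-torsion growth valid for all finite-index normal subgroups simultaneously — the $\IF_p$-analogue of the determinant-class inequality in characteristic zero — and no such estimate is currently known. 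You have located the bottleneck correctly, but the two reductions in the middle paragraph do not lower the bar: they reformulate the open problem rather than approach it.
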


The assumption that $\overline{X} $ is contractible is necessary in
Conjecture~\ref{con:Approximation_in_zero_and_prime_characteristic}, 
see~\cite[Example~6.2]{Linnell-Lueck-Sauer(2011)}.  An obvious modification 
of~\cite[Example~6.2]{Linnell-Lueck-Sauer(2011)} applied to $G = \IZ$ and $X = S^1 \vee Y$
for a finite aspherical $CW$-complex $Y$ with $H_n(Y;\IQ) \not= 0$ and $H_n(Y;\IF_p) \not=0$ 
yields a counterexample, where $X$ is aspherical, (but $\overline{X}$ is not the universal covering).

Estimates of the growth of Betti-numbers  in terms of the volume of the underlying manifold 
and examples of aspherical manifolds, where this growth is sublinear,
are given in~\cite{Sauer(2016)}.


\subsection{Minimal number of the generators of the homology}
\label{subsec:Minimal_number_of_the_generators_of_the_homology}

Recall the standard notation that $d(G)$ denotes the minimal
number of generators of a finitely generated group $G$. The 
Universal Coefficient Theorem implies
$d\bigl(H_n(X[i] ;\IZ)\bigr) \ge b_n(X[i];F)$ if $F$ has characteristic zero,
but this inequality is not necessarily true  in prime characteristic.
One can make the following  version of
Conjecture~\ref{con:Approximation_in_zero_and_prime_characteristic},
which is in some sense stronger; see the discussion 
in~\cite[Remark~1.3 and Lemma~2.13]{Lueck(2013l2approxfib)}.

\begin{conjecture}[Growth of number of generators of the homology]
  \label{con:Growth_of_number_of_generators_of_the_homology}
    We get
  \[
  \lim_{i \to \infty} \frac{d\bigl(H_n(X[i] ;\IZ)\bigr)}{[G:G_i]} = b_n^{(2)}(\overline{X};\caln(G)),
   \]
  provided that $\overline{X} $ is contractible.
\end{conjecture}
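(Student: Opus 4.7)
The plan is to split the minimal number of generators by the fundamental theorem on finitely generated abelian groups:
\[
d\bigl(H_n(X[i];\IZ)\bigr) \;=\; b_n(X[i];\IQ) + d\bigl(\tors(H_n(X[i];\IZ))\bigr).
\]
The first summand is already handled by Theorem~\ref{the:approx_Betti_char_zero}, giving $\frac{b_n(X[i];\IQ)}{[G:G_i]} \to b_n^{(2)}(\overline{X};\caln(G))$. Since $d(A)\ge \rk_\IZ(A)$ always holds, the lower bound in Conjecture~\ref{con:Growth_of_number_of_generators_of_the_homology} is automatic. Everything therefore reduces to proving the sublinear estimate
\[
\lim_{i\to\infty} \frac{d\bigl(\tors(H_n(X[i];\IZ))\bigr)}{[G:G_i]} \;=\; 0.
\]

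To attack this I would exploit the hypothesis that $\overline{X}$ is contractible to view the cellular chain complex $C_* := C_*(\overline{X})$ as a finite based free $\IZ G$-resolution of the trivial module $\IZ$. The induced integer complex $C_*^{[i]} := \IZ[G/G_i] \otimes_{\IZ G} C_*$ has boundary maps $c_n^{[i]}$, and Smith normal form applied to the inclusion $\im(c_{n+1}^{[i]}) \hookrightarrow \ker(c_n^{[i]})$ identifies $\tors H_n(X[i];\IZ) \cong \bigoplus_j \IZ/d_j$, so that $d(\tors H_n(X[i];\IZ))$ equals the number of invariant factors $d_j > 1$. Two routes for bounding this count suggest themselves. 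The first is the trivial estimate
\[
d\bigl(\tors H_n(X[i];\IZ)\bigr) \;\le\; \log_2 \bigl|\tors H_n(X[i];\IZ)\bigr|,
\]
which reduces the problem to a log-torsion bound and suffices whenever the $L^2$-torsion is predicted to vanish, as in the aspherical-manifold setting of Conjecture~\ref{con:Homological_growth_and_L2-torsion_for_aspherical_manifolds}(5) for $2n+1 \ne d$. The second is to bound the number of invariant factors $>1$ by the number of singular values of $c_{n+1}^{[i]}$ that cluster near $0$, and then to prove a uniform spectral-concentration estimate asserting that the fraction of singular values in $(0,\lambda)$ tends to $0$ as $\lambda\downarrow 0$, uniformly in $i$, with a rate of convergence compatible with dividing by $[G:G_i]$.

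The main obstacle is that neither bound is sharp enough in full generality. The log-torsion route fails exactly when the predicted $L^2$-torsion is nonzero (for instance in odd dimensions on hyperbolic manifolds), where integral torsion is expected to grow exponentially but to be concentrated in a few large cyclic summands---precisely the phenomenon the conjecture predicts but that cannot be extracted from log-volume considerations alone. The spectral-concentration route requires the kind of uniform logarithmic control of the spectral density functions of the boundary operators near $0$ that underlies the Approximation Conjecture for Fuglede-Kadison determinants, concretely a bound along the lines of Theorem~\ref{the:the_uniform_logarithmic_estimate}. An alternative prime-by-prime attack via the identity $d(A) = \max_p \dim_{\IF_p}(A\otimes \IF_p)$ yields $d(H_n(X[i];\IZ)) \le \max_p b_n(X[i];\IF_p)$, but this merely substitutes Conjecture~\ref{con:Approximation_in_zero_and_prime_characteristic} uniformly over all primes for the problem at hand, which is no easier. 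I therefore expect any complete proof of Conjecture~\ref{con:Growth_of_number_of_generators_of_the_homology} to combine the $\IZ G$-chain-complex structure available under contractibility with a quantitative spectral estimate strictly beyond the reach of the currently available $L^2$-approximation machinery.
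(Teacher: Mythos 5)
The statement you were asked to prove is explicitly labeled a conjecture in the paper, and no proof is offered there, so the honest conclusion you reach---that any complete argument lies beyond currently available $L^2$-approximation machinery---is exactly right. Your reduction via $d(H_n(X[i];\IZ)) = b_n(X[i];\IQ) + d(\tors(H_n(X[i];\IZ)))$, with Theorem~\ref{the:approx_Betti_char_zero} absorbing the first summand, reproduces the paper's own accounting in Remark~\ref{rem:possible_scenarios}, where $d(H_n(M[i];\IZ)) = r[i,n] + \max_p t[i,n,p]$ is displayed and the expected sublinear growth of the second term is singled out as the heart of the matter. Your remark that the prime-by-prime route via $d(A)=\max_p\dim_{\IF_p}(A\otimes_{\IZ}\IF_p)$ merely restates the problem as a uniform-in-$p$ version of Conjecture~\ref{con:Approximation_in_zero_and_prime_characteristic} agrees with the paper's comment that Conjecture~\ref{con:Growth_of_number_of_generators_of_the_homology} is ``in some sense stronger.''

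One caveat to sharpen in your second route: even a uniform logarithmic spectral estimate of the kind in Theorem~\ref{the:the_uniform_logarithmic_estimate} feeds into the Fuglede--Kadison approximation for the alternating sum $\rho^{\IZ}$, not directly into a per-degree bound on the number of invariant factors. Remark~\ref{no_relation_between_Laplace_and_homology} is explicit that there is no degree-by-degree relation between $\ln\bigl({\det}^{(2)}_{\caln(\{1\})}(c_n^{(2)})\bigr)$ and $\ln\bigl|\tors(H_n)\bigr|$; and beyond that, separating the count of nontrivial cyclic summands from their sizes---that is, distinguishing many small primes from a few huge prime powers---is arithmetic information that spectral density functions simply do not register. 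So the gap is not merely quantitative; the conjecture requires an input of a different nature than spectral concentration alone, which is why it remains open.
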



\typeout{------------------------ Section 3: Rank gradient --------------------}

\section{Rank gradient and cost}
\label{sec:Rank_gradient_and_cost}

Let $G$ be a finitely generated group. Let $(G_i)_{i\ge 0}$
be a descending chain of subgroups of finite index of $G$. The \emph{rank gradient of
  $G$} (with respect to $(G_i)$)  is defined by
\begin{eqnarray}
  \RG(G;(G_i)_{i \ge 0}) &= & \lim_{i \to\infty} \frac{d(G_i) -1}{[G:G_i]}.
  \label{rank_gradient}
\end{eqnarray}
The above limit always exists since for any finite index subgroup $H$ of $G$ one has
$\frac{d(H)-1}{[G:H]} \le d(G)-1$ by the Schreier index formula and hence the sequence
$\frac{d(G_i) -1}{[G:G_i]}$ is a monotone decreasing sequence of non-negative rational
numbers.

The rank gradient was originally introduced by Lackenby~\cite{Lackenby(2005expanders)} as a
tool for studying 3-manifold groups, but is also interesting from a purely group-theoretic
point of view, see, e.g.,
\cite{Abert-Jaikin-Zapirain-Nikolov(2011),Abert-Nikolov(2012),Osin(2011_rankgradient),Schlage-Puchta(2012)}.

In the sequel let $(G_i)_{i \ge 0}$ be a descending chain of normal subgroups of finite index  of
$G$ with $\bigcap_{i \ge 0} G_i=\{1\}$. The following inequalities are known to hold:
\begin{equation}
  \label{inequalities1}
  b_1^{(2)}(G) - b_0^{(2)}(G) \leq \cost(G)-1\leq  \RG(G;(G_i)_{i \ge 0}).
\end{equation}
The first inequality is due to
Gaboriau~\cite[Corollaire~3.16,~3.23]{Gaboriau(2002a)}
and the second was proved by Ab\'ert and
Nikolov~\cite[Theorem~1]{Abert-Nikolov(2012)}.
See~\cite{Gaboriau(2000b),Gaboriau(2002a),Gaboriau(2002b)} for the definition and some 
key results about the cost of a group.

It is not known whether  the  inequalities in~\eqref{inequalities1} are equalities. This is true,
if $G$ is finite since then all values are $|G|^{-1}$.  The
following questions remain open:

\begin{question}[Rank gradient, cost, and first $L^2$ Betti number]
  \label{que:Rank_gradient_cost_and_first_L2_Betti_number}
  Let $G$ be an infinite finitely generated residually finite group. Let $(G_i)_{i \ge 0}$ be a
  descending  chain of normal subgroups of finite index of $G$ with $\bigcap_{i \ge 0}   G_i=\{1\}$. 
 
  Do we have
  \[
  b_1^{(2)}(G) = {\rm cost}(G)-1 = \RG(G;(G_i)_{i \ge 0})?
  \]
\end{question}

\begin{question}[Rank gradient, cost, first $L^2$-Betti number and approximation]
  \label{que:Rank_gradient_cost_first_L2_Betti_number_and_approximation}
  Let $G$ be a finitely presented residually finite group. Let $(G_i)$ be a
  descending chain of normal subgroups of  finite index  of $G$ with $\bigcap_{i \ge 0 } G_i=\{1\}$. 
  Let $F$ be any field. 

   Do we have 
  \[
  \lim_{i \to \infty} \frac{b_1(G_i;F)-1}{[G:G_i]} = b_1^{(2)}(G) - b_0^{(2)}(G) = {\rm cost}(G)-1 =   \RG(G;(G_i)_{i \ge 0})?
  \]
\end{question}

Notice that a positive answer to the questions above also includes the statement, that
$\lim_{i \to \infty} \frac{b_n(X[i];F)}{[G:G_i]}$ and $\RG(G;(G_i)_{i \ge 0})$ are independent of
the chain.

One can ask for any finitely generated group $G$ (without assuming that it is residually
finite) whether $b_1^{(2)}(G) = {\rm cost}(G)-1$ is true, and whether the Fixed Price
Conjecture is true which predicts that the cost of every standard action of $G$, i.e., an
essentially free $G$-action on a standard Borel space with $G$-invariant probability
measure, is equal to the cost of $G$.

\begin{remark}[Minimal number of generators versus rank of the abelianization]
  \label{rem:Minimal_number_of_generators_versus_rank_of_the_abelianization}
  A positive answer to Question~\ref{que:Rank_gradient_cost_first_L2_Betti_number_and_approximation} 
  is equivalent to the assertion
  that
  \[
  \lim_{i \to \infty} \frac{d(G_i) - \rk_{\IZ}(H_1(G_i))}{[G:G_i]} = 0.
  \]
  This is surprising since in general one would not expect that for a finitely generated
  group $H$ the minimal number of generators $d(H)$ agrees with the rank of its
  abelianization $\rk_{\IZ}(H_1(H))$. So a positive answer to
  Question~\ref{que:Rank_gradient_cost_first_L2_Betti_number_and_approximation} would
  imply that this equality is true asymptotically. This raises the question whether this
  equality holds for a ``random group'' in the sense of Gromov.
\end{remark}

\begin{remark}[Known cases]
  \label{rem:known_cases}
  The answers to Question~\ref{que:Rank_gradient_cost_and_first_L2_Betti_number}
  and~\ref{que:Rank_gradient_cost_first_L2_Betti_number_and_approximation} is known to be
  positive if $G$ contains a normal infinite amenable subgroup. Namely, in this case all
  values are $0$ since $\RG(G;(G_i)_{i \ge 0}) = 0$ for all descending chains $(G_i)_{i
    \ge 0}$ of normal subgroups of finite index of $G$ with trivial intersection, as
  proved by Lackenby~\cite[Theorem~1.2]{Lackenby(2005expanders)} when $G$ is finitely
  presented, and by Ab\'ert and Nikolov~\cite[Theorem~3]{Abert-Nikolov(2012)} in general,
  where actually more general chains are considered.
  
  It is also positive for limit groups by Bridson-Kochlukova~\cite[Theorem~A and
  Corollary~C]{Bridson-Kochlukova(2013)}, where all values are $- \chi(G)$.
\end{remark}

\begin{remark}[All conditions are necessary]
  \label{rem:All_conditions_are_necessary}
  One cannot drop in 
  Question~\ref{que:Rank_gradient_cost_first_L2_Betti_number_and_approximation} the assumption 
  that the intersection  $\bigcap_{i \ge 0} G_i$ is trivial.  Indeed, there exists a finitely presented group $G$
  together with a descending chain $(G_i)_{i \ge 0}$ of normal subgroups $G_i$ of finite index of $G$,
  but with non-trivial intersection $\bigcap_{i \ge 0} G_i$, such that
  \[
  \lim_{i \to \infty} \frac{b_1(G_i;\IQ)}{[G:G_i]} < \lim_{i \to \infty}
  \frac{b_1(G_i;\IF_p)}{[G:G_i]} < \lim_{i \to \infty} \frac{d(H_1(G_i;\IZ))}{[G:G_i]} <
  RG(G;(G_i))
  \]
  holds, see Ershov-L\"uck~\cite[Section~4]{Ershov-Lueck(2014)}.

  The condition that each subgroup $G_i$ is normal in $G$ cannot be discarded in
  Question~\ref{que:Rank_gradient_cost_first_L2_Betti_number_and_approximation}.  Indeed,
  one can conclude from Ab\'ert and Nikolov~\cite[Proposition~14]{Abert-Nikolov(2012)},
  see~\cite[Proposition~3.14]{Funke(2013)} for details that there exists for every prime
  $p$ a finitely presented group together with a descending chain $(G_i)_{i \ge 0}$ of 
  (not normal) subgroups  of finite index of $G$   with $\bigcap_{i \ge 0} G_i$ satisfying
  \[
  \lim_{i \to \infty} \frac{b_1(G_i;\IQ)}{[G:G_i]} < b_1^{(2)}(G) < \lim_{i \to \infty}
  \frac{b_1(G_i;\IF_p)}{[G:G_i]} < \lim_{i \to \infty} \frac{d(H_1(G_i;\IZ))}{[G:G_i]} <
  RG(G;(G_i)).
  \]
  One can find also examples where $G$ is the fundamental group of an  oriented hyperbolic $3$-manifold
  of finite volume and the rank gradient is positive for a descending chain $(G_i)_{i \ge 0}$ of 
  (not normal) subgroups  of finite index of $G$   with $\bigcap_{i \ge 0} G_i$ trivial, whereas 
  the first $L^2$-Betti number of $G$ is zero; see Gir\~ao~\cite{Girao(2013), Girao(2014)}.

  Also the condition that $G$ is finitely presented has to appear in
  Question~\ref{que:Rank_gradient_cost_first_L2_Betti_number_and_approximation}. 
 (Notice that in Question~\ref{que:Rank_gradient_cost_and_first_L2_Betti_number} 
  we demand $G$ only to be finitely generated.)
  For   finitely generated $G$ and $F$ of characteristic zero one knows at least 
  $\limsup_{i \to \infty} \frac{b_1(G_i;F))}{[G:G_i]} \le b_1^{(2)}(G)$,
  see L\"uck-Osin~\cite[Theorem~1.1]{Lueck-Osin(2011)}. However, for every prime $p$ there
  exists an infinite finitely generated residually $p$-finite $p$-torsion group $G$ such
  that for any descending chain of normal subgroups $(G_i)_{i \ge 0} $, for which $[G:G_i]$ is
  finite and a power of $p$ and $\bigcap_{i \ge 0} G_i$ is trivial,
  \[
  0 = \lim_{i \to \infty} \frac{b_1(G_i;\IQ)}{[G:G_i]} < b_1^{(2)}(G) \le \lim_{i \to
    \infty} \frac{b_1(G_i;\IF_p)}{[G:G_i]}
  \]
  holds. This follows from Ershov-L\"uck~\cite[Theorem~1.6]{Ershov-Lueck(2014)}
  and L\"uck-Osin~\cite[Theorem~1.2]{Lueck-Osin(2011)}.
\end{remark}


\typeout{------------------ Section 4: A high dimensional version of the rank gradient  ----------------}

\section{A high dimensional version of the rank gradient}
\label{sec:A_high_dimensional_version_of_the_rank_gradient}

One may speculate about the following higher dimensional analogue of
Question~\ref{que:Rank_gradient_cost_first_L2_Betti_number_and_approximation}. 


\subsection{Truncated Euler characteristics}
\label{subsec:Truncated_Euler_characteristics}

Let $d$ be a natural number and let $X$ be a space. Denote by $\calcw_d(X)$ the set of
 $CW$-complexes $Y$ which have  a finite $d$-skeleton $Y_d$ and are homotopy equivalent to $X$.

Provided that $\calcw_d(X)$ is not empty,
define the \emph{$d$-th truncated Euler characteristic of $X$} by
\begin{eqnarray}
  \label{chimin_d} & & 
  \\
  \chi^{\trun}_d(X) & := & 
  \begin{cases}
    \min \{\chi(Y_d) \mid Y \in \calcw_d(X) \} & \text{if} \; d \; \text{is even};
    \\
    \max \{\chi(Y_d) \mid Y   \in \calcw_d(X) \} & \text{if} \; d \; \text{is odd},
  \end{cases}
  \nonumber
\end{eqnarray}
where $\chi(Y_d)$ is the Euler characteristic of the $d$-skeleton $Y_d$ of $Y$.  

If $X$ is a finite $CW$-complex, then $\chi^{\trun}_d(X) = \chi(X)$ if $d \ge \dim(X)$.

Fix a $G$-covering $\overline{X} \to X$. Consider $Y \in \calcw_d(X)$.
Choose a homotopy equivalence $h \colon Y \to X$.    We obtain a 
$G$-covering $\overline{Y} \to Y$ by applying the pullback construction to
$\overline{X} \to X$ and  $h \colon Y \to X$. We get using~\cite[Theorem~6.80~(1)  on page~277]{Lueck(2002)}
\begin{eqnarray*}
  \chi(Y_d)
  & = & 
  \chi^{(2)}(\overline{Y_d};\caln(G))
  \\
  & = & 
  \sum_{n = 0}^d (-1)^n \cdot b_n^{(2)}(\overline{Y_d};\caln(G))
  \\
  & = & 
  (-1)^d \cdot b_d^{(2)}(\overline{Y_d};\caln(G))  +  \sum_{n = 0}^{d-1} (-1)^n \cdot b_n^{(2)}(\overline{Y_d};\caln(G))
  \\
  & = & 
  (-1)^d \cdot b_d^{(2)}(\overline{Y_d};\caln(G))  +  \sum_{n = 0}^{d-1} (-1)^n \cdot b_n^{(2)}(\overline{Y};\caln(G))
  \\
  & = & 
  (-1)^d \cdot \bigl(b_d^{(2)}(\overline{Y_d};\caln(G))   -  b_d^{(2)}(\overline{Y};\caln(G))\bigr) +  
  \sum_{n = 0}^{d} (-1)^n \cdot b_n^{(2)}(\overline{Y};\caln(G)) 
  \\
  & = & 
  (-1)^d \cdot \bigl(b_d^{(2)}(\overline{Y_d};\caln(G))   -  b_d^{(2)}(\overline{Y};\caln(G))\bigr) +  
  \sum_{n = 0}^{d} (-1)^n \cdot b_n^{(2)}(\overline{X},\caln(G)).
\end{eqnarray*}
Since $b_d^{(2)}(\overline{Y_d};\caln(G)) \ge b_d^{(2)}(\overline{Y};\caln(G))$ holds,
we always have the inequality
\[
\chi(Y_d) \; \begin{cases} \ge \; \sum_{n = 0}^d (-1)^n \cdot b_n^{(2)}(\overline{X},\caln(G)) &
  \text{if} \; d \; \text{is even};
  \\
  \le \; \sum_{n = 0}^d (-1)^n \cdot b_n^{(2)}(\overline{X},\caln(G)) & \text{if} \; d \; \text{is odd}.
\end{cases}
\]
This implies that $\chi^{\trun}_d(X)$ is a well-defined integer satisfying
\[
\chi^{\trun}_d(X) \; \begin{cases} \ge \; \sum_{n = 0}^d (-1)^n \cdot b_n^{(2)}(\overline{X},\caln(G)) &
  \text{if} \; d \; \text{is even};
  \\
  \le \; \sum_{n = 0}^d (-1)^n \cdot b_n^{(2)}(\overline{X},\caln(G)) & \text{if} \; d \; \text{is odd}.
\end{cases}
\]

Next we show that the limit $\lim_{i \to \infty} \frac{\chi^{\trun}_d(X[i])} {[G:G_i]}$ always
exists.  Consider a natural number $i$. Choose an element $Y[i] \in \calcw_d(X[i])$ such that
$\chi^{\trun}_d(X[i]) = \chi(Y[i]_d)$ holds. We can find a $[G_i:G_{i+1}]$-sheeted covering
$Y[i+1] \to Y[i]$ such that $Y[i+1]$  belongs to $\calcw_d([X[i+1])$. Obviously  
$\chi(Y[i+1]_d) = \chi(Y[i]_d) \cdot [G_i : G_{i+1}]$. Suppose that $d$ is
even. We conclude
\begin{eqnarray*}
  \frac{\chi_d^{\trun}(X[i+1])}{[G:G_{i+1}]} 
  & = & 
  \frac{\chi_d^{\trun}(X[i+1])}{[G:G_i] \cdot [G_i:G_{i+1}]}
  \\
  & \le & 
  \frac{\chi(Y[i+1]_d)}{[G:G_i] \cdot [G_i:G_{i+1}]}
  \\
  & = & 
  \frac{\chi(Y[i]_d)}{[G:G_i]}
  \\
  & = & 
  \frac{\chi_d^{\trun}(X[i])}{[G:G_i]}.
\end{eqnarray*}
Hence the sequence $\left(\frac{\chi^{\trun}_d(X[i])} {[G:G_i]}\right)_{i \ge 0}$ is
monotone decreasing. Since we get by an argument similar to the one above
\[
\frac{\chi^{\trun}_d(X[i])}{[G:G_i]} \ge \sum_{n = 0}^d (-1)^n \cdot b_n^{(2)}(X[i];\caln(G/G_i)) 
=  \sum_{n = 0}^d (-1)^n \cdot \frac{b_n(X[i];\IQ)}{[G:G_i]}
\]
for all $i$, we conclude from Theorem~\ref{the:approx_Betti_char_zero}
that the sequence $\left(\frac{\chi^{\trun}_d(X[i])} {[G:G_i]}\right)_{i \ge 0}$ is bounded from
below by $\sum_{n = 0}^d (-1)^n \cdot b_n^{(2)}(\overline{X};\caln(G))$. Hence its limit exists and satisfies
\[
\lim_{i \to \infty} \frac{\chi^{\trun}_d(X[i])} {[G:G_i]} \ge \sum_{n = 0}^d (-1)^n \cdot
b_n^{(2)}(\overline{X};\caln(G)),
\]
if $d$ is even.  Provided that $d$ is odd, one analogously shows that the sequence
$\left(\frac{\chi^{\trun}_d(X[i])} {[G:G_i]}\right)_{i \ge 0}$ is monotone increasing,
bounded from above by $\sum_{n = 0}^d (-1)^n \cdot b_n^{(2)}(\overline{X};\caln(G))$ and
hence converges with
\[
\lim_{i \to \infty} \frac{\chi^{\trun}_d(X[i])} {[G:G_i]} \le \sum_{n = 0}^d (-1)^n \cdot
b_n^{(2)}(\overline{X};\caln(G)).
\]
This leads to

\begin{question}[Asymptotic Morse equality]\label{que:asymptotic_Morse_equality_general}
  Let $\overline{X} \to X$ be a $G$-covering and let $d$ be a natural number such that
  $\calcw_d(X)$ is not empty.  When do we have
  \[
  \lim_{i \to \infty} \frac{\chi^{\trun}_d(X[i])} {[G:G_i]} = \sum_{n = 0}^d (-1)^n \cdot
  b_n^{(2)}(\overline{X};\caln(G))?
  \]
\end{question}

In this generality, the answer to Question~\ref{que:asymptotic_Morse_equality_general}
is not positive in general. For instance if $G$ is trivial and $d =1$, a positive answer to
Question~\ref{que:asymptotic_Morse_equality_general} would mean 
for a connected $CW$-complex $X$ with non-empty
$\calcw_1(X)$ that $\pi_1(X)$ is finitely generated and satisfies $d(\pi_1(X)) = b_1(X)$
which is not true in general.

Of particular interest is the case where $\overline{X}$ is contractible, or,
equivalently, $X = BG$ and $\overline{X} = EG$.  Since $\chi^{\trun}_d(X)$ depends only on the homotopy type of
$X$, we will abbreviate $\chi^{\trun}_d(G_i) := \chi^{\trun}_d(BG_i)$, provided that
$\calcw_d(BG)$ for some (and hence all) model for $BG$ is not empty. Then
Question~\ref{que:asymptotic_Morse_equality_general} reduces to
the following question, for which we do not know an example where the answer is negative.

\begin{question}[Asymptotic Morse equality for groups]
\label{que:asymptotic_Morse_equality}
  Let $G$ be a group  and let $d$ be a natural number such that $\calcw_d(BG)$ is not empty.
 When do we have
  \[
  \lim_{i \to \infty} \frac{\chi^{\trun}_d(G_i)} {[G:G_i]} = \sum_{n = 0}^d (-1)^n \cdot
  b_n^{(2)}(G)?
  \]
\end{question}

\begin{example}[Morse relation in degree $d = 1,2$]
  \label{exa:Morse_relation_in_degree_d_is_1,2}
  Question~\ref{que:asymptotic_Morse_equality} is in the case $d = 1$ precisely
  Question~\ref{que:Rank_gradient_cost_first_L2_Betti_number_and_approximation}, since a
  group $H$ is finitely generated if and only if there is a model for $BH$ with finite $1$-skeleton
  and in this case $\chi^{\trun}_1(H) = 1 - d(H)$.

  In the case $d =2$ Question~\ref{que:asymptotic_Morse_equality} can be rephrased as the
  question, for a finitely presented group $G$, when  do we have
  \[
  \lim_{i \to \infty} \frac{1 - {\rm def}(G_i)}{[G:G_i]} = b_2^{(2)}(G) - b_1^{(2)}(G) + b_0^{(2)}(G),
  \]
  where ${\rm def}(H)$ denotes for a finitely presented group $H$ its \emph{deficiency},
  i.e, the maximum over the numbers $g-r$ for all finite presentations $H = \langle s_1,
  s_2, \ldots , s_g \mid R_1, R_2, \ldots, R_r\rangle$.
\end{example}

\begin{remark}[Asymptotic Morse inequalities imply Approximation for Betti numbers over any field]
\label{rem:Asymptotic_Morse_inequalities_imply_Approximation_for_Betti_numbers_over_any_field}
 Let $G$ be a group with a finite model for $BG$. It is not hard to show
that Conjecture~\ref{con:Approximation_in_zero_and_prime_characteristic} is true for $G$,
provided that  the answer to Question~\ref{que:asymptotic_Morse_equality} is positive for all $d \ge 0$.
The main idea of the proof is to show for every field  $F$ and  $CW$-complex $Z$ with non-empty $\calcw_d(Z)$

\[
\chi_d^{\trun}(Z) \; \begin{cases} \ge \; \sum_{n = 0}^d (-1)^n \cdot b_n(Z;F) &
  \text{if} \; d \; \text{is even};
  \\
  \le \; \sum_{n = 0}^d (-1)^n \cdot b_n(Z;F) & \text{if} \; d \; \text{is odd},
\end{cases}
\]
and then  show in the situation of  Conjecture~\ref{con:Approximation_in_zero_and_prime_characteristic} 
for $n = 0,1,2, \ldots$ by induction using Theorem~\ref{the:approx_Betti_char_zero} the equality
\[
\lim_{i \to \infty} \frac{b_n(X[i];F)}{[G:G_i]} = b_n^{(2)}(\widetilde{X}).
\]
More generally,  Conjecture~\ref{con:Growth_of_number_of_generators_of_the_homology} is true for $G$,
provided that  the answer to Question~\ref{que:asymptotic_Morse_equality} is positive for all $d \ge 0$, since for 
$Y \in \calcw_d(Z)$ we get $b_d(Y_d;\IQ) = d(H_d(Y_d;\IZ)) \ge  d(H_d(Y;\IZ)) =  d(H_d(Z;\IZ))$.
\end{remark}


\subsection{Groups with slow growth}
\label{subsec:groups_with_slow_growth}

The answer to Question~\ref{que:asymptotic_Morse_equality}
is positive by Bridson-Kochlukova~\cite[Theorem~A and Corollary~C] {Bridson-Kochlukova(2013)}
if $G$ is a limit group.  Their proofs are based on various notions  of groups with slow growth.
It is interesting that limit groups may have non-trivial first $L^2$-Betti numbers.

Here is a another  case, where the answer to
Question~\ref{que:asymptotic_Morse_equality} is positive.  
Following Bridson-Kochlukova~\cite[page~4] {Bridson-Kochlukova(2013)} we make

\begin{definition}[Slow growth in dimensions $\le d$]
  \label{def:Slow_growth_in_dimensions_le_d}
  We say that a residually finite group has \emph{slow growth in dimension $\le d$} if for
  any chain $(G_i)_{i \ge 0}$ of normal subgroups of finite index with trivial
  intersection there is a choice of $CW$-complexes $(X[i])_{i \ge 0} $ such that $X[i]$ has a
  finite $d$-skeleton and is a model for $BG_i$ for each $i \ge 0$, and 
  $\lim_{i \to \infty} \frac{c_k(X[i])}{[G:G_i]} = 0$ holds for every $k = 0,1,2 \ldots, d$, where
  $c_k(X[i])$ is the number of $k$-cells in $X[i]$.
\end{definition}

\begin{lemma} \label{lem:consequences_of_slow_growth}
Suppose that $G$ has \emph{slow growth in dimension $\le d$}.
Then we get for $k = 0,1,2, \ldots ,d$
\begin{eqnarray*}
\lim_{i \to \infty} \frac{\chi^{\trun}_k(G_i)}{[G:G_i]} & = & 0,
\\
b_k^{(2)}(G) & = & 0.
\end{eqnarray*}
\end{lemma}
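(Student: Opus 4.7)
The plan is to combine three ingredients: the cell-efficient models for $BG_i$ guaranteed by slow growth, the general upper/lower bound on $\chi^{\trun}_k$ in terms of the alternating sum of $L^2$-Betti numbers that was derived just before Question~\ref{que:asymptotic_Morse_equality_general}, and the multiplicativity of $L^2$-Betti numbers under finite coverings. The argument then closes by induction on $k$ via a squeeze.

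First I would fix a chain $(G_i)_{i \ge 0}$ and invoke the slow growth hypothesis to choose models $X[i]$ for $BG_i$ with finite $d$-skeleton such that $\lim_{i \to \infty} c_n(X[i])/[G:G_i] = 0$ for all $n = 0,1,\ldots,d$. Since $|\chi((X[i])_k)| \le \sum_{n=0}^k c_n(X[i])$, this yields $\chi((X[i])_k)/[G:G_i] \to 0$ for every $k \le d$. Because $X[i]$ lies in $\calcw_k(BG_i)$, the definition of $\chi^{\trun}_k$ as a minimum (even $k$) or a maximum (odd $k$) gives $\chi^{\trun}_k(G_i) \le \chi((X[i])_k)$ for even $k$ and $\chi^{\trun}_k(G_i) \ge \chi((X[i])_k)$ for odd $k$. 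Dividing by $[G:G_i]$ and passing to the limit, I obtain $\limsup_i \chi^{\trun}_k(G_i)/[G:G_i] \le 0$ for even $k$ and $\liminf_i \chi^{\trun}_k(G_i)/[G:G_i] \ge 0$ for odd $k$.

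On the other hand, applying the general inequality derived just before Question~\ref{que:asymptotic_Morse_equality_general} to the universal covering $EG_i \to BG_i$ and using the multiplicativity $b_n^{(2)}(G_i) = [G:G_i] \cdot b_n^{(2)}(G)$ under finite coverings, I obtain
\[
\frac{\chi^{\trun}_k(G_i)}{[G:G_i]} \ge \sum_{n=0}^k (-1)^n b_n^{(2)}(G) \qquad (k \text{ even})
\]
and the reverse inequality for $k$ odd. These bounds point in the opposite direction to the ones coming from slow growth, producing the desired sandwich.

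Finally I would combine the two sandwiching bounds by induction on $k$ from $0$ up to $d$. At $k = 0$ they read $b_0^{(2)}(G) \le \liminf \chi^{\trun}_0(G_i)/[G:G_i]$ and $\limsup \chi^{\trun}_0(G_i)/[G:G_i] \le 0$, which together with non-negativity of $b_0^{(2)}(G)$ forces $b_0^{(2)}(G) = 0$ and then $\lim \chi^{\trun}_0(G_i)/[G:G_i] = 0$. In the inductive step, assuming $b_n^{(2)}(G) = 0$ for all $n < k$ collapses the alternating $L^2$-sum to $(-1)^k b_k^{(2)}(G)$; the two bounds then sandwich $b_k^{(2)}(G)$ between $0$ and itself (with the correct sign depending on the parity of $k$), and non-negativity of $L^2$-Betti numbers forces $b_k^{(2)}(G) = 0$ and the limit of the truncated Euler characteristic to vanish. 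I do not anticipate a serious obstacle; the one point requiring care is that the general inequality must be applied to the covering $EG_i \to BG_i$ rather than to the original $EG \to BG$, but this is exactly compensated by the multiplicativity factor $[G:G_i]$ that matches the normalization $1/[G:G_i]$ in the statement.
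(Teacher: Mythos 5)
Your argument is correct, but it takes a genuinely different route from the paper's. The paper first proves $b_k^{(2)}(G) = 0$ directly by combining the inequality $b_n(G_i;\IQ) \le c_n(X[i])$ with L\"uck's approximation theorem (Theorem~\ref{the:approx_Betti_char_zero}), and only afterwards runs the squeeze for $\chi^{\trun}_k$, where the lower bound $\sum_{n=0}^k (-1)^n b_n^{(2)}(G)$ is already known to be zero. You instead avoid invoking the approximation theorem at all: you play the upper bound coming from the small cell counts against the lower bound $\sum_{n=0}^k (-1)^n b_n^{(2)}(G)$, obtained by applying the $\chi^{\trun}$-inequality derived before Question~\ref{que:asymptotic_Morse_equality_general} to $EG_i \to BG_i$ and using multiplicativity $b_n^{(2)}(G_i) = [G:G_i]\cdot b_n^{(2)}(G)$ for finite-index subgroups, and then an induction on $k$ together with non-negativity of $L^2$-Betti numbers collapses the alternating sum to $(-1)^k b_k^{(2)}(G)$ and forces both vanishing statements simultaneously. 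Your version is a bit more self-contained, relying only on material from this section rather than outsourcing the $L^2$-Betti number computation to Theorem~\ref{the:approx_Betti_char_zero}; the paper's version is shorter once that black box is accepted. The one step you correctly flag as requiring care --- applying the inequality to $EG_i \to BG_i$ rather than $EG \to BG$ and compensating via multiplicativity --- is indeed exactly what makes the normalizations $1/[G:G_i]$ match.
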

\begin{proof}
By assumption there is a choice of $CW$-complexes $(X[i])_{i \ge 0} $ such that $X[i]$ has a
  finite $d$-skeleton and is a model for $BG_i$ for each $i \ge 0$, and 
  $\lim_{i \to    \infty} \frac{c_n(X[i])}{[G:G_i]} = 0$ holds for every $n = 0,1,2 \ldots, d$.
Since $b_n(G_i;\IQ) \le c_n(X[i])$ holds, we conclude
$\lim_{i \to    \infty} \frac{b_n(G_i;\IQ)}{[G:G_i]} = 0$ for every $n = 0,1,2 \ldots, d$.
Theorem~\ref{the:approx_Betti_char_zero} implies
\[
b_k^{(2)}(G) = 0 \quad \text{for}\; k = 0,1,2 \ldots, d.
\]
If $k \in \{0,1,2, \ldots, d\} $ is even, we get 
\begin{eqnarray*}
0  
& = & 
\sum_{n = 0}^k (-1)^n \cdot b_n^{(2)}(G) 
\\
& \le &
\lim_{i \to \infty} \frac{\chi_k^{\trun}(G_i)}{[G:G_i]}
\\
& \le &
\lim_{i \to \infty} \frac{\chi\bigl((X[i])_k\bigr)}{[G:G_i]}
\\
& = &
\lim_{i \to \infty} \sum_{n = 0}^k \frac{c_n(X[i])}{[G:G_i]}
\\
& = &
 \sum_{n = 0}^k  \lim_{i \to \infty}\frac{c_n(X[i])}{[G:G_i]}
\\
& = &
 0,
\end{eqnarray*}
and hence 
\[
\lim_{i \to \infty} \frac{\chi^{\trun}_k(G_i)}{[G:G_i]}  =  0.
\]
The proof in the case where $k$ is odd is analogous.
\end{proof}

\begin{lemma}
\label{lem:slow_growth_and_extensions}
Let $1 \to K \xrightarrow{j} G \xrightarrow{q} Q \to 1$ be an extension of groups.
Suppose that $K$ has slow growth in dimensions $\le d$. Suppose that there is a model for $BQ$ with
finite $d$-skeleton or that there is a model for $BG$ with finite $d$-skeleton.  

Then $G$ has slow growth in dimensions $\le d$.
\end{lemma}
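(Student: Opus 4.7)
The plan is to leverage the slow growth of $K$ through a fibered construction modeled on the pulled-back extensions $1 \to K_i \to G_i \to Q_i \to 1$. Given a chain $(G_i)_{i\ge 0}$ of normal finite-index subgroups of $G$ with trivial intersection, I set $K_i := K \cap G_i$ and $Q_i := q(G_i) = G_i K/K$. Then $(K_i)_{i \ge 0}$ is a chain of normal finite-index subgroups of $K$ with trivial intersection, $(Q_i)_{i \ge 0}$ is a chain of normal finite-index subgroups of $Q$, the canonical isomorphism $G_i/K_i \cong Q_i$ gives the short exact sequence $1 \to K_i \to G_i \to Q_i \to 1$, and the indices satisfy $[G:G_i] = [Q:Q_i] \cdot [K:K_i]$.

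The first step is to reduce both hypotheses to the common conclusion that there exists a model $Z$ for $BQ$ with finite $d$-skeleton. In the first case this is given. In the second case, the slow growth of $K$ applied to the chain consisting solely of $K$ shows that $BK$ has a finite $d$-skeleton, so $K$ is of type $F_d$ and in particular of type $F_{d-1}$; combined with the assumption that $BG$ has a finite $d$-skeleton (so $G$ is of type $F_d$), the finiteness-transfer theorem of Bieri (in a short exact sequence $1 \to K \to G \to Q \to 1$, $K$ of type $F_{d-1}$ and $G$ of type $F_d$ imply $Q$ of type $F_d$) produces the required $Z$.

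Fix such a $Z$. Let $W[i] \to Z$ be the $[Q:Q_i]$-sheeted covering associated with $Q_i$; then $W[i]$ is a model for $BQ_i$ with $c_p(W[i]) = [Q:Q_i] \cdot c_p(Z)$ for $p \le d$. By slow growth of $K$ applied to $(K_i)_{i \ge 0}$, I choose $CW$-models $Y[i]$ for $BK_i$ with finite $d$-skeleton such that $c_q(Y[i])/[K:K_i] \to 0$ for $0 \le q \le d$. The extension $1 \to K_i \to G_i \to Q_i \to 1$ is classified by a fiber bundle with base $BQ_i$ and fiber $BK_i$; I realize it via a Borel-type construction as a $CW$-complex $X[i]$ with underlying fibration $Y[i] \to X[i] \to W[i]$. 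Equipping the fibration with the product cellular structure over each open cell of the base gives a model for $BG_i$ whose $d$-skeleton is finite and satisfies
\[
c_k(X[i]) \le \sum_{p+q=k} c_p(W[i]) \cdot c_q(Y[i]) \qquad \text{for } 0 \le k \le d.
\]
Dividing by $[G:G_i] = [Q:Q_i] \cdot [K:K_i]$ yields
\[
\frac{c_k(X[i])}{[G:G_i]} \le \sum_{p+q=k} c_p(Z) \cdot \frac{c_q(Y[i])}{[K:K_i]} \to 0 \quad \text{as } i \to \infty,
\]
since each $c_p(Z)$ is a fixed constant while $c_q(Y[i])/[K:K_i] \to 0$. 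This establishes slow growth of $G$ in dimensions $\le d$.

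The main obstacle is the reduction in the second case, which relies on Bieri's finiteness-transfer theorem to bring both hypotheses under a single umbrella; a secondary technical point is realizing the extension by an honest fiber bundle of $CW$-complexes so that the multiplicativity of cell counts holds, which is a standard but care-demanding cellular approximation of the classifying data.
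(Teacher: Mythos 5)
Your argument reproduces the paper's proof almost exactly: both reduce to the case where $BQ$ has a model with finite $d$-skeleton, pull the chain back to $(K_i)$ and $(Q_i)$, take the $[Q:Q_i]$-fold covers $Z_i$ of a fixed model $Z$ for $BQ$, and use a fibered $CW$-model for $BG_i$ with multiplicative cell counts so that dividing by $[G:G_i]=[K:K_i]\cdot[Q:Q_i]$ leaves the vanishing ratios coming from the $K$-side; the paper cites an earlier lemma for the initial reduction where you supply an explicit Bieri-type finiteness-transfer argument, which is equivalent. One small slip worth fixing: the ``chain consisting solely of $K$'' does not have trivial intersection, so to see that $BK$ has a finite $d$-skeleton you should instead apply the slow-growth hypothesis to any admissible chain $(K_i)_{i\ge 0}$ of normal finite-index subgroups with trivial intersection (such a chain exists since $K$ is residually finite) and read off the resulting model for $BK_0=BK$.
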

\begin{proof} 
If $BG$ has a model with finite $d$-skeleton, then also $BQ$ has a model with finite $d$-skeleton 
by~\cite[Lemma~7.2~(2)]{Lueck(1997a)}. Hence it suffices to treat the case 
where   $BQ$ has a model with finite $d$-skeleton.

Consider any chain $(G_i)_{i \ge 0}$ of normal subgroups of finite index with trivial intersection.
Put $K_i = j^{-1}(G_i)$ and $Q_i = q(G_i)$. 
We obtain an exact sequence of groups $1 \to K_i \xrightarrow{j_i} G_i \xrightarrow{q_i} Q_i \to 1$,
where $j_i$ and $q_i$ are obtained from $j$ and $q$ by restriction. The 
subgroups $K_i \subseteq K$, $G_i \subseteq G$ and $Q_i \subseteq Q$
are normal subgroups of finite index and $[G:G_i] = [K : K_i] \cdot [Q:Q_i]$. 
We have $\bigcap_{i \ge 0} K_i = \{1\}$. By assumption 
there is a choice of $CW$-complexes $(X[i])_{i \ge 0} $ such that $X[i]$ has a
 finite $d$-skeleton and is a model for $BK_i$ for each $i \ge 0$, and 
$\lim_{i \to  \infty} \frac{c_m(X[i])}{[K:K_i]} = 0$ holds for every $m = 0,1,2 \ldots, d$, where
$c_m(X[i])$ is the number of $m$-cells in $X[i]$.
Choose a $CW$-model $Z$ for $BQ$ with finite $d$-skeleton. 
Let $Z_i \to BQ$ be the $[Q:Q_i]$-sheeted finite covering associated to $Q_i \subseteq Q$.
Equip $Z_i$ with the $CW$-structure induced by the one of $Z$. Then $Z_i$ is a model for $BQ_i$,
has a finite $d$-skeleton, and we get for the number of $n$-cells for $n \in \{0,1,2, \ldots ,d\}$ that
\begin{eqnarray*}
c_n(Z_i) 
& = &
[Q:Q_i] \cdot c_n(Z).
\end{eqnarray*}
There is a fibration $X[i] \to BG_i \to Z_i$ such that after taking fundamental groups we
obtain the exact sequence $1 \to K_i \xrightarrow{j_i} G_i \xrightarrow{q_i} Q_i \to
1$. Then one can find a $CW$-complex $Y_i$ which is a model for $BG_i$ such that we get
for the number of $k$-cells for $k \in \{0,1,2, \ldots ,d\}$
\begin{eqnarray*}
c_k(Y_i) 
& = & 
\sum_{m +n = k} c_m(X[i]) \cdot c_n(Z_i),
\end{eqnarray*}
see for instance~\cite[Section~3]{Farrell-Lueck-Steimle(2010)}. This implies for
$k \in \{0,1,2, \ldots ,d\}$
\begin{eqnarray*}
\lim_{i \to \infty} \frac{c_k(Y_i)}{[G:G_i]} 
&  = & 
\lim_{i \to \infty}  \sum_{m +n = k} \frac{c_m(X[i]) \cdot c_n(Z_i)}{[G:G_i]}
\\
&  = & 
\lim_{i \to \infty}   \sum_{m +n = k} \frac{c_m(X[i])}{[K:K_i]}  \cdot \frac{c_n(Z_i)}{[Q:Q_i]}
\\
&  = & 
\sum_{m +n = k} c_n(Z) \cdot \lim_{i \to \infty}   \frac{c_m(X[i])}{[K:K_i]}  
\\
& = 0.
\end{eqnarray*}
\end{proof}

\begin{example}[Examples of groups with slow growth in dimensions $\le d$]
  \label{exa:Examples_of_groups_with_slow_growth_in_dimensions_le_d}
  A residually finite group has slow growth in dimensions $\le 0$ if and only if it is infinite.
  
  Obviously $\IZ$ has slow growth in dimensions $\le d$ for all natural numbers $d$ since
  any non-trivial subgroup $K$ of $\IZ$ is isomorphic to $\IZ$ again and has $S^1$ as
  model for $BK$. 

  We conclude from Lemma~\ref{lem:slow_growth_and_extensions} that
  any infinite virtually poly-$\IZ$-group has slow growth in dimensions $\le d$. 

  Moreover,  if $G$ is any residually finite 
  group which possesses  a finite sequence $K_0 \subseteq K_1 \subseteq \cdots \subseteq K_n = G$ of subgroups
  such that $K_0 \cong \IZ$, $K_i$ is normal in $K_{i+1}$ and $B(K_{i+1}/K_i)$ has a model with finite $d$-skeleton for $i = 0, \ldots, (n-1)$, 
  then $G$ has slow growth in  dimensions $\le d$ by Lemma~\ref{lem:slow_growth_and_extensions}.
\end{example}


\typeout{------------------------ Section 5: Speed of convergence --------------------}

\section{Speed of convergence}
\label{sec:Speed_of_convergence}

  The speed of convergence of 
  \[
   \lim_{i \to \infty} \frac{b_n(X[i];F)}{[G:G_i]} = b_n^{(2)}(\widetilde{X})
   \] 
    (if it converges) and  of
   \[
   \lim_{i \to \infty} \frac{d(G_i) -1)}{[G:G_i]} = \RG(G;(G_i)_{i \ge 0})
   \] 
  can be arbitrarily slow for one chain
  and very fast for another chain in a sense that we shall now explain. Fix a prime $p$ and two functions
   $F^s,F^f \colon \{i \in \IZ    \mid i \ge 1\} \to (0,\infty)$  such that 
   \begin{eqnarray*}
   \lim_{i \to \infty} F^s(i) & = & 0;
   \\
    \lim_{i \to \infty} F^f(i) & = & 0;
    \\
    \lim_{i \to \infty} i \cdot F^f (i) & = & \infty.
  \end{eqnarray*}


\subsection{Betti numbers}
\label{subsec:Betti_numbers}

  \begin{theorem}\label{the:L2-Betti_numbers_speed_of_convergence}
  For every integer $n \ge 1$, there is a closed  $(2n+1)$-dimensional 
Riemannian manifold $X$ with non-positive sectional curvature
  and two chains  $(G_i^s)_{i \ge 0}$ and $(G_i^f)_{i \ge 0}$ for $G = \pi_1(X)$ such that $G_i^s$ and
  $G_i^f$ are normal subgroups of $G$ of finite $p$-power index, the
  intersections $\bigcap_{i \ge 0} G_i^s$ and $\bigcap_{i \ge 0} G_i^f$ are trivial, and
  we have for every field $F$
  \begin{eqnarray*}
  \lim_{i \to \infty} \frac{b_n(X^s[i];F)}{[G:G^s_i]} & = & b_n^{(2)}(\widetilde{X}) = 0;
  \\
  \frac{b_n(X^s[i];F)}{[G^s:G^s_i]} & \ge & F^s([G:G^s_i]);
   \\
   \lim_{i \to \infty}  \frac{b_n(X^f[i];F)}{[G:G^f_i]} & = & b_n^{(2)}(\widetilde{X}) = 0;
  \\
   \frac{b_n(X^f[i];F)}{[G:G^f_i]} & \le & F^f([G:G^f_i]).
 \end{eqnarray*}
\end{theorem}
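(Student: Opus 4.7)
The plan is to take $X := Y \times S^1$ with the product Riemannian metric, where $Y$ is a closed orientable surface of genus $g \ge 2$ if $n = 1$, and a closed hyperbolic $2n$-manifold (obtained by standard arithmetic constructions, e.g.\ from quadratic forms) if $n \ge 2$. Then $X$ is a closed $(2n+1)$-dimensional Riemannian manifold of non-positive sectional curvature, and $G := \pi_1(X) = \Gamma \times \IZ$ for $\Gamma := \pi_1(Y)$. Moreover $\Gamma$ is residually $p$-finite (classical for surfaces, and for hyperbolic manifolds it follows from linearity via principal congruence subgroups). Set $c_0 := b_n^{(2)}(\widetilde{Y})$. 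Then $c_0 > 0$: for surfaces $c_0 = 2g-2$, and for hyperbolic $2n$-manifolds $c_0 = (-1)^n \chi(Y) > 0$ by Gauss--Bonnet--Chern. The Künneth formula for $L^2$-Betti numbers together with the vanishing $b_q^{(2)}(\widetilde{S^1};\caln(\IZ)) = 0$ for all $q$ (since $\IZ$ is infinite amenable) yields $b_n^{(2)}(\widetilde{X}) = 0$; alternatively this follows from Cheeger--Gromov applied to the normal infinite amenable subgroup $\IZ \subseteq G$ with $X$ aspherical.

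For the fast chain, set $G_i^f := \Gamma \times p^{N(i)}\IZ$ where $N(i)$ is a non-decreasing sequence of non-negative integers tending to $\infty$. Then $[G:G_i^f] = p^{N(i)}$ and by Künneth $b_n(X^f[i];F) = b_n(Y;F) + b_{n-1}(Y;F) \le C$ for some finite constant $C$ depending on $Y$ alone but independent of $F$, since the mod-$p$ Betti numbers of the fixed finite $CW$-complex $Y$ are uniformly bounded in terms of the integral homology $H_*(Y;\IZ)$. Hence the ratio is at most $C/p^{N(i)}$. The hypothesis $i \cdot F^f(i) \to \infty$ implies $F^f(m) \ge C/m$ for all sufficiently large $m$, so choosing $N(i)$ to grow fast enough ensures $C/p^{N(i)} \le F^f(p^{N(i)})$ for all $i$ sufficiently large, while trivial intersection and the chain property are immediate.

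For the slow chain, fix an auxiliary descending chain $(H_j)_{j \ge 0}$ of normal subgroups of $\Gamma$ of $p$-power index with $H_0 = \Gamma$ and $\bigcap_j H_j = \{1\}$; put $N_j := [\Gamma:H_j]$ and $Y_j := H_j \backslash \widetilde{Y}$. Combining Theorem~\ref{the:approx_Betti_char_zero} with the universal coefficient inequality $b_n(-;F) \ge b_n(-;\IQ)$ valid for any field $F$, I obtain $j_0$ such that $b_n(Y_j;F) \ge (c_0/2) N_j$ for every field $F$ and all $j \ge j_0$. Now define $G_i^s := H_{j(i)} \times p^i\IZ$, where $j(0) = 0$ and $j(i)$ for $i \ge 1$ is chosen inductively so that $j(i) > j(i-1)$, $j(i) \ge j_0$, and $F^s(N_{j(i)} \cdot p^i) \le c_0/(2p^i)$; this is possible because $N_j \to \infty$ as $j \to \infty$ and $F^s(m) \to 0$ as $m \to \infty$. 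By Künneth,
\[
\frac{b_n(X^s[i];F)}{[G:G_i^s]} \ge \frac{b_n(Y_{j(i)};F)}{N_{j(i)} \cdot p^i} \ge \frac{(c_0/2) N_{j(i)}}{N_{j(i)} \cdot p^i} = \frac{c_0}{2p^i} \ge F^s\bigl([G:G_i^s]\bigr),
\]
and the same Künneth identity with the trivial upper bound $b_n(Y_j;F) \le c_n(Y) \cdot N_j$ via cell counts shows that the ratio is at most $(c_n(Y) + c_{n-1}(Y))/p^i \to 0$.

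The main technical point is securing uniformity in $F$: the lower bound in the slow chain must hold for every field simultaneously, and this is handled by the universal coefficient inequality which reduces the problem to the rational case covered by Theorem~\ref{the:approx_Betti_char_zero}; the upper bound in the fast chain is likewise uniform in $F$ because the mod-$p$ Betti numbers of the fixed manifold $Y$ are uniformly bounded by integral-homology data. The remaining difficulty is a routine inductive choice of $j(i)$ exploiting the two degrees of freedom (growth of $N_{j(i)}$ and the fixed factor $p^i$), where the $S^1$-factor contributes only to the denominator while the $Y$-factor supplies linearly growing Betti numbers.
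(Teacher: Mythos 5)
Your slow chain $(G_i^s)_{i \ge 0}$ is correct and mirrors the paper's construction: pick a $p$-tower $(H_j)_{j\ge 0}$ of $\pi_1(Y)$ with trivial intersection, use the inequality $b_n(Y_j;F) \ge b_n(Y_j;\IQ)$ together with Theorem~\ref{the:approx_Betti_char_zero} for the lower bound and a cell count for the upper bound, and then thread a sparse subsequence of that tower against $p^i\IZ$. The fast chain, however, does not satisfy the hypotheses the theorem asserts. With $G_i^f := \Gamma \times p^{N(i)}\IZ$ one has
\[
\bigcap_{i \ge 0} G_i^f \;=\; \Gamma \times \bigcap_{i \ge 0} p^{N(i)}\IZ \;=\; \Gamma \times \{0\},
\]
which is infinite because $\Gamma = \pi_1(Y)$ is, so the intersection is not trivial. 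The theorem requires $\bigcap_{i\ge 0} G_i^f = \{1\}$, and your chain simply fails this; you assert the trivial intersection is ``immediate'' but it is false. You froze the $\Gamma$-factor to make $b_n(X^f[i];F) = b_n(Y;F) + b_{n-1}(Y;F)$ independent of $i$, and that choice is exactly what destroys the intersection condition. It is also a warning sign that your argument never truly uses the hypothesis $\lim_{m\to\infty} m\cdot F^f(m) = \infty$ beyond the near-trivial consequence that $F^f(m) \ge C/m$ eventually.

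The repair is the paper's fast chain: $G_i^f = H_{j_i} \times p^{n_i^f}\IZ$ with $(H_{j_i})$ a strictly descending subsequence of the $\pi_1(Y)$-tower, so both factors shrink to the identity. The cell-count bound you already deployed for the slow chain, $b_n(Y_j;F) + b_{n-1}(Y_j;F) \le (c_n(Y)+c_{n-1}(Y)) N_j$, then gives
\[
\frac{b_n(X^f[i];F)}{[G:G_i^f]} \;\le\; \frac{c_n(Y)+c_{n-1}(Y)}{p^{n_i^f}},
\]
and the condition $\lim_{m\to\infty} m \cdot F^f(m) = \infty$ (hence $\lim_{M\to\infty} p^M \cdot F^f(N_{j_i} p^M) = \infty$ for each fixed $i$) lets you choose $n_i^f$ depending on $j_i$ so that the right-hand side is at most $F^f([G:G_i^f])$. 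This is where the hypothesis on $F^f$ is actually consumed; without the growing $\Gamma$-factor there is nothing for it to fight against.

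A side remark: taking $Y$ to be a closed hyperbolic $2n$-manifold for $n \ge 2$ is a valid alternative to the paper's direct product of $n$ surfaces of genus $\ge 2$, but it costs more: you then need Dodziuk's concentration of $L^2$-cohomology in the middle dimension for hyperbolic manifolds to conclude $b_n^{(2)}(\widetilde{Y}) = (-1)^n\chi(Y)$, plus residual $p$-finiteness via linearity and congruence subgroups. The product-of-surfaces choice obtains all of this from the L\"uck--K\"unneth formula and Baumslag's residual freeness with much less machinery, and it buys nothing here to do otherwise.
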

\begin{proof}
Consider any finite connected $CW$-complex $Y$ with universal
covering $\widetilde{Y} \to Y$ and $b_n^{(2)}(\widetilde{Y}) +
b_{n-1}^{(2)}(\widetilde{Y}) > 0$ such that $K = \pi_1(Y)$ is infinite and residually
$p$-finite. Choose any chain $(K_i)_{i \ge 0}$ of normal subgroups of $K$ of finite index
$[K : K_i]$ which is a power of $p$ and with trivial intersection $\bigcap_{i \ge 0} K_i =
\{1\}$. Because of Theorem~\ref{the:approx_Betti_char_zero} 
and~\cite[Theorem~1.6]{Bergeron-Linnell-Lueck-Sauer(2014)} the limit 
$\frac{b_n(Y[i];F) + b_{n-1}(Y[i];F)}{[K:K_i]}$ exists and is greater than $0$.
Hence there exist real numbers $C_1$
and $C_2$ (independent of $i$) with $0 < C_1 \le C_2$ such that for each $i \ge 1$
  \[
  C_1 \le  \frac{b_n(Y[i];F) + b_{n-1}(Y[i];F)}{[K:K_i]} 
\le C_2.
  \]
  Let $k_i$ be the natural number for which $[K:K_i] = p^{k_i}$ holds. Then $(k_i)_{i \ge 0}$ is a monotone increasing 
   sequence of natural numbers with $\lim_{i \to \infty} k_i   = \infty$. 
  Since $\lim_{k \to \infty} F^s\bigl(p^k \cdot p^m\bigr) = 0$  holds for any integer $m \ge 0$, 
  we can construct a strictly monotone increasing sequence of natural numbers 
  $(j_i)_{i \ge 0}$ such that we get for all $i \ge 0$
  \begin{eqnarray*}
   F^s\bigl(p^{k_{j_i}} \cdot p^i\bigr) & \le &  C_1 \cdot  p^{-i}.
 \end{eqnarray*}
 Since $\lim_{n \to \infty} p^n \cdot F^f\bigl(p^k \cdot p^n\bigr) = \infty$ for any natural number $k$,
 we can construct a strictly monotone increasing sequence of natural numbers $(n_i^f)_{i \ge 0}$
 satisfying
  \begin{eqnarray*}
  C_2 \cdot p^{-n_i^f} & \le &  F^f\bigl(p^{k_{j_i}} \cdot p^{n_i^f}\bigr).
 \end{eqnarray*}
   Put
   \[ 
   X = Y \times S^1.
   \]
   Then $G = \pi_1(X)$ can be identified with $K \times \IZ$. We conclude $b_n^{(2)}(\widetilde{X}) = 0$
   from~\cite[Theorem~1.39 on page~42]{Lueck(2002)}.  Put
  \begin{eqnarray*}
   G_i^s & = &  K_{j_i} \times (p^i \cdot \IZ);
   \\
   G_i^f & = &  K_{j_i} \times (p^{n_i^f} \cdot \IZ).
 \end{eqnarray*}
  Then $(G_i^s)_{i \ge 0}$ is a chain
  of normal subgroups of $G$ with finite index $[G:G_i^s] = [K:K_{j_i}] \cdot p^i$ which is a $p$-power, namely $p^{k_{j_i} + i}$, and trivial
  intersection $\bigcap_{i \ge 0} G_i^s = \{1\}$, and analogously for  $(G_i^f)_{i \ge 0}$.  We estimate
  using the K\"unneth formula:
    \begin{eqnarray*}
    \frac{b_n(X^s[i];F)}{[G:G_i^s]}
    & = & 
    \frac{b_n(Y[j_i];F) + b_{n-1}(Y[j_i];F)}{[K:K_{j_i}] \cdot p^{i}}
     \\
     & \ge &
     C_1 \cdot p^{-i}
     \\
     & \ge &
     F^s\bigl(p^{k_{j_i}} \cdot p^{i}\bigr)
     \\
     & = & 
     F^s([G:G_i^s]),
  \end{eqnarray*}
  and 
 \[
\frac{b_n(X^s[i];F)}{[G:G_i^s]} =     \frac{b_n(Y[j_i];F) + b_{n-1}(Y[j_i];F)}{[K:K_{j_i}] \cdot p^{i}}     \le      C_2 \cdot p^{-i}.
 \]    
    The latter implies $\lim_{i \to \infty}   \frac{b_n(X^s[i];F)}{[G:G_i^s]} = 0$.
    We estimate
    \begin{eqnarray*}
    \frac{b_n(X^f[i];F)}{[G:G_i^f]}
    & = & 
    \frac{b_n(Y[j_i];F) + b_{n-1}(Y[j_i];F)}{[K:K_{j_i}] \cdot p^{n_i^f}}
     \\
     & \le &
     C_2 \cdot p^{-n_i^f}
     \\
     & \le &
     F^f\bigl(p^{k_{j_i}} \cdot p^{n_i^f}\bigr)
     \\
     & = & 
     F^f([G:G_i^f]).
  \end{eqnarray*}
Since $\lim_{i \to \infty}  C_2 \cdot p^{-n_i^f} = 0$, we also get $\lim_{i \to \infty}   \frac{b_n(X^f[i];F)}{[G:G_i^f]} = 0$.

It remains to construct the desired finite $CW$-complex $Y$.  The fundamental group of an oriented closed surface 
of genus $\ge 2$ is residually free and hence residually $p$-finite for any prime $p$ by~\cite{Baumslag(1962)}.
The $L^2$-Betti numbers of its universal covering are all zero except in dimension $1$, where it is non-zero,
see~\cite[Example~1.36 on page~40]{Lueck(2002)}.
We conclude from  the K\"unneth  formula for $L^2$-Betti numbers~\cite[Theorem~6.54~(5) on page~266]{Lueck(2002)}
that an example for $Y$ is the  direct product of $n$ closed oriented surfaces of genus $\ge 2$. 
So we can arrange that $X$ is an aspherical closed
$(2n+1)$-dimensional Riemannian manifold with non-positive sectional curvature.
\end{proof}

Theorem~\ref{the:L2-Betti_numbers_speed_of_convergence} implies
that one can find  for any $\epsilon > 0$ two chains $(G_i^s)_{i \ge 0}$ and $(G_i^f)_{i \ge 0}$
 satisfying
\begin{eqnarray*}
\lim_{i \to \infty} \frac{b_n(X[i];F)}{{[G:G^s_i]}^{1-\epsilon}} & = & \infty;
\\
\lim_{i \to \infty} \frac{b_n(X[i];F)}{{[G:G^f_i]}^{\epsilon}} & = & 0,
\end{eqnarray*}
since we can take $F^s(i) = i^{-\epsilon/2}$ and $F^f(i) = i^{\epsilon/2 - 1}$.

The condition $\lim_{i \to \infty} i \cdot F^f(i) = \infty$ is reasonable. Namely, in
  most cases one would expect $\lim_{i \to \infty} b_n(X[i];F) = \infty$ and if this is
  true, we get
\[
\lim_{i \to \infty} [G:G_i] \cdot \frac{b_n(X[i];F)}{[G:G_i]} = \infty.
\]


\subsection{Rank gradient}
\label{subsec:rank_gradient}

\begin{theorem}\label{the:rank_gradient_speed_of_convergence}
 Let $G$ be the direct product of $\IZ$ with a finitely generated free group of rank $\ge 2$
 or the product of $\IZ$ with the fundamental group of a closed surface of genus $\ge 2$.

Then  there are  two chains
  $(G_i^s)_{i \ge 0}$ and $(G_i^f)_{i \ge 0}$  such that $G_i^s$ and
  $G_i^f$ are normal subgroups of $G$ of finite $p$-power index, the
  intersections $\bigcap_{i \ge 0} G_i^s$ and $\bigcap_{i \ge 0} G_i^f$ are trivial, and
  we have

  \begin{eqnarray*}
  \RG(G,(G_i^s)_{i\ge 0})   & = &  0;
  \\
  \frac{d(G_i^s) -1 }{[G:G_i^s]}   & \ge & F^s([G:G^s_i]);
  \\
 \RG(G,(G_i^f)_{i\ge 0}) & = &0;
  \\
   \frac{d(G_i^f) -1 }{[G:G_i^f]}  & \le & F^f([G:G^f_i]).
 \end{eqnarray*}
\end{theorem}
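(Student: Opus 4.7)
The plan is to adapt the construction from the proof of Theorem~\ref{the:L2-Betti_numbers_speed_of_convergence} to the rank-gradient setting. Write $G = K \times \IZ$, where $K$ is either a free group of rank $r \ge 2$ or the fundamental group of a closed orientable surface of genus $g \ge 2$. In both cases $K$ is finitely generated, infinite, and residually $p$-finite (free groups classically; surface groups by~\cite{Baumslag(1962)}). Because $\IZ$ is an infinite normal amenable subgroup of $G$, the vanishings $\RG(G,(G_i^s)_{i\ge 0}) = 0$ and $\RG(G,(G_i^f)_{i\ge 0}) = 0$ follow automatically from the Ab\'ert--Nikolov result recorded in Remark~\ref{rem:known_cases}, as soon as the chains are chosen to consist of normal finite-index subgroups with trivial intersection. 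Thus the only real task is to arrange the two inequalities on $d(G_i)$.

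Next I would fix a descending chain $(K_j)_{j \ge 0}$ of normal subgroups of $K$ of finite $p$-power index with trivial intersection. Every finite-index subgroup of $K$ is again free, respectively a surface group, so the Schreier index formula, respectively the Riemann--Hurwitz formula, yields an exact formula for $d(K_j)$ linear in $[K:K_j]$. In either case there exist positive constants $C_1 \le C_2$, independent of $j$, with $C_1 \le d(K_j)/[K:K_j] \le C_2$. For any $j,m \ge 0$ the subgroup $H_{j,m} := K_j \times p^m\IZ$ is normal in $G$ of $p$-power index $[K:K_j]\cdot p^m$; moreover $d(H_{j,m}) = d(K_j)+1$, the lower bound coming from the abelianization (which is free abelian of rank $d(K_j)+1$, since for both $K = F_r$ and $K = \Sigma_g$ the abelianization of every finite-index subgroup is torsion-free of rank equal to its minimal number of generators), and the upper bound from the obvious generating set. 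Hence
\[
\frac{d(H_{j,m}) - 1}{[G:H_{j,m}]} \;=\; \frac{d(K_j)}{[K:K_j] \cdot p^m} \;\in\; [\,C_1 \cdot p^{-m},\; C_2 \cdot p^{-m}\,].
\]

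With these formulas in hand, the construction of the two chains is identical in spirit to that in the proof of Theorem~\ref{the:L2-Betti_numbers_speed_of_convergence}. For the slow chain, using $F^s(N) \to 0$ one selects inductively strictly increasing sequences $(j_i^s)$ and $(m_i^s)$ of natural numbers with $C_1 \cdot p^{-m_i^s} \ge F^s\bigl([K:K_{j_i^s}] \cdot p^{m_i^s}\bigr)$, which is possible because the right-hand side tends to $0$ as $m$ grows with $j$ fixed. For the fast chain, the hypothesis $i\cdot F^f(i) \to \infty$ allows one to pick strictly increasing $(j_i^f)$ and $(m_i^f)$ with $C_2 \cdot p^{-m_i^f} \le F^f\bigl([K:K_{j_i^f}] \cdot p^{m_i^f}\bigr)$. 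Setting $G_i^s := K_{j_i^s} \times p^{m_i^s}\IZ$ and $G_i^f := K_{j_i^f} \times p^{m_i^f}\IZ$ produces two descending chains of normal subgroups of $G$ of finite $p$-power index with trivial intersection (since $j_i,m_i \to \infty$), and the desired lower and upper bounds on $(d(G_i) - 1)/[G:G_i]$ follow immediately from the pinching above. The only non-routine step in the whole argument is the identity $d(K_j \times p^m\IZ) = d(K_j) + 1$; once that is recorded, the rest is a diagonal extraction that presents no conceptual difficulty.
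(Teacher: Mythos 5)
Your proposal follows the same skeleton as the paper's proof---take $G = K \times \IZ$, use chains of the form $K_{j} \times p^{m}\IZ$, and exploit the extra factor of $p^{-m}$ to tune the speed of convergence---but with two genuine variations. First, you invoke Ab\'ert--Nikolov (via Remark~\ref{rem:known_cases}) to get the vanishing of the rank gradient for both chains for free from the presence of the normal infinite amenable subgroup $\IZ$, whereas the paper reads the vanishing directly off the explicit upper bound $(d(G_i)-1)/[G:G_i] \le C_2\, p^{-m_i} \to 0$; both are correct, yours is slicker but imports heavier machinery. Second, you upgrade the paper's sandwich $d(K_{j_i}) \le d(G_i) \le d(K_{j_i})+1$ to the exact equality $d(K_j \times p^m\IZ) = d(K_j)+1$ via the torsion-free abelianization; this is a correct and clean observation, though the paper's two-sided bound already suffices. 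The paper also works with a general finitely presented, residually $p$-finite $K$ having $b_1^{(2)}(K) > 0$, obtaining $C_1, C_2$ from $\RG(K,(K_i)_{i\ge 0}) \ge b_1^{(2)}(K) > 0$ rather than from Schreier and Riemann--Hurwitz; your specialization is harmless since the statement only concerns those two choices of $K$.

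There is, however, one genuine gap in the slow-chain construction. You justify the existence of $(j_i^s, m_i^s)$ satisfying $C_1\, p^{-m_i^s} \ge F^s\bigl([K:K_{j_i^s}]\, p^{m_i^s}\bigr)$ by saying that the right-hand side tends to $0$ as $m$ grows with $j$ fixed. This argument does not close: with $j$ fixed, the left-hand side $C_1 p^{-m}$ also tends to $0$, and there is no reason it should dominate. For instance, take $F^s(N) = N^{-1/2}$; then the right-hand side decays like $p^{-m/2}$, which eventually exceeds $C_1 p^{-m}$, so the inequality fails for all large $m$ with $j$ fixed. The correct quantifier order is the reverse one, as in the paper: fix $m$ (for example $m_i = i$) and observe that $F^s\bigl([K:K_j]\, p^{m}\bigr) \to 0$ as $j \to \infty$ while $C_1 p^{-m}$ is a fixed positive number, so one may choose $j_i > j_{i-1}$ large enough. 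Your argument for the fast chain, by contrast, has the quantifiers right: there one does fix $j$ and let $m$ grow, and the hypothesis $\lim_{i} i\, F^f(i) = \infty$ gives $p^m F^f\bigl([K:K_j] p^m\bigr) \to \infty$ as $m \to \infty$, hence eventually $F^f\bigl([K:K_j]\, p^m\bigr) \ge C_2\, p^{-m}$.
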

\begin{proof}
Essentially the same argument as in the proof of Theorem~\ref{the:L2-Betti_numbers_speed_of_convergence}
also applies  to the rank gradient. Let $K$ be a finitely presented
group with $b_1^{(2)}(K) > 0$. Choose any chain $(K_i)_{i \ge 0}$ of normal subgroups of $K$ of finite index
$[K : K_i]$ which is a power of $p$ and with trivial intersection $\bigcap_{i \ge 0} K_i =
\{1\}$.  Then $\left(\frac{d(K_i)-1}{[K:K_i]}\right)_{i \ge 0}$ is a monotone decreasing sequence.
Its limit $\RG(K,(K_i)_{i \ge 0}) := \lim_{i \to \infty} \frac{d(K_i)-1}{[K:K_i]}$
exists and is greater than $b_1^{(2)}(K)$. Hence we can choose constants $C_1> 0$ and $C_2 > 0$
such that such that for each $i \ge 1$ we get
\[
 C_1 \le  \frac{d(K_i) -1 }{[K:K_i]} \le  \frac{d(K_i)}{[K:K_i]} \le C_2.
\]
Put $G = K \times \IZ$. Now construct the sequences $(j_i)_{i \ge 0}$ and $(n_i^s)_{i\ge 0}$ and define $G_i^s$ and
$G_i^f$ as in the proof of Theorem~\ref{the:L2-Betti_numbers_speed_of_convergence}.
Then $d(K_{j_i}) \le d(G_i^s) \le d(K_{j_i}) +1$ and $d(K_{j_i}) \le d(G_i^f) \le d(K_{j_i}) +1$ holds. Now a calculation analogous to the one
in the proof of Theorem~\ref{the:L2-Betti_numbers_speed_of_convergence} shows
\begin{eqnarray*}
   \frac{d(G_i^s) -1 }{[G:G_i^s]}  & \ge & F^s([G^s:G^s_i]);
  \\
  \frac{d(G_i^f) -1 }{[G:G_i^f]}  & \le & F^f([G^f:G^f_i]).
 \end{eqnarray*}
If $K$ is a finitely generated free group of rank $\ge 2$
or  the fundamental group of a closed surface of genus $\ge 2$,
then $K$ is finitely presented, residually $p$-finite  and $b_1^{(2)}(K) > 0$.
\end{proof}


\typeout{------------------------ Section 6: Determinants --------------------}

\section{The Approximation Conjecture for Fuglede-Kadison determinants}
\label{sec:The_Approximation_Conjecture_for_Fuglede-Kadison_determinants}

Let $A \in M_{r,s}(\IQ G)$ be a matrix. It induces by right multiplication a $G$-equivariant bounded 
operator $r_A^{(2)} \colon L^2(G)^r \to L^2(G)^s$.  We denote by
${\det}^{(2)}_{\caln(G)}\bigl(r_A^{(2)}\colon L^2(G)^r \to L^2(G)^s\bigr)$ its
Fuglede-Kadison determinant.

Denote by $A[i] \in M_{r,s}(\IQ[G/G_i])$ the matrix obtained from $A$ by applying
elementwise the ring homomorphism $\IQ G \to \IQ[G/G_i]$ induced by the projection $G \to
G/G_i$.  It induces a $\IC$-homomorphism of finite-dimensional complex Hilbert spaces
$r_{A[i]}^{(2)} \colon \IC[G/G_i]^r \to \IC[G/G_i]^s$.

Consider a $\IC$-homomorphism of finite-dimensional Hilbert spaces $f \colon V \to W$.  It
induces an endomorphism $f^*f \colon V \to V$. We have $\ker(f) = \ker(f^*f)$.  Denote by
$\ker(f)^{\perp}$ the orthogonal complement of $\ker(f)$. Then $f^*f$ induces an
automorphism $(f^*f)^{\perp} \colon \ker(f)^{\perp} \to \ker(f)^{\perp}$.  Define
\begin{eqnarray} {\det}'(f) & := & \sqrt{\det\bigl((f^*f)^{\perp}\bigr)}.
  \label{detprime(f)}
\end{eqnarray}
If $0 < \lambda_1 \le \lambda_2 \le \lambda_3 \le \cdots$ are the non-zero eigenvalues
(listed with multiplicity) of the positive operator $f^*f \colon V \to V$ , then
\[ {\det}'(f) = \prod_{j \ge 1} \sqrt{\lambda_j}.
\]
If $f$ is an isomorphism, then $\det'(f)$ reduces to $\sqrt{\det(f^*f)}$.

\begin{conjecture}[Approximation Conjecture for Fuglede-Kadison determinants]
\label{con:Approximation_conjecture_for_Fuglede-Kadison_determinants_with_finite_index}
  Consider a matrix $A \in M_{r,s}(\IQ G)$. Then we get
  \[
  \ln\bigl({\det}^{(2)}_{\caln(G)}(r_A^{(2)})\bigr) = \lim_{i \to \infty}
  \frac{\ln\bigl({\det}'(r_{A[i]}^{(2)})\bigr)}{[G:G_i]}.
  \]

\end{conjecture}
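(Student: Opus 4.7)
The plan is to reduce the problem to convergence of spectral density functions and then control the singular behaviour of the logarithm at zero via a uniform estimate.

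First I would express both sides in terms of spectral density functions. Let $F(A) \colon [0,\infty) \to [0,\infty)$ denote the spectral density function of the positive operator $(r_A^{(2)})^*r_A^{(2)}$ on $L^2(G)^r$, measured with the von Neumann trace $\tr_{\caln(G)}$, and let $F(A[i])$ be the normalized counting function of the eigenvalues of $(r_{A[i]}^{(2)})^*r_{A[i]}^{(2)}$, i.e., $F(A[i])(\lambda) = \frac{1}{[G:G_i]} \cdot \#\{\text{eigenvalues} \le \lambda\}$. Since $A$ is of determinant class (an assumption implicit in the conjecture, true e.g.\ when the Determinant Conjecture holds for $G$), the Fuglede-Kadison determinant is given by
\[
\ln\bigl({\det}^{(2)}_{\caln(G)}(r_A^{(2)})\bigr) = \frac{1}{2} \int_{0^+}^{K} \ln(\lambda)\, dF(A)(\lambda),
\]
where $K = \|r_A^{(2)}\|^2$, and analogously
\[
\frac{\ln\bigl({\det}'(r_{A[i]}^{(2)})\bigr)}{[G:G_i]} = \frac{1}{2} \int_{0^+}^{K_i} \ln(\lambda)\, dF(A[i])(\lambda),
\]
with $K_i = \|r_{A[i]}^{(2)}\|^2 \le K$, the latter inequality being an elementary consequence of the fact that passage to $G/G_i$ is a trace-preserving $*$-homomorphism.

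Second, I would invoke the approximation theorem for spectral density functions, which is a strengthening of Theorem~\ref{the:approx_Betti_char_zero}: the functions $F(A[i])$ converge pointwise to $F(A)$ at every continuity point of $F(A)$. Combined with integration by parts, writing
\[
\int_{0^+}^{K} \ln(\lambda)\, dF(A[i])(\lambda) = \ln(K)\cdot F(A[i])(K) - \int_0^K \frac{F(A[i])(\lambda) - F(A[i])(0)}{\lambda}\, d\lambda
\]
(and similarly for $F(A)$), the boundary term at $\lambda = K$ converges trivially, and on any interval $[\epsilon, K]$ the integrand $(F(A[i])(\lambda) - F(A[i])(0))/\lambda$ is uniformly bounded, so dominated convergence together with pointwise convergence at continuity points yields convergence of the truncated integrals $\int_\epsilon^K$ for every continuity point $\epsilon > 0$.

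The remaining and decisive issue is the contribution of small eigenvalues, i.e., the integral $\int_0^\epsilon \frac{F(A[i])(\lambda) - F(A[i])(0)}{\lambda}\, d\lambda$. To make this uniformly small as $\epsilon \downarrow 0$, one needs a uniform logarithmic estimate: a function $\phi \colon (0,\epsilon_0] \to [0,\infty)$ with $\int_0^{\epsilon_0} \phi(\lambda)/\lambda\, d\lambda < \infty$ and a constant independent of $i$ such that
\[
F(A[i])(\lambda) - F(A[i])(0) \le \phi(\lambda) \qquad \text{for all } i \text{ and all } \lambda \in (0,\epsilon_0].
\]
Given such an estimate (and its analogue for $F(A)$), an $\epsilon/3$-argument combined with the pointwise convergence on $[\epsilon, K]$ yields the claim. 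This is precisely the strategy announced in Theorem~\ref{the:the_uniform_logarithmic_estimate}.

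The main obstacle is the verification of the uniform logarithmic estimate. Individually each $F(A[i])$ is compactly supported and causes no trouble, but uniformity in $i$ encodes a quantitative Novikov-Shubin type control on the sequence $(r_{A[i]}^{(2)})$. At present such uniform estimates are known only in special situations (for example when $G$ satisfies suitable determinant class or residual amenability conditions, or when the entries of $A$ take values in $\IZ G$ so that Gelfand-Mahler type lower bounds on $\ln\bigl({\det}'(r_{A[i]}^{(2)})\bigr)$ can be exploited), and this is exactly where the conjecture is open in general.
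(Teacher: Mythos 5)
This statement is a \emph{conjecture}, not a theorem, so there is no proof of it in the paper to compare against; your proposal is (correctly) not a proof but a strategy outline. What you have written is a faithful reconstruction of the approach the paper lays out in Section~\ref{sec:A_general_strategy_to_prove_the_Approximation_Conjecture_for_Fuglede-Kadison_determinants}: passing to the positive operator $A^*A$ and its spectral density function, using the integration-by-parts formula for $\ln\det^{(2)}$ (the analogue of~\eqref{det(r_B)_is_int} and~\eqref{det(r_B[i])_is_int}), getting pointwise convergence of the $F[i]$ almost everywhere from the Determinant Conjecture for the finite quotients, and then trying to pass to the limit in the integral via dominated convergence --- which requires precisely the uniform integrability condition of Theorem~\ref{the:approxi_for_spectral_density}~\eqref{the:approxi_for_spectral_density:integrability}, a sufficient version of which is the uniform logarithmic estimate of Theorem~\ref{the:the_uniform_logarithmic_estimate}. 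You have correctly identified the decisive open gap.

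Two points worth making explicit, since they sharpen the picture. First, the paper's Lemma~\ref{lem:addendum} shows that the uniform integrability condition is \emph{not} a formal consequence of the other known properties of the spectral density functions: there are sequences of admissible density functions converging pointwise to the right limit for which the integral strictly drops. So the needed estimate must encode genuinely new quantitative input about the specific operators $r_{A[i]}^{(2)}$, not just general features of the approximation problem --- your ``$\epsilon/3$-argument'' cannot close without such input. Second, the bound one does know in general, namely~\eqref{ln(lambda)(-1)_bound} coming from L\"uck's $1/(-\ln\lambda)$ estimate, is just barely insufficient (one cannot take $\delta=0$ in Theorem~\ref{the:the_uniform_logarithmic_estimate}), and the polynomial strengthening~\eqref{lambda(alpha_n)_bound} is known to fail for some groups by the Grabowski--Vir\'ag example, so any eventual proof will have to thread between these two obstructions.
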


\begin{remark} \label{rem:complicated_case_for_determinants-necessary} If $r = s$ and $A
  \in M_{r,r}(\IQ G)$ is invertible, then the following equality is always true 
  \[
  \ln\bigl({\det}^{(2)}_{\caln(G)}(r_A^{(2)})\bigr) = \lim_{i \to \infty}
  \frac{\ln\bigl(\det(r_{A[i]}^{(2)})\bigr)}{[G:G_i]}.
  \]
  However, for applications to $L^2$-torsion we have to consider the case, where $r$ and
  $s$ may be different and the maps $(r_A^{(2)})^*r_A^{(2)}$ and $(r_{A[i]}^{(2)})^*r_{A[i]}^{(2)}$ may not be
  injective.
\end{remark}

\begin{remark}[$\IQ$ coefficients are necessary]
  \label{rem:IQ-coefficients_are_necessary}
  Conjecture~\ref{con:Approximation_conjecture_for_Fuglede-Kadison_determinants_with_finite_index} does not
  hold if one replaces $\IQ$ by $\IC$ by the following result appearing
  in~\cite[Example~13.69 on page~481]{Lueck(2002)}. There exists a sequence of integers $2
  \le n_1 < n_2 < n_3 < \cdots$ and a real number $s$ such that for $G = \IZ$ and $G_i =
  n_i \cdot \IZ$ and the $(1,1)$-matrix $A$ given by the element $z-\exp(2\pi i s)$ in
  $\IC[\IZ] = \IC[z,z^{-1}]$ we get for all $i \ge 1$
  \begin{eqnarray*}
    \ln\bigl({\det}^{(2)}_{\caln(G)}(r_A^{(2)})\bigr) & = & 0;
    \\
    \frac{\ln\bigl(\det(r_{A[i]}^{(2)})\bigr)}{[G:G_i]} & \le & - 1/2.
  \end{eqnarray*}
\end{remark}

\begin{remark}[Status of
  Conjecture~\ref{con:Approximation_conjecture_for_Fuglede-Kadison_determinants_with_finite_index}]
  \label{con:status_of_Approx_Conj_Fuglede-Kadison}
  Conjecture~\ref{con:Approximation_conjecture_for_Fuglede-Kadison_determinants_with_finite_index} has been
  proved for $G = \IZ$ by Schmidt~\cite{Schmidt(1995)}, see also~\cite[Lemma~13.53 on
  page~478]{Lueck(2002)}.  To the author's knowledge infinite virtually cyclic groups are the only infinite groups for
  which Conjecture~\ref{con:Approximation_conjecture_for_Fuglede-Kadison_determinants_with_finite_index} is
  known to be true.  One does know in general the inequality, see Theorem~\ref{the:inequality_det_det},
  \begin{eqnarray}
    \ln\bigl({\det}^{(2)}_{\caln(G)}(r_A^{(2)})\bigr)
    & \ge & 
    \limsup_{i \to \infty} \frac{\ln\bigl({\det}'(r_{A[i]}^{(2)})\bigr)}{[G:G_i]}.
    \label{ln(det)_ge_limsup}
  \end{eqnarray}

  For $G = \IZ^n$ the last inequality for the limit superior is known to be an equality by
  L\^e~\cite{Le(2013)}.  But nothing seems to be known beyond virtually finitely generated
  free abelian groups.
\end{remark}


\typeout{------------------------ Section 7: Torsion invariants --------------------}

\section{Torsion invariants}
\label{sec:torsion_invariants}

Before we start talking about torsion, we want to shed some light on the name. To the best of our knowledge
it comes from the visualization of a $3$-dimensional
lens space and its cells structure, where for different models the cells appear to be distorted in different ways.
The Reidemeister torsion, which was invented to classify lens spaces, measures this distorsion.


\subsection{$L^2$-torsion}
\label{sec:L2-torsion}

Let $D_*$ be a finite based free $\IZ$-chain complex, for instance the cellular chain
complex $C_*(Y) $ of a finite $CW$-complex $Y$.  The $\IC$-chain complex $D_* \otimes_{\IZ} \IC$ 
inherits from the $\IZ$-basis on $D_*$ and the standard Hilbert space structure on $\IC$ the
structure of a Hilbert space and the resulting $L^2$-chain complex is denoted by
$D_*^{(2)}$ with differentials $d_p^{(2)} := d_p \otimes_{\IZ} \id_{\IC}$. Define its \emph{$L^2$-torsion} by
\begin{eqnarray}
\rho^{(2)}(D_*^{(2)};\caln(\{1\})) := - \sum_{n \ge 1} (-1)^n \cdot \ln\bigl({\det}_{\caln(\{1\})}^{(2)}(d_n^{(2)})\bigr) \quad \in \IR.
\label{rho_caln(1)(2)(C_ast(2))_over_Z}
\end{eqnarray}
Notice that ${\det}_{\caln(\{1\})}^{(2)}(c_n^{(2)})$ is the same as $\det'(c_n^{(2)})$ which we have introduced
in~\eqref{detprime(f)}.

More generally, if $C_*$ is a finite based free $\IZ G$-chain complex, we obtain a 
finite Hilbert $\caln(G)$-chain complex $C_*^{(2)} := C_* \otimes_{\IZ G} L^2(G)$ and we define its 
\emph{$L^2$-torsion}
\begin{eqnarray}
\rho^{(2)}(C_*^{(2)};\caln(G)) := - \sum_{n \ge 1} (-1)^n \cdot \ln({\det}_{\caln(G)}^{(2)}\bigl(c_n^{(2)})\bigr)\quad \in \IR.
\label{rho_caln(1)(2)(C_ast(2))_over_ZG}
\end{eqnarray}

\begin{conjecture}[Approximation Conjecture for $L^2$-torsion]
\label{con:Approximation_conjecture_for_chain_complexes_with_finite_index}
Let $C_*$ be a finite based free $\IZ G$-chain complex. Denote by $C[i]_*$ the finite free $\IZ$-chain complex
given by $C[i]_* = C_* \otimes_{\IZ[G_i]} \IZ = C_* \otimes_{\IZ G} \IZ[G/G_i]$. 

Then we get
  \[
  \rho^{(2)}(C_*^{(2)};\caln(G))
  = 
  \lim_{i \to \infty}   \frac{\rho^{(2)}(C[i]_*^{(2)};\caln(\{1\}))}{[G:G_i]}.
  \]
\end{conjecture}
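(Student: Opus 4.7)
The plan is to reduce the statement entirely to a termwise application of Conjecture~\ref{con:Approximation_conjecture_for_Fuglede-Kadison_determinants_with_finite_index}, using only the definition of $L^2$-torsion as an alternating sum of logarithms of Fuglede-Kadison determinants together with the fact that a finite chain complex contributes only finitely many terms.

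First I would unfold both sides. By definition one has
\[
\rho^{(2)}(C_*^{(2)};\caln(G)) \;=\; -\sum_{n \ge 1}(-1)^n \ln\bigl({\det}^{(2)}_{\caln(G)}(c_n^{(2)})\bigr),
\]
and, using that ${\det}^{(2)}_{\caln(\{1\})}(f)$ coincides with ${\det}'(f)$ for a map between finite-dimensional Hilbert spaces,
\[
\rho^{(2)}(C[i]_*^{(2)};\caln(\{1\})) \;=\; -\sum_{n \ge 1}(-1)^n \ln\bigl({\det}'(c_n[i]^{(2)})\bigr).
\]
Each differential $c_n$ is a matrix in $M_{r,s}(\IZ G) \subseteq M_{r,s}(\IQ G)$ whose $L^2$-completion is precisely the operator $r_{c_n}^{(2)}$ of Section~\ref{sec:The_Approximation_Conjecture_for_Fuglede-Kadison_determinants}, and $c_n[i]$ is the entrywise image of $c_n$ under $\IQ G \to \IQ[G/G_i]$, whose completion is $r_{c_n[i]}^{(2)}$. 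Since $C_*$ is finite, the alternating sums have finitely many nonzero terms, so $\lim_i$ passes through them, and the desired equality reduces to
\[
\ln\bigl({\det}^{(2)}_{\caln(G)}(r_{c_n}^{(2)})\bigr) \;=\; \lim_{i\to\infty}\frac{\ln\bigl({\det}'(r_{c_n[i]}^{(2)})\bigr)}{[G:G_i]}
\]
for every $n$ with $c_n \ne 0$. But this is exactly Conjecture~\ref{con:Approximation_conjecture_for_Fuglede-Kadison_determinants_with_finite_index} applied to $A = c_n$.

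The hard part is precisely the input: Conjecture~\ref{con:Approximation_conjecture_for_Fuglede-Kadison_determinants_with_finite_index} is itself open outside the virtually cyclic case, as recorded in Remark~\ref{con:status_of_Approx_Conj_Fuglede-Kadison}. All one has unconditionally is the one-sided bound
\[
\ln\bigl({\det}^{(2)}_{\caln(G)}(r_A^{(2)})\bigr) \;\ge\; \limsup_{i\to\infty}\frac{\ln\bigl({\det}'(r_{A[i]}^{(2)})\bigr)}{[G:G_i]}.
\]
This inequality does \emph{not} upgrade to the torsion statement simply by taking alternating sums, because the signs $(-1)^n$ destroy the one-sidedness. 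The genuine obstacle is therefore the familiar one for Fuglede-Kadison determinant approximation: one needs uniform control (in $i$) on the spectral density near $0$ of the positive operators $(r_{c_n[i]}^{(2)})^*\, r_{c_n[i]}^{(2)}$, i.e.\ a uniform logarithmic estimate in the sense alluded to in the introduction. Once such a uniform estimate promotes the above $\limsup$ bound to an equality for each individual $c_n$, the elementary termwise reduction described above immediately closes the argument.
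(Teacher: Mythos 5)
Your reduction coincides with the paper's own (one-sentence) treatment: immediately after stating the conjecture, the paper remarks that it is ``just the chain complex version'' of Conjecture~\ref{con:Approximation_conjecture_for_Fuglede-Kadison_determinants_with_finite_index} and that the two are equivalent for a given group $G$, which is precisely the termwise unfolding you spell out. Since this is an open conjecture there is no further proof to compare against; you correctly identify the unconditional $\limsup$ bound of Theorem~\ref{the:inequality_det_det} and the uniform logarithmic estimate of Theorem~\ref{the:the_uniform_logarithmic_estimate} as, respectively, the known one-sided result (which the alternating signs prevent from summing to the torsion statement) and the missing ingredient.
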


Obviously Conjecture~\ref{con:Approximation_conjecture_for_chain_complexes_with_finite_index}
is just the chain complex version of 
Conjecture~\ref{con:Approximation_conjecture_for_Fuglede-Kadison_determinants_with_finite_index}
and these two conjectures are equivalent for a given group $G$. 


\subsection{Analytic and topological $L^2$-torsion}
\label{sec:analytic_and_topological_L2-torsion}

Let $X$ be a closed Riemannian manifold. In the sequel we denote by $G \to \overline{X} \to X$ some $G$-covering.
Let $\rho_{\an}(X[i])$ be the analytic torsion in the sense of Ray and
Singer of the closed Riemannian manifold $X[i]$. Denote by $\rho^{(2)}_{\an}(\overline{X})$ the analytic
$L^2$-torsion of the Riemannian manifold $\overline{X}$ with isometric free cocompact $G$-action.

\begin{conjecture}[Approximation Conjecture for analytic torsion]
  \label{con:Approximation_for_analytic_torsion}
  Let $X$ be a closed Riemannian manifold. Then
  \[
  \rho^{(2)}_{\an}(\overline{X};\caln(G)) = \lim_{i \to \infty}
  \frac{\ln(\rho_{\an}(X[i]))}{[G:G_i]}.
  \]
\end{conjecture}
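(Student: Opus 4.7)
The plan is to reduce Conjecture~\ref{con:Approximation_for_analytic_torsion} to the combinatorial/chain-complex version Conjecture~\ref{con:Approximation_conjecture_for_chain_complexes_with_finite_index}, which in turn is equivalent to Conjecture~\ref{con:Approximation_conjecture_for_Fuglede-Kadison_determinants_with_finite_index} on Fuglede-Kadison determinants. The philosophical reason to proceed this way is that the analytic torsion is rather rigid to handle in a sequence of covers, whereas once one is in the algebraic world the approximation statement only depends on the behavior of spectral measures of matrices $r_{A[i]}^{(2)}$ near zero.

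First I would invoke the classical Cheeger-M\"uller theorem to identify, for each finite cover $X[i]$, the Ray-Singer analytic torsion $\rho_{\an}(X[i])$ with the Reidemeister torsion of the cellular $\IZ$-chain complex $C_*(X[i])$ coming from a smooth triangulation of $X$, up to a regulator term built from comparing the $L^2$-inner product on harmonic forms with the combinatorial inner product on cellular cochains. In parallel, I would use the $L^2$-Cheeger-M\"uller theorem of Burghelea-Friedlander-Kappeler-McDonald (and Schick, for the $L^2$-acyclic case) to identify $\rho^{(2)}_{\an}(\overline{X};\caln(G))$ with the $L^2$-topological torsion $\rho^{(2)}(C_*(\overline{X})^{(2)};\caln(G))$ up to an analogous $L^2$-regulator. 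This step turns the conjecture into a statement about the ratio of topological torsions and ratios of regulator terms.

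Next I would dispose of the regulators. The regulators measure a change of basis on the $n$-th cohomology between the Riemannian inner product and the combinatorial one; the quotient of the logarithmic determinants for $X[i]$ divided by $[G:G_i]$ should tend to the $L^2$-regulator as $i\to\infty$. In the $L^2$-acyclic case (where there are no harmonic forms in the limit), the relevant regulators vanish asymptotically, as indicated by Theorem~\ref{the:comparing_torsion_and_FK-determinants} alluded to in the introduction; in the non-acyclic case one would need the approximation theorem for Betti numbers (Theorem~\ref{the:approx_Betti_char_zero}) together with a continuity argument for the comparison forms. Once the regulators are handled, one is reduced to showing
\[
\rho^{(2)}(C_*(\overline{X})^{(2)};\caln(G)) = \lim_{i \to \infty}\frac{\rho^{(2)}(C[i]_*^{(2)};\caln(\{1\}))}{[G:G_i]},
\]
which is exactly Conjecture~\ref{con:Approximation_conjecture_for_chain_complexes_with_finite_index}, equivalent to the determinant Conjecture~\ref{con:Approximation_conjecture_for_Fuglede-Kadison_determinants_with_finite_index}.

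The hard part is this last step. The general inequality \eqref{ln(det)_ge_limsup} gives one direction for free, so the real content is the reverse inequality. The obstacle is controlling the contribution of small eigenvalues of $(r_{A[i]}^{(2)})^*r_{A[i]}^{(2)}$: one needs a uniform logarithmic estimate on the spectral density functions near zero that persists under passage to $G/G_i$, in the spirit of Theorem~\ref{the:the_uniform_logarithmic_estimate}. For virtually abelian groups this is available by work of L\^e, but for general $G$ (e.g. hyperbolic $3$-manifold groups, where the conjecture would have striking topological consequences via the torsion growth of $H_n(M[i];\IZ)$) no such uniform estimate is known, and this is the genuine bottleneck I would focus on.
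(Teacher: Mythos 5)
The statement is an open conjecture, so the paper has no proof of it and none is expected from you; what can be compared is your reduction strategy against the paper's own discussion, and the two agree closely. You correctly invoke the analytic--topological interchange via Cheeger, M\"uller, and Burghelea--Friedlander--Kappeler--McDonald, and the reduction under $L^2$-acyclicity to the determinant Conjecture~\ref{con:Approximation_conjecture_for_Fuglede-Kadison_determinants_with_finite_index}, which is precisely the content of Theorem~\ref{the:comparing_torsion_and_FK-determinants} (proved via Theorem~\ref{the:comparing_analytic_and_chain_complexes} in Section~\ref{sec:The_L2de_Rham_isomorphism_ad_the_proof_of_theorem_ref(the:comparing_analytic_and_chain_complexes)}); and you correctly identify the genuine bottleneck as a uniform control on small eigenvalues in the spirit of Theorem~\ref{the:the_uniform_logarithmic_estimate}.

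Three refinements. First, the vanishing of the ``regulator'' terms is not a soft statement: the paper's mechanism is Lemma~\ref{lem:bound_for_de_Rham}, which pins the operator norm of every de~Rham isomorphism $\Int[i]^p$ between constants $L_1,L_2$ depending only on $M$ and the fixed triangulation $K$ but not on $i$, so that $\bigl|\ln\bigl({\det}^{(2)}_{\caln(G/G_i)}(\Int[i]^p)\bigr)\bigr|$ is bounded by a constant times $b_p^{(2)}(M[i];\caln(G/G_i))$, which tends to $0$ by Theorem~\ref{the:approx_Betti_char_zero}. Second, your hope that in the non-acyclic case ``a continuity argument for the comparison forms'' would suffice is exactly the missing ingredient singled out in Remark~\ref{rem:On_the_L2-acyclicity_assumption}: without $L^2$-acyclicity one would instead need $\lim_{i}\ln\bigl({\det}_{\caln(G/G_i)}(\Int[i]^p)\bigr) = \ln\bigl({\det}_{\caln(G)}(\Int^p)\bigr)$, which is not established and is genuinely harder than the vanishing statement. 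Third, a small attribution slip: the $L^2$-Cheeger--M\"uller theorem used here is due to Burghelea--Friedlander--Kappeler--McDonald~\cite{Burghelea-Friedlander-Kappeler-McDonald(1996a)}, not Schick; Schick's cited contributions concern the Determinant Conjecture. Your closing observation that the only uniform spectral density bound known in general, $F_p(X[i])(\lambda)-F_p(X[i])(0) \le C\cdot[G:G_i]/(-\ln\lambda)$, is one power of $\ln\lambda$ short of what Theorem~\ref{the:the_uniform_logarithmic_estimate} requires is the correct and concrete assessment of where the problem sits.
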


There are topological counterparts which we will denote by $\rho_{\topo}(X[i])$ and
$\rho^{(2)}_{\topo}(\overline{X})$ which agree with their analytic versions by results of
Cheeger~\cite{Cheeger(1979)} and M\"uller~\cite{Mueller(1978)} and
Burghelea-Friedlander-Kappeler-McDonald~\cite{Burghelea-Friedlander-Kappeler-McDonald(1996a)}.
So the conjecture above is equivalent to its topological counterpart.

\begin{conjecture}[Approximation Conjecture for topological torsion]
  \label{con:Approximation_for_topological_torsion}
  Let $X$ be a closed Riemannian manifold. Then
  \[
  \rho^{(2)}_{\topo}(\overline{X};\caln(G)) = \lim_{i \to \infty}
  \frac{\ln(\rho_{\topo}(X[i]))}{[G:G_i]}.
  \]
\end{conjecture}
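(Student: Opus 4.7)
The plan is to reduce Conjecture~\ref{con:Approximation_for_topological_torsion} all the way down to the Fuglede--Kadison approximation conjecture (Conjecture~\ref{con:Approximation_conjecture_for_Fuglede-Kadison_determinants_with_finite_index}), where the real analytic difficulty sits. Pick a smooth triangulation of $X$ and lift it to a $G$-CW-structure on $\overline{X}$. The cellular $\IZ G$-chain complex $C_* := C_*(\overline{X})$ is finite and based free, and $C[i]_* = C_* \otimes_{\IZ G} \IZ[G/G_i]$ is the cellular chain complex of $X[i]$. The topological torsion $\rho_{\topo}(X[i])$ differs from the chain-level $L^2$-torsion $\rho^{(2)}(C[i]_*^{(2)};\caln(\{1\}))$ only by regulators comparing the inner product on $H_*(X[i];\IR)$ induced by the cellular basis with the one obtained from harmonic forms via the Riemannian metric. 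In the $L^2$-acyclic case, which is the only genuinely interesting case for torsion, the vanishing of these regulators in the limit is exactly the content of Theorem~\ref{the:comparing_torsion_and_FK-determinants}. What remains is Conjecture~\ref{con:Approximation_conjecture_for_chain_complexes_with_finite_index}, which is term-by-term equivalent to Conjecture~\ref{con:Approximation_conjecture_for_Fuglede-Kadison_determinants_with_finite_index} applied to each boundary matrix $A = \partial_n \in M_{r,s}(\IZ G)$.

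It then remains to show that for each such $A$,
\[
\ln\bigl({\det}^{(2)}_{\caln(G)}(r_A^{(2)})\bigr) = \lim_{i \to \infty} \frac{\ln\bigl({\det}'(r_{A[i]}^{(2)})\bigr)}{[G:G_i]}.
\]
Let $\mu_i$ be the spectral measure of $(r_{A[i]}^{(2)})^*r_{A[i]}^{(2)}$ and let $\mu$ be the $\caln(G)$-spectral measure of $(r_A^{(2)})^*r_A^{(2)}$. Then
\[
\frac{\ln {\det}'(r_{A[i]}^{(2)})}{[G:G_i]} = \tfrac12 \int_{0^+}^{\|A\|^2} \ln(\lambda)\, \frac{d\mu_i(\lambda)}{[G:G_i]}, \quad \ln{\det}^{(2)}_{\caln(G)}(r_A^{(2)}) = \tfrac12 \int_{0^+}^{\|A\|^2} \ln(\lambda)\, d\mu(\lambda),
\]
and the normalized measures $\mu_i/[G:G_i]$ converge weakly to $\mu$ on every interval bounded away from $0$; this is the spectral-density input underlying Theorem~\ref{the:approx_Betti_char_zero}. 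The inequality $\ge$, read as a limsup, is already known by~\eqref{ln(det)_ge_limsup}, so the only issue is to rule out mass of $\mu_i/[G:G_i]$ escaping just above $0$ in a way that produces a large negative contribution from $\ln \lambda$.

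The standard route, alluded to in the introduction as Theorem~\ref{the:the_uniform_logarithmic_estimate}, is to establish a \emph{uniform logarithmic estimate}: produce a non-decreasing function $F:(0,1] \to [0,\infty)$, independent of $i$, with $\int_{0}^{1} |\ln \epsilon|\, dF(\epsilon) < \infty$, such that
\[
\frac{\mu_i\bigl((0,\epsilon]\bigr)}{[G:G_i]} \le F(\epsilon) \quad \text{for all } i \text{ and all } \epsilon \in (0,1].
\]
Such a bound makes $\ln \lambda$ uniformly integrable against the family $\mu_i/[G:G_i]$ and then forces the desired equality by combining it with weak convergence away from $0$. The main obstacle, and the reason the conjecture remains open beyond virtually cyclic $G$, is exactly to produce such an estimate from the algebraic data $A \in M_{r,s}(\IQ G)$. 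The rationality of the entries is essential, as Remark~\ref{rem:IQ-coefficients_are_necessary} shows, which suggests exploiting integrality of the characteristic polynomials of $(r_{A[i]}^{(2)})^*r_{A[i]}^{(2)}$, in the spirit of L\^e's Mahler-measure argument for $G = \IZ^n$. Extending such integrality/Mahler-measure controls to nonabelian $G$, where there is no longer a commutative Newton polytope available, is the genuinely hard step and the central open problem in the $L^2$-torsion approximation program.
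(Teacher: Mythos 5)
The statement you were given is a \emph{conjecture}, not a theorem: Conjecture~\ref{con:Approximation_for_topological_torsion} is open, and the paper does not prove it. Your text is correspondingly not a proof but a (largely correct) map of the territory. It is worth being explicit about where the genuine gap sits.

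Your first reduction --- passing from topological torsion to the chain-level Conjecture~\ref{con:Approximation_conjecture_for_chain_complexes_with_finite_index} and hence to the Fuglede--Kadison Conjecture~\ref{con:Approximation_conjecture_for_Fuglede-Kadison_determinants_with_finite_index} --- is exactly the content of Theorem~\ref{the:comparing_torsion_and_FK-determinants} (more generally Theorem~\ref{the:comparing_analytic_and_chain_complexes}), whose proof in the paper controls the de Rham comparison via the uniform norm bounds of Lemma~\ref{lem:bound_for_de_Rham} together with the vanishing of the normalized $L^2$-Betti numbers in the limit. But note that this reduction requires $b_n^{(2)}(\overline{X};\caln(G))=0$ for all $n$, whereas Conjecture~\ref{con:Approximation_for_topological_torsion} as stated makes no such hypothesis; the paper itself flags this as a restriction of its method (Remark~\ref{rem:On_the_L2-acyclicity_assumption_with_finite_index} and Remark~\ref{rem:On_the_L2-acyclicity_assumption}). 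Dismissing the non-acyclic case as uninteresting is not an argument, so even your reduction step only covers a subcase of the conjecture.

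The decisive gap is the one you name yourself at the end. After the reduction, proving the statement would require establishing the uniform integrability condition~\eqref{the:approxi_for_spectral_density:integrability} of Theorem~\ref{the:approxi_for_spectral_density}, for instance via the uniform logarithmic estimate of Theorem~\ref{the:the_uniform_logarithmic_estimate}. The paper is explicit that this is precisely what is missing: the known bound~\eqref{ln(lambda)(-1)_bound} from~\cite{Lueck(1994c)} is of the form $\frac{F_p(X[i])(\lambda)-F_p(X[i])(0)}{[G:G_i]} \le \frac{C}{-\ln\lambda}$, which is \emph{not} integrable against $d\lambda/\lambda$ near $0$ (the paper verifies this divergence explicitly), and the stronger polynomial bound~\eqref{lambda(alpha_n)_bound} is known to fail in general by the Grabowski--Vir\'ag example. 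The construction in Lemma~\ref{lem:addendum} shows that weak convergence of the spectral density functions plus the known properties does not by itself preclude an escape of mass producing a logarithmic divergence. So what you present is a correct reformulation of the open problem, not a proof; the sentence ``the genuinely hard step and the central open problem'' is where the proof would have to be, and there is nothing there. Beyond virtually cyclic $G$ (Schmidt's theorem, extended by L\^e to $\IZ^n$ for the limsup), no argument producing the needed uniform spectral estimate is known, and no proof of Conjecture~\ref{con:Approximation_for_topological_torsion} exists.
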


\begin{remark}[Dependency on the triangulation and the Riemannian metric]
  \label{rem:Dependency_on_the_triangulation_and_the_Riemannian_metric}
  Let $X$ be a closed smooth manifold.  Fix a smooth triangulation. Since this induces a
  structure of a free finite $G$-$CW$-complex on $\overline{X}$, we get a $\IZ G$-basis
  for $C_*(\overline{X})$ and hence can consider
  $\rho^{(2)}(C_*^{(2)}(\overline{X});\caln(G))$. The cellular $\IZ G$-basis for
  $C_*(\overline{X})$ is not unique, but it is up to permutation of the basis elements and
  multiplying base elements with trivial units, i.e., elements of the shape $\pm g$ for
  $g \in G$. It turns out that $\rho^{(2)}(C_*^{(2)}(\overline{X});\caln(G))$ is independent
  of these choices after we have fixed a smooth triangulation of $X$.
  However, if we pass to a subdivision of the smooth triangulation of
  $X$, then $\rho^{(2)}(C_*^{(2)}(\overline{X});\caln(G))$ changes in general, 
unless $b_n^{(2)}(\overline{X};\caln(G))$ vanishes for all $n \ge 0$.

  Let $X$ be a closed smooth Riemannian manifold. Then
  $\rho^{(2)}_{\an}(\overline{X};\caln(G))$ and
  $\rho^{(2)}_{\topo}(\overline{X};\caln(G))$ are independent of the choice of
  smooth triangulation and hence depend only on the isometric diffeomorphism type of
  $X$. However, changing the Riemannian metric does in general change
  $\rho^{(2)}_{\an}(\overline{X};\caln(G))$ and
  $\rho^{(2)}_{\topo}(\overline{X};\caln(G))$. If we have
  $b_n^{(2)}(\overline{X};\caln(G)) = 0$ for all $n \ge 0$, then
  $\rho^{(2)}_{\an}(\overline{X};\caln(G))$ and
  $\rho^{(2)}_{\topo}(\overline{X};\caln(G))$ are independent of the Riemannian metric and
  depend only on the diffeomorphism type of $X$, actually, they depend only on the simple
  homotopy type of $X$. There is a lot of evidence that in this situation only the homotopy type of $X$ matters,
see for instance~\cite[Conjecture~3.94 on page~163]{Lueck(2002)} and the Determinant Conjecture~\ref{con:Determinant_Conjecture}.
\end{remark}

The next result is a special case of Theorem~\ref{the:comparing_analytic_and_chain_complexes}.

\begin{theorem}[Relating the Approximation Conjectures for Fuglede-Kadison determinant and
  torsion invariants]
  \label{the:comparing_torsion_and_FK-determinants}
  Suppose that $X$ is a closed Riemannian manifold such that
  $b_n^{(2)}(\overline{X},\caln(G))$ vanishes for all $n \ge 0$.  If $G$ satisfies
  Conjecture~\ref{con:Approximation_conjecture_for_Fuglede-Kadison_determinants_with_finite_index} for all
  matrices $A \in M_{r,s}(\IQ G)$ and all natural numbers $r,s$, then
  Conjecture~\ref{con:Approximation_for_analytic_torsion} and
  Conjecture~\ref{con:Approximation_for_topological_torsion} hold for $X$.
\end{theorem}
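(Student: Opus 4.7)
The plan is to reduce the analytic and topological Approximation Conjectures for $X$ to the algebraic chain-complex Approximation Conjecture~\ref{con:Approximation_conjecture_for_chain_complexes_with_finite_index} for $C_*(\overline{X})$, which in turn follows directly from the Fuglede-Kadison version, and then to control the regulator terms arising in the Cheeger-M\"uller-type comparison between the algebraic torsion and the classical analytic/Reidemeister torsion of the finite covers $X[i]$.

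First I would fix a smooth triangulation of $X$, inducing a $G$-equivariant $CW$-structure on $\overline{X}$ and $CW$-structures on each $X[i]$. Let $C_* = C_*(\overline{X})$ be the resulting finite based free $\IZ G$-chain complex, whose differentials are represented by matrices $A_n \in M_{r_n,r_{n-1}}(\IZ G) \subset M_{r_n,r_{n-1}}(\IQ G)$. Applying Conjecture~\ref{con:Approximation_conjecture_for_Fuglede-Kadison_determinants_with_finite_index} to each $A_n$, summing over $n$ with the signs dictated by the definition~\eqref{rho_caln(1)(2)(C_ast(2))_over_ZG}, and interchanging the finite sum with the limit, one obtains
\[
\rho^{(2)}\bigl(C_*^{(2)}; \caln(G)\bigr) = \lim_{i \to \infty} \frac{\rho^{(2)}\bigl(C[i]_*^{(2)}; \caln(\{1\})\bigr)}{[G:G_i]}.
\]
Because $\overline{X}$ is $L^2$-acyclic, the left-hand side equals $\rho^{(2)}_{\topo}(\overline{X}; \caln(G))$, and by the theorems of Cheeger-M\"uller and Burghelea-Friedlander-Kappeler-McDonald it also equals $\rho^{(2)}_{\an}(\overline{X}; \caln(G))$, so both targets of Conjecture~\ref{con:Approximation_for_analytic_torsion} and Conjecture~\ref{con:Approximation_for_topological_torsion} are captured simultaneously.

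The remaining step is to identify $\lim_i \rho^{(2)}(C[i]_*^{(2)}; \caln(\{1\}))/[G:G_i]$ with $\lim_i \ln \rho_{\topo}(X[i])/[G:G_i]$ (and hence, via Cheeger-M\"uller applied to the finite covers, with $\lim_i \ln \rho_{\an}(X[i])/[G:G_i]$). The discrepancy between $\rho^{(2)}(C[i]_*^{(2)}; \caln(\{1\}))$ and $\ln \rho_{\topo}(X[i])$ is a sum of regulator terms $\sum_n (-1)^n \ln R_n(X[i])$, where $R_n(X[i])$ is the determinant of the comparison between the combinatorial basis of $H_n(X[i]; \IR)$ induced from the cellular chains and the orthonormal basis provided by harmonic representatives coming from the Riemannian metric. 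What has to be proved is therefore
\[
\lim_{i \to \infty} \frac{\ln R_n(X[i])}{[G:G_i]} = 0 \qquad \text{for every } n.
\]

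The hard part will be exactly this asymptotic vanishing of regulators. My strategy combines two inputs: (i) by Theorem~\ref{the:approx_Betti_char_zero} and the $L^2$-acyclicity of $\overline{X}$, the rank $b_n(X[i]; \IR)$ of each regulator matrix grows sublinearly in $[G:G_i]$; and (ii) each individual logarithm of a singular value of these base-change matrices can be bounded in absolute value by $O(\ln [G:G_i])$. Input (ii) is the delicate point: a lower bound on the singular values comes easily from the integrality of cellular cycle representatives, but an upper bound requires controlling the $L^2$-norm of harmonic representatives on $X[i]$, for which one exploits that both the Riemannian metric and the triangulation are pulled back from the compact base $X$ and therefore have uniformly bounded local geometry. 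Multiplying the sublinear growth in (i) by the polylogarithmic bound in (ii) yields the desired $o([G:G_i])$ estimate and completes the argument.
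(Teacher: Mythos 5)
Your overall plan coincides with the paper's: reduce the analytic/topological approximation conjecture for $X$ to the chain-complex version via the de Rham comparison, invoke Cheeger-M\"uller and Burghelea-Friedlander-Kappeler-McDonald to unify the analytic and topological cases, and then show the de Rham regulator terms vanish asymptotically. The paper's proof of Theorem~\ref{the:comparing_analytic_and_chain_complexes} follows exactly this outline.

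However, your quantitative input (ii) is too weak, and the arithmetic at the end does not close. You claim that each singular value of the comparison map contributes $O(\ln[G:G_i])$ to the regulator's logarithm, while (i) gives $b_n(X[i];\IR) = o([G:G_i])$. The product $o([G:G_i]) \cdot O(\ln[G:G_i])$ is \emph{not} $o([G:G_i])$ in general: for example, if $b_n(X[i];\IR) \sim [G:G_i]/\ln[G:G_i]$ the product is $\sim [G:G_i]$, which destroys the conclusion. What is actually needed --- and what the paper's Lemma~\ref{lem:bound_for_de_Rham} delivers --- is a \emph{uniform constant} bound: there are $L_1,L_2 > 0$ depending only on the compact base $M$ and its fixed triangulation $K$ (not on the covering) such that $L_1 \le \|\Int[i]^p\| \le L_2$, and dually for the inverse, so every singular value of $\Int[i]^p$ lies in $[L_1,L_2]$. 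With that, the $n$-th regulator is trapped between $\ln(L_1)\cdot b_n^{(2)}(M[i];\caln(G/G_i))$ and $\ln(L_2)\cdot b_n^{(2)}(M[i];\caln(G/G_i))$, and since $b_n^{(2)}(M[i];\caln(G/G_i)) = b_n(X[i];\IR)/[G:G_i]\to 0$, the regulator terms vanish. Establishing that $L_1,L_2$ are independent of the covering is the real content: one must trace through Dodziuk's construction of the Whitney and integration maps and check that every constant entering is local (determined by the geometry of $K$ and the metric on $M$, after possibly passing to a barycentric subdivision), which is precisely what the paper's lemma does. Your intuition that the uniform local geometry of the covers should be exploited is correct, but without sharpening $O(\ln[G:G_i])$ to $O(1)$ the final estimate is a genuine gap.
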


\begin{remark}[On the $L^2$-acyclicity assumption]
  \label{rem:On_the_L2-acyclicity_assumption_with_finite_index}
  Recall that in Theorem~\ref{the:comparing_torsion_and_FK-determinants} we require that
  $b_n^{(2)}(M;\caln(G)\bigr) = 0$ holds for $n \ge 0$.  This assumption is satisfied in
  many interesting cases.  It is possible that this assumption is not needed for
  Theorem~\ref{the:comparing_torsion_and_FK-determinants} to be true, but our proof does
  not work without it.
\end{remark}


\subsection{Integral torsion}
\label{sec:integral_torsion}

\begin{definition}[Integral torsion]\label{def:integral_torsion}
  Define for a finite $\IZ$-chain complex $D_*$ its \emph{integral torsion}
  \begin{eqnarray*}
    \rho^{\IZ}(D_*) 
    & := & 
    \sum_{n \ge 0} (-1)^n \cdot \ln\left(\bigl|\tors(H_{n}(D_*))\bigr|\right) 
    \quad \in \IR,
  \end{eqnarray*}
  where $\bigl|\tors(H_n(D_*))\bigr|$ is the order of the torsion subgroup of the finitely
  generated abelian group $H_n(D_*)$.

  Given a finite $CW$-complex $X$, define its \emph{integral torsion} $\rho^{\IZ}(X)$ by
  $\rho^{\IZ}(C_*(X))$, where $C_*(X)$ is its cellular $\IZ$-complex.
\end{definition}

\begin{remark}[Integral torsion and Milnor's torsion]
  \label{rem:Milnor_torsion}
  Let $C_*$ be a finite free $\IZ$-chain complex. Fix for each $n \ge 0$ a $\IZ$-basis for
  $C_n$ and for $H_n(C)/\tors(H_n(C))$. These induce $\IQ$-bases for $\IQ \otimes_{\IZ}
  C_n$ and $H_n\bigl(\IQ \otimes_{\IZ} C_*) \cong \IQ \otimes_{\IZ}
  \bigl(H_n(C)/\tors(H_n(C)\bigr)\bigr)$.  Then the torsion in the sense of
  Milnor~\cite[page~365]{Milnor(1966)} is $\rho^{\IZ}(C_*)$.
\end{remark}

The following two conjectures are motivated
by~\cite[Conjecture~1.3]{Bergeron-Venkatesh(2013)} and~\cite[Conjecture~11.3 on page~418
and Question~13.52 on page~478]{Lueck(2002)}.  They are true in special cases by
Theorem~\ref{the:Groups_containing_a_normal_infinite_nice_subgroups}.  The assumption that
$b_n^{(2)}(\overline{X};\caln(G))$ vanishes for all $n \ge 0$ ensures that the definition
of the topological $L^2$-torsion $\rho^{(2)}_{\topo}(\overline{X};\caln(G))$ makes sense
for $X$ also in the case of a connected finite $CW$-complex.

\begin{conjecture}[Approximation Conjecture for integral torsion]
  \label{con:Approximation_Conjecture_for_Milnor_torsion}
  Let $X$ be a finite connected $CW$-complex. Suppose that
  $b_n^{(2)}(\overline{X};\caln(G))$ vanishes for all $n \ge 0$. Then
  \[
  \rho^{(2)}_{\topo}(\overline{X};\caln(G)) = \lim_{i \to \infty}
  \frac{\rho^{\IZ}(X[i])}{[G:G_i]}.
  \]
\end{conjecture}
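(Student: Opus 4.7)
The strategy is to pass to chain complexes, apply a Franz--Milnor type regulator formula, and then invoke Conjecture~\ref{con:Approximation_conjecture_for_Fuglede-Kadison_determinants_with_finite_index}; the whole problem then reduces to the vanishing of a regulator term arising from the comparison of the integral lattice in homology with the inner product coming from the cellular basis.

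First I would reformulate both sides in chain-complex language. Fix a cellular $\IZ G$-basis for $C_*(\overline{X})$ and write $c_n$ for the $n$-th cellular differential. The assumption $b_n^{(2)}(\overline{X};\caln(G))=0$ for all $n$ guarantees, as noted after the statement of the conjecture, that the topological $L^2$-torsion is well defined and equals
\[
\rho^{(2)}_{\topo}(\overline{X};\caln(G)) \;=\; -\sum_{n \ge 1} (-1)^n \, \ln\bigl({\det}^{(2)}_{\caln(G)}(c_n^{(2)})\bigr).
\]
For each $i$ the cellular $\IZ$-chain complex $C_*(X[i]) = C_*(\overline{X}) \otimes_{\IZ G} \IZ[G/G_i]$ inherits a canonical cellular $\IZ$-basis, and its differentials $c_n[i]$ are precisely the matrices appearing in Conjecture~\ref{con:Approximation_conjecture_for_Fuglede-Kadison_determinants_with_finite_index}.

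Second, I would invoke the classical Franz--Milnor identity for a finite based free $\IZ$-chain complex $D_*$:
\[
-\sum_{n \ge 1} (-1)^n \, \ln\bigl({\det}'(d_n^{(2)})\bigr) \;=\; \rho^{\IZ}(D_*) \;-\; \sum_{n \ge 0} (-1)^n \, \ln R_n(D_*),
\]
where $R_n(D_*)$ is the covolume of the lattice $H_n(D_*)/\tors$ inside $H_n(D_* \otimes \IR)$, the inner product being induced from the $\IZ$-basis on $D_*$ via an orthogonal decomposition into cycles, boundaries and complements. The formula is verified directly on an adapted orthonormal basis together with the multiplicativity of $\det'$ under orthogonal direct sums and short exact sequences. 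Applying it to $D_* = C_*(X[i])$, dividing by $[G:G_i]$, and assuming Conjecture~\ref{con:Approximation_conjecture_for_Fuglede-Kadison_determinants_with_finite_index} for each $c_n$ (so that the alternating sum of $\det$-contributions converges to $\rho^{(2)}_{\topo}(\overline{X};\caln(G))$), Conjecture~\ref{con:Approximation_Conjecture_for_Milnor_torsion} becomes equivalent to the regulator vanishing
\[
\lim_{i \to \infty} \frac{1}{[G:G_i]}\sum_{n \ge 0} (-1)^n \, \ln R_n(X[i]) \;=\; 0.
\]

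The regulator vanishing is the main obstacle. In our $L^2$-acyclic setting Theorem~\ref{the:approx_Betti_char_zero} already gives $b_n(X[i];\IQ) = o([G:G_i])$, so the lattice $H_n(X[i];\IZ)/\tors$ has sublinear rank; this rules out the most naive source of blow-up but is far from sufficient, because the successive minima of that lattice may still grow exponentially in $[G:G_i]$, allowing individual $\ln R_n(X[i])$ to be of order $[G:G_i]$. What is needed is either a uniform geometric control on short vectors in each lattice, or a systematic cancellation across the alternating sum. In the Riemannian situation of Theorem~\ref{the:comparing_torsion_and_FK-determinants} the required control is indirectly available through the Cheeger--M\"uller theorem and heat-kernel asymptotics, whereas in the CW-complex generality of the present conjecture no such geometric input is present, and the question lies essentially at the heart of the Bergeron--Venkatesh program on torsion growth. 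A plausible intermediate step is to combine the uniform logarithmic estimate strategy alluded to in Theorem~\ref{the:the_uniform_logarithmic_estimate} with an algebraic reduction that rewrites the signed sum of regulators as a single covolume on a virtual homology class, where degree-by-degree growth may be forced to cancel.
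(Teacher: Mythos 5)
This statement is a conjecture, not a theorem: the paper does not prove it, and neither do you. What you have written is a reduction strategy, not a proof, so the right question is whether that reduction matches the one the paper itself offers. It does. Your ``Franz--Milnor identity'' is Lemma~\ref{lem:rho(2)-rhoZ} (with the small notational difference that in the paper $R_n$ is already defined as a logarithm, so the $\ln$ you write around $R_n$ should be dropped or $R_n$ redefined as a covolume, as you do informally). The equivalence you derive --- that, granting Conjecture~\ref{con:Approximation_conjecture_for_Fuglede-Kadison_determinants_with_finite_index}, the present conjecture is equivalent to the normalized signed sum of regulators tending to zero --- is precisely Remark~\ref{rem:Comparing_conjectures_for_L2-torsion_and_integral_torsion}. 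Your observation that $b_n(X[i];\IQ) = o([G:G_i])$ by Theorem~\ref{the:approx_Betti_char_zero} controls the rank but not the covolume of the lattice is also the right diagnosis of why this does not suffice, and your identification of the regulator vanishing as the genuine open core is consistent with the paper's Remark~\ref{no_relation_between_Laplace_and_homology}, which stresses that this conjecture needs strictly more input than the Fuglede--Kadison or analytic-torsion approximation conjectures.

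One point of imprecision worth flagging: your suggestion that in the Riemannian setting the needed regulator control ``is indirectly available through the Cheeger--M\"uller theorem and heat-kernel asymptotics'' via Theorem~\ref{the:comparing_torsion_and_FK-determinants} overstates what that theorem does. Theorem~\ref{the:comparing_torsion_and_FK-determinants} compares Fuglede--Kadison approximation with analytic/topological torsion approximation; the regulator that it controls (via Lemma~\ref{lem:bound_for_de_Rham}) is the de~Rham comparison between harmonic forms and cellular cochains, whose operator norm admits uniform two-sided bounds by geometry. It says nothing about the covolume of the integral lattice $H_n(X[i];\IZ)/\tors$ inside the harmonic inner product, which is the regulator appearing in $\rho^{\IZ}$. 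So even in the closed-manifold case, the regulator vanishing that you correctly identify as the obstacle remains untouched by Cheeger--M\"uller; this is precisely the additional difficulty the paper emphasizes. Apart from this, your reduction is the same as the paper's and your assessment of what is missing is sound.
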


The chain complex version of Conjectures~\ref{con:Approximation_Conjecture_for_Milnor_torsion} is

\begin{conjecture}[Approximating Fuglede-Kadison determinants and $L^2$-torsion
  by homology]
  \label{con:Approximating_Fuglede-Kadison_determinants_and_L2-torsion_by_homology}\
 
  \begin{enumerate}

  \item \label{con:Approximating_Fuglede-Kadison_determinants_and_L2-torsion_by_homology:ZG-maps}
    Let $f \colon \IZ G^r \to \IZ G^r$ be a $\IZ G$-homomorphism such that
    $f^{(2)} \colon L^2(G)^r \to L^2(G)^r$ is a weak isomorphism of Hilbert
    $\caln(G)$-modules.  Let $f[i] := f \otimes_{\IZ G_i} \IZ = f \otimes_{\IZ G} \IZ[G/G_i] \colon \IZ[G/G_i]^r \to \IZ[G/G_i]^r$
   be the induced $\IZ$-homomorphism.
   Then
    \begin{eqnarray*}
      {\det}_{\caln(G)}^{(2)}(f^{(2)})
      & = & 
      \lim_{i \to \infty} \; \bigl|\tors\bigl(\coker(f[i])\bigr)\bigr|^{1/[G:G_i]};
    \end{eqnarray*}

  \item \label{con:Approximating_Fuglede-Kadison_determinants_and_L2-torsion_by_homology:chain_complexes}
    Let $C_*$ be a finite based free $\IZ G$-chain complex.  Suppose that
    $C_*^{(2)}$ is $L^2$-acyclic, i.e., $b_p^{(2)}\bigl(C_*^{(2)}\bigr) = 0$ for all $p \ge 0$. 
   Let $C[i]_* := C_* \otimes_{\IZ G_i} \IZ = C_* \otimes_{\IZ G} \IZ[G/G_i]$
   be the induced finite based free $\IZ$-chain complex. Then
    \begin{eqnarray*}
      \rho^{(2)}\bigl(C_*^{(2)}\bigr) 
      & = &
      \lim_{i \to \infty} \;\frac{\rho^{\IZ}(C[i]_*)}{[G:G_i]};
    \end{eqnarray*}

  \end{enumerate}
\end{conjecture}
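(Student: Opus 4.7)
The plan is to reduce both parts to two independent ingredients: Conjecture~\ref{con:Approximation_conjecture_for_Fuglede-Kadison_determinants_with_finite_index} on the approximation of Fuglede-Kadison determinants (equivalently, its chain-complex form Conjecture~\ref{con:Approximation_conjecture_for_chain_complexes_with_finite_index}), together with an asymptotic vanishing of homological regulators.

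First I would argue that part~(1) is the special case of part~(2) applied to the two-term complex $C_*$ concentrated in degrees $0$ and $1$ with differential $f$. This $C_*$ is $L^2$-acyclic because $f^{(2)}$ is a weak isomorphism, and its $L^2$-torsion is $\rho^{(2)}(C_*^{(2)}) = \ln {\det}^{(2)}_{\caln(G)}(f^{(2)})$. On the other hand $\ker(f[i]) \subseteq \IZ[G/G_i]^r$ is a subgroup of a free abelian group and hence torsion-free, so $H_1(C[i]_*) = \ker(f[i])$ contributes nothing to $\rho^{\IZ}(C[i]_*)$, and we obtain $\rho^{\IZ}(C[i]_*) = \ln|\tors(\coker(f[i]))|$. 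Hence part~(2) applied to this particular $C_*$ is precisely the logarithmic form of part~(1).

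For part~(2), the key tool is the classical Milnor-type identity comparing $L^2$-torsion with integral torsion of a finite $\IZ$-chain complex (see Remark~\ref{rem:Milnor_torsion}). Fix for each $n$ a $\IZ$-basis of $H_n(D_*)/\tors(H_n(D_*))$, and compare the Hilbert space structure it induces on $H_n(D_* \otimes_{\IZ} \IC)$ with the one obtained by identifying this space with the orthogonal complement of $\im(d_{n+1}^{(2)})$ inside $\ker(d_n^{(2)})$. The logarithm of the change-of-basis determinant is a regulator $\ln R_n(D_*) \in \IR$, and one obtains the identity
\begin{eqnarray*}
\rho^{(2)}(D_*^{(2)};\caln(\trivial)) & = & \rho^{\IZ}(D_*) + \sum_{n \ge 0} (-1)^n \ln R_n(D_*).
\end{eqnarray*}
Applying this with $D_* = C[i]_*$, dividing by $[G:G_i]$, and letting $i \to \infty$, the left hand side converges to $\rho^{(2)}(C_*^{(2)};\caln(G))$ by Conjecture~\ref{con:Approximation_conjecture_for_chain_complexes_with_finite_index}, so what remains to be shown is the asymptotic vanishing of regulators
\begin{eqnarray*}
\lim_{i \to \infty} \frac{\ln R_n(C[i]_*)}{[G:G_i]} & = & 0 \qquad \text{for every } n \ge 0.
\end{eqnarray*}

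The hard part will be this regulator vanishing, which is essentially the content of Theorem~\ref{the:comparing_torsion_and_FK-determinants} (and more generally Theorem~\ref{the:comparing_analytic_and_chain_complexes}) in the $L^2$-acyclic case. The hypothesis $b_n^{(2)}(C_*^{(2)}) = 0$ combined with Theorem~\ref{the:approx_Betti_char_zero} forces $\dim_{\IC} H_n(C[i]_*;\IC) = o([G:G_i])$, which controls the \emph{number} of non-trivial eigenvalue contributions to $\ln R_n(C[i]_*)$; but controlling their \emph{size} requires ruling out atypically small positive eigenvalues of the combinatorial Laplacians on $C[i]_*^{(2)}$, precisely the sort of uniform logarithmic estimate discussed in Theorem~\ref{the:the_uniform_logarithmic_estimate}. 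So even granting Conjecture~\ref{con:Approximation_conjecture_for_Fuglede-Kadison_determinants_with_finite_index}, the passage from $L^2$-torsion of the Hilbert complex $C[i]_*^{(2)}$ to integral torsion of $C[i]_*$ requires genuinely new analytic input, and is intricately tied to the Bergeron--Venkatesh program on torsion growth in arithmetic groups.
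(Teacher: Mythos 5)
The statement you are addressing is stated as a conjecture and the paper does not prove it, so there is no ``paper's own proof'' to compare against; what you have written is a description of the reduction structure, and it matches the paper's perspective faithfully. Your reduction of part~(1) to part~(2) via the two-term complex $0\to\IZ G^r\xrightarrow{f}\IZ G^r\to 0$ is correct: $\ker(f[i])$ is a subgroup of a free abelian group and hence torsion-free, so $\rho^{\IZ}(C[i]_*)=\ln\bigl|\tors(\coker(f[i]))\bigr|$, and $L^2$-acyclicity of this complex is exactly the statement that $f^{(2)}$ is a weak isomorphism; the paper uses the same mechanism (through Lemma~\ref{lem:Fuglede-Kadison_and_tors_for_G_is_trivial}) in proving Theorem~\ref{the:about_approximating_by_IZ-torsion}. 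Your decomposition of part~(2) as the Approximation Conjecture~\ref{con:Approximation_conjecture_for_chain_complexes_with_finite_index} for Fuglede-Kadison determinants plus the asymptotic vanishing of the regulators, via the Milnor-type identity, is exactly Lemma~\ref{lem:rho(2)-rhoZ} together with Remark~\ref{rem:Comparing_conjectures_for_L2-torsion_and_integral_torsion} (any two of the three statements listed there imply the third).

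One point in your discussion is misleading and should be corrected. The regulator vanishing you need is \emph{not} ``essentially the content of Theorem~\ref{the:comparing_torsion_and_FK-determinants}.'' That theorem (via Theorem~\ref{the:comparing_analytic_and_chain_complexes}) controls a different regulator, the one coming from the $L^2$-de Rham isomorphism relating harmonic forms to simplicial $L^2$-cohomology, and the argument there hinges on Lemma~\ref{lem:bound_for_de_Rham}: the operator norms of the de Rham maps and their inverses are uniformly bounded by constants $L_1,L_2$ that depend only on $M$ and the chosen triangulation, not on $i$; this turns the logarithm of the relevant Fuglede-Kadison determinant into something bounded in absolute value by $\max\{|\ln L_1|,|\ln L_2|\}\cdot b_p^{(2)}(M[i];\caln(G/G_i))$, which goes to zero after normalizing. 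No analogous a priori uniform bound exists for the homological regulators $R_n(C[i]_*)$: the comparison map between the inner product coming from an integral basis of $H_n(C[i]_*)_f$ and the combinatorial harmonic inner product on $H_n(C[i]_*;\IC)$ has no uniform norm control, and bounding its small singular values is precisely the kind of uniform spectral estimate near zero discussed in Theorem~\ref{the:the_uniform_logarithmic_estimate}, which is open. You do reach this correct diagnosis at the end of your write-up, so the logical structure of the reduction is sound; but the comparison with Theorem~\ref{the:comparing_torsion_and_FK-determinants} should be presented as an analogy that is broken precisely by the absence of uniform operator-norm bounds, not as the same argument.
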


In Conjecture~\ref{con:Approximation_Conjecture_for_Milnor_torsion}
and Conjecture~\ref{con:Approximating_Fuglede-Kadison_determinants_and_L2-torsion_by_homology}
it is necessary to demand that $f$ is a weak isomorphism and that $C_*$ and
$X$ are $L^2$-acyclic, otherwise there are counterexamples, see Remark~\ref{rem:condition_L2_acyclic_necessary}.

Here are some results about the conjecture above which will be proved in
Section~\ref{sec:Proof_of_Theorem_ref(the:about_approximating_by_IZ-torsion)}.

\begin{theorem}\label{the:about_approximating_by_IZ-torsion}\
 
  \begin{enumerate}

  \item \label{the:about_approximating_by_IZ-torsion:inequality} Let $f \colon
    \IZ G^r \to \IZ G^s$ be a $\IZ G$-homomorphism. Then
\begin{eqnarray*}
  \ln\bigl({\det}_{\caln(G)}^{(2)}(f^{(2)})\bigr)
  & \ge  & 
  \limsup_{i \to \infty} \frac{\bigl|\tors\bigl(\coker(f[i])\bigr)\big|}{[G:G_i]};
\end{eqnarray*}

\item \label{the:about_approximating_by_IZ-torsion:acyclic_in_each_degree}
  Suppose in the situation of
  assertion~\eqref{con:Approximating_Fuglede-Kadison_determinants_and_L2-torsion_by_homology:ZG-maps}
  of  Conjecture~\ref{con:Approximating_Fuglede-Kadison_determinants_and_L2-torsion_by_homology}
  that $f\otimes_{\IZ} \id_{\IQ} \colon \IQ[G]^r \to \IQ[G]^r$ is
  bijective. Then the conclusion appearing there is true.

  Suppose in the situation of
  assertion~\eqref{con:Approximating_Fuglede-Kadison_determinants_and_L2-torsion_by_homology:chain_complexes}
  of  Conjecture~\ref{con:Approximating_Fuglede-Kadison_determinants_and_L2-torsion_by_homology}
  that $H_p(C_*) \otimes_{\IZ} \IQ = 0$ for all $p \ge 0$. Then the
  conclusion appearing there is true;  

\item \label{the:about_approximating_by_IZ-torsion:G_is_Z} If $G$ is the
  infinite cyclic group $\IZ$, then
  Conjecture~\ref{con:Approximating_Fuglede-Kadison_determinants_and_L2-torsion_by_homology}
  is true.

\end{enumerate}

\end{theorem}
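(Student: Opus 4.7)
\emph{Part~(1).} Combine Theorem~\ref{the:inequality_det_det}, which yields
\[
\ln\bigl({\det}_{\caln(G)}^{(2)}(f^{(2)})\bigr) \;\ge\; \limsup_{i\to\infty}\frac{\ln\bigl({\det}'(f[i]^{(2)})\bigr)}{[G:G_i]},
\]
with the finite-dimensional bound ${\det}'(A\otimes_{\IZ}\IC)\ge\bigl|\tors\coker(A)\bigr|$ valid for every integer matrix $A$. The latter follows from Cauchy--Binet applied to $A^{*}A$, giving $({\det}'A)^{2}=\sum_{|S|=|T|=k}(\det A_{S,T})^{2}$ with $k=\rk A$, combined with the elementary divisor identity $|\tors\coker A|=\gcd_{S,T}|\det A_{S,T}|$; a sum of squares of integers always dominates the square of their gcd. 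Specializing to $A=f[i]$ and passing to $\limsup$ closes Part~(1).

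\emph{Part~(2).} In the single map case, a two-sided inverse of $f$ in $M_{r}(\IQ G)$ projects under $\IQ G\twoheadrightarrow\IQ[G/G_{i}]$ to an inverse of $f[i]\otimes_{\IZ}\IQ$, so $\coker f[i]$ is finite, $|\tors\coker f[i]|=|\coker f[i]|=|\det f[i]|$, and ${\det}'(f[i]^{(2)})=|\det f[i]|$; the assertion is then the equality stated in Remark~\ref{rem:complicated_case_for_determinants-necessary}. For the chain complex case with $H_{p}(C_{*})\otimes\IQ=0$ in every degree, the complex $C_{*}\otimes_{\IZ G}\IQ G$ is a bounded acyclic complex of finitely generated free $\IQ G$-modules and therefore admits a $\IQ G$-chain contraction $s$; clearing denominators by a scalar $N\in\IZ_{\ge 1}$, the standard even--odd fold $A:=Nc+Ns\colon C_{\mathrm{ev}}\to C_{\mathrm{odd}}$ is a $\IZ G$-linear map between free modules of equal rank (since $\chi(C_{*})=0$) whose scalar extension to $\IQ G$ is bijective. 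The single map case applied to $A$ approximates ${\det}_{\caln(G)}^{(2)}(A^{(2)})$ by $|\tors\coker A[i]|^{1/[G:G_{i}]}$, and by standard Whitehead-torsion bookkeeping one has $\ln{\det}^{(2)}(A^{(2)})=\rho^{(2)}(C_{*}^{(2)})$ and $\ln|\tors\coker A[i]|=\rho^{\IZ}(C[i]_{*})$ up to an additive term in $\ln N$ and $\chi(C_{*})$ that cancels after division by $[G:G_{i}]$; the Milnor regulator that usually intervenes between $\rho^{\IZ}$ and the Hilbert chain torsion vanishes because $C[i]_{*}$ is itself rationally acyclic (the chain contraction $s$ descends).

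\emph{Part~(3).} Schmidt's theorem (Remark~\ref{con:status_of_Approx_Conj_Fuglede-Kadison}) establishes Conjecture~\ref{con:Approximation_conjecture_for_Fuglede-Kadison_determinants_with_finite_index}, and equivalently Conjecture~\ref{con:Approximation_conjecture_for_chain_complexes_with_finite_index}, for $G=\IZ$, providing
\[
\rho^{(2)}(C_{*}^{(2)};\caln(\IZ)) \;=\; \lim_{i\to\infty}\frac{\rho^{(2)}(C[i]_{*}^{(2)};\caln(\{1\}))}{[G:G_{i}]}.
\]
What remains is to show the Milnor regulator $R(C[i]_{*}):=\rho^{(2)}(C[i]_{*}^{(2)};\caln(\{1\}))-\rho^{\IZ}(C[i]_{*})$, i.e.\ the logarithm of the covolume of the free parts of $\bigoplus_{n}H_{n}(C[i]_{*})$ with respect to the cellular basis, is $o([G:G_{i}])$. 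Exploiting that $\IQ[t,t^{-1}]$ is a PID and $C_{*}^{(2)}$ is $L^{2}$-acyclic, each $H_{n}(C_{*}\otimes\IQ)$ is a finite-dimensional $\IQ[t,t^{-1}]$-torsion module; decomposing into cyclic summands $\IQ[t,t^{-1}]/(p(t))$ reduces the estimate on $R(C[i]_{*})$ to a computation on each summand via the resultant identity
\[
\det\bigl(p(t)\colon \IZ[t]/(t^{n_{i}}-1)\to \IZ[t]/(t^{n_{i}}-1)\bigr) \;=\; \prod_{\zeta^{n_{i}}=1}p(\zeta),
\]
combined with the classical convergence $n_{i}^{-1}\sum_{\zeta^{n_{i}}=1}\log|p(\zeta)|\to\int_{0}^{1}\log|p(e^{2\pi i\theta})|\,d\theta$ with effective error bounds.

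The principal obstacle is this last step in Part~(3): controlling $R(C[i]_{*})$ when $p(t)$ has roots near or on the unit circle, where Lehmer-type lower bounds on cyclotomic values of integer polynomials are needed to prevent individual covolumes from growing too fast relative to $[G:G_{i}]$, and where subtle cancellations across the alternating sum over degrees must be verified rather than merely bounded summand-by-summand.
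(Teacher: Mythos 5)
Your Parts (1) and (2) are essentially correct, but each takes a different route from the paper. In Part (1) you replace the paper's appeal to Lemma~\ref{lem:Fuglede-Kadison_and_tors_for_G_is_trivial} by a self-contained Cauchy--Binet argument: ${\det}'(A\otimes_{\IZ}\IC)^{2}=\sum_{|S|=|T|=k}|\det A_{S,T}|^{2}\ge d_k(A)^{2}=|\tors\coker A|^{2}$ for $k=\rk A$, which establishes exactly the same finite-dimensional bound before applying Theorem~\ref{the:inequality_det_det}. This is clean and correct. In Part (2) the paper proves the chain complex statement directly: $H_p(C_*)\otimes\IQ=0$ forces $C_*\otimes_{\IZ G}L^1(G)$ to be acyclic, so Corollary~\ref{cor:l1-chain_equivalence}~(1) applies, and since $H_p(C[i]_*)$ is finite the regulator terms in Lemma~\ref{lem:rho(2)-rhoZ} vanish. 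You instead fold a $\IQ G$-chain contraction after clearing denominators and reduce the chain complex statement to the single map statement, invoking Remark~\ref{rem:complicated_case_for_determinants-necessary} (which is indeed a consequence of Theorem~\ref{the:invertible_matrices_over_l1(G)}, since a matrix invertible over $\IQ G$ is invertible over $L^{1}(G)$). This works, but the ``standard Whitehead-torsion bookkeeping'' hides genuine content: one must verify that both $\rho^{(2)}$ and $\rho^{\IZ}$ are computed by the logarithm of the determinant of the \emph{same} fold $(c+s)_{\mathrm{odd}}$, the latter via Lemma~\ref{lem:rho(2)-rhoZ} with vanishing regulators, so that the $\rk(C_{\mathrm{ev}})\cdot\ln N$ and $[G:G_i]\cdot\rk(C_{\mathrm{ev}})\cdot\ln N$ terms cancel upon division by $[G:G_i]$.

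Part (3) has a genuine gap, and you identify it honestly but do not close it. Reducing, via Schmidt's theorem and Lemma~\ref{lem:rho(2)-rhoZ}, to showing $\sum_n(-1)^n R_n(C[i]_*)=o([G:G_i])$ is a correct reformulation, but the subsequent plan founders: the cyclic decomposition of $H_*(C_*\otimes\IQ)$ over $\IQ[t,t^{-1}]$ does not lift to a $\IZ[\IZ]$-direct sum, so there is no summand-by-summand estimate of the regulators, and even if there were, the cyclotomic factors and the factors with roots on (not near) the unit circle contribute precisely the covolume growth you cannot control without Diophantine input. The tool you are missing is Lemma~\ref{lem:invariance_of_homological_conjecture_under_Q-homotopy_equivalence}: if $C_*\otimes\IQ$ and $D_*\otimes\IQ$ are $\IQ G$-chain homotopy equivalent and $C_*^{(2)}$ is $L^2$-acyclic, then $\rho^{(2)}(D_*^{(2)})-\rho^{(2)}(C_*^{(2)})=\lim_i\bigl(\rho^{\IZ}(D[i]_*)-\rho^{\IZ}(C[i]_*)\bigr)/[G:G_i]$. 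Its proof trades the regulator control you are stuck on for a mapping-cone argument whose only non-acyclic error term is dominated by $\rk_\IZ H_p(C'[i]_*)/[G:G_i]\to 0$, i.e.\ by Theorem~\ref{the:approx_Betti_char_zero}, with no Lehmer-type estimate. With this lemma in hand one replaces $C_*$ by a direct sum of elementary two-term complexes $\IZ[\IZ]\xrightarrow{p_{k,j}}\IZ[\IZ]$; the cyclotomic summands are disposed of by the positivity argument of Part~(1) (both sides are then $0$), and the non-cyclotomic summands have $\coker p_{k,j}[i]$ finite, so the regulator question disappears and the claim reduces to the Mahler-measure convergence that you already invoked as Schmidt's theorem.
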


Assertion~\eqref{the:about_approximating_by_IZ-torsion:acyclic_in_each_degree} of 
Theorem~\ref{the:about_approximating_by_IZ-torsion} is generalized in
Lemma~\ref{lem:invariance_of_homological_conjecture_under_Q-homotopy_equivalence}.
Assertion~\eqref{the:about_approximating_by_IZ-torsion:G_is_Z} of 
Theorem~\ref{the:about_approximating_by_IZ-torsion} has already been 
proved by Bergeron-Venkatesh~\cite[Theorem~7.3]{Bergeron-Venkatesh(2013)}.
Applied to cyclic coverings of a knot complement this
reduces to a theorem of Silver-Williams~\cite[Theorem~2.1]{Silver-Williams(2002)}.
An extension of the results in this paper is given by
Raimbault~\cite{Raimbault(2012abelian)}.


\typeout{-------- Section 8: On the relation of $L^2$-torsion and integral torsion ------------------------}

\section{On the relation of $L^2$-torsion and integral torsion}
\label{sec:On_the_relation_of_L2-torsion_and_integral_torsion}

Let $C_*$ be a finite based free $\IZ$-chain complex, for instance the cellular chain
complex $C_*(X) $ of a finite $CW$-complex.  We have introduced in
Subsection~\ref{sec:L2-torsion} the $L^2$-chain complex $C_*^{(2)} = C_* \otimes_{\IZ} \IC $ 
with differentials $c_n^{(2)} := c_n \otimes_{\IZ} \id_{\IC}$  and its
$L^2$-torsion $\rho^{(2)}(C_*^{(2)};\caln(\{1\}))$.

Let $H_n^{(2)}(C_*^{(2)})$ be the
$L^2$-homology of $C_*^{(2)}$ with respect to the von Neumann algebra $\caln(\{1\}) = \IC$. The
underlying complex vector space is the homology $H_n(C_* \otimes_{\IZ} \IC$) of
$C_*\otimes_{\IZ} \IC$, but it comes now with the structure of a Hilbert
space.  For the reader's convenience we recall this Hilbert space
structure.  Let
\[\Delta_n^{(2)} = \bigl(c_n^{(2)}\bigr)^*\circ c_n^{(2)} + c_{n+1}^{(2)} \circ
\bigl(c_{n+1}^{(2)}\bigr)^* \colon C_n^{(2)} \to C_n^{(2)}\] be the associated
Laplacian.  Equip $\ker(\Delta_n^{(2)}) \subseteq C_n^{(2)}$ with the induced
Hilbert space structure.  Equip $H_n^{(2)}(C_*^{(2)})$ with the Hilbert space
structure for which the obvious $\IC$-isomorphism $\ker(\Delta_n^{(2)}) \to
H_n^{(2)}(C_*^{(2)})$ becomes an isometric isomorphism. This is the same as the 
Hilbert subquotient structure with respect to the inclusion $\ker\bigl(c_n^{(2)}\bigr) \to C_n^{(2)}$ and 
the projection $\ker\bigl(c_n^{(2)}\bigr) \to H_n^{(2)}(C_*^{(2)})$.

\begin{notation} \label{not_M_f}
  If $M$ is a finitely generated abelian group, define
  \begin{eqnarray*}
    M_f & := & M/ \tors(M).
  \end{eqnarray*}
\end{notation}

We have the canonical $\IC$-isomorphism
\begin{eqnarray}
\alpha_n \colon \bigl(H_n(C_*)_f\bigr)^{(2)} := \bigl(H_n(C_*)/\tors(H_n(C_*)\bigr) \otimes_{\IZ} \IC 
& \xrightarrow{\cong}  & 
H_n^{(2)}(C_*^{(2)}).
\label{iso_alpha}
\end{eqnarray}
Choose a $\IZ$-basis on $H_n(C_*)_f$. This  and  the
standard Hilbert space structure on $\IC$ induces a Hilbert space structure on
$\bigl(H_n(C_*)_f\bigr)^{(2)}$. Now we can consider the logarithm of the Fuglede-Kadison 
determinant
\begin{eqnarray}
R_n(C_*)
& = & 
\ln\left({\det}_{\caln(\{1\})}\bigl(\alpha_n \colon \bigl(H_n(C_*)_f)^{(2)}
\to  H_n^{(2)}(C_*^{(2)})\bigr)\right),
\label{nth-regulator}
\end{eqnarray}
which is sometimes called the \emph{$n$th regulator}.
It is independent of the choice of
the $\IZ$-basis of $H_n(C_*)_f$, since the absolute value of the determinant of an invertible
matrix over $\IZ$ is always $1$.  If $\{b_1, b_2, \ldots ,b_r\}$ is an integral basis
of $H_n(C_*)_f$ and we equip $H_n(C_*)_f \otimes_{\IZ} \IC$ with an inner product
$\langle -,-\rangle$ for which the map $\alpha_n$ of~\eqref{iso_alpha} becomes an isometry, then
\[
R_n(C_*)
= 
\frac{\ln\big({\det}_{\IC}(B)\bigr)}{2},
\]
where $B$ is the Gram-Schmidt matrix $\bigl(\langle b_i,b_j\rangle\bigr)_{i,j}$.

The next result is proved for instance in~\cite[Lemma~2.3]{Lueck(2013l2approxfib)}.

\begin{lemma} \label{lem:rho(2)-rhoZ}
Let $C_*$ be a finite based free $\IZ$-chain complex. Then
\begin{eqnarray*}
\rho^{\IZ}(C_*)  - \rho^{(2)}\bigl(C_*^{(2)};\caln(\{1\})\bigr) 
& = &
\sum_{n \ge 0} (-1)^n \cdot R_n(C_*).
\end{eqnarray*}
\end{lemma}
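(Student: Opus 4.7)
The plan is to interpret both $\rho^{\IZ}(C_*)$ and $\rho^{(2)}(C_*^{(2)};\caln(\{1\}))$ as two different evaluations of the classical Reidemeister--Milnor torsion of the finite based $\IC$-chain complex $C_*^{(2)} = C_* \otimes_{\IZ} \IC$, and then to read off their difference from the change-of-basis formula for Reidemeister torsion applied to the homology bases.

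Fix a $\IZ$-basis of each $H_n(C_*)_f$. Tensoring with $\IC$ yields one $\IC$-basis of the homology $H_n(C_*^{(2)}) \cong H_n(C_*)_f \otimes_{\IZ} \IC$, namely the \emph{integer basis}. Transported through $\alpha_n$ of~\eqref{iso_alpha}, the harmonic summand $\ker(\Delta_n^{(2)}) \cong H_n^{(2)}(C_*^{(2)})$ provides a second $\IC$-basis (any orthonormal basis of $\ker(\Delta_n^{(2)})$), the \emph{harmonic basis}. By definition $R_n(C_*) = \ln \det_{\caln(\{1\})}(\alpha_n)$ is precisely the logarithm of the Fuglede--Kadison determinant of the change of basis from the integer basis to the harmonic basis of $H_n^{(2)}(C_*^{(2)})$.

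Next I would recall two ways to compute the Reidemeister torsion of $C_*^{(2)}$ equipped with the orthonormal $\IC$-basis coming from the fixed $\IZ$-basis of $C_*$. If one uses the integer homology basis, the classical formula recovers the Milnor torsion with the integer bases on $C_n$ and on $H_n(C_*)_f$, and this by Remark~\ref{rem:Milnor_torsion} equals $\rho^{\IZ}(C_*)$. If instead one uses the harmonic homology basis, one may compute the torsion via the orthogonal Hodge decomposition $C_n^{(2)} = \overline{\im(c_{n+1}^{(2)})} \oplus \ker(\Delta_n^{(2)}) \oplus (\ker c_n^{(2)})^{\perp}$: the restriction $c_n^{(2)}\colon (\ker c_n^{(2)})^{\perp}\to \overline{\im(c_n^{(2)})}$ is an isomorphism whose absolute determinant in orthonormal bases equals $\det'(c_n^{(2)})$, and the transition contributions between $\overline{\im(c_{n+1}^{(2)})}$ and its copy appearing as $\overline{\im(c_n^{(2)})}$ in adjacent degrees cancel telescopically in the alternating sum. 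One obtains that the torsion in this basis equals $-\sum_{n\ge 1}(-1)^n \ln \det'(c_n^{(2)}) = \rho^{(2)}(C_*^{(2)};\caln(\{1\}))$.

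Applying the standard change-of-basis formula for Reidemeister torsion, which states that replacing the homology basis in degree $n$ by one related by a transition matrix $M_n$ adds $\sum_n (-1)^n \ln |\det(M_n)|$ to the torsion, with $M_n = \alpha_n$, yields
\[
\rho^{\IZ}(C_*) - \rho^{(2)}(C_*^{(2)};\caln(\{1\})) = \sum_{n \ge 0} (-1)^n \cdot R_n(C_*),
\]
as claimed. The main obstacle is the verification that the Hodge-decomposition computation really delivers $-\sum (-1)^n \ln \det'(c_n^{(2)})$ as a Reidemeister torsion with harmonic homology basis; this is a careful but standard identification that hinges on the telescoping cancellation of the orthonormal transition matrices between $\overline{\im(c_{n+1}^{(2)})} \subseteq C_n^{(2)}$ and $\overline{\im(c_n^{(2)})} \subseteq C_{n-1}^{(2)}$ as $n$ varies. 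Once this identification is in hand, the remaining step is the purely formal change-of-basis manipulation, and the sign conventions match since the harmonic inner product makes $\alpha_n$ into a genuine transition from the integer basis to an orthonormal one.
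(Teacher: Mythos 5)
Your strategy is correct: realize both $\rho^{\IZ}(C_*)$ and $\rho^{(2)}\bigl(C_*^{(2)};\caln(\{1\})\bigr)$ as (logarithms of) the Reidemeister--Milnor torsion of $C_*^{(2)}$ with the same preferred orthonormal chain bases but with two different homology bases --- the integer basis from Remark~\ref{rem:Milnor_torsion} and the harmonic orthonormal basis --- and then read off the difference from the change-of-homology-basis formula, which yields precisely the alternating sum of the regulators $R_n(C_*)$. Note that the paper does not reprove this Lemma itself but refers to \cite[Lemma~2.3]{Lueck(2013l2approxfib)}, so I cannot compare line by line; your route is, however, the natural one and the conclusion is right, as one can also check directly against the example of Section~\ref{sec:elementary_example_about_L2-torsion_and_integral_torsion}.

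One inaccuracy worth correcting: the ``telescoping cancellation'' you invoke in the harmonic computation does not actually occur, nor is it needed. Choose in Milnor's recipe $\hat b_n$ to be an orthonormal basis of $\ker(c_n^{(2)})^{\perp}$ and $\hat h_n$ an orthonormal basis of $\ker(\Delta_n^{(2)})$. If $\beta_n$ denotes an orthonormal basis of $\im(c_{n+1}^{(2)})$, then $\beta_n \cup \hat h_n \cup \hat b_n$ is an orthonormal basis of $C_n^{(2)}$, so the transition from the preferred basis of $C_n^{(2)}$ to the Milnor basis $c_{n+1}^{(2)}(\hat b_{n+1}) \cup \hat h_n \cup \hat b_n$ factors as a unitary (determinant of modulus $1$) followed by a block-diagonal matrix that is the identity on the $\hat h_n$- and $\hat b_n$-blocks and equals the restriction $c_{n+1}^{(2)}\colon \ker(c_{n+1}^{(2)})^{\perp} \to \im(c_{n+1}^{(2)})$, written in orthonormal bases, on the remaining block. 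Hence $|\tau_n| = {\det}'(c_{n+1}^{(2)})$ outright, nothing cancels between adjacent degrees, and the alternating product gives $\rho^{(2)}\bigl(C_*^{(2)};\caln(\{1\})\bigr)$ after reindexing. With $\alpha_n(b_i) = \sum_j (M_n)_{ji} h_j$ one then has $R_n(C_*) = \ln\bigl|\det M_n\bigr|$ and the change-of-basis formula gives $\rho^{\IZ}(C_*) = \rho^{(2)}\bigl(C_*^{(2)};\caln(\{1\})\bigr) + \sum_{n \ge 0} (-1)^n \ln\bigl|\det M_n\bigr|$, which is the Lemma.
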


\begin{remark}[Comparing conjectures for $L^2$-torsion and integral torsion] 
\label{rem:Comparing_conjectures_for_L2-torsion_and_integral_torsion}
 Consider the following three statements:

  \begin{enumerate}

   \item \label{rem:Comparing_conjectures_for_L2-torsion_and_integral_torsion:L(2)}
  Every finite based free $\IZ G$-chain complex $C_*$ with $b_n^{(2)}\bigl(C_*^{(2)}\bigr) = 0$ for all $n \ge 0$
satisfies
\[
\rho^{(2)}\bigl(C_*^{(2)}\bigr) = \lim_{i \to \infty} \;\frac{\rho^{(2)}\bigl(C[i]_*^{(2)}\bigr)}{[G:G_i]};
\]

  \item\label {rem:Comparing_conjectures_for_L2-torsion_and_integral_torsion:Z}
Every finite based free $\IZ G$-chain complex $C_*$ with $b_n^{(2)}\bigl(C_*^{(2)}\bigr) = 0$ for all $n \ge 0$
satisfies assertion~\eqref{con:Approximating_Fuglede-Kadison_determinants_and_L2-torsion_by_homology:chain_complexes} 
  of Conjecture~\ref{con:Approximating_Fuglede-Kadison_determinants_and_L2-torsion_by_homology}, i.e.,
  \begin{eqnarray*}
      \rho^{(2)}\bigl(C_*^{(2)}\bigr) 
      & = &
      \lim_{i \to \infty} \;\frac{\rho^{\IZ}(C[i]_*)}{[G:G_i]};
    \end{eqnarray*}

  \item \label{rem:Comparing_conjectures_for_L2-torsion_and_integral_torsion:alpha}
    Every finite based free $\IZ G$-chain complex $C_*$ with $b_n^{(2)}\bigl(C_*^{(2)}\bigr) = 0$ for all $n \ge 0$ satisfies
    \begin{eqnarray*}
      \lim_{i \to \infty}\; \sum_{n \ge 0} (-1)^n \cdot  
      \frac{R_n(C[i]_*)}{[G:G_i]}
      & = & 0,
    \end{eqnarray*}
\end{enumerate}
By Lemma~\ref{lem:rho(2)-rhoZ} all of these statements are true if any  two of them hold.
\end{remark}

\begin{lemma} \label{lem:R_0_is_okay}
Let $X$ be an oriented closed smooth manifold of dimension $d$.
Fix a smooth triangulation.  Let $s_n$ be the number of $n$-simplices of
the triangulation of $X$. Then we get
\begin{eqnarray*}
R_d(C_*(X[i])) & = & \frac{\ln([G:G_i] \cdot s_d)}{2};
\\
R_0(C_*(X[i])) & = & -\frac{\ln([G:G_i] \cdot s_0)}{2};
\\
\lim_{i \to \infty} \frac{R_n(C_*(X[i]))}{[G:G_i]}  &= & 0\quad \text{for}\; n = 0,d.
\end{eqnarray*}
\end{lemma}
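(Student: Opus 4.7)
The third assertion follows formally from the first two, because $\ln([G:G_i]\cdot s_n)/[G:G_i] \to 0$ as the logarithm grows sublinearly. So the content lies in the closed-form computation of $R_d(C_*(X[i]))$ and $R_0(C_*(X[i]))$. I may assume throughout that $X[i]$ is connected (otherwise one treats each component separately and sums up the contributions). Then $H_d(X[i];\IZ)\cong \IZ$ is generated by the fundamental class $[X[i]]$, and $H_0(X[i];\IZ)\cong \IZ$ is generated by the class $[v]$ of any vertex. Since both $(H_n(C_*)_f)^{(2)}$ and $H_n^{(2)}(C_*^{(2)})$ are then one-dimensional complex Hilbert spaces for $n \in \{0,d\}$, the Fuglede-Kadison determinant defining $R_n$ reduces to an absolute value, and $R_n(C_*(X[i]))$ equals the logarithm of the $C_n^{(2)}$-norm of the harmonic representative of the chosen $\IZ$-basis vector (which has norm $1$ on the other side).

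For $R_d$: since $c_{d+1}=0$ we have $\Delta_d^{(2)} = (c_d^{(2)})^{\ast}\,c_d^{(2)}$, so $\ker(\Delta_d^{(2)}) = \ker(c_d^{(2)})$, and the harmonic representative of $[X[i]]$ is just the fundamental cycle $\sum_\sigma \epsilon_\sigma\sigma$, with $\epsilon_\sigma\in\{\pm 1\}$ determined by orientation, the sum ranging over all $[G:G_i]\cdot s_d$ top-dimensional simplices of $X[i]$. Its $C_d^{(2)}$-norm with respect to the orthonormal basis of simplices is $\sqrt{[G:G_i]\cdot s_d}$. Hence $R_d(C_*(X[i])) = \tfrac{1}{2}\ln([G:G_i]\cdot s_d)$.

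For $R_0$: the kernel of $\Delta_0^{(2)} = c_1^{(2)}\,(c_1^{(2)})^{\ast}$ coincides with the orthogonal complement of $\im(c_1^{(2)})$ in $C_0^{(2)}$, and for a connected finite complex this is the one-dimensional line spanned by $\mathbf{1} := \sum_w w$, the sum being over all $N := s_0\cdot [G:G_i]$ vertices. The harmonic representative of $[v]$ is the orthogonal projection of $v$ onto this line, namely $\tfrac{1}{N}\mathbf{1}$, whose $C_0^{(2)}$-norm equals $1/\sqrt{N}$. Consequently $R_0(C_*(X[i])) = \ln(N^{-1/2}) = -\tfrac{1}{2}\ln([G:G_i]\cdot s_0)$. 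The only nontrivial step in this plan is the identification of the space of harmonic $0$-chains with the constants, which is the familiar fact that the combinatorial Laplacian of a connected finite complex has a one-dimensional kernel spanned by the constant function; everything else is bookkeeping in a one-dimensional Hilbert space.
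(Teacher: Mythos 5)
Your proof is correct and takes essentially the same route as the paper's: identify the one-dimensional harmonic subspace in degrees $0$ and $d$, compute the norm of the harmonic representative of the chosen integral generator, and observe the sublinear growth of the logarithm. The only cosmetic difference is in degree $0$: the paper starts from the explicit harmonic cycle $\sigma[i]_0 = \sum_w w$, checks harmonicity by orthogonality to $1$-boundaries, and reads off via the augmentation that it is $[G:G_i]\,s_0$ times the generator; you instead describe $\ker(\Delta_0^{(2)})$ as the line of constants and take the orthogonal projection of a vertex onto it — both yield the same representative $\frac{1}{N}\mathbf{1}$ and the same answer.
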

\begin{proof}
The  fundamental class $[X[i]]$ is a generator of the infinite cyclic group $H_d(X[i];\IZ)$, and
is represented by the cycle $\sigma[i]_d$ in $C_d(X[i])$ given by the
sum over all $d$-dimensional simplices. The number of $d$-simplices  in $X[i]$ is $[G:G_i] \cdot s_d$.
If we consider $\sigma[i]_d$ as an element in $C_d^{(2)}(X[i])$, it belongs to the kernel of
$\Delta[i]_d^{(2)}$ and has norm $\sqrt{[G:G_i] \cdot s_d}$.
Hence $R_d(C_*(X[i]))$, which is the logarithm of the norm of $\sigma[i]_d$ considered as an element
in $C_d^{(2)}(X[i])$, is $\frac{\ln([G:G_i] \cdot s_d)}{2}$.

Consider the element $\sigma[i]_0 \in C_0(X[i])$ given by the sum of the $0$-simplices
of $X[i]$.  The number of $0$-simplices in $X[i]$ is $[G:G_i] \cdot s_0$.
The element $\sigma[i]_0$ considered as an element in $C_0^{(2)}(X[i])$ has norm $\sqrt{[G:G_i] \cdot s_0}$
and lies in the kernel of $\bigl(c[i]_1^{(2)}\bigr)^*$ and hence in the kernel of $\Delta[i]_0^{(2)}$
since it is orthogonal to any element of the shape $e_1$ - $e_0$ for $0$-simplices $e_0$ and $e_1$
and hence to the image of $c_1^{(2)} \colon C_1^{(2)}(X[i]) \to C_0^{(2)}(X[i])$.
The augmentation map $C_0(X[i]) \to \IZ$ sending  a $0$-simplex to $1$
induces an isomorphism $H_0(C_*(X[i])) \xrightarrow{\cong} \IZ$.
This shows that $\sigma[i]_0$  represents $[G:G_i] \cdot s_0$ times the generator of $H_0(X[i];\IZ)$. Hence $R_0(C_*(X[i]))$,
which is the logarithm of the norm of $\frac{\sigma[i]_0}{[G:G_i] \cdot s_0}$, is $-\frac{\ln([G:G_i] \cdot s_0)}{2}$. 

In particular we get $\lim_{i \to \infty} \frac{R_n(C_*(X[i]))}{[G:G_i]} = 0$ for $n = 0,d$.
\end{proof}


\typeout{-------- Section 9: Elementary example about $L^2$-torsion and integral torsion  ----------------}

\section{Elementary example about $L^2$-torsion and integral torsion}
  \label{sec:elementary_example_about_L2-torsion_and_integral_torsion}

Consider integers $a$, $b$, $k$, $l$, and  $g \ge 1$, such that $(a,b) = (1)$ and $(k,l) = (1)$.

  Consider the following finite based free $\IZ$-chain complex $C_*$ which is
  concentrated in dimensions $0$, $1$, $2$ and $3$ and given there by
\[
0\cdots \to 0 \to \IZ \xrightarrow{c_3 = \left(\begin{matrix} -l \\ k\end{matrix}\right)} \IZ^2
\xrightarrow{c_2 = \left(\begin{matrix} gka & gla\\ gkb & glb\end{matrix}\right)}
\IZ^2 \xrightarrow{c_1 = \left(\begin{matrix} -b & a \end{matrix}\right)} \IZ
\to 0 \to \cdots.
\]
Notice that any matrix homomorphism $c_2 \colon \IZ^2 \to \IZ^2$ whose kernel has rank one is
of the shape above.

  One easily checks that
\begin{eqnarray*}
  \ker(c_3) & = & \{0\};
  \\
  \im(c_3) & = & \{ n \cdot \begin{pmatrix}-l \\ k \end{pmatrix}  \mid n \in \IZ\};
  \\
  \ker(c_2) & = & \{ n \cdot \begin{pmatrix}-l \\ k \end{pmatrix} \mid n \in \IZ\};
  \\
  \im(c_2) & = & \{n \cdot  \begin{pmatrix} ga \\ gb \end{pmatrix} \mid n \in \IZ\};
  \\
  \ker(c_1) & = & \{n \cdot \begin{pmatrix} a  \\ b \end{pmatrix}\mid n \in \IZ\};
  \\
  \im(c_1) & = & \IZ.
\end{eqnarray*}
This implies
\[ H_i(C_*) = 
\begin{cases}
  \IZ/g & i = 1;
  \\
  \{0\} & i \not= 1.
\end{cases}
\]
We conclude from~\cite[Lemma~3.15~(4) on page~129]{Lueck(2002)}
\begin{multline*}
  \ln\bigl({\det}^{(2)}_{\caln(\{1\})}\bigl(c_3^{(2)}\bigr)\bigr) = \frac{1}{2} \cdot
  \ln\bigl({\det}^{(2)}_{\caln(\{1\})}\bigl((c_3^{(2)})^* \circ c_3^{(2)}\bigr)\bigr)
  \\
  = \frac{1}{2} \cdot \ln\bigl({\det}^{(2)}_{\caln(\{1\})}\bigl((k^2 +l^2) \colon \IC \to
  \IC\bigr)) = \frac{\ln(k^2 + l^2)}{2}.
\end{multline*}
Analogously one shows
$$\ln\bigl({\det}^{(2)}_{\caln(\{1\})}\bigl(c_1^{(2)}\bigr)\bigr)  =  \frac{\ln(a^2 + b^2)}{2}.$$
The kernel of $c_2^{(2)}$ is the subvector space of $\IC^2$ generated by
$\frac{1}{\sqrt{k^2+l^2}} \cdot  \begin{pmatrix} -l \\ k \end{pmatrix} $ and the image of $c_2^{(2)}$ is the
subvector space of $\IC^2$ generated by $\frac{1}{\sqrt{a^2+b^2}} \cdot \begin{pmatrix} a \\ b \end{pmatrix} $. 
Hence the orthogonal complement $\ker\bigl(c_2^{(2)}\bigr)^{\perp}$ of the 
kernel of $c_2^{(2)}$ is the subvector space of $\IC^2$ generated by
$\frac{1}{\sqrt{k^2+l^2}} \cdot \begin{pmatrix} k \\ l \end{pmatrix} $. Since 
$\frac{1}{\sqrt{a^2+b^2}} \cdot \begin{pmatrix} a \\ b \end{pmatrix}$
and $\frac{1}{\sqrt{k^2+l^2}} \cdot  \begin{pmatrix} k \\ l \end{pmatrix} $ have norm $1$ and
\[
c_2^{(2)}\left(\frac{1}{\sqrt{k^2+l^2}} \cdot  \begin{pmatrix} k \\ l \end{pmatrix} \right)
= \left(g \cdot \sqrt{k^2+l^2} \cdot \sqrt{a^2 + b^2}\right) 
\cdot \left(\frac{1}{\sqrt{a^2+b^2}} \cdot  \begin{pmatrix} a \\ b \end{pmatrix} \right),
\]
we conclude from~\cite[Lemma~3.15~(3) on page~129]{Lueck(2002)}
\[
\ln\bigl({\det}^{(2)}_{\caln(\{1\})}\bigl(c_2^{(2)}\bigr)\bigr)  
=  \ln(g) +  \frac{\ln(a^2 + b^2) + \ln(k^2 +l^2)}{2}.
\]
Notice that Lemma~\ref{lem:rho(2)-rhoZ} predicts $\rho^{(2)}\bigl(C_*^{(2)}\bigr) = \rho^{\IZ}(C_*)$
which is consistent with the direct computation
\begin{eqnarray*}
  \rho^{(2)}\bigl(C_*^{(2)}\bigr)
  & = & 
  - \ln\bigl({\det}^{(2)}_{\caln(\{1\})}\bigl(c_3^{(2)}\bigr)\bigr) + 
  \ln\bigl({\det}^{(2)}_{\caln(\{1\})}\bigl(c_2^{(2)}\bigr)\bigr) 
  -\ln\bigl({\det}^{(2)}_{\caln(\{1\})}\bigl(c_1^{(2)}\bigr)\bigr)
  \\
  & = &
   \ln(g)
  \\
  & = & 
  \ln\bigl(\tors\bigl(H_1(C_*)\bigr)\bigr)
  \\
  & = & 
  \rho^{\IZ}(C_*).
\end{eqnarray*}

We also compute  the combinatorial Laplace operators of $C_*$. We get
for their matrices
\begin{eqnarray*}
\Delta_3 & = & (k^2 +l^2);
\\
\Delta_2 & = & \left(\begin{matrix}
g^2k^2a^2 + g^2k^2b^2  + l^2 & g^2kla^2 + g^2klb^2 - kl
\\
g^2kla^2 + g^2klb^2 - kl & g^2l^2a^2 + g^2l^2b^2  + k^2
\end{matrix}\right);
\\
\Delta_1 & = & \left(\begin{matrix}
g^2k^2a^2 + g^2l^2a^2 + b^2  & g^2k^2ab + g^2l^2ab - ab
\\
g^2k^2ab + g^2l^2ab - ab  & g^2k^2b^2 + g^2l^2b^2 + a^2
\end{matrix}\right);
\\
\Delta_0 & = & (a^2 +b^2).
\end{eqnarray*}
This implies
$${\det}_{\IZ}(\Delta_i) = {\det}_{\caln(\{1\})}^{(2)}\bigl(\Delta_i^{(2)}\bigr)
\; = \;
\begin{cases}
k^2+l^2 & i = 3;
\\
(a^2 + b^2) \cdot g^2 \cdot (k^2+l^2)^2 & i  = 2;
\\
(a^2 +b^2)^2 \cdot g^2 \cdot (k^2 +l^2) & i = 1;
\\
(a^2 + b^2) & i = 0.
\end{cases}$$
This is consistent with the formula
\[\rho^{(2)}\bigl(C_*^{(2)};\caln(\{1\})\bigr)
 = - \frac{1}{2} \cdot \sum_{i \ge 0} (-1)^i \cdot i \cdot \ln\bigl({\det}^{(2)}_{\caln(\{1\})}\bigl(\Delta_i^{(2)}\bigr)\bigr).
\]

\begin{remark}[No relationship between the differentials and homology in each degree]
 \label{no_relation_between_Laplace_and_homology}
We see that there is no relationship between 
$\ln \bigl({\det}_{\caln(\{1\})}\bigl(\Delta_i^{(2)}\bigr)\bigr) $ and $\ln\bigl(\tors\bigl(H_i(C_*)\bigr)\bigr)$
or between $\ln \bigl({\det}_{\caln(\{1\}}^{(2)}\bigl(c_i^{(2)}\bigr)\bigr)$ and $\ln\bigl(\tors\bigl(H_i(C_*)\bigr)\bigr)$
for each individual $i \in \IZ$, there is only a relationship after taking the alternating sum over $i \ge 0$.

This shows that a potential proof of Conjecture~\ref{con:Approximating_Fuglede-Kadison_determinants_and_L2-torsion_by_homology}
will require more input than one would expect
for a potential proof of 
Conjecture~\ref{con:Approximation_conjecture_for_chain_complexes_with_finite_index},
  Conjecture~\ref{con:Approximation_for_analytic_torsion}, or
   Conjecture~\ref{con:Approximation_for_topological_torsion}.
\end{remark}

\begin{remark}[$L^2$-acyclicity is necessary for the homological version]
\label{rem:condition_L2_acyclic_necessary}
  This example can also be used to show that the condition of $L^2$-acyclicity appearing in
  Conjecture~\ref{con:Approximating_Fuglede-Kadison_determinants_and_L2-torsion_by_homology}
  is necessary.  This is not a surprise since
  $\rho^{\IZ}(C[i]_*)$ depends only on the $\IZ$-chain homotopy type of
  $C[i]_*$ which is not true for $\rho^{(2)}(C_*^{(2)})$ unless $C_*^{(2)}$ is
  $L^2$-acyclic.

  Consider the $1$-dimensional $\IZ$-chain complex chain complex $D_*$
  whose only non-trivial differential $d_1$ is the differential $c_2$ in the chain complex $C_*$ above.
  Let $E_* := D_* \otimes_{\IZ} \IZ[\IZ]$. Put $E_*[n] = E_*\otimes_{\IZ[\IZ]} \IZ[\IZ/n]$.
  Then $E_*[n] = D_* \otimes_{\IZ} \IZ[\IZ/n]$. We conclude from
  the computations above 
  and~\cite[Theorem~3.14~(5) and~(6) on  page~128]{Lueck(2002)}
  \begin{eqnarray*}
    \rho^{(2)}\bigl(E_*^{(2)};\caln(\IZ)\bigr) 
    & = & 
    \ln(g) +  \frac{\ln(a^2 + b^2) + \ln(k^2 +l^2)}{2};
    \\
    \frac{\rho^{(2)}\bigl(E[n]_*^{(2)};\caln(\{1\})\bigr)}{n}
    & = & 
    \ln(g) + \frac{\ln(a^2 + b^2) + \ln(k^2 +l^2)}{2};
    \\
    \frac{\rho^{\IZ}(E[n]_*)}{n} 
    & = & 
    \ln(g).
  \end{eqnarray*}
  Hence we have
  \begin{eqnarray*}
    \rho^{(2)}\bigl(E_*^{(2)};\caln(\IZ)\bigr) 
    & = & 
    \lim_{n \to \infty}    \frac{\rho^{(2)}\bigl(E[n]_*^{(2)};\caln(\{1\})\bigr)}{n} 
  \end{eqnarray*}
  but 
   \begin{eqnarray*}
    \rho^{(2)}\bigl(E_*^{(2)};\caln(\IZ)\bigr) 
    & \not= & 
    \lim_{n \to \infty}    
    \frac{\rho^{\IZ}(E[n]_*)}{n}.
  \end{eqnarray*}
   Notice that the condition of $L^2$-acyclicity is not demanded in 
  Conjecture~\ref{con:Approximation_conjecture_for_chain_complexes_with_finite_index},
  Conjecture~\ref{con:Approximation_for_analytic_torsion},
   Conjecture~\ref{con:Approximation_for_topological_torsion},   
   Conjecture~\ref{con:Approximation_conjecture_for_L2-torsion_of_chain_complexes},
   and Conjecture~\ref{con:Approximation_conjecture_for_analytic_L2-torsion}.
\end{remark}


\typeout{------------------------ Section 10: Aspherical manifolds --------------------}

\section{Aspherical manifolds}
\label{sec:Aspherical_manifolds}

The following conjecture is in our view the most advanced and interesting one.  It
combines Conjecture~\ref{con:Approximation_Conjecture_for_Milnor_torsion}, that one can
approximate $L^2$-torsion by integral torsion in the $L^2$-acyclic case, with the
conjecture that for closed aspherical manifolds $X$ the $L^2$-cohomology of $\widetilde{X}$,
and asymptotically the homology of $X[i]$ are concentrated in the middle
dimension.

\begin{conjecture}[Homological growth and $L^2$-torsion for aspherical closed manifolds]
  \label{con:Homological_growth_and_L2-torsion_for_aspherical_manifolds}
  Let $M$ be an aspherical closed manifold of dimension $d$ and fundamental group $G =  \pi_1(M)$. 
  Let $\widetilde{M}$ be its universal covering.  Then

  \begin{enumerate}

  \item \label{con:Homological_growth_and_L2-torsion_for_aspherical_manifolds:Betti} 
    For any natural number $n$ with $2n \not= d$ we get 
    \[
    b_n^{(2)}(\widetilde{M}) = 0.
    \]
    \noindent
    If $d = 2n$, we have
    \[
    (-1)^{n} \cdot \chi(M)  = b_n^{(2)}(\widetilde{M}) \ge 0.
    \]
     \noindent
    If  $d = 2n$ and $M$ carries a Riemannian metric of negative sectional curvature, then
    \[
    (-1)^n \cdot \chi(M) = b_n^{(2)}(\widetilde{M}) > 0;
    \]

   \item \label{con:Homological_growth_and_L2-torsion_for_aspherical_manifolds:Betti_and_limit} 
    Let $(G_i)_{i \ge 0}$ be  any chain of normal subgroups $G_i \subseteq G$ of finite index $[G:G_i]$
    and trivial intersection $\bigcap_{i \ge 0} G_i = \{1\}$. Put $M[i] = G_i \backslash \widetilde{M}$.
   
    Then we get for any natural number $n$ and any field $F$
        \[
    b_n^{(2)}(\widetilde{M}) = \lim_{i \to \infty} \frac{b_n(M[i];F)}{[G:G_i]} = \lim_{i \to \infty} \frac{d\bigl(H_n(M[i];\IZ)\bigr)}{[G:G_i]};
    \]
    
    and for $n = 1$
    \begin{multline*}
    \quad \quad \quad \quad \quad \quad 
    b_1^{(2)}(\widetilde{M}) = \lim_{i \to \infty} \frac{b_1(M[i];F)}{[G:G_i]} = \lim_{i \to \infty} \frac{d\bigl(G_i/[G_i,G_i]\bigr)}{[G:G_i]} 
    \\
    = RG(G,(G_i)_{i \ge 0}) 
    = \begin{cases} 0 & d \not= 2; \\ -\chi(M) & d = 2; \end{cases}
  \end{multline*}

   \item \label{con:Homological_growth_and_L2-torsion_for_aspherical_manifolds:truncated_Euler_characteristic} 
   We get for $m \ge 0$

    \[
    \lim_{i \to \infty} \frac{\chi_m^{\trun}(M[i])}{[G:G_i]} 
    = \begin{cases}
    \chi(M) & \text{if} \; d \;\text{is even and}\; 2m \ge d;
    \\
    0 & \text{otherwise};
   \end{cases}
    \]

   \item \label{con:Homological_growth_and_L2-torsion_for_aspherical_manifolds:tors_parity} 
    If $d = 2n+1$ is odd, we have
    \[
     (-1)^n \cdot \rho^{(2)}_{\an}\bigl(\widetilde{M}\bigr) \ge 0;
    \]
    If $d = 2n+1$ is odd and $M$ carries a Riemannian metric with negative sectional curvature, we have
    \[
     (-1)^n \cdot \rho^{(2)}_{\an}\bigl(\widetilde{M}\bigr) > 0;
    \]

  \item \label{con:Homological_growth_and_L2-torsion_for_aspherical_manifolds:tors} 
    Let $(G_i)_{i \ge 0}$ be any chain of normal subgroups $G_i \subseteq G$ of finite index $[G:G_i]$
    and trivial intersection $\bigcap_{i \ge 0} G_i = \{1\}$. Put $M[i] = G_i \backslash \widetilde{M}$.
 
   Then we get for  any natural number $n$ with $2n +1 \not= d$
    \[
    \lim_{i \to \infty} \;\frac{\ln\big(\bigl|\tors\bigl(H_n(M[i])\bigr)\bigr|\bigr)}{[G:G_i]} = 0,
    \]
    and we get in the case $d = 2n+1$
    \[
    \lim_{i \to \infty} \;\frac{\ln\big(\bigl|\tors\bigl(H_n(M[i])\bigr)\bigr|\bigr)}{[G:G_i]} = (-1)^n \cdot
    \rho^{(2)}_{\an}\bigl(\widetilde{M}\bigr) \ge 0.
    \]
  \end{enumerate}
\end{conjecture}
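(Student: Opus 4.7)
The plan is to address the five parts sequentially, treating (1) as the classical Singer Conjecture on which the remaining parts are built via the approximation machinery developed earlier.

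For part (1), the vanishing $b_n^{(2)}(\widetilde{M}) = 0$ for $2n \ne d$ on an aspherical closed $M$ is the Singer Conjecture itself, open in full generality. I would attack it in the classes where techniques are available: K\"ahler hyperbolic manifolds via Gromov's method, locally symmetric spaces via Borel--Matsushima style computations, and graph manifolds via the foliation technique of Cheeger--Gromov. The identity $(-1)^n \chi(M) = b_n^{(2)}(\widetilde{M})$ in middle dimension then follows immediately from the $L^2$-Euler formula $\chi(M) = \sum_n (-1)^n b_n^{(2)}(\widetilde{M})$ combined with the vanishing in the other degrees. Strict positivity under negative sectional curvature reduces to the Hopf Conjecture on the sign of $\chi(M)$, already established by Dodziuk for real hyperbolic manifolds.

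For parts (2) and (3), I would invoke the reduction in Remark~\ref{rem:Asymptotic_Morse_inequalities_imply_Approximation_for_Betti_numbers_over_any_field}. The plan is to establish the asymptotic Morse equality of Question~\ref{que:asymptotic_Morse_equality} for $G = \pi_1(M)$ at every dimension $d \ge 0$. Combined with Theorem~\ref{the:approx_Betti_char_zero} this yields by induction on $n$ the equality $\lim_{i\to\infty} b_n(M[i]; F)/[G:G_i] = b_n^{(2)}(\widetilde{M})$ for any field $F$, and the generator version from the inequality $d(H_n(Y_d;\IZ)) \ge d(H_n(Y;\IZ))$ noted at the end of that remark. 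Part (3) for the truncated Euler characteristic is then a direct computation: $\chi^{\trun}_m(M[i])/[G:G_i]$ converges to $\sum_{n=0}^m (-1)^n b_n^{(2)}(\widetilde{M})$, which by (1) equals $\chi(M)$ when $d$ is even and $2m \ge d$, and vanishes otherwise.

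For parts (4) and (5), the sign statement in (4) under strictly negative sectional curvature would follow from extending the computation of $L^2$-torsion as a negative multiple of volume, known for real hyperbolic manifolds by the Hess--Schick and L\"uck--Schick formulas; the general positivity is an $L^2$-analogue of the Hopf conjecture for torsion. Part (5) I would deduce from Conjecture~\ref{con:Approximating_Fuglede-Kadison_determinants_and_L2-torsion_by_homology} applied to $C_*(\widetilde{M})$: the $L^2$-acyclicity hypothesis is precisely part (1) in odd dimension, and the conjectural concentration of torsion homology in the middle degree together with Lemma~\ref{lem:rho(2)-rhoZ} lets one convert the alternating sum of $\ln|\tors H_n|$ into the single contribution in dimension $n = (d-1)/2$, the other degrees being controlled via the growth statements in (2). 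The hardest single step will be the approximation of $L^2$-torsion by integral torsion: Conjecture~\ref{con:Approximating_Fuglede-Kadison_determinants_and_L2-torsion_by_homology} is only known for virtually cyclic groups, controlling Fuglede--Kadison determinants along the tower forces simultaneous control of the regulators in Lemma~\ref{lem:rho(2)-rhoZ}, and the elementary example of Section~\ref{sec:elementary_example_about_L2-torsion_and_integral_torsion} shows that there is no degreewise correspondence between individual Laplacian determinants and torsion subgroups to exploit --- any proof must work with the full alternating sum.
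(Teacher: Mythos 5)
This statement is a \emph{conjecture}, not a theorem; the paper contains no proof of it, only a discussion of its relationships to other conjectures and partial evidence (the remarks following the conjecture, Theorem~\ref{the:Groups_containing_a_normal_infinite_nice_subgroups}, Example~\ref{exa:hyperbolic_3-manifolds}). Your proposal correctly treats it as an open problem and lays out essentially the same reduction structure that the paper itself describes: part~\eqref{con:Homological_growth_and_L2-torsion_for_aspherical_manifolds:Betti} is the Singer Conjecture together with the $L^2$-Euler formula and the Hopf Conjecture; parts~\eqref{con:Homological_growth_and_L2-torsion_for_aspherical_manifolds:Betti_and_limit} and~\eqref{con:Homological_growth_and_L2-torsion_for_aspherical_manifolds:truncated_Euler_characteristic} reduce to the asymptotic Morse equality of Question~\ref{que:asymptotic_Morse_equality} via Remark~\ref{rem:Asymptotic_Morse_inequalities_imply_Approximation_for_Betti_numbers_over_any_field}; and part~\eqref{con:Homological_growth_and_L2-torsion_for_aspherical_manifolds:tors} rests on Conjecture~\ref{con:Approximating_Fuglede-Kadison_determinants_and_L2-torsion_by_homology}, Lemma~\ref{lem:rho(2)-rhoZ}, and the regulator control, with the $L^2$-acyclicity in odd dimension coming from~\eqref{con:Homological_growth_and_L2-torsion_for_aspherical_manifolds:Betti}. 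Your observation that the elementary example of Section~\ref{sec:elementary_example_about_L2-torsion_and_integral_torsion} precludes any degreewise matching between Laplacian determinants and torsion subgroups, so that only the alternating sum is accessible, is precisely the cautionary point made in Remark~\ref{no_relation_between_Laplace_and_homology}. Since no proof exists, there is no discrepancy to flag; your roadmap is consistent with the paper's own framing, and you have accurately located the two genuinely hard open ingredients — the asymptotic Morse equality in all degrees and the approximation of $L^2$-torsion by integral torsion — which is all one can do here.
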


Notice that in assertions~\eqref{con:Homological_growth_and_L2-torsion_for_aspherical_manifolds:Betti} 
and~\eqref{con:Homological_growth_and_L2-torsion_for_aspherical_manifolds:tors_parity} we are not demanding
that $G = \pi_1(M)$ is residually finite. This assumption only enters in 
assertions~\eqref{con:Homological_growth_and_L2-torsion_for_aspherical_manifolds:Betti_and_limit},%
\eqref{con:Homological_growth_and_L2-torsion_for_aspherical_manifolds:truncated_Euler_characteristic}, 
and~\eqref{con:Homological_growth_and_L2-torsion_for_aspherical_manifolds:tors}, where the chain $(G_i)_{i \ge 0}$
occurs. 

\begin{remark}[Rank growth versus torsion growth]
  \label{rem:Rank_growth_versus_torsion_growth}
  Let us summarize what
  Conjecture~\ref{con:Homological_growth_and_L2-torsion_for_aspherical_manifolds} means
  for an aspherical closed manifold $M$ of dimension $d$. It predicts that the rank of the
  singular homology grows in dimension $m$ sublinearly if $2m \not= d$, and grows linearly
  if $d = 2m$ and $M$ carries a Riemannian metric of negative sectional curvature. It also
  predicts that the cardinality of the torsion of the singular homology grows in dimension
  $m$ grows subexponentially if $2m +1 \not = d$ and grows exponentially if $d = 2m+1$ and
  $M$ carries a Riemannian metric of negative sectional curvature. Roughly speaking, the
  free part of the singular homology is asymptotically concentrated in dimension $m$ if $d
  = 2m$ and the torsion part is asymptotically concentrated in dimension $m$ if $d =
  2m+1$. A vague explanation for this phenomenon could be that Poincar\'e duality links
  the rank in dimensions $m$ and $d - m$, whereas the torsion is linked in dimensions $m$
  and $d-1-m$, and there must be some reason that except in the middle dimension the growth
  of the rank and the growth of the cardinality of the torsion block one another in dual dimensions.
\end{remark}

\begin{remark}[Finite Poincar\'e complexes] \label{rem:finite_Poincare_complexes}
One may replace in the formulation of Conjecture~\ref{con:Homological_growth_and_L2-torsion_for_aspherical_manifolds}
the aspherical closed manifold $M$ by an aspherical finite Poincar\'e complex.
In the formulation of the part of 
assertion~\eqref{con:Homological_growth_and_L2-torsion_for_aspherical_manifolds:Betti},
where negative sectional curvature is required, one has to add an assumption on $\pi_1(X)$, for instance that 
$\pi_1(X)$  is a CAT(-1)-group.
\end{remark}

Assertion~\eqref{con:Homological_growth_and_L2-torsion_for_aspherical_manifolds:Betti} of
Conjecture~\ref{con:Homological_growth_and_L2-torsion_for_aspherical_manifolds} in the case 
that $M$ carries a Riemannian metric with non-positive sectional curvature is the Singer Conjecture.
The Singer Conjecture and also the related Hopf Conjecture are discussed in detail in~\cite[Section~11]{Lueck(2002)}.

Assertion~\eqref{con:Homological_growth_and_L2-torsion_for_aspherical_manifolds:Betti_and_limit}  is closely related
to 
Conjecture~\ref{con:Approximation_in_zero_and_prime_characteristic}, 
Conjecture~\ref{con:Growth_of_number_of_generators_of_the_homology} and 
Question~\ref{que:Rank_gradient_cost_first_L2_Betti_number_and_approximation}.

Assertion~\eqref{con:Homological_growth_and_L2-torsion_for_aspherical_manifolds:truncated_Euler_characteristic}
is connected to Question~\ref{que:asymptotic_Morse_equality_general}.

The parity condition about the $L^2$-torsion appearing in 
assertion~\eqref{con:Homological_growth_and_L2-torsion_for_aspherical_manifolds:tors_parity}  of
Conjecture~\ref{con:Homological_growth_and_L2-torsion_for_aspherical_manifolds} is already
considered in~\cite[Conjecture~11.3 on page~418]{Lueck(2002)}.

Assertion~\eqref{con:Homological_growth_and_L2-torsion_for_aspherical_manifolds:tors} appearing in
Conjecture~\ref{con:Homological_growth_and_L2-torsion_for_aspherical_manifolds} in the case
that $M$ is a locally symmetric space, is discussed in Bergeron-Venkatesh~\cite{Bergeron-Venkatesh(2013)}, 
where also twisting with a finite-dimensional integral representation is considered.

Some evidence for Conjecture~\ref{con:Homological_growth_and_L2-torsion_for_aspherical_manifolds}
comes from the following result of
L\"uck~\cite[Corollary~1.13]{Lueck(2013l2approxfib)}.

\begin{theorem}
  \label{the:Groups_containing_a_normal_infinite_nice_subgroups}
  Let $M$ be an aspherical closed manifold with fundamental group $G = \pi_1(M)$.  Suppose
  that $M$ carries a non-trivial $S^1$-action or suppose that $G$ contains a non-trivial
  elementary amenable normal subgroup. Then we get for all $n \ge 0$ and fields $F$
  \begin{eqnarray*}
    \lim_{i \to \infty} \frac{b_n(M[i];F)}{[G:G_i]}  
    & = & 
    0;
    \\
      \lim_{i \to \infty} \frac{d\bigl(H_n(M[i];\IZ)\bigr)}{[G:G_i]}  
    & = & 
    0;
    \\
    \lim_{i \to \infty} \;\frac{\ln\big(\bigl|\tors\bigl(H_n(M[i])\bigr)\bigr|\bigr)}{[G:G_i]}
    & = & 
    0;
    \\
    \lim_{i\to \infty} \frac{\rho_{\an}\bigl(M[i];\caln(\{1\})\bigr)}{[G:G_i]}
    & = & 
    0;
    \\
    \lim_{i \to \infty} \frac{\rho^{\IZ}\bigl(M[i]\bigr)}{[G:G_i]}
    & = & 
    0;
    \\
    b_n^{(2)}(\widetilde{M}) 
    & = & 
    0;
    \\
    \rho^{(2)}_{\an}(\widetilde{M}) 
    & = & 
    0.
  \end{eqnarray*}
  In particular Conjecture~\ref{con:Approximation_in_zero_and_prime_characteristic},
  Conjecture~\ref{con:Growth_of_number_of_generators_of_the_homology},
  Conjecture~\ref{con:Approximation_for_analytic_torsion},
  Conjecture~\ref{con:Approximation_for_topological_torsion},
  Conjecture~\ref{con:Approximation_Conjecture_for_Milnor_torsion} and
  Conjecture~\ref{con:Homological_growth_and_L2-torsion_for_aspherical_manifolds} 
   with the exception of assertion~\eqref{con:Homological_growth_and_L2-torsion_for_aspherical_manifolds:truncated_Euler_characteristic}
   are known to be true  for $G = \pi_1(M)$ and $X =M$.
\end{theorem}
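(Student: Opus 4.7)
The plan is to unify both hypotheses by reducing to the existence of an infinite cyclic (or at least infinite amenable) normal subgroup $\IZ \trianglelefteq G$, and then to exploit the resulting homotopy fiber bundle structure on $M$. In the $S^1$-action case, the action on an aspherical manifold must be almost free (any higher-dimensional isotropy would produce torsion in $\pi_1(M)$, contradicting asphericity), and a Conner--Raymond type argument yields a central infinite cyclic subgroup $\IZ \subseteq G$. In the elementary amenable case, torsion-freeness of $G$ combined with the structure theory of elementary amenable groups (Hirsch length, Fitting-type filtration) produces a non-trivial torsion-free abelian characteristic subgroup of the given normal subgroup, hence a normal abelian subgroup of $G$; passing to a cyclic direct summand gives the desired $\IZ \trianglelefteq G$.

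The second step is to use this $\IZ \trianglelefteq G$ to obtain, via asphericity, a homotopy fibration
\[
S^1 \longrightarrow M \longrightarrow Y,
\]
where $Y$ is aspherical with $\pi_1(Y) = G/\IZ$. The chain $(G_i)_{i \ge 0}$ induces compatible fibrations $S^1[i] \to M[i] \to Y[i]$ on the finite coverings, with $S^1[i]$ still a circle. The vanishing results $b_n^{(2)}(\widetilde{M}) = 0$ and $\rho^{(2)}_{\an}(\widetilde{M}) = 0$ then follow from the multiplicativity of $L^2$-Betti numbers for fibrations with amenable fiber (Cheeger--Gromov) and from the L\"uck--Schick vanishing of $L^2$-torsion for manifolds with an infinite amenable normal subgroup, using that all $L^2$-invariants of $S^1$ vanish.

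For the approximation statements, the Betti number case in characteristic zero is immediate from Theorem \ref{the:approx_Betti_char_zero} combined with the vanishing of $b_n^{(2)}(\widetilde{M})$. The remaining invariants are handled through the Leray--Serre spectral sequence of $S^1[i] \to M[i] \to Y[i]$: since $H_*(S^1[i];\IZ)$ is concentrated in degrees $0$ and $1$, there is a Wang-type exact sequence relating $H_*(M[i];\IZ)$ to $H_*(Y[i];\IZ)$ and its twists by the monodromy. Applying Theorem \ref{the:about_approximating_by_IZ-torsion} on $\IZ$-coverings fiber-wise, combined with Lemma \ref{lem:rho(2)-rhoZ} to translate between $L^2$-torsion, integral torsion, and regulators, reduces each approximation claim to a corresponding claim on $Y[i]$. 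One then iterates the fibration argument, performing induction on the Hirsch length of the amenable subgroup, to close the loop.

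The main obstacle is controlling the growth of $\ln\bigl|\tors(H_n(M[i]))\bigr|$ and of $\rho^{\IZ}(M[i])$ through the spectral sequence. Unlike Betti numbers, torsion in homology does not behave multiplicatively in fibrations, so extensions and differentials could in principle produce arbitrarily large torsion; the delicate point is to show that the \emph{logarithmic} growth rate remains sublinear in $[G:G_i]$. Equivalently, by Lemma \ref{lem:rho(2)-rhoZ} one must show that the regulators $R_n(C_*(M[i]))$ contribute $o([G:G_i])$, and this is where $\det$-$L^2$-acyclicity (forced by the presence of the amenable normal subgroup) together with uniform spectral estimates on the combinatorial Laplacians of $M[i]$ enter in an essential way, in the spirit of the Bergeron--Venkatesh and Silver--Williams results for abelian coverings.
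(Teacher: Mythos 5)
Your proposal takes a fundamentally different route from the source; note that the paper does not actually prove this theorem but simply quotes it as \cite[Corollary~1.13]{Lueck(2013l2approxfib)}. There are two points where your argument has genuine gaps.

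First, the reduction in the elementary amenable case does not work. From a non-trivial normal elementary amenable subgroup $N$ of the torsion-free group $G$ one can extract a non-trivial torsion-free abelian characteristic subgroup $A \subseteq N$ (hence $A \trianglelefteq G$), but passing to a cyclic direct summand does \emph{not} in general yield a normal subgroup of $G$: the $G$-conjugation representation on $A$ typically has no one-dimensional invariant subspace (and $A$ may even have infinite rank). So the claim that one always gets $\IZ \trianglelefteq G$ is false. Even when one does get a circle bundle $S^1 \to M \to Y$ (as in the $S^1$-action case via Conner--Raymond), the base $Y$ is only a $K(G/\IZ,1)$; the quotient $G/\IZ$ need not be residually finite, $Y$ need not be a finite complex, the filtration $(G_i)$ does not push down to one with trivial intersection on $G/\IZ$, and the Hirsch length need not be finite. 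So the intended induction does not close.

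Second and more seriously, the control of $\ln\bigl|\tors(H_n(M[i]))\bigr|$ and of the regulators $R_n(C_*(M[i]))$ is not actually carried out, and the tools you invoke for it are exactly what is unavailable. You propose to obtain $R_n(C_*(M[i])) = o([G:G_i])$ from ``uniform spectral estimates on the combinatorial Laplacians of $M[i]$''; but establishing such uniform spectral density bounds is tantamount to proving the open Approximation Conjecture for Fuglede--Kadison determinants~\ref{con:Approximation_conjecture_for_Fuglede-Kadison_determinants_with_arbitrary_index}, and the discussion in Section~\ref{sec:A_general_strategy_to_prove_the_Approximation_Conjecture_for_Fuglede-Kadison_determinants} (the Grabowski--Virag example with $G = \IZ^3 \wr \IZ$, a group that does contain a normal infinite abelian subgroup) shows that the power-law bound~\eqref{lambda(alpha_n)_bound} can fail even in this setting. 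Also Theorem~\ref{the:about_approximating_by_IZ-torsion}~\eqref{the:about_approximating_by_IZ-torsion:G_is_Z} covers only $G = \IZ$, not the fiber-wise application you need. The actual proof of \cite[Corollary~1.13]{Lueck(2013l2approxfib)} avoids spectral density functions entirely: it is a chain-level algebraic argument exploiting the normal infinite elementary amenable subgroup to show that $C_*(\widetilde M)$ is $\IQ G$-chain homotopy equivalent to a complex on which the integral torsion of the quotients $C_*(M[i])$ is directly controllable, and hence is unconditional. So the heart of the theorem is precisely the step you leave as a heuristic.
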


Estimates of the growth of the  torsion in the homology in terms of the volume of the underlying manifold 
and examples of aspherical manifolds, where this growth is subexponential,
are given in~\cite{Sauer(2016)}.

Sometimes one can express for certain  classes of closed Riemannian manifolds $M$
the $L^2$-torsion of the universal covering $\widetilde{M}$ by the volume
\[
\rho_{\an}^{(2)}(\widetilde{M}) = C_{\dim(M)}  \cdot \vol(M),
\]
where $C_{\dim(M)} \in \IR$ is a dimension constant depending only on the class but not
on the specific $M$.  This follows from the Proportionality Principle due to Gromov, see
for instance~\cite[Theorem~1.183 on page~201]{Lueck(2002)}.  Typical examples are
locally symmetric spaces of non-compact type, for instance hyperbolic manifolds, 
see~\cite[Theorem~5.12 on page~228]{Lueck(2002)}.  Since $\rho_{\an}^{(2)}(\widetilde{M})$ vanishes for
  even-dimensional closed Riemannian manifolds, only the case of odd dimensions is
  interesting. For locally symmetric spaces of non-compact type with  odd dimension $d$ one can show that
  $(-1)^{(d-1)/2} \cdot C_d \ge 0$ holds.  Thus one obtains some computational evidence for
  Conjecture~\ref{con:Homological_growth_and_L2-torsion_for_aspherical_manifolds}.

Here is a concrete and already very interesting special case.

\begin{example}[Hyperbolic $3$-manifolds]
\label{exa:hyperbolic_3-manifolds}
Suppose that $M$ is a closed hyperbolic $3$-manifold. Then 
$\rho_{\an}(\widetilde{M})$ is known to be $- \frac{1}{6\pi} \cdot \vol(M)$,
see~\cite[Theorem~4.3 on page~216]{Lueck(2002)}, and hence
Conjecture~\ref{con:Homological_growth_and_L2-torsion_for_aspherical_manifolds}
predicts
\[
\lim_{i \to \infty} \;\frac{\ln\big(\bigl|\tors\bigl(H_1(G_i\bigr)\bigr|\bigr)}{[G:G_i]} =  \frac{1}{6\pi} \cdot \vol(M).
\]
Since the volume is always positive, the equation above implies that $|\tors\bigl(H_1(G_i\bigr)|$
growth exponentially in $[G:G_i]$.  This is in contrast to the question
appearing in the survey paper by Aschenbrenner-Friedl-Wilton~\cite[Question~9.13]{Aschenbrenner-Friedl-Wilton(2015)}
whether a hyperbolic $3$-manifold of finite volume admits a finite covering $N \to M$ 
such that $\tors\bigl(H_1(N)\bigr)$ is non-trivial. However,
a positive  answer to this question  and evidence for 
Conjecture~\ref{con:Homological_growth_and_L2-torsion_for_aspherical_manifolds} for closed hyperbolic
$3$-manifolds is given in Sun~\cite[Corollary~1,6]{Sun(2015)}, where it is shown that
for any finitely generated abelian group $A$, and any closed hyperbolic 3-manifold $M$, there 
exists a finite cover $N$ of $M$, such that $A$ is a direct summand of $H_1(N;\IZ)$.

Bergeron-Sengun-Venkatesh~\cite{Bergeron-Sengun-Venkatesh(2016)} consider 
the equality above for arithmetic hyperbolic $3$-manifolds and relate it to a conjecture about classes in the second integral homology.

Some numerical evidence for the equality above is given in Sengun~\cite{Sengun(2011)}. 

The inequality 
\[
\limsup _{i \to \infty} \;\frac{\ln\big(\bigl|\tors\bigl(H_1(G_i\bigr)\bigr|\bigr)}{[G:G_i]} \le   \frac{1}{6\pi} \cdot \vol(M)
\]
is proved by Thang~\cite{Le(2014)} for a compact connected orientable irreducible $3$-manifold $M$ 
with infinite fundamental group and empty or toroidal boundary. 
\end{example}

\begin{remark}[Possible Scenarios] \label{rem:possible_scenarios}
Consider the situation of Conjecture~\ref{con:Homological_growth_and_L2-torsion_for_aspherical_manifolds}.
We can find for each $i \ge 0$, $n \ge 0$ and prime number $p$ integers $r[i,n] \ge 0$, $t[i,n,p]\ge 0 $, and
$n[i,n,p]_1, n[i,n,p]_2, \ldots, n[i,n,p]_{t[i,n,p]} \ge 1$ such that the set 
$\{p \; \text{prime} \mid t[i,n,p] >  0\}$ is finite and 
\[
H_n(M[i];\IZ) \cong \IZ^{r[i,n]} \oplus \bigoplus_{p \; \text{prime}} \;\bigoplus_{j =1}^{t[i,n,p]} \IZ/p^{n[i,n,p]_j}.
\]
Then
\begin{eqnarray*}
b_n(M[i];\IQ) & = & r[i,n];
\\
b_n(M[i];\IF_p) & = & r[i,n] + t[i,n,p];
\\
d\bigl(H_n(M[i];\IZ)\bigr) & = & r[i,n] + \max\{t[i,n,p]\mid p \; \text{prime}\};
\\
\ln\big(\bigl|\tors\bigl(H_n(M[i])\bigr)\bigr|\bigr)
& = & 
\sum_{p} \sum_{j =1}^{t[i,n,p]} n[i,n,p]_j \cdot \ln(p).
\end{eqnarray*}
Let us discuss first the case where $M$ possesses a Riemannian metric with negative sectional curvature
and $\dim(M)  = 2n+1$. Then Conjecture~\ref{con:Homological_growth_and_L2-torsion_for_aspherical_manifolds} predicts
\begin{eqnarray*}
\lim_{i \to \infty} \frac{r[i,n]}{[G:G_i]} & = & 0;
\\
\lim_{i \to \infty}  \frac{ \max\{t[i,n,p]\mid p \; \text{prime}\}}{[G:G_i]}   & =  & 0;
\\
\lim_{i \to \infty} \;\frac{\sum_{p} \sum_{j =1}^{t[i,n,p]} n[i,n,p]_j \cdot \ln(p)}{[G:G_i]} & > & 0.
\end{eqnarray*}
There are two scenarios which can explain these expected statements. 
Of course there are other scenarios as well, but the two below
illustrate nicely what may happen.

\begin{itemize}

\item The number $ \max\{t[i,n,p]\mid p \; \text{prime}\}$ grows sublinearly in comparison with
$[G:G_i]$. The number of primes $p$ for which $t[i,n,p] \ge 1$ grows linearly with $[G:G_i]$.
A concrete example is the case where
\[
H_1(M[i];\IZ)) \cong \IZ^{r[i,1]} \oplus \bigoplus_{p \in \calp[i]} \IZ/p,
\] 
where $\calp[i]$ is a set of primes satisfying $\lim_{i \to \infty} \frac{|\calp[i]|}{[G:G_i]} > 0$,
and $\lim_{i \to \infty} \frac{r[i,1]}{[G:G_i]}  =  0$ holds;

\item The number $\max\{t[i,n,p]\mid p \; \text{prime}\}$ grows sublinearly in comparison with
$[G:G_i]$. There is a prime $p$ such that the number $\sum_{j =1}^{t[i,n,p]} n[i,n,p]_j$ grows linearly with $[G:G_i]$. 
A concrete example is  the case, when
\[
H_1(M[i];\IZ)) \cong \IZ^{r[i,1]} \oplus   \IZ/p^{m[i,1]}
\]
for a prime $p$ such that $\lim_{i \to \infty} \frac{m[i,1]}{[G:G_i]} > 0$ and 
$\lim_{i \to \infty} \frac{r[i,1]}{[G:G_i]}  =  0$ holds.
\end{itemize}

Next we discuss the case where $M$ possesses a Riemannian metric with negative sectional curvature
and $\dim(M)  = 2n$. Then Conjecture~\ref{con:Homological_growth_and_L2-torsion_for_aspherical_manifolds} predicts
\begin{eqnarray*}
\lim_{i \to \infty} \frac{r[i,n]}{[G:G_i]} & > & 0;
\\
\lim_{i \to \infty} \;\frac{\sum_{p} \sum_{j =1}^{t[i,n,p]} n[i,n,p]_j \cdot \ln(p)}{[G:G_i]} & = & 0.
\end{eqnarray*}
\end{remark}


\typeout{--------------------------------- Section 11: Mapping tori ---------- --------------------}

\section{Mapping tori}
\label{sec:Mapping_tori}

A very interesting case is the example of a mapping torus $T_f$ of a self-map
$f \colon Z \to Z$ of a connected finite $CW$-complex $Z$
(which is not necessarily a homotopy equivalence). The canonical projection $q \colon T_f \to S^1$ induces
an epimorphism $\pr \colon G = \pi_1(T_f) \to \IZ$. Let $K$ be its kernel which can be identified
with the colimit of the direct system of groups indexed by $\IZ$.
\[
\cdots \xrightarrow{\pi_1(f)} \pi_1(Z)  \xrightarrow{\pi_1(f)} \pi_1(Z)  \xrightarrow{\pi_1(f)}  \cdots
\]
In particular the inclusion $Z \to T_f$ induces a homomorphism $j \colon \pi_1(Z) \to K$
which corresponds to the structure map at $0 \in \IZ$ in the description of $K$ as a colimit.
We obtain a short exact sequence 
$1 \to K \xrightarrow{j} G \xrightarrow{\pr} \IZ \to 1$. We use the Setup~\eqref{set:restricted} with $X = T_f$ and
$\overline{X} = \widetilde{T_f}$.  Put $K_i = j^{-1}(G_i)$. Let $d_i \in \IZ$ be the integer for which
$~pr(G_i) = d_i \cdot \IZ$. We obtain an induced exact sequence 
$1 \to K_i \xrightarrow{j_i} G_i \xrightarrow{\pr_i} d_i \cdot \IZ \to 1$. We have
\begin{eqnarray*}
[G:G_i] & = & [K:K_i] \cdot d_i.
\end{eqnarray*}
If $\pi_1(f)$ is an isomorphism, then $j \colon \pi_1(Z) \to K$ is an isomorphism.

Let $p_i \colon S^1 \to S^1$ be the $d_i$-sheeted covering given by $z \mapsto z^{d_i}$.
It is the covering associated to $d_i \cdot \IZ \subseteq \IZ$.
Let $\overline{p_i} \colon T_f[i]' \to T_f$ be the $d_i$-sheeted covering given by the pullback
\[\xymatrix{T_f[i]' \ar[r]^{q[i]'} \ar[d]_{\overline{p_i}}
& S^1 \ar[d]^{p_i}
\\
T_f \ar[r]_q
& S^1
}
\]
It is the covering associated to $\pr^{-1}(d_i \cdot \IZ) \subseteq G = \pi_1(T_f)$.
Let $q_i \colon T_f[i] \to T_f[i]'$ be the $[K:K_i]$-sheeted covering which is associated
to $G_i \subseteq \pr^{-1}(d_i \cdot \IZ) = \pi_1(T_f[i]')$.  The composite $T_f[i]
\xrightarrow{q_i} T_f'[i] \xrightarrow{\overline{p_i}} T_f$ is the $[G:G_i]$-sheeted
covering associated to $G_i \subseteq G = \pi_1(T_f)$.  Let $\overline{q_i} \colon Z[i]\to Z$ 
be the $[K:K_i]$-sheeted covering given by the pullback
\[
\xymatrix{Z[i] \ar[d]^{\overline{q_i}} \ar[r] 
&
T_f[i] \ar[d]^{q_i} 
\\
Z \ar[r]^{i} 
&
T_f[i]'
}
\]
It is the covering given by $K/K_i \times_{\pi_1(Z)/j^{-1}(K_i)} \widetilde{Z}$.

Since $T_f[i]'$ is obtained from the $d_i$-fold mapping telescope of $f$ by identifying the left and the right end by the identity, 
there is an  obvious map $T_f[i]' \to T_{f^{d_i}}$ which turns out to be a homotopy equivalence.
Hence we can choose a  homotopy equivalence
\[
u \colon T_{f^{d_i}} \xrightarrow{\simeq} T_f[i]'.
\]
Define the homotopy equivalence
$\overline{u} \colon \overline{T_{f^{d_i}}} \xrightarrow{\simeq} T_f[i]$ by the pullback
\[
\xymatrix{
\overline{T_{f^{d_i}}} \ar[r]^{\overline{u}} \ar[d]^{\overline{q_i}}
&
T_f[i] \ar[d]^{q_i}
\\
T_{f^{d_i}} \ar[r]^{u}
&
T_f[i]'
}
\]

There is a finite $CW$-structure on $T_{f^{d_i}}$ such that the number of $n$-cells $c_n(T_{f^{d_i}})$ is
$c_n(Z) + c_{n-1}(Z)$, where $c_n(Z)$ is the number of $n$-cells in $Z$. Since
$\overline{q_i} \colon \overline{T_{f^{d_i}}} \to T_{f^{d_i}}$ is a $[K:K_i]$-sheeted covering,
there is a finite $CW$-structure on $\overline{T_{f^{d_i}}}$ such that the number of $n$-cells $c_n(T_{f^{d_i}})$ is
$[K:K_i] \cdot (c_n(Z) + c_{n-1}(Z))$. This implies
\begin{eqnarray*}
\frac{c_n(T_{f^{d_i}})}{[G:G_i]}  & = & \frac{c_n(Z) + c_{n-1}(Z)}{d_i}.
\end{eqnarray*}
Hence we get for any field $F$
\begin{eqnarray*}
\frac{b_n(T_f[i];F)}{[G:G_i]}   & \le  & \frac{c_n(Z) + c_{n-1}(Z)}{d_i};
\\
d(\pi_1(T_f[i])) & \le & \frac{c_1(Z) + c_0(Z)}{d_i};
\\
\frac{{\rm def}(\pi_1(T_f[i]))}{[G:G_i]} & \le & \frac{c_2(Z) +  c_1(Z) + c_0(Z)}{d_i};
\\
\frac{\chi_n^{\trun}(T_f[i])}{[G:G_i]} & \le & \frac{\sum_{i = 0}^n c_i(Z)}{d_i}.
\end{eqnarray*}


\subsection{The case $\bigcap_{i \ge 0} d_i \cdot \IZ = \{1\}$}
\label{subsec:The_case_bigcap_d_i_cdotZ_is_1}

Suppose that $\bigcap_{i \ge 0} d_i \cdot \IZ = \{1\}$, or, equivalently, $\lim_{i \to \infty} d_i = \infty$ holds. 
Then we conclude for any field $F$
\begin{eqnarray*}
\lim_{i \to \infty}  \frac{b_n(T_f[i];F)}{[G:G_i]}   & =  & 0;
\\
\lim_{i \to \infty}  \frac{d(\pi_1(T_f[i]))}{[G:G_i]} & = & 0;
\\
\lim_{i \to \infty}  \frac{{\rm def}(\pi_1(T_f[i]))}{[G:G_i]} & = & 0;
\\
\lim_{i \to \infty}  \frac{\chi_n^{\trun}(T_f[i])}{[G:G_i]} & =  & 0.
\end{eqnarray*}
Since
\begin{eqnarray}
b_n^{(2)}(\widetilde{T_f}) & = & 0
\label{L2-Betti_numbers_of_tori_vanish}
\end{eqnarray}
holds for $n \ge 0$ by~\cite[Theorem~2.1]{Lueck(1994b)}, this gives evidence 
for Conjecture~\ref{con:Approximation_in_zero_and_prime_characteristic}
and Conjecture~\ref{con:Growth_of_number_of_generators_of_the_homology},
and positive answers to Questions~\ref{que:Rank_gradient_cost_and_first_L2_Betti_number}
and Question~\ref{que:asymptotic_Morse_equality}, provided that $Z$ is aspherical.


\subsection{The case $\bigcap_{i \ge 0} d_i \cdot \IZ \not= \{1\}$}
\label{subsec:The_case_bigcap_d_i_cdotZ_is_not_1}
Next we consider the hard case $\bigcap_{i \ge 0} d_i \cdot \IZ \not= \{1\}$. Then there exists an  integer $i_0$ such that $d_i = d_{i_0}$
for all $i \ge i_0$. We can assume without loss of generality that $d_i = 1$ holds for all $i \ge 0$, otherwise
replace $T_f$ by $T_f[i_0]$, $G$ by $G_{i_0}$, $\IZ$ by $n_{i_0} \cdot \IZ$, and $(G_i)_{i \ge 0}$ by $(G_i)_{i \ge i_0}$.

We conclude from Theorem~\ref{the:approx_Betti_char_zero},
Theorem~\ref{the:dim_approximation_over_fields} and~\eqref{L2-Betti_numbers_of_tori_vanish} 
\begin{eqnarray}
  \lim_{i \to \infty}  \frac{b_n(T_f[i];F)}{[G:G_i]}   & =  & 0.
  \label{lim_Betti_numbers_mapping_torus_is_zero}
\end{eqnarray}
provided that $F$ has characteristic zero. We get the same
conclusion~\eqref{lim_Betti_numbers_mapping_torus_is_zero} for any field $F$ provided that
$G$ is torsion-free elementary amenable and residually finite by
Theorem~\ref{the:dim_approximation_over_fields} 
and~\eqref{L2-Betti_numbers_of_tori_vanish}, since Theorem~\ref{the:dim_approximation_over_fields} implies
for torsion-free elementary $G$ that $\lim_{i \to \infty} \frac{b_n(T_f[i];F)}{[G:G_i]}$
exists and is independent of the chain $(G_i)_{i \ge 0}$ and because of
Subsection~\ref{subsec:The_case_bigcap_d_i_cdotZ_is_1} there exists an  appropriate chain,
for instance $(G_i \cap \pr^{-1}(2^i \cdot \IZ))_{i \ge 0}$,  with $\lim_{i \to \infty} \frac{b_n(T_f[i];F)}{[G:G_i]} = 0$. We do not
know whether~\eqref{lim_Betti_numbers_mapping_torus_is_zero} holds for arbitrary fields
and arbitrary residually finite groups $G$, 
as predicted by Conjecture~\ref{con:Approximation_in_zero_and_prime_characteristic}.

We do not know whether the rank gradient $RG(G,(G_i)_{i_\ge 0})$ is zero for any chain $(G_i)_{i \ge 0}$
as predicted by Conjecture~\ref{que:Rank_gradient_cost_and_first_L2_Betti_number} 
in view of~\eqref{lim_Betti_numbers_mapping_torus_is_zero}, but at least for chains
with $\bigcap_{i \ge 0} d_i \cdot \IZ = \{1\}$ this follows from Subsection~\ref{subsec:The_case_bigcap_d_i_cdotZ_is_1}.
This illustrates why it would be very interesting to know 
whether the rank gradient $RG(G,(G_i)_{i \ge 0})$ is independent of the chain $(G_i)_{i \ge 0}$.
The same remark applies to the more general Question~\ref{que:asymptotic_Morse_equality}.

One can express $b_n(T_f[i];F)$ in terms of $f$ as follows.
Obviously $K_i$ is a normal subgroup of $G$, the automorphism of $K$ induced by $f$
sends $K_i$ to $K_i$ and we have $[G:G_i] = [K:K_i]$ for all $i \ge 0$. Put $f[0] = f \colon Z = Z[0] \to Z = Z[0]$.
We can choose for each $i \ge 1$ self-homotopy equivalences
$f[i] \colon Z[i] \to Z[i]$ for which the following diagram with the obvious coverings as vertical maps
\[
\xymatrix{Z[i] \ar[r]^{f[i]} \ar[d]
&
Z[i]\ar[d]
\\
Z[i-1] \ar[r]^{f[i-1]} 
&
Z[i-1]
}
\]
commutes.  Then $T_f[i]$ is $T_{f[i]}$. We have the Wang sequence of $R$-modules for any commutative  ring $R$
\begin{multline}
\cdots \to H_n(Z[i];R) \xrightarrow{\id - H_n(f[i];R)} H_n(Z[i];R) \to H_n(T_f[i]) 
\\
\to H_{n-1}(Z[i];R) \xrightarrow{\id - H_{n-1}(f[i];R)} H_{n-1}(Z[i];R) \to\cdots
\label{Wang_sequence}
\end{multline}
This implies
\begin{multline}
b_n(T_f[i];F) 
\\
= 
\dim_F\bigl(\coker\bigl(\id - H_n(f[i];F) \bigr)\bigr) + \dim_F\bigl(\ker\bigl(\id - H_{n-1}(f[i];F) \bigr)\bigr).
\label{Betti_numbers_and_Wang}
\end{multline}


\subsection{Self-homeomorphism of a surface}
\label{subsec:Selfhomeomorphism_of_a_surface}

Now assume that $Z$ is a closed orientable surface of genus $g$ and $f \colon Z \to Z$ is an orientation preserving
selfhomeomorphism.

If $g = 0$, we get $T_f = S^1 \times S^2$ and in this case everything can be computed directly.

If $g = 1$, then $\pi_1(T_f)$ is poly-$\IZ$ and $T_f$ is aspherical, and hence we know already that
Conjecture~\ref{con:Approximation_in_zero_and_prime_characteristic} and
Conjecture~\ref{con:Growth_of_number_of_generators_of_the_homology}
are true, the 
answers to Questions~\ref{que:Rank_gradient_cost_and_first_L2_Betti_number}
and Question~\ref{que:asymptotic_Morse_equality} are positive, and 
Conjecture~\ref{con:Homological_growth_and_L2-torsion_for_aspherical_manifolds}
is true by applying Remark~\ref{rem:known_cases}, Lemma~\ref{lem:consequences_of_slow_growth}, 
Example~\ref{exa:Examples_of_groups_with_slow_growth_in_dimensions_le_d}
and Theorem~\ref{the:Groups_containing_a_normal_infinite_nice_subgroups}.

So the interesting (and open) case is $g \ge 2$. In this situation equality~\eqref{Betti_numbers_and_Wang}
becomes
\begin{eqnarray*}
b_n(T_f[i];F) & = & 
\begin{cases}
\dim_F\bigl(\coker\bigl(\id - H_1(f[i];F) \bigr)\bigr)  + 1 & n = 1;
\\
\dim_F\bigl(\ker\bigl(\id - H_1(f[i];F) \bigr)\bigr) + 1 & n = 2;
\\
1 & n = 0,3;
\\ 0 & n \ge 4.
\end{cases}
\end{eqnarray*}

We know for all $n \ge 0$ 
\begin{eqnarray}
  \lim_{i \to \infty}  \frac{b_n(T_f[i];F)}{[G:G_i]}   & =  & 0,
  \label{lim_Betti_numbers_mapping_torus_for_surface_is_zero}
\end{eqnarray}
provided that $F$ has characteristic zero. Notice that~\eqref{lim_Betti_numbers_mapping_torus_for_surface_is_zero}
for a field of characteristic zero is equivalent to 
\begin{eqnarray*}
  \lim_{i \to \infty}  \frac{\dim_{\IQ}\bigl(\coker(\id - H_1(f[i];\IZ)) \otimes_{\IZ} \IQ\bigr)}{[G:G_i]}   & =  & 0.
\end{eqnarray*}
Next we consider the case that $F$ is a field of prime characteristic $p$.
Then we do know~\eqref{lim_Betti_numbers_mapping_torus_for_surface_is_zero}
in the situation of
Subsection~\ref{subsec:The_case_bigcap_d_i_cdotZ_is_1} but not in the situation
of Subsection~\ref{subsec:The_case_bigcap_d_i_cdotZ_is_not_1}.
Recall that  Conjecture~\ref{con:Approximation_in_zero_and_prime_characteristic} 
predicts~\eqref{lim_Betti_numbers_mapping_torus_for_surface_is_zero} 
in view of~\eqref{L2-Betti_numbers_of_tori_vanish} also in this case.
In order to prove~\eqref{lim_Betti_numbers_mapping_torus_is_zero}
also for a field $F$ of prime characteristic $p$ for all $n \ge 0$
in the situation of Subsection~\ref{subsec:The_case_bigcap_d_i_cdotZ_is_1}, it suffices to show
\begin{eqnarray*}
  \lim_{i \to \infty}  \frac{\dim_{\IF_p}\bigl(\tors(H_1(T_f[i];\IZ)) \otimes_{\IZ} \IF_p\bigr)}{[G:G_i]}   & =  & 0;
\end{eqnarray*}
or equivalently 
\begin{eqnarray*}
  \lim_{i \to \infty}  \frac{\dim_{\IF_p}\bigl(\tors\bigl(\coker(\id - H_1(f[i];\IZ))\bigr) \otimes_{\IZ} \IF_p\bigr)}{[G:G_i]}   & =  & 0.
\end{eqnarray*}
So one needs to understand more about the maps $\id - H_1(f[i];\IZ) \colon H_1(Z[i];\IZ) \to H_1(Z[i];\IZ)$ for $i \ge 0$. 

The status of
Conjecture~\ref{con:Homological_growth_and_L2-torsion_for_aspherical_manifolds}~%
\eqref{con:Homological_growth_and_L2-torsion_for_aspherical_manifolds:tors}  is even
more mysterious.  Suppose that $f \colon Z \to Z$ is an orientation preserving irreducible  selfhomeomorphism of a closed
orientable surface $Z$ of genus $g \ge 2$.
If $f$ is periodic, $T_f$ is finitely covered by $S^1 \times Z$ and  
Conjecture~\ref{con:Homological_growth_and_L2-torsion_for_aspherical_manifolds} is known to be true.
Therefore we consider from now on the case, where $f$ is not periodic.
Then $f$ is  pseudo-Anosov, see~\cite[Theorem 6.3]{Casson-Bleiler(1988)},
and $T_f$ carries the structure of a hyperbolic $3$-manifold 
by~\cite[Theorem 3.6 on page 47, Theorem 3.9 on page 50]{McMullen(1996)}. Hence
Conjecture~\ref{con:Homological_growth_and_L2-torsion_for_aspherical_manifolds} predicts, see 
Example~\ref{exa:hyperbolic_3-manifolds},
\begin{eqnarray*}
\lim_{i \to \infty} \;\frac{\ln\big(\bigl|\tors(H_1(T_f[i];\IZ))\bigr|\bigr)}{[G:G_i]}  
> 0,
\\
\lim_{i \to \infty}  \frac{\dim_{\IF_p}\bigl(\tors(H_1(T_f[i];\IZ)) \otimes_{\IZ} \IF_p\bigr)}{[G:G_i]}   & =  & 0,
\\
\lim_{i \to \infty}  \frac{d\bigl(\tors(H_1(T_f[i];\IZ)) \bigr)}{[G:G_i]}   & =  & 0,
\end{eqnarray*}
or, because of the Wang sequence~\eqref{Wang_sequence} equivalently, 
\begin{eqnarray*}
\lim_{i \to \infty} \;\frac{\ln\big(\bigl|\tors\bigl(\coker(\id - H_1(f[i];\IZ))\bigr)\bigr|\bigr)}{[G:G_i]} 
& > & 
0,
\\
 \lim_{i \to \infty}  \frac{\dim_{\IF_p}\bigl(\tors\bigl(\coker(\id - H_1(f[i];\IZ))\bigr) \otimes_{\IZ} \IF_p\bigr)}{[G:G_i]}   
& =  & 
0,
\\
 \lim_{i \to \infty}  \frac{d\bigl(\tors\bigl(\coker(\id - H_1(f[i];\IZ))\bigr)\bigr)}{[G:G_i]}   
& =  & 
0.
\end{eqnarray*}


\typeout{-------------------- Section 12: Dropping the finite index condition --------------------------}

\section{Dropping the finite index condition}
\label{sec:Dropping_the_finite_index_condition}

From now on we want to drop the condition that the index of the subgroups $G_i$ in $G$ is finite
and that the index set for the chain is given by the natural numbers. So we will consider
for the remainder of this paper the following  more general  situation:

\begin{setup}[Inverse system]\label{set:inverse_systems}
  Let $G$ be a group together with an inverse system $\{G_i \mid i \in I\}$ of normal subgroups of $G$
  directed by inclusion over the directed set $I$ such that $\bigcap_{i \in I} G_i = \{1\}$.
\end{setup}

If $I$ is given by the natural numbers, this boils down to a nested sequence of
normal subgroups
\[G =  G_0 \supset G_1 \supseteq G_2 \supseteq \cdots
\]
satisfying $\bigcap_{n \ge 1} G_n = \{1\}$. If we additionally assume that $[G:G_i]$ is finite,
we are back in the previous special situation~\eqref{normal_chain}. Some of the following conjectures
reduce to previous conjectures in this special case. The reason is that for a finite group $H$ 
and a based free finite $\IZ H$-chain complex $D_*$ we
have
\begin{eqnarray*}
b_p^{(2)}(D_*^{(2)};\caln(H)) & = & \frac{b_p^{(2)}(D_*^{(2)};\caln(\{1\}))}{|H|};
\\
\rho^{(2)}(D_*^{(2)};\caln(H)) & = & \frac{\rho^{(2)}(D_*^{(2)};\caln(\{1\}))}{|H|}.
\end{eqnarray*}


\typeout{--- Section 13: Review of the  Determinant Conjecture and the Approximation Conjecture for L2-Betti numbers --}

\section{Review of the Determinant Conjecture and the Approximation Conjecture for $L^2$-Betti numbers}
\label{sec:Review_of_the_Determinant_Conjecture_and_the_Approximation_Conjecture_for_L2-Betti_numbers}

We begin with the Determinant Conjecture (see~\cite[Conjecture~13.2 on page~454]{Lueck(2002)}). 

\begin{conjecture}[Determinant Conjecture for a group $G$]
  \label{con:Determinant_Conjecture}
  For any matrix $A \in M_{r,s}(\IZ G)$, the Fuglede-Kadison determinant of the
  morphism of Hilbert modules $r_A^{(2)}\colon L^2(G)^r \to L^2(G)^s$ given by
  right multiplication with $A$ satisfies
\[{\det}_{\caln(G)}^{(2)}\bigl(r_A^{(2)}\bigr)\ge 1.
\]
\end{conjecture}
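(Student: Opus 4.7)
The plan is to prove the Determinant Conjecture in the case where $G$ is residually finite, and to identify where the argument breaks down for arbitrary $G$. Using residual finiteness I would fix a descending chain $G = G_0 \supseteq G_1 \supseteq \cdots$ of finite-index normal subgroups with $\bigcap_i G_i = \{1\}$, so that Setup~\ref{set:restricted} applies and both the approximation inequality~\eqref{ln(det)_ge_limsup} and the finite-quotient matrices $A[i] \in M_{r,s}(\IZ[G/G_i])$ are available.

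First I would establish the key lower bound ${\det}'\bigl(r_{A[i]}^{(2)}\bigr) \ge 1$ for every $i$ with $A[i] \ne 0$ (the case $A = 0$ being trivial, since then $r_A^{(2)} = 0$ and the Fuglede--Kadison determinant equals $1$ by convention). Identifying $\IC[G/G_i]^r$ with $\IC^{r[G:G_i]}$ via the orthonormal basis $\{e_j \otimes \overline{g} : 1 \le j \le r,\ \overline{g} \in G/G_i\}$, the operator $r_{A[i]}^{(2)}$ is represented by a matrix $B$ with entries in $\IZ$ of size $r[G:G_i] \times s[G:G_i]$. Let $\rho := \operatorname{rank}(B)$. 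Expanding the pseudo-determinant of $B^TB$ as the $\rho$-th elementary symmetric polynomial of its eigenvalues (equivalently, as the appropriate coefficient of its characteristic polynomial) and then applying Cauchy--Binet yields
\[
{\det}'(B)^2 \;=\; {\det}'(B^TB) \;=\; \sum_{|S|=\rho} \det\bigl((B^TB)_{S,S}\bigr) \;=\; \sum_{|S|=|T|=\rho} \det(B_{T,S})^2,
\]
which is a sum of squares of integers containing at least one term $\ge 1$ (any $\rho \times \rho$ minor witnessing the rank of $B$). Hence ${\det}'(B) \ge 1$.

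Then I would combine this with the inequality~\eqref{ln(det)_ge_limsup} already recorded in the paper to conclude
\[
\ln\bigl({\det}_{\caln(G)}^{(2)}(r_A^{(2)})\bigr) \;\ge\; \limsup_{i\to\infty} \frac{\ln\bigl({\det}'(r_{A[i]}^{(2)})\bigr)}{[G:G_i]} \;\ge\; 0,
\]
which immediately gives ${\det}_{\caln(G)}^{(2)}(r_A^{(2)}) \ge 1$, proving the Determinant Conjecture whenever $G$ is residually finite. The same strategy handles residually amenable groups via the amenable version of~\eqref{ln(det)_ge_limsup}, and more generally any class for which a compatible asymptotic upper bound on $\ln {\det}^{(2)}_{\caln(G)}$ by finite-dimensional analogues is known.

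The main obstacle lies in dropping residual finiteness: the reduction to \emph{integer} matrices depends crucially on the finite quotients $G/G_i$, and for a group with no nontrivial finite quotients there is no such tower. For sofic groups one can replace finite quotients by sofic approximations (following Elek--Szab\'o and Thom), where the representing matrices are only asymptotically integral, and one must adapt both the limsup inequality and the integer-determinant lower bound to this almost-integral setting. The delicate point is semicontinuity control of $\ln$ of the singular values near zero, since the Fuglede--Kadison determinant is notoriously discontinuous in the strong operator topology. Going beyond sofic groups, where no approximation scheme is even known, appears to require genuinely new ideas.
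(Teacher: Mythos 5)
The statement you are asked to prove is a genuine open \emph{Conjecture}; the paper does not contain a proof of it. What the paper does is record, in Remark~\ref{rem:status_of_Determinant_Conjecture}, the classes of groups for which it is known (amenable, residually finite, sofic, and classes built from these by the listed closure operations), with references to Elek--Szab\'o, L\"uck, and Schick. Your proposal establishes the conjecture for residually finite $G$ and sketches the extension to sofic groups; it is correct as far as it goes and is essentially the known argument underlying those references, but it is not, and you rightly acknowledge it cannot be, a proof of the conjecture for arbitrary $G$.

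On the substance: your Cauchy--Binet computation showing ${\det}'(B) \ge 1$ for a nonzero integer matrix $B$ is correct, and it is precisely the Determinant Conjecture for finite groups, which is the elementary core of the whole theory. Pay attention, however, to one logical subtlety in how you then conclude. The inequality~\eqref{ln(det)_ge_limsup} that you invoke is Theorem~\ref{the:inequality_det_det}, and that theorem is proved \emph{under} Assumption~\ref{ass:Determinant_Conjecture}, i.e., under the hypothesis that each quotient $G/G_i$ already satisfies the Determinant Conjecture. There is no circularity in your argument, because each $G/G_i$ is finite and your integer-matrix bound supplies exactly the required hypothesis for finite groups; but your write-up treats the inequality as available from the outset, whereas it only becomes available after the finite case is secured. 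Make that ordering explicit: first prove the finite case unconditionally, then earn the inequality, then pass to the limit. Your closing diagnosis of the obstruction beyond the sofic class matches the state of the art as surveyed in Remark~\ref{rem:status_of_Determinant_Conjecture}. The proposal should therefore be relabeled as a proof of the residually finite (and, after the indicated modifications, sofic) case of Conjecture~\ref{con:Determinant_Conjecture}, not of the conjecture itself.
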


\begin{remark}[Status of the Determinant Conjecture]
  \label{rem:status_of_Determinant_Conjecture}
  We will often have to assume that the Determinant
  Conjecture~\ref{con:Determinant_Conjecture} is true. This is an acceptable
  condition since it is known for a large class of groups.  Namely, the
  following is known (see~\cite[Theorem~5]{Elek-Szabo(2005)},
  \cite[Section~13.2 on pages~459~ff]{Lueck(2002)}, \cite[Theorem~1.21]{Schick(2001b)}).  Let $\calf$ be the
  class of groups for which the Determinant
  Conjecture~\ref{con:Determinant_Conjecture} is true.  Then:
  \begin{enumerate}

  \item \label{rem:status_of_Determinant_Conjecture:amenable_quotient}
    Amenable quotient\\
    Let $H \subset G$ be a normal subgroup. Suppose that $H \in \calf$ and the
    quotient $G/H$ is amenable. Then $G \in \calf$;

  \item \label{rem:status_of_Determinant_Conjecture:direct_limit}
    Colimits\\
    If $G = \colim_{i \in I} G_i$ is the colimit of the directed system $\{G_i
    \mid i \in I\}$ of groups indexed by the directed set $I$ (with not
    necessarily injective structure maps) and each $G_i$ belongs to $\calf$,
    then $G$ belongs to $\calf$;

  \item \label{rem:status_of_Determinant_Conjecture:inverse_limit}
    Inverse limits\\
    If $G = \lim_{i \in I} G_i$ is the limit of the inverse system $\{G_i \mid i
    \in I\}$ of groups indexed by the directed set $I$ and each $G_i$ belongs to
    $\calf$, then $G$ belongs to $\calf$;

  \item \label{rem:status_of_Determinant_Conjecture:subgroups}
    Subgroups\\
    If $H$ is isomorphic to a subgroup of a group $G$ with $G \in \calf$, then
    $H \in \calf$;

  \item \label{rem:status_of_Determinant_Conjecture:quotient_with_finite_kernel}
    Quotients with finite kernel\\
    Let $1 \to K \to G \to Q \to 1$ be an exact sequence of groups. If $K$ is
    finite and $G$ belongs to $\calf$, then $Q$ belongs to $\calf$;

  \item \label{rem:status_of_Determinant_Conjecture:sofic_groups} Sofic groups
    belong to $\calf$.
  \end{enumerate}

  The class of sofic groups is very large.  It is closed under direct and free
  products, taking subgroups, taking inverse and direct limits over directed index sets, and is closed
  under extensions with amenable groups as quotients and a sofic group as
  kernel.  In particular it contains all residually amenable groups.  One
  expects that there exists non-sofic groups but no example is known.  More
  information about sofic groups can be found for instance
  in~\cite{Elek-Szabo(2006)} and~\cite{Pestov(2008)}.

\end{remark}

\begin{notation}[Inverse systems and matrices]\label{not:inverse_systems_and_matrices}
  Let $R$ be a ring with $\IZ \subseteq R \subseteq \IC$. Given a matrix $A \in
  M_{r,s}(RG)$, let $A[i]\in M_{r,s}(R[G/G_i])$ be the matrix obtained from
  $A$ by applying elementwise the ring homomorphism $RG \to R[G/G_i]$
  induced by the projection $G \to G/G_i$.  Let $r_A \colon RG^r \to RG^s$ and
  $r_{A[i]} \colon R[G/G_i]^r \to R[G/G_i]^s$ be the $RG$- and
  $R[G/G_i]$-homomorphisms given by right multiplication with $A$ and $A[i]$.
  Let $r_A^{(2)} \colon L^2(G)^r \to L^2(G)^s$ and $r_{A[i]}^{(2)} \colon
  L^2(G/G_i)^r \to L^2(G/G_i)^s$ be the morphisms of Hilbert $\caln(G)$- and
  Hilbert $\caln(G/G_i)$-modules given by right multiplication with $A$ and
  $A[i]$.
\end{notation}

Next we deal with the Approximation Conjecture for $L^2$-Betti numbers
(see~\cite[Conjecture~1.10]{Schick(2001b)}, 
\cite[Conjecture~13.1 on page~453]{Lueck(2002)}).

\begin{conjecture}[Approximation Conjecture for $L^2$-Betti numbers]
  \label{con:Approximation_conjecture_for_L2-Betti_numbers}
  A group $G$ together with an inverse system $\{G_i \mid i \in I\}$ as in
  Setup~\ref{set:inverse_systems} satisfies the \emph{Approximation
    Conjecture for $L^2$-Betti numbers} if one of the following equivalent
  conditions holds:

  \begin{enumerate}

  \item Matrix version\\[1mm]
    Let $A \in M_{r,s}(\IQ G)$ be a matrix. Then
    \begin{eqnarray*}
      \lefteqn{\dim_{\caln(G)}\bigl(\ker\bigl(r_A^{(2)}\colon L^2(G)^r \to L^2(G)^s
        \bigr)\bigr)}
      & &
      \\ & \hspace{14mm} =  &
      \lim_{i \in I} \;\dim_{\caln(G/G_i)}\big(\ker
      \big(r_{A[i]}^{(2)}\colon L^2(G/G_i)^r \to L^2(G/G_i)^s \bigr)\bigr);
    \end{eqnarray*}

  \item $CW$-complex version\\[1mm]
    Let $X$ be a $G$-$CW$-complex of finite type. Then $X[i] := G_i\backslash X$ is a
    $G/G_i$-$CW$-complex of finite type and
    \begin{eqnarray*} b_p^{(2)}(X;\caln(G)) & = & \lim_{i \in I}
      \;b_p^{(2)}(X[i];\caln(G/G_i)).
    \end{eqnarray*}

  \end{enumerate}
\end{conjecture}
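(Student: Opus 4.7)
The plan is to first establish the equivalence of the two formulations, then concentrate on proving the matrix version. For the equivalence, given a $G$-$CW$-complex $X$ of finite type, one writes the $p$-th differential of the cellular $\IZ G$-chain complex $C_*(X)$ as right multiplication by a matrix $A_p \in M_{r,s}(\IZ G)$. Using additivity of $\dim_{\caln(G)}$ and the identity $b_p^{(2)}(X;\caln(G)) = \dim_{\caln(G)}\bigl(\ker r_{A_p}^{(2)}\bigr) - \dim_{\caln(G)}\bigl(\im r_{A_{p+1}}^{(2)}\bigr)$, together with the parallel identity over $\caln(G/G_i)$ applied to $A_p[i]$, the $CW$-complex version reduces to applying the matrix version to each $A_p$ and each $A_{p+1}$ (which have rational entries).

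For the matrix version, the strategy is to reformulate each dimension of a kernel as the atom at $0$ of a spectral measure. Let $\mu_{A}$ be the spectral measure of the positive operator $(r_A^{(2)})^*\circ r_A^{(2)}$ with respect to the von Neumann trace on $\caln(G)$, and define $\mu_{A[i]}$ analogously on $\caln(G/G_i)$. Then $\dim_{\caln(G)}\bigl(\ker r_A^{(2)}\bigr) = \mu_A(\{0\})$ and the analogous statement holds for $A[i]$. The first key step is moment convergence: for each natural number $n$ we have
\[
\int \lambda^n \, d\mu_{A[i]}(\lambda) = \tr_{\caln(G/G_i)}\bigl(((A[i])^*A[i])^n\bigr),
\]
and the right hand side converges to $\tr_{\caln(G)}\bigl((A^*A)^n\bigr) = \int \lambda^n \, d\mu_A(\lambda)$. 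This is because $(A^*A)^n$ is a fixed matrix over $\IQ G$ whose relevant entries have finite support $S \subset G$; the trace in $\caln(G/G_i)$ picks out exactly the coefficients of elements in $G_i$, and since $\bigcap_i G_i = \{1\}$, for every non-identity $g \in S$ eventually $g \notin G_i$. By Weierstrass approximation this yields weak convergence $\mu_{A[i]} \to \mu_A$ on a common compact interval containing the spectra of all $(A[i])^*A[i]$ and $A^*A$.

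The hard part is to pass from weak convergence to convergence of the atom at $0$. Weak convergence gives $\limsup_i \mu_{A[i]}(\{0\}) \le \mu_A(\{0\})$ only after work with upper semicontinuity, but the crucial direction $\liminf_i \mu_{A[i]}(\{0\}) \ge \mu_A(\{0\})$ requires ruling out that spectral mass of $\mu_{A[i]}$ sitting at values just above $0$ limits to positive mass strictly above $0$ in $\mu_A$; equivalently, one must control $\mu_{A[i]}((0,\lambda])$ uniformly in $i$ as $\lambda \downarrow 0$. This is precisely where the Determinant Conjecture~\ref{con:Determinant_Conjecture} enters: via the formula $\ln\bigl({\det}_{\caln(G/G_i)}^{(2)}(r_{A[i]}^{(2)})\bigr) = \tfrac{1}{2}\int_{0^+} \ln(\lambda)\, d\mu_{A[i]}(\lambda)$ combined with the uniform lower bound ${\det}_{\caln(G/G_i)}^{(2)}(r_{A[i]}^{(2)}) \ge 1$ (which holds because every finite group belongs to the class $\calf$), one obtains the uniform logarithmic estimate
\[
\mu_{A[i]}((0,\lambda]) \le \frac{-2\ln\bigl({\det}_{\caln(G)}^{(2)}(r_A^{(2)})\bigr) + r \cdot \ln(\|A\|^2+1)}{-\ln(\lambda)},
\]
which tends to $0$ as $\lambda \downarrow 0$ uniformly in $i$. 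Combined with moment convergence, this forces $\lim_i \mu_{A[i]}(\{0\}) = \mu_A(\{0\})$.

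The main obstacle is the Determinant Conjecture itself: it is needed both for the target group $G$ and, in the uniform form above, to obtain bounds that do not deteriorate as one passes through the inverse system. For sofic $G$ (and hence residually amenable $G$) this is known by Elek--Szabó, so the strategy carries through in that generality; beyond sofic groups the conjecture remains open, and it is precisely the uniform positivity of Fuglede--Kadison determinants that prevents a proof in full generality.
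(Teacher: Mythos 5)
Your outline follows the route the paper delegates to L\"uck's book and Schick's paper and then abstracts as Theorem~\ref{the:approxi_for_spectral_density}: moment convergence of traces (the paper's Lemma~\ref{lem_trace_and_limit}), weak convergence of the induced spectral measures, and a Determinant Conjecture lower bound promoted to uniform logarithmic control of the spectral density near zero, forcing $\liminf_i \mu_{A[i]}(\{0\}) \ge \mu_A(\{0\})$ on top of the Portmanteau-easy $\limsup$ inequality. Since the statement is a conjecture, what your argument actually establishes is precisely Theorem~\ref{the:The_Determinant_Conjecture_implies_the_Approximation_Conjecture_for_L2-Betti_numbers}, the implication from Assumption~\ref{ass:Determinant_Conjecture} to the conjecture's conclusion, and you correctly identify the Determinant Conjecture as the input that makes the strategy work. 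In that sense the approach matches the paper's.

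Three points should be corrected. First, and most substantively, Setup~\ref{set:inverse_systems} does \emph{not} require the quotients $G/G_i$ to be finite, so your parenthetical that the uniform lower bound ``holds because every finite group belongs to the class $\calf$'' covers only the finite-index special case of Setup~\ref{set:restricted}; in the general inverse system one must \emph{assume} the Determinant Conjecture for each $G/G_i$, which is exactly the paper's Assumption~\ref{ass:Determinant_Conjecture}, and one cannot deduce it from soficity of $G$ alone since soficity does not pass to arbitrary quotients. Second, the numerator of your uniform logarithmic bound is off: the correct estimate comes from $\ln\bigl({\det}^{(2)}_{\caln(G/G_i)}(r_{A[i]}^{(2)})\bigr)\ge 0$ together with the integral formula for the determinant and gives roughly $r\ln(K)/(-\ln\lambda)$ with $K = \|r_A^{(2)}\|^2$; the term $-2\ln\bigl({\det}^{(2)}_{\caln(G)}(r_A^{(2)})\bigr)$ does not belong, and since it is nonpositive under the Determinant Conjecture for $G$ it would actually make the claimed upper bound smaller than the true one and hence false. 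Third, your equivalence argument only treats the direction matrix version $\Rightarrow$ $CW$-version; the converse, realizing a matrix over $\IZ G$ (after clearing denominators, which does not change kernels) as the cellular differential of a finite-type $G$-$CW$-complex concentrated in two adjacent dimensions, is routine but should be supplied.
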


The two conditions appearing in
Conjecture~\ref{con:Approximation_conjecture_for_L2-Betti_numbers} are
equivalent by~\cite[Lemma~13.4 on page~455]{Lueck(2002)}.

We will frequently make the following assumption:

\begin{assumption}[Determinant Conjecture]
  \label{ass:Determinant_Conjecture}
  For each $i \in I$ the quotient $G/G_i$ satisfies the Determinant 
  Conjecture~\ref{con:Determinant_Conjecture}.
\end{assumption}

\begin{theorem}[The Determinant Conjecture implies the Approximation Conjecture
  for $L^2$-Betti numbers]
  \label{the:The_Determinant_Conjecture_implies_the_Approximation_Conjecture_for_L2-Betti_numbers}
  If Assumption~\ref{ass:Determinant_Conjecture} holds, then the conclusion of
  the Approximation
  Conjecture~\ref{con:Approximation_conjecture_for_L2-Betti_numbers} holds for
  $\{G_i \mid i \in I\}$.
\end{theorem}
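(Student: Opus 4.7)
The plan is to generalize L\"uck's original approximation argument to the inverse system setting, using the Determinant Conjecture as the crucial input that controls the jump of the spectral measure at zero. By the equivalence of the two formulations in Conjecture~\ref{con:Approximation_conjecture_for_L2-Betti_numbers} it suffices to prove the matrix version, and after clearing denominators I may assume $A \in M_{r,s}(\IZ G)$. Let $F$ and $F_i$ denote the spectral density functions of the positive operators $(r_A^{(2)})^* r_A^{(2)}$ on $L^2(G)^r$ and $(r_{A[i]}^{(2)})^* r_{A[i]}^{(2)}$ on $L^2(G/G_i)^r$, so that $\dim_{\caln(G)}\ker r_A^{(2)} = F(0)$ and $\dim_{\caln(G/G_i)}\ker r_{A[i]}^{(2)} = F_i(0)$. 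The claim becomes $\lim_{i \in I} F_i(0) = F(0)$.

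First I would verify convergence of moments. For any polynomial $p \in \IR[t]$, each diagonal entry of the matrix $p((r_A^{(2)})^*r_A^{(2)}) \in M_r(\IQ G)$ is a fixed element $y \in \IQ G$ with finite support. Because the inverse system is directed by inclusion and $\bigcap_{i \in I} G_i = \{1\}$, for sufficiently large $i$ the set $G_i$ intersects $\supp(y) \setminus \{1\}$ trivially, so $\tr_{\caln(G/G_i)}(y[i])$ stabilizes to $\tr_{\caln(G)}(y) = y_1$. Combined with the uniform operator norm bound $\|(r_{A[i]}^{(2)})^*r_{A[i]}^{(2)}\| \le K$ for some $K \ge 1$ (obtained from the fact that the $\caln(G/G_i)$-operator norm is dominated by the $\caln(G)$-operator norm under the quotient $\IZ G \to \IZ[G/G_i]$), this yields weak convergence of the measures $dF_i$ to $dF$ on $[0,K]$. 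Standard upper semi-continuity of spectral density functions then gives the easy direction
\[
\limsup_{i \in I} F_i(\lambda) \;\le\; F(\lambda) \qquad \text{for every }\lambda \ge 0.
\]

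The core step is the reverse inequality at $\lambda = 0$. Applying Conjecture~\ref{con:Determinant_Conjecture} for $G/G_i$ to $A[i] \in M_{r,s}(\IZ[G/G_i])$ yields
\[
0 \;\le\; \ln {\det}^{(2)}_{\caln(G/G_i)}\bigl(r_{A[i]}^{(2)}\bigr) \;=\; \tfrac{1}{2} \int_{0^+}^{K} \ln(\lambda)\, dF_i(\lambda).
\]
Integration by parts on $(0,K]$ applied to $G_i(\lambda) := F_i(\lambda) - F_i(0)$ converts this into the uniform bound
\[
\int_{0^+}^{K} \frac{G_i(\lambda)}{\lambda}\, d\lambda \;\le\; G_i(K) \ln K \;\le\; r \ln K.
\]
Since $G_i$ is monotone, for any $\mu \in (0, K)$,
\[
G_i(\mu) \ln(K/\mu) \;\le\; \int_{\mu}^{K} \frac{G_i(\lambda)}{\lambda}\, d\lambda \;\le\; r \ln K,
\]
so $F_i(\mu) - F_i(0) \le r \ln K / \ln(K/\mu)$ uniformly in $i$. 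Choosing $\mu > 0$ at a continuity point of $F$, the weak convergence from the previous step gives $\lim_i F_i(\mu) = F(\mu)$, whence
\[
\liminf_{i \in I} F_i(0) \;\ge\; F(\mu) - \frac{r \ln K}{\ln(K/\mu)}.
\]
Letting $\mu \downarrow 0$ through continuity points of $F$, right-continuity of $F$ yields $F(\mu) \to F(0)$ and the error term tends to zero, so $\liminf_i F_i(0) \ge F(0)$. Combined with the easy direction this proves $\lim_i F_i(0) = F(0)$.

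The main obstacle is precisely this last passage: weak convergence of spectral measures gives convergence of $F_i(\lambda)$ only at continuity points of $F$, while the invariant of interest lives at the potential jump $\lambda = 0$. The Determinant Conjecture supplies exactly the uniform logarithmic estimate needed to prevent spectral mass from accumulating just above zero along the inverse system, bridging this gap; this is the strategy foreshadowed by Theorem~\ref{the:the_uniform_logarithmic_estimate}. A minor subtlety in the general inverse system setting, as opposed to the chain setting with $I = \IN$, is that one works with limits of nets rather than sequences; however, since $F_i$ takes values in $[0,r]$ uniformly and all estimates above are monotone comparisons independent of the indexing cardinality, the argument carries over verbatim.
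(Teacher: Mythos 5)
Your argument correctly reproduces the spectral-density-function approach of L\"uck and Schick, which is precisely what the paper references for this theorem; the key use of the Determinant Conjecture to deduce the uniform bound $F_i(\mu) - F_i(0) \le r\ln K / \ln(K/\mu)$, and hence to control the accumulation of spectral mass just above zero, is the heart of the matter and you carry it out correctly. There is, however, one genuine slip: the justification you offer for the uniform operator norm bound $K$, namely that the $\caln(G/G_i)$-operator norm of $A[i]$ is dominated by the $\caln(G)$-operator norm of $A$, is false in general. Take $G = F_2$ with free basis $\{x,y\}$, $a = x + x^{-1} + y + y^{-1} \in \IZ G$, and let $(G_i)$ be the lower central series (which has trivial intersection); Kesten's theorem gives $\|r_a^{(2)}\|_{\caln(G)} = 2\sqrt{3}$, while the image of $a$ in $\IZ[G/G_1] = \IZ[\IZ^2]$ acts with operator norm $4$. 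The correct source of the uniform bound is the $L^1$-norm, exactly as in Lemma~\ref{lem_KG(B)-estimate} of the paper: the quotient map $L^1(G) \to L^1(G/G_i)$ is norm-nonincreasing and the $L^2$-operator norm is dominated by the $L^1$-norm of the symbol. With this single substitution your proof is sound and complete, and it matches the argument in the cited references.
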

\begin{proof}
  See~\cite[Theorem~13.3 (1) on page~454]{Lueck(2002)}  and~\cite{Schick(2001b)}.
\end{proof}

  Suppose that each quotient $G/G_i$ is finite. Then
  Assumption~\ref{ass:Determinant_Conjecture} is fulfilled by Remark~\ref{rem:status_of_Determinant_Conjecture},
  and  we   recover Theorem~\ref{the:approx_Betti_char_zero} from
  Theorem~\ref{the:The_Determinant_Conjecture_implies_the_Approximation_Conjecture_for_L2-Betti_numbers}.

For more information about the Approximation Conjecture and the Determinant
Conjecture we refer to~\cite[Chapter~13 on pages~453~ff]{Lueck(2002)} and~\cite{Schick(2001b)}.


\typeout{----- Section 14: The Approximation Conjecture for Fuglede-Kadison determinants  and L^2-torsion ----}

\section{The Approximation Conjecture for Fuglede-Kadison determinants 
and $L^2$-torsion}
\label{sec:The_Approximation_Conjecture_for_Fuglede-Kadison_determinants_and_L2-torsion}

Next we turn to Fuglede-Kadison determinants and $L^2$-torsion.


\subsection{The chain complex version}
\label{subsec:The_chain_complex_version}

\begin{conjecture}[Approximation Conjecture for Fuglede-Kadison determinants]
\label{con:Approximation_conjecture_for_Fuglede-Kadison_determinants_with_arbitrary_index}
A group $G$ together with an inverse system $\{G_i \mid i \in I\}$ as in
Setup~\ref{set:inverse_systems} satisfies 
the \emph{Approximation Conjecture for Fuglede-Kadison determinants}
if for any matrix $A \in M_{r,s}(\IQ G)$ 
we get for the Fuglede-Kadison determinant
\begin{eqnarray*}
\lefteqn{{\det}_{\caln(G)}\bigl(r_A^{(2)}\colon L^2(G)^r \to L^2(G)^s\bigr)}
& &
\\ & \hspace{14mm} =  &
\lim_{i \in I}\; {\det}_{\caln(G/G_i)}\big(r_{A[i]}^{(2)}\colon L^2(G/G_i)^r \to L^2(G/G_i)^s\bigr),
\end{eqnarray*}
where the existence of the limit above is part of the claim.
\end{conjecture}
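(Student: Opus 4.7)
The plan is to reduce the conjecture to a statement about convergence of spectral density functions and to isolate the contribution near zero as the essential difficulty. Replacing $A$ by $A^*A$, so that ${\det}^{(2)}_{\caln(G)}(r_A^{(2)})^2 = {\det}^{(2)}_{\caln(G)}(r_{A^*A}^{(2)})$, I would reduce to the case of a positive self-adjoint matrix $B \in M_{r,r}(\IQ G)$ with corresponding $B[i] \in M_{r,r}(\IQ[G/G_i])$. Then
\[
\ln {\det}^{(2)}_{\caln(G)}(r_B^{(2)}) \;=\; \tfrac{1}{2}\int_{0^+}^{\|B\|} \ln(\lambda)\,dF_B(\lambda),
\]
where $F_B$ is the spectral density function associated to $r_B^{(2)}$, with the analogous expression on the quotient side involving $F_{B[i]}$ built from the normalized trace on $M_{r,r}(\caln(G/G_i))$. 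Since the ring homomorphism $\IQ G \to \IQ[G/G_i]$ is norm-contractive, all $F_{B[i]}$ are supported in the common interval $[0,\|B\|]$.

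\textbf{Step 1 (weak convergence of spectral measures).} First, I would prove $F_{B[i]}(\lambda) \to F_B(\lambda)$ at every continuity point $\lambda$ of $F_B$. This follows from the Approximation Conjecture for $L^2$-Betti numbers, Theorem~\ref{the:The_Determinant_Conjecture_implies_the_Approximation_Conjecture_for_L2-Betti_numbers}, applied to matrices of the form $B - \mu I$ for rational $\mu$, under Assumption~\ref{ass:Determinant_Conjecture}. Equivalently, since the traces are determined by moments and $\tr_{\caln(G/G_i)}(B[i]^n) \to \tr_{\caln(G)}(B^n)$ in each fixed degree $n$ (a direct consequence of $\bigcap_i G_i = \{1\}$), the measures $dF_{B[i]}$ converge weakly to $dF_B$.

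\textbf{Step 2 (easy inequality).} The inequality
\[
\ln {\det}^{(2)}_{\caln(G)}(r_B^{(2)}) \;\ge\; \limsup_{i \in I} \ln {\det}^{(2)}_{\caln(G/G_i)}(r_{B[i]}^{(2)})
\]
is the general version of Theorem~\ref{the:inequality_det_det}. It drops out of Step~1 by splitting the integral at a small $\epsilon > 0$, using that on $[\epsilon,\|B\|]$ the integrand $\ln \lambda$ is bounded and continuous so weak convergence applies directly, while on $[0,\epsilon)$ the integrand is negative so only an upper estimate is needed.

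\textbf{Step 3 (the hard inequality and main obstacle).} The remaining direction, $\ln {\det}^{(2)}_{\caln(G)}(r_B^{(2)}) \le \liminf_i \ln {\det}^{(2)}_{\caln(G/G_i)}(r_{B[i]}^{(2)})$, is the core difficulty, since $\ln \lambda$ is unbounded below near $0$ and weak convergence does not forbid the $F_{B[i]}$ from concentrating spectrum near zero faster than $F_B$. The plan would be to establish a \emph{uniform logarithmic estimate}
\[
\int_{0^+}^{\epsilon} |\ln \lambda|\,dF_{B[i]}(\lambda) \;\le\; \eta(\epsilon), \qquad \eta(\epsilon) \to 0 \text{ as } \epsilon \to 0,
\]
uniformly in $i$, which is precisely the content of Theorem~\ref{the:the_uniform_logarithmic_estimate} advertised in the introduction. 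I expect this to be the central obstacle: it is what distinguishes the $\IQ$-coefficient case from the $\IC$-coefficient counterexample in Remark~\ref{rem:IQ-coefficients_are_necessary}, and in essence it is a non-commutative analogue of the Lehmer problem on small Mahler measures of integer polynomials. For $G = \IZ$ (Schmidt) and $G = \IZ^n$ (L\^e), the bound is obtained from classical Mahler measure estimates and the structure of roots of unity in cyclotomic factors; in the general setting one would need to exploit the integrality of $B[i]$ together with some form of sofic or amenable approximation of the quotients $G/G_i$ to rule out too many exponentially small eigenvalues, and it is exactly here that no general technique is currently available.
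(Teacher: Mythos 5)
The statement you were asked to prove is an open conjecture; neither the paper nor you actually prove it, and you are candid about that. What you have written is a strategy outline, and it mirrors the paper's own strategy in Section~\ref{sec:A_general_strategy_to_prove_the_Approximation_Conjecture_for_Fuglede-Kadison_determinants} essentially point-by-point. Your Step~1 corresponds to condition~(ii) of Theorem~\ref{the:approxi_for_spectral_density} (trace/moment convergence, supplied by Lemma~\ref{lem_trace_and_limit}); your Step~2 is Theorem~\ref{the:approxi_for_spectral_density}~(1), the general form of Theorem~\ref{the:inequality_det_det}, which does need the Determinant Conjecture for the quotients so that the determinants are uniformly bounded below (condition~(iii)) and the integral $\int_{0+}^{K}\frac{F(\lambda)-F(0)}{\lambda}\,d\lambda$ is finite --- a point your informal phrase ``only an upper estimate is needed'' glosses over; and your Step~3 isolates the uniform integrability condition~(v), which the paper likewise names as the core obstruction. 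One terminological mismatch worth flagging: what you call the ``uniform logarithmic estimate'' is really the paper's \emph{uniform integrability condition}~(v); the paper reserves ``uniform logarithmic estimate'' (Theorem~\ref{the:the_uniform_logarithmic_estimate}) for the specific pointwise sufficient condition $F[i](\lambda)-F[i](0)\le C\cdot(-\ln\lambda)^{-1-\delta}$ that implies~(v). Your intuition that this is a Lehmer-type phenomenon, and that soficity of the quotients alone cannot be enough, is consistent with the paper's discussion in Subsection~\ref{subsec:Uniform_estimate_on_spectral_density_functions} and Subsection~\ref{subsec:Lehmers_problem}, including the Grabowski--Vir\'ag example cited there which shows the stronger polynomial bound $F[i](\lambda)-F[i](0)\le C\cdot\lambda^{\alpha}$ can actually fail for a closed Riemannian manifold.
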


\begin{notation}[Inverse systems and chain complexes]
\label{not:inverse_system_and_chain_complexes}
Let $C_*$ be a finite based free $\IQ G$-chain complex.  In the sequel we denote
by $C[i]_*$ the $\IQ [G/G_i]$-chain complex $\IQ [G/G_i]\otimes_{\IQ G} C_* $, by
$C^{(2)}_*$ the finite Hilbert $\caln(G)$-chain complex $L^2(G) \otimes_{\IQ G} C_*$, 
and by $C[i]^{(2)}_*$ the finite Hilbert $\caln(G/G_i)$-chain complex
$L^2(G/G_i) \otimes_{\IQ[G/G_i]} C[i]_*  $. The $\IQ G$-basis for
$C_*$ induces a $\IQ[G/G_i]$-basis for $C[i]_*$ and Hilbert space structures on
$C^{(2)}_*$ and $C[i]_*^{(2)}$ using the standard Hilbert structure on $L^2(G)$
and $L^2(G/G_i)$.  We emphasize that in the sequel after fixing a $\IQ G$-basis
for $C_*$ the $\IQ[G/G_i]$-basis for $C_*[i]$ and the Hilbert structures on
$C_*^{(2)}$ and $C[i]_*^{(2)}$ have to be chosen in this particular way.

Denote by
\begin{eqnarray}
  \rho^{(2)}\big(C_*^{(2)}\bigr) 
   & := & 
   - \sum_{p \ge 0} (-1)^p \cdot \ln\bigl({\det}_{\caln(G)}^{(2)}\bigl(c_p^{(2)}\bigr)\bigr);
  \label{L2-torsion_for_C_over_cakln(G)}
  \\
  \rho^{(2)}\big(C[i]_*^{(2)}\bigr) 
  & := & 
  - \sum_{p \ge 0} (-1)^p \cdot \ln\bigl({\det}_{\caln(G/G_i)}^{(2)}\bigl(c[i]_p^{(2)}\bigr)\bigr),
  \label{L2-torsion_for_C[i]_over_caln(G/G_i)}
\end{eqnarray}
their \emph{$L^2$-torsion} over $\caln(G)$ and $\caln(G/G_i)$ respectively.
\end{notation}

We have the following chain complex version of 
Conjecture~\ref{con:Approximation_conjecture_for_Fuglede-Kadison_determinants_with_arbitrary_index}
which is obviously equivalent to 
Conjecture~\ref{con:Approximation_conjecture_for_Fuglede-Kadison_determinants_with_arbitrary_index}

\begin{conjecture}[Approximation Conjecture for $L^2$-torsion of chain complexes]
\label{con:Approximation_conjecture_for_L2-torsion_of_chain_complexes}
A group $G$ together with an inverse system $\{G_i \mid i \in I\}$ as in
Setup~\ref{set:inverse_systems}
satisfies the \emph{Approximation Conjecture for $L^2$-torsion of chain complexes}
if for any finite based free $\IQ G$-chain complex $C_*$ we have
\[
\rho^{(2)}\bigl(C_*^{(2)}\bigr) = \lim_{i \in I} \;\rho^{(2)}\bigl(C[i]_*^{(2)}\bigr).
\]
\end{conjecture}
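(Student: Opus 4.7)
The plan is to reduce the conjecture to a statement about individual Fuglede--Kadison determinants and then attack that statement via spectral theory. Since
\[
\rho^{(2)}(C_*^{(2)}) = -\sum_{p \ge 0} (-1)^p \ln{\det}^{(2)}_{\caln(G)}(c_p^{(2)})
\]
is a finite alternating sum (the chain complex has finite length), and similarly for $\rho^{(2)}(C[i]_*^{(2)})$, it suffices to establish for each matrix $A \in M_{r,s}(\IQ G)$ arising as a differential the equality
\[
\ln {\det}^{(2)}_{\caln(G)}(r_A^{(2)}) = \lim_{i \in I} \ln {\det}^{(2)}_{\caln(G/G_i)}(r_{A[i]}^{(2)}),
\]
which is just Conjecture~\ref{con:Approximation_conjecture_for_Fuglede-Kadison_determinants_with_arbitrary_index}. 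The natural framework is the spectral density function $F(\lambda) := \dim_{\caln(G)}(\im E_\lambda^{A^*A})$ of the positive self-adjoint operator $A^*A$ on $L^2(G)^r$, and its finite-level analogue $F_i(\lambda) := \dim_{\caln(G/G_i)}(\im E_\lambda^{A[i]^*A[i]})$. Both Fuglede--Kadison determinants can be expressed via the standard formula
\[
\ln {\det}^{(2)}_{\caln(G)}(r_A^{(2)}) = \frac{1}{2}\int_{0+}^{K} \ln(\lambda)\, dF(\lambda),
\]
and analogously for $F_i$, where $K$ is a uniform upper bound on $\|A^*A\|_\infty$ and $\|A[i]^*A[i]\|_\infty$, available because $\IQ G \to \IQ[G/G_i]$ is a $*$-ring homomorphism and the operator norm can be controlled by the $\ell^1$-norm of the matrix entries, which is invariant under $A \mapsto A[i]$.

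The next step is to split the integral at some small parameter $\epsilon > 0$ and deal with the two pieces separately. Under Assumption~\ref{ass:Determinant_Conjecture}, the Approximation Conjecture for $L^2$-Betti numbers (Theorem~\ref{the:The_Determinant_Conjecture_implies_the_Approximation_Conjecture_for_L2-Betti_numbers}) applied to the spectral projections yields pointwise convergence $F_i(\lambda) \to F(\lambda)$ at every continuity point of $F$, hence weak convergence of the measures $dF_i$ to $dF$ on $(0, K]$. Since $\ln \lambda$ is continuous and bounded on $[\epsilon, K]$, we immediately obtain
\[
\int_{\epsilon}^{K} \ln(\lambda)\, dF_i(\lambda) \;\longrightarrow\; \int_{\epsilon}^{K} \ln(\lambda)\, dF(\lambda)
\qquad \text{as } i \in I.
\]
Convergence of the full integrals therefore reduces entirely to controlling the tail integral $\int_{0+}^{\epsilon} \ln(\lambda)\, dF_i(\lambda)$ uniformly in $i$ as $\epsilon \to 0$.

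The main obstacle is precisely this tail, since $\ln(\lambda) \to -\infty$ as $\lambda \to 0+$ and an arbitrarily small amount of spectral mass accumulating near $0$ can contribute nontrivially. The key ingredient I would aim to establish is a \emph{uniform logarithmic estimate}: constants $C, \lambda_0 > 0$, depending on $A$ but not on $i$, such that
\[
F_i(\lambda) - F_i(0) \;\le\; \frac{C}{|\ln \lambda|} \qquad \text{for all } 0 < \lambda < \lambda_0, \ i \in I.
\]
Given such an estimate, an integration-by-parts argument (as in the strategy alluded to in Theorem~\ref{the:the_uniform_logarithmic_estimate}) shows that $\int_{0+}^{\epsilon} |\ln \lambda|\, d(F_i - F_i(0))$ tends to $0$ uniformly in $i$ as $\epsilon \to 0$; combining this with the middle-range convergence above and the analogous (softer) estimate for the limiting $F$ yields the equality. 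The hard part will be proving the uniform logarithmic estimate itself: it is known for $G = \IZ$ (Schmidt) and $G = \IZ^n$ (L\^e) via Mahler-measure techniques, but for general $G$ one needs a substitute for the Diophantine control on how eigenvalues of $r_{A[i]}^{(2)}$ approach zero. Finding the right input here — perhaps via soficity, determinant-conjecture-type lower bounds applied to perturbations $A + \epsilon \cdot I$, or a suitable approximation of Brown measures — is where the genuinely new idea would be required.
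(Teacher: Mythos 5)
The statement you are trying to prove is a \emph{conjecture}: the paper offers no proof of it, only the observation that it is (trivially, by taking alternating sums over the finitely many differentials) equivalent to the matrix version, Conjecture~\ref{con:Approximation_conjecture_for_Fuglede-Kadison_determinants_with_arbitrary_index}, together with a survey of a possible line of attack in Section~\ref{sec:A_general_strategy_to_prove_the_Approximation_Conjecture_for_Fuglede-Kadison_determinants}. Your reduction to the determinant statement, the passage to the spectral density functions $F$ and $F[i]$, the integral formula for $\ln\det^{(2)}$, the pointwise convergence $F[i](\lambda)\to F(\lambda)$ at continuity points under Assumption~\ref{ass:Determinant_Conjecture}, and the identification of the tail near $\lambda=0$ as the sole obstruction all coincide with the paper's own strategy (Theorem~\ref{the:approxi_for_spectral_density}, whose condition~(v) is exactly your uniform tail control). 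So you have reproduced the known framework rather than produced a proof, and you are right that the missing ingredient is a uniform estimate on $F[i](\lambda)-F[i](0)$ near zero; the paper's Lemma~\ref{lem:addendum} even shows that without extra input such uniform integrability can fail for otherwise plausible sequences of density functions.

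There is, however, one concrete error inside your conditional argument. The ``key ingredient'' you propose to establish is the bound
\[
F[i](\lambda) - F[i](0) \;\le\; \frac{C}{-\ln \lambda}
\]
uniformly in $i$. This estimate is not the right target: it is already a theorem (it follows from~\cite[Theorem~0.3]{Lueck(1994c)}, recorded as inequality~\eqref{ln(lambda)(-1)_bound} in the paper), and it is provably insufficient for your integration-by-parts / dominated-convergence step, because
\[
\int_{0+}^{\epsilon} \frac{C}{\lambda \cdot (-\ln(\lambda))}\, d\lambda \;=\; \infty .
\]
What one actually needs is the strictly stronger bound $F[i](\lambda)-F[i](0) \le C\,(-\ln\lambda)^{-(1+\delta)}$ for some $\delta>0$ independent of $i$ (Theorem~\ref{the:the_uniform_logarithmic_estimate}); with exponent exactly $1$ the tail integrals $\int_{0+}^{\epsilon}\frac{F[i](\lambda)-F[i](0)}{\lambda}\,d\lambda$ need not be uniformly small, so your claimed conclusion does not follow from your claimed hypothesis. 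The genuinely open problem is precisely to improve the exponent from $1$ to $1+\delta$ (and the paper notes, citing Grabowski, that $\delta$ must be allowed to depend on $G$), so the gap in your proposal sits one step earlier than where you locate it.
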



\subsection{$L^2$-torsion}
\label{subsec:L2-torsion}

Let $\overline{M}$ be a Riemannian manifold without boundary that 
comes with a proper free cocompact isometric $G$-action.
Denote by $M[i]$ the Riemannian manifold obtained
from $\overline{M}$ by dividing out the $G_i$-action. The Riemannian metric on
$M[i]$ is induced by the one on $M$. There is an obvious proper free 
cocompact isometric $G/G_i$-action on $M[i]$ induced by the given $G$-action on
$\overline{M}$. Notice that $M = \overline{M}/G$ is a closed Riemannian manifold
and we get a $G$-covering $\overline{M} \to M$ and a $G/G_i$-covering $M[i] \to M$
which are compatible with the Riemannian metrics. Denote by
\begin{eqnarray}
  \rho^{(2)}_{\an}\big(\overline{M};\caln(G)\bigr) & \in & \IR;
  \label{L2-torsion_for_M_over_cakln(G)}
  \\
  \rho^{(2)}_{\an}\big(M[i];\caln(G/G_i)\bigr) & \in & \IR,
  \label{L2-torsion_for_M[i]_over_caln(G/G_i)}
\end{eqnarray}
their \emph{analytic $L^2$-torsion} over $\caln(G)$ and $\caln(G/G_i)$
respectively.

\begin{conjecture}[Approximation Conjecture for analytic $L^2$-torsion]
\label{con:Approximation_conjecture_for_analytic_L2-torsion}
Consider a group $G$ together with an inverse system $\{G_i \mid i \in I\}$ as in
Setup~\ref{set:inverse_systems}.  Let $\overline{M}$ be a Riemannian manifold without boundary that 
comes with a proper free cocompact isometric $G$-action. Then
\[\rho^{(2)}_{\an}\big(\overline{M};\caln(G)\bigr) 
= \lim_{i \in I} \;\rho^{(2)}_{\an}\big(M[i];\caln(G/G_i)\bigr).
\]
\end{conjecture}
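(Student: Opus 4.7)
The plan is to reduce Conjecture~\ref{con:Approximation_conjecture_for_analytic_L2-torsion} to the chain complex version Conjecture~\ref{con:Approximation_conjecture_for_L2-torsion_of_chain_complexes}, which is equivalent to the Fuglede--Kadison determinant version Conjecture~\ref{con:Approximation_conjecture_for_Fuglede-Kadison_determinants_with_arbitrary_index}. First I would fix a smooth $G$-equivariant triangulation of $\overline{M}$ descending from a smooth triangulation of $M = G\backslash\overline{M}$, yielding a finite based free $\IZ G$-chain complex $C_* = C_*(\overline{M})$ with $C[i]_* = \IZ[G/G_i]\otimes_{\IZ G}C_*$ computing the cellular $G/G_i$-equivariant chain complex of $M[i]$. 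Applying the $L^2$-Cheeger--M\"uller theorem of Burghelea--Friedlander--Kappeler--McDonald to the proper free cocompact isometric actions of $G$ on $\overline{M}$ and of $G/G_i$ on $M[i]$, one obtains
\begin{eqnarray*}
\rho^{(2)}_{\an}\bigl(\overline{M};\caln(G)\bigr) & = & \rho^{(2)}\bigl(C_*^{(2)};\caln(G)\bigr) + R(\overline{M}),
\\
\rho^{(2)}_{\an}\bigl(M[i];\caln(G/G_i)\bigr) & = & \rho^{(2)}\bigl(C[i]_*^{(2)};\caln(G/G_i)\bigr) + R(M[i]),
\end{eqnarray*}
where the regulators $R(\overline{M})$ and $R(M[i])$ are alternating sums of logarithms of Fuglede--Kadison determinants of the canonical comparison maps between the spaces of harmonic $L^2$-forms (with the Hodge inner product from the Riemannian metric) and the reduced cellular $L^2$-cohomology (with the Hilbert structure from the triangulation).

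Next I would establish $\lim_{i \in I} R(M[i]) = R(\overline{M})$. In the $L^2$-acyclic case both sides vanish by the natural extension of Theorem~\ref{the:comparing_torsion_and_FK-determinants} to arbitrary inverse systems. In general, under Assumption~\ref{ass:Determinant_Conjecture}, Theorem~\ref{the:The_Determinant_Conjecture_implies_the_Approximation_Conjecture_for_L2-Betti_numbers} already delivers convergence of the dimensions of the spaces of harmonic $L^2$-forms, and what remains is a heat-kernel-based comparison of the Hodge projections on $M[i]$ with those on $\overline{M}$ in an appropriate trace-class sense. With this regulator convergence in hand, the analytic conjecture follows from Conjecture~\ref{con:Approximation_conjecture_for_L2-torsion_of_chain_complexes} applied to $C_*$, which by definitions~\eqref{L2-torsion_for_C_over_cakln(G)} and~\eqref{L2-torsion_for_C[i]_over_caln(G/G_i)} reduces to showing, for every $p$,
\[\ln{\det}_{\caln(G)}^{(2)}\bigl(c_p^{(2)}\bigr) = \lim_{i \in I}\,\ln{\det}_{\caln(G/G_i)}^{(2)}\bigl(c[i]_p^{(2)}\bigr).\]

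The hard part will be precisely this last Fuglede--Kadison determinant convergence. Under Assumption~\ref{ass:Determinant_Conjecture}, the spectral density functions of $(c[i]_p^{(2)})^*\circ c[i]_p^{(2)}$ converge to that of $(c_p^{(2)})^*\circ c_p^{(2)}$ in the sense driving Theorem~\ref{the:The_Determinant_Conjecture_implies_the_Approximation_Conjecture_for_L2-Betti_numbers}. However, $\ln{\det}^{(2)}$ is essentially $\int_{0^+}^{\infty}\ln\lambda\,dF(\lambda)$, and the logarithmic singularity at $\lambda = 0$ obstructs an immediate application of dominated convergence. What is needed is a \emph{uniform logarithmic estimate} on the spectral density functions near zero, of the kind articulated in Theorem~\ref{the:the_uniform_logarithmic_estimate}; away from zero the convergence is routine. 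Establishing such a uniform bound beyond the virtually cyclic case of Schmidt is the principal open obstacle and the reason Conjecture~\ref{con:Approximation_conjecture_for_analytic_L2-torsion} remains out of reach in general.
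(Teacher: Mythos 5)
You have correctly diagnosed the status: the statement is a conjecture, not a theorem, and the paper proves only the conditional Theorem~\ref{the:comparing_analytic_and_chain_complexes}, namely that $L^2$-acyclicity of $\overline{M}$ together with the Approximation Conjecture for $L^2$-torsion of chain complexes~\ref{con:Approximation_conjecture_for_L2-torsion_of_chain_complexes} imply Conjecture~\ref{con:Approximation_conjecture_for_analytic_L2-torsion}. Your overall strategy --- pass from analytic to combinatorial $L^2$-torsion via Burghelea--Friedlander--Kappeler--McDonald, control the regulator correction terms coming from the $L^2$-de Rham isomorphism, and reduce what remains to Fuglede--Kadison determinant approximation where the uniform logarithmic estimate of Theorem~\ref{the:the_uniform_logarithmic_estimate} is the missing input --- is precisely the paper's, and your closing diagnosis of why the conjecture is out of reach is accurate.

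Where you are thin is the regulator step. You invoke ``the natural extension of Theorem~\ref{the:comparing_torsion_and_FK-determinants} to arbitrary inverse systems,'' which is Theorem~\ref{the:comparing_analytic_and_chain_complexes}; but that theorem already packages the regulator comparison together with the reduction you are in the middle of setting up, so quoting it there bypasses the very content you are supposed to supply. The paper's actual technical input is Lemma~\ref{lem:bound_for_de_Rham}: by tracing Dodziuk's construction of the de Rham map $A^p$ and the Whitney map $W^p$, one extracts constants $L_1, L_2 > 0$ depending only on $M$ and the fixed smooth triangulation $K$, uniform in $i$, with $L_1 \le ||\Int[i]^p|| \le L_2$. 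This sandwiches $\ln\bigl({\det}_{\caln(G/G_i)}^{(2)}(\Int[i]^p)\bigr)$ between $\ln(L_1) \cdot b_p^{(2)}\bigl(M[i];\caln(G/G_i)\bigr)$ and $\ln(L_2) \cdot b_p^{(2)}\bigl(M[i];\caln(G/G_i)\bigr)$, which tends to $0$ under the hypothesis $b_p^{(2)}(\overline{M};\caln(G)) = 0$ by Theorem~\ref{the:The_Determinant_Conjecture_implies_the_Approximation_Conjecture_for_L2-Betti_numbers}. Without $L^2$-acyclicity the uniform norm bounds alone do not control the Fuglede--Kadison determinant of $\Int[i]^p$, which is exactly what Remark~\ref{rem:On_the_L2-acyclicity_assumption} flags; your suggestion to handle the general case via ``a heat-kernel-based comparison of the Hodge projections in an appropriate trace-class sense'' names the gap rather than filling it, and the paper does not fill it either.
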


\begin{remark} \label{rem:main_theorem_answers_questions}
  The conjectures above imply a positive
  answer to~\cite[Question~21]{Deninger(2009Mahler)} and~\cite[Question~13.52 on
  page~478 and Question~13.73 on page~483]{Lueck(2002)}.  They also would
  settle~\cite[Problem~4.4 and Problem~6.4]{Kitano-Morifuji(2008)}
  and~\cite[Conjecture~3.5] {Kitano-Morifuji-Takasawa(2004surface_bundle)}. One may wonder
  whether it is related to the Volume Conjecture due to
  Kashaev~\cite{Kashaev(1997)} and H. and J. Murakami~\cite[Conjecture~5.1 on
  page~102]{Murakami-Murakami(2001)}.
\end{remark}

We will prove in 
Section~\ref{sec:The_L2de_Rham_isomorphism_ad_the_proof_of_theorem_ref(the:comparing_analytic_and_chain_complexes)}
the following result
which in the weakly acyclic case reduces Conjecture~\ref{con:Approximation_conjecture_for_analytic_L2-torsion}
to Conjecture~\ref{con:Approximation_conjecture_for_Fuglede-Kadison_determinants_with_arbitrary_index}.

\begin{theorem}\label{the:comparing_analytic_and_chain_complexes}
  Consider a group $G$ together with an inverse system $\{G_i \mid i \in I\}$ as
  in Setup~\ref{set:inverse_systems}.  Let $\overline{M}$ be a Riemannian manifold without
  boundary that comes with a proper free cocompact isometric $G$-action. Suppose
  that $b_p^{(2)}(\overline{M};\caln(G)) = 0$ for all $p \ge 0$. Assume that the
  Approximation Conjecture for $L^2$-torsion of chain
  complexes~\ref{con:Approximation_conjecture_for_L2-torsion_of_chain_complexes}
  (or, equivalently,
  Conjecture~\ref{con:Approximation_conjecture_for_Fuglede-Kadison_determinants_with_arbitrary_index})
  holds for $G$.

  Then
  Conjecture~\ref{con:Approximation_conjecture_for_analytic_L2-torsion}
  holds for $M$, i.e.,
  \[
  \rho^{(2)}_{\an}\big(\overline{M};\caln(G)\bigr) = \lim_{i \in I}
  \;\rho^{(2)}_{\an}\big(M[i];\caln(G/G_i)\bigr).
  \]
\end{theorem}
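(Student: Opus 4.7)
The plan is to apply an $L^2$-version of the Cheeger--M\"uller theorem on both sides, reduce to the assumed Approximation Conjecture for $L^2$-torsion of chain complexes, and control the regulator terms that arise when comparing analytic and combinatorial inner products on $L^2$-(co)homology. First, fix a smooth triangulation of $M = \overline{M}/G$. Lifting yields a $G$-equivariant smooth triangulation of $\overline{M}$ and induces $(G/G_i)$-equivariant triangulations of each $M[i] = G_i\backslash\overline{M}$. Let $C_* = C_*(\overline{M})$ be the resulting finite based free $\IZ G$-chain complex, so that $C[i]_* = \IZ[G/G_i]\otimes_{\IZ G}C_*$ is the cellular chain complex of $M[i]$ over $\IZ[G/G_i]$.

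Second, apply the $L^2$-Cheeger--M\"uller theorem of Burghelea--Friedlander--Kappeler--McDonald~\cite{Burghelea-Friedlander-Kappeler-McDonald(1996a)} to $\overline{M}$ with its $G$-action. It takes the form
\[
\rho^{(2)}_{\an}(\overline{M};\caln(G)) = \rho^{(2)}\bigl(C_*^{(2)};\caln(G)\bigr) + \sum_p (-1)^p \widetilde{R}_p(\overline{M}),
\]
where $\widetilde{R}_p(\overline{M})$ is the logarithm of the Fuglede--Kadison determinant of the $L^2$-de Rham Hilbert $\caln(G)$-iso\-morphism from harmonic $L^2$-forms to combinatorial $L^2$-cohomology in degree $p$. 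The hypothesis $b_p^{(2)}(\overline{M};\caln(G)) = 0$ for all $p$ makes both domain and target of this isomorphism zero, so each $\widetilde{R}_p(\overline{M})$ is trivially $0$, whence
\[
\rho^{(2)}_{\an}(\overline{M};\caln(G)) = \rho^{(2)}\bigl(C_*^{(2)};\caln(G)\bigr).
\]
The analogous identity for each $M[i]$ over $\caln(G/G_i)$ reads
\[
\rho^{(2)}_{\an}(M[i];\caln(G/G_i)) = \rho^{(2)}\bigl(C[i]_*^{(2)};\caln(G/G_i)\bigr) + \sum_p (-1)^p \widetilde{R}_p(M[i]),
\]
but here the regulators $\widetilde{R}_p(M[i])$ need not vanish, since $M[i]$ is generally not $L^2$-acyclic over $\caln(G/G_i)$. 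Applying the assumed Conjecture~\ref{con:Approximation_conjecture_for_L2-torsion_of_chain_complexes} to $C_*$ gives
\[
\rho^{(2)}\bigl(C_*^{(2)};\caln(G)\bigr) = \lim_{i \in I} \rho^{(2)}\bigl(C[i]_*^{(2)};\caln(G/G_i)\bigr),
\]
so the theorem reduces to the regulator vanishing statement
\[
\lim_{i \in I} \; \sum_p (-1)^p \widetilde{R}_p(M[i]) = 0.
\]

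The hard part, and the main obstacle, is precisely this regulator vanishing. The idea is to use that, under the hypothesis, the companion Conjecture~\ref{con:Approximation_conjecture_for_L2-Betti_numbers} for $L^2$-Betti numbers can also be extracted, giving $\lim_i b_p^{(2)}(M[i];\caln(G/G_i)) = 0$ for every $p$. Each regulator $\widetilde{R}_p(M[i])$ is then the logarithm of the Fuglede--Kadison determinant of a Hilbert $\caln(G/G_i)$-isomorphism between two finite Hilbert $\caln(G/G_i)$-modules whose von Neumann dimensions tend to zero. Showing that these determinants tend to zero requires a uniform logarithmic integrability estimate on the spectral density functions of the combinatorial Laplacians $\Delta[i]_p^{(2)}$ associated to $C[i]_*^{(2)}$, of exactly the type discussed in Theorem~\ref{the:the_uniform_logarithmic_estimate}. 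Such a uniform control is the genuine difficulty: without it the small eigenvalues of these Laplacians could conspire to keep the regulators bounded away from zero, paralleling the obstruction to a general proof of Conjecture~\ref{con:Approximation_conjecture_for_Fuglede-Kadison_determinants_with_arbitrary_index}. The hypothesis that $G$ satisfies Conjecture~\ref{con:Approximation_conjecture_for_L2-torsion_of_chain_complexes} provides precisely the required convergence of Fuglede--Kadison determinants of combinatorial operators; transferring it across the $L^2$-de Rham isomorphism to the analytic side, and then assembling the alternating sum, completes the argument.
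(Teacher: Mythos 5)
Your high-level plan matches the paper's: triangulate $M$, lift, use the $L^2$-Cheeger--M\"uller theorem of Burghelea--Friedlander--Kappeler--McDonald to pass between analytic and topological $L^2$-torsion, invoke the assumed Approximation Conjecture~\ref{con:Approximation_conjecture_for_L2-torsion_of_chain_complexes} for the cellular chain complexes, and reduce to showing that the de Rham--regulator terms for $M[i]$ die in the limit.

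However, you misidentify how to close the remaining step, and that is a genuine gap. You assert that the regulator vanishing requires a uniform logarithmic integrability estimate on the spectral density functions of the combinatorial Laplacians $\Delta[i]^{(2)}_p$, in the spirit of Theorem~\ref{the:the_uniform_logarithmic_estimate}, and then gesture at ``transferring'' the assumed convergence of Fuglede--Kadison determinants of combinatorial operators ``across the $L^2$-de Rham isomorphism.'' Neither of these is what is needed, and neither is given a precise meaning. The quantity to control is $\ln\det^{(2)}_{\caln(G/G_i)}(\Int[i]^p)$ for the $L^2$-de Rham isomorphism $\Int[i]^p\colon \calh^p_{(2)}(M[i])\xrightarrow{\cong} H^p_{(2)}(\overline{K}[i])$, not a spectral density function of $\Delta[i]_p^{(2)}$, and the hypothesis on $L^2$-torsion of chain complexes says nothing about this map. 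What the paper actually establishes (Lemma~\ref{lem:bound_for_de_Rham}, a separate analytic input going back through Dodziuk's estimates) is that there are constants $L_1,L_2>0$ \emph{depending only on $M$ and the chosen triangulation $K$, not on $G$ or on the cover}, bounding the operator norms of the de Rham map and its inverse (the Whitney map) uniformly over the whole tower; hence
\[
\ln(L_1)\cdot b^{(2)}_p\bigl(M[i];\caln(G/G_i)\bigr) \;\le\; \ln\det^{(2)}_{\caln(G/G_i)}\bigl(\Int[i]^p\bigr) \;\le\; \ln(L_2)\cdot b^{(2)}_p\bigl(M[i];\caln(G/G_i)\bigr).
\]
Combined with $\lim_{i\in I} b^{(2)}_p(M[i];\caln(G/G_i)) = 0$ — which follows from $b^{(2)}_p(\overline{M};\caln(G))=0$ and the Approximation Conjecture~\ref{con:Approximation_conjecture_for_L2-Betti_numbers} via Theorem~\ref{the:The_Determinant_Conjecture_implies_the_Approximation_Conjecture_for_L2-Betti_numbers} — this immediately gives $\lim_{i}\ln\det^{(2)}_{\caln(G/G_i)}(\Int[i]^p)=0$ in each degree $p$, with no spectral density input at all. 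The point you are missing is that the de Rham comparison map is essentially a local object whose analytic estimates are insensitive to the covering; establishing this uniformity (Lemma~\ref{lem:bound_for_de_Rham}) is the new technical ingredient, and your proposal substitutes for it an appeal to a much harder — and here irrelevant — spectral density control.
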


It is conceivable that Theorem~\ref{the:comparing_analytic_and_chain_complexes}
remains to true if we drop the assumption that $b_p^{(2)}(\overline{M};\caln(G))$ vanishes for all $p
\ge 0$, but our present proof works only under this assumption
(see Remark~\ref{rem:On_the_L2-acyclicity_assumption}).

A more general notion of $L^2$-torsion called \emph{universal $L^2$-torsion} and the relevant algebraic $K$-groups,
where it takes values in, 
are investigated in~\cite{Friedl-Lueck(2016l2_universal),Linnell-Lueck(2016)}.


\subsection{An inequality}
\label{subsec:an_inequality}

We always  have the following inequality.

\begin{theorem}[Inequality]
\label{the:inequality_det_det}
Consider a group $G$ together with an inverse system $\{G_i \mid i \in I\}$ as in
Setup~\ref{set:inverse_systems}. Suppose that 
Assumption~\ref{ass:Determinant_Conjecture} holds.
Consider a matrix $A \in M_{r,s}(\IQ G)$ with coefficients in $\IQ G$.

Then we get the inequality
\begin{multline*}
{\det}_{\caln(G)}^{(2)}\big(r_A^{(2)}\colon L^2(G)^r \to L^2(G)^s \bigr)
\\ 
\ge
\limsup _{i \in I} {\det}_{\caln(G/G_i)}^{(2)}\big(r_{A[i]}^{(2)}\colon L^2(G/G_i)^r \to L^2(G/G_i)^s \bigr).
\end{multline*}
\end{theorem}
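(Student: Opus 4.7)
The plan is to represent both sides as integrals of $\ln$ against spectral density functions and deduce the inequality from weak convergence of spectral measures. Let $F_A\colon[0,\infty)\to[0,\infty)$ be the spectral density function of $(r_A^{(2)})^*\circ r_A^{(2)}$ with respect to $\caln(G)$, and let $F_{A[i]}$ be defined analogously over $\caln(G/G_i)$. Fix $K$ strictly larger than $\|r_A^{(2)}\|^2$; since the entries of $A[i]$ are projections of those of $A$, one has $\|r_{A[i]}^{(2)}\|\le\|r_A^{(2)}\|$, so $K$ bounds all supports. With this convention
\[
\ln\bigl({\det}_{\caln(G)}^{(2)}(r_A^{(2)})\bigr)
= \tfrac{1}{2}\int_{0^+}^{K}\ln(\lambda)\,dF_A(\lambda),
\]
and analogously for $A[i]$, so the theorem reduces to showing $\limsup_{i\in I}\int_{0^+}^{K}\ln\,dF_{A[i]}\le \int_{0^+}^{K}\ln\,dF_A$.

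First I would establish weak convergence of the measures $dF_{A[i]}$ to $dF_A$ on $[0,K]$. By the Weierstrass approximation theorem this reduces to convergence of moments, namely to showing
\[
\tr_{\caln(G/G_i)}\bigl(p((r_{A[i]}^{(2)})^* r_{A[i]}^{(2)})\bigr)\to \tr_{\caln(G)}\bigl(p((r_A^{(2)})^* r_A^{(2)})\bigr)
\]
for every polynomial $p\in\IR[x]$. This in turn follows from the elementary computation that $\tr_{\caln(G/G_i)}$ of the image of $g\in G$ equals $1$ if $g\in G_i$ and $0$ otherwise, so the hypothesis $\bigcap_iG_i=\trivial$ forces stabilization at the correct value. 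In particular $F_{A[i]}(\lambda)\to F_A(\lambda)$ at every continuity point $\lambda$ of $F_A$. Next I would pick a continuity point $\lambda_0\in(0,\min(1,K))$ of $F_A$ and split
\[
\int_{0^+}^{K}\ln(\lambda)\,dF_{A[i]}(\lambda) = \int_{0^+}^{\lambda_0}\ln(\lambda)\,dF_{A[i]}(\lambda) + \int_{\lambda_0}^{K}\ln(\lambda)\,dF_{A[i]}(\lambda) \le \int_{\lambda_0}^{K}\ln(\lambda)\,dF_{A[i]}(\lambda),
\]
using that $\ln\le 0$ on $(0,1]$. Since $\ln$ is continuous and bounded on $[\lambda_0,K]$, weak convergence of the restricted measures (legitimate because $\lambda_0$ is a continuity point of $F_A$) yields $\int_{\lambda_0}^K\ln\,dF_{A[i]}\to\int_{\lambda_0}^K\ln\,dF_A$. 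Taking $\limsup$ in $i$ and then letting $\lambda_0\to 0^+$ through continuity points, the desired inequality follows by monotone convergence.

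The main obstacle lies in the singular behaviour of $\ln$ at $0$, and this is precisely where Assumption~\ref{ass:Determinant_Conjecture} plays its role. Via Theorem~\ref{the:The_Determinant_Conjecture_implies_the_Approximation_Conjecture_for_L2-Betti_numbers} it guarantees that $F_{A[i]}(0)$ converges to $F_A(0)$, ruling out the possibility that spectral weight escapes down to $0$ along the inverse system. Without such a uniform control at the bottom of the spectrum, the monotone-convergence step letting $\lambda_0\to 0$ could fail to be matched by the left-hand side, and one would lose the contribution of the near-kernel spectrum of $A$; handling this eventuality is the technically delicate part of the argument.
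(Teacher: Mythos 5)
Your core strategy is sound and it matches the paper's at a high level: both encode the logarithm of the Fuglede--Kadison determinant as a Stieltjes-type integral of $\ln$ against the spectral density function of $B=A^*A$, both establish convergence of the normalized traces of polynomials in $B$ (the paper isolates this as Lemma~\ref{lem_trace_and_limit}, condition~\eqref{the:approxi_for_spectral_density:traces_and_polynomials} of Theorem~\ref{the:approxi_for_spectral_density}), and both then pass to the one-sided estimate. The difference is in how the singularity of $\ln$ at $0$ is handled. The paper first integrates by parts, replacing $\int\ln\,dF$ by $\ln K\cdot(F(K)-F(0))-\int_{0^+}^K\frac{F(\lambda)-F(0)}{\lambda}\,d\lambda$, and then applies Fatou to the non-negative integrands $\frac{F[i](\lambda)-F[i](0)}{\lambda}$; this requires the almost-everywhere convergence of $F[i](\lambda)-F[i](0)$ to $F(\lambda)-F(0)$, and here the term $F[i](0)\to F(0)$ (approximation of kernel dimensions) genuinely enters and is obtained from Assumption~\ref{ass:Determinant_Conjecture}. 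Your cut-off at a continuity point $\lambda_0>0$, combined with $\ln\le 0$ on $(0,1]$, bypasses the subtraction of $F(0)$ entirely: only $F_{A[i]}(\lambda_0)\to F_A(\lambda_0)$ at continuity points $\lambda_0>0$ is used, which is a consequence of weak convergence alone.

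However, the last paragraph of your proposal misattributes the role of the Determinant Conjecture in your own argument. You assert that the Assumption is needed, via Theorem~\ref{the:The_Determinant_Conjecture_implies_the_Approximation_Conjecture_for_L2-Betti_numbers}, to secure $F_{A[i]}(0)\to F_A(0)$ and prevent spectral mass ``escaping to $0$''. But the argument you wrote never integrates across $0$ or subtracts the atom at $0$: you estimate $\int_{0^+}^K\ln\,dF_{A[i]}\le\int_{\lambda_0}^K\ln\,dF_{A[i]}$ by discarding a non-positive contribution, pass to the limit over the interval $(\lambda_0,K]$ where $\ln$ is bounded, and finally send $\lambda_0\to 0^+$ monotonically. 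None of these steps touches $F_{A[i]}(0)$, so the monotone-convergence step cannot ``fail to be matched by the left-hand side'' in the way you describe; the inequality is one-sided precisely so as to make the near-zero part of the spectrum harmless. What this reveals is that your route, if carried out cleanly, gives the inequality of Theorem~\ref{the:inequality_det_det} without invoking Assumption~\ref{ass:Determinant_Conjecture} at all; the paper uses the Assumption because it derives the statement as a corollary of the more general Theorem~\ref{the:approxi_for_spectral_density}, whose condition~\eqref{the:approxi_for_spectral_density:det} feeds into $F(0)=\lim_i F[i](0)$ and into showing the integral is finite. You should also not assert $\|r_{A[i]}^{(2)}\|\le\|r_A^{(2)}\|$ simply from the entries of $A[i]$ being projections of those of $A$; there is no natural intertwiner between $L^2(G)$ and $L^2(G/G_i)$ that makes this a direct comparison. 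The correct way to a uniform bound $K$ independent of $i$ is via the $L^1$-norm estimate of Lemma~\ref{lem_KG(B)-estimate} together with the observation that $L^1(G)\to L^1(G/G_i)$ is norm non-increasing, which is exactly how the paper verifies condition~\eqref{the:approxi_for_spectral_density:uniform_bound_K}.
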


The proof of Theorem~\ref{the:inequality_det_det} will be given in 
Section~\ref{sec:Proof_of_some_Theorems_for_Fuglede-Kadison_determinants}.


\subsection{Matrices invertible in $L^1(G)$}
\label{subsec:Matrices_invertible_in_L1(G)}

\begin{theorem}[Invertible matrices over $L^1(G)$]
\label{the:invertible_matrices_over_l1(G)}
Consider a group $G$ together with an inverse system $\{G_i \mid i \in I\}$ as in
Setup~\ref{set:inverse_systems}. 
Consider an invertible matrix $A \in \GL_d(L^1(G))$ with coefficients in $L^1(G)$. 
The projection $G \to G/G_i$ induces a ring
homomorphism $L^1(G) \to L^1(G/G_i)$.  Thus we obtain for each $i \in I$ an invertible matrix
$A[i]  \in \GL_d(L^1(G/G_i))$. 

Then the Approximation Conjecture for Fuglede-Kadison 
determinants~\ref{con:Approximation_conjecture_for_Fuglede-Kadison_determinants_with_arbitrary_index}
holds for $A$, i.e.,
\[
{\det}_{\caln(G)}^{(2)}\big(r_A^{(2)}\colon L^2(G)^d\to L^2(G)^d \bigr)
= \lim_{i \in I} {\det}_{\caln(G/G_i)}^{(2)}\big(r_{A[i]}^{(2)}\colon L^2(G/G_i)^d \to L^2(G/G_i)^d \bigr).
\]
\end{theorem}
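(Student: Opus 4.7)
The plan is to reduce the convergence of Fuglede--Kadison determinants to convergence of traces of polynomials in $(r_A^{(2)})^*r_A^{(2)}$, via continuous functional calculus with the logarithm function applied on a common bounded spectrum. First I would reduce to the positive, self-adjoint case: setting $T := (r_A^{(2)})^* r_A^{(2)}$ and $T_i := (r_{A[i]}^{(2)})^* r_{A[i]}^{(2)}$, one has $2\ln {\det}_{\caln(G)}^{(2)}(r_A^{(2)}) = \tr_{M_d(\caln(G))}(\ln T)$, and likewise for $A[i]$. Denoting by $A^*$ the conjugate transpose of $A$ formed using the standard involution $g \mapsto g^{-1}$ of $L^1(G)$, one has $T = r_{A^*A}^{(2)}$ and $T_i = r_{(A^*A)[i]}^{(2)}$, because the projection $L^1(G) \to L^1(G/G_i)$ is a $*$-homomorphism. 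Put $B := A^*A \in M_d(L^1(G))$; since $A$ is invertible in $M_d(L^1(G))$, so is $B$, with inverse $A^{-1}(A^{-1})^*$.

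Next I would establish uniform spectral bounds for $T$ and all $T_i$. The classical estimate $\|r_f^{(2)}\|_{\mathrm{op}} \le \|f\|_{L^1}$ for $f \in L^1(G)$, together with its matrix analogue, implies that the spectrum of $T$ lies in $[\alpha, \beta]$ with $\beta := \|B\|_{L^1}$ and $\alpha := \|B^{-1}\|_{L^1}^{-1}$, both strictly positive and finite. Since the projection $L^1(G) \to L^1(G/G_i)$ is a ring homomorphism that does not increase $L^1$-norms and hence sends $B^{-1}$ to $B[i]^{-1}$, the very same bounds apply to every $T_i$. By the Stone--Weierstrass theorem I choose polynomials $p_n$ converging uniformly to $\ln$ on $[\alpha, \beta]$. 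Continuous functional calculus then gives $\|p_n(T) - \ln T\|_{\mathrm{op}} \to 0$ and, uniformly in $i$, $\|p_n(T_i) - \ln T_i\|_{\mathrm{op}} \to 0$. Combined with the elementary bound $|\tr(X)| \le d \cdot \|X\|_{\mathrm{op}}$ on $M_d(\caln(\cdot))$, this reduces the theorem to the claim that for every polynomial $p$
\[
\lim_{i \in I} \tr_{M_d(\caln(G/G_i))}\bigl(p(T_i)\bigr) = \tr_{M_d(\caln(G))}\bigl(p(T)\bigr).
\]
Since $p(T) = r_{p(B)}^{(2)}$ and $p(T_i) = r_{p(B)[i]}^{(2)}$ with $p(B) \in M_d(L^1(G))$, this in turn reduces to showing, for an arbitrary $C \in M_d(L^1(G))$, that
\[
\tr_{M_d(\caln(G/G_i))}\bigl(r_{C[i]}^{(2)}\bigr) \longrightarrow \tr_{M_d(\caln(G))}\bigl(r_C^{(2)}\bigr).
\]

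The heart of the argument, and the main obstacle, is this trace-convergence statement for $L^1$-elements rather than the easier group-algebra case. I would attack it by density: the traces equal $\sum_j \sum_{g \in G_i} C_{jj}(g)$ and $\sum_j C_{jj}(1)$ respectively, so the difference is $\sum_j \sum_{g \in G_i \setminus \{1\}} C_{jj}(g)$. Given $\epsilon > 0$, approximate each $C_{jj}$ in $L^1$-norm by a finitely supported element $C'_{jj} \in \IC G$ within $\epsilon$. For $i$ sufficiently large the support of each $C'_{jj}$ meets $G_i$ only at $1$, by directedness of $I$ and $\bigcap_{i \in I} G_i = \{1\}$, which produces exact equality of the two traces after replacing $C$ by $C'$. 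The residual error is controlled by a uniform constant times $\|C - C'\|_{L^1}$, again because the projection does not increase $L^1$-norms. Combining this density step with the functional-calculus reduction above yields the theorem; along the way one also obtains the existence of the limit, which was part of the assertion in Conjecture~\ref{con:Approximation_conjecture_for_Fuglede-Kadison_determinants_with_arbitrary_index}.
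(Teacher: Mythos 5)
Your proof is correct, and it takes a genuinely different technical route from the paper's, although both rest on the same two ingredients (a uniform two-sided spectral bound and polynomial trace convergence). The paper proves this theorem as an application of the general spectral-density-function machine of Theorem~\ref{the:approxi_for_spectral_density}: it writes $\ln\det^{(2)}$ as the integral $\ln(K)\cdot(F(K)-F(0)) - \int_{0+}^K \frac{F(\lambda)-F(0)}{\lambda}\,d\lambda$, verifies conditions (i)--(v) (using Lemma~\ref{lem_trace_and_limit} for trace convergence of polynomials and the invertibility of $B = A^*A$ in $M_d(L^1(G))$ to produce a uniform spectral gap at zero, whence the uniform integrability condition), and then invokes dominated convergence. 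You bypass spectral density functions entirely by using the identity $2\ln\det^{(2)}(r_A^{(2)}) = \tr(\ln T)$, the same uniform spectral gap to place all spectra of $T$ and $T_i$ inside a common compact interval $[\alpha,\beta]\subset(0,\infty)$ on which $\ln$ is continuous, Stone--Weierstrass to approximate $\ln$ uniformly by polynomials, and the same trace-convergence fact (your density argument is exactly the content of the paper's Lemma~\ref{lem_trace_and_limit}); a standard $3\epsilon$-argument finishes. Your version is more self-contained and arguably cleaner for this particular theorem, since the spectral gap is exploited maximally and no estimate near $\lambda=0$ is ever needed; the paper's heavier approach buys generality, because Theorem~\ref{the:approxi_for_spectral_density} is also the engine behind Theorem~\ref{the:inequality_det_det}, where no gap exists. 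One detail to make precise in a final writeup: the matrix operator-norm bound should be formulated via the paper's $K^G(B)$ from~\eqref{KG(B)}, and the fact that the induced map $L^1(G) \to L^1(G/G_i)$ is a $*$-ring homomorphism that does not increase $L^1$-norms is what gives both $K^{G/G_i}(B[i]) \le K^G(B)$ and $K^{G/G_i}(B[i]^{-1}) \le K^G(B^{-1})$, so that the interval $[\alpha,\beta]$ really is uniform in $i$.
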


Theorem~\ref{the:invertible_matrices_over_l1(G)} has already been proved by
  Deninger~\cite[Theorem~17]{Deninger(2009Mahler)} in the case $d = 1$. Notice that
  Deninger~\cite[page~46]{Deninger(2009Mahler)} uses a different definition of
  Fuglede-Kadison determinant which agrees with ours for injective operators
  by~\cite[Lemma~3.15 (5) on page~129]{Lueck(2002)}.

\begin{corollary}\label{cor:l1-chain_equivalence}
Consider a group $G$ together with an inverse system $\{G_i \mid i \in I\}$ as in
Setup~\ref{set:inverse_systems}. 

\begin{enumerate}
\item \label{cor:l1-chain_equivalence:acyclic}
Let $C_*$ be a finite based free $L^1(G)$-chain complex which is acyclic. Then
\[
\rho^{(2)}\bigl(C_*^{(2)}\bigr)  = \lim_{i \in I} \rho^{(2)}\bigl(C_*[i]^{(2)}\bigr);
\] 

\item \label{cor:l1-chain_equivalence:equivalence}
Let $C_*$ and $D_*$ be finite based free $L^1(G)$-chain complexes. Suppose that they are
$L^1(G)$-chain homotopy equivalent. Then
\[\rho^{(2)}\bigl(C_*^{(2)}\bigr)- \rho^{(2)}\bigl(D_*^{(2)}\bigr) 
= \lim_{i \in I} \left(\rho^{(2)}\bigl(C_*[i]^{(2)}\bigr) - \rho^{(2)}\bigl(D_*[i]^{(2)}\bigr)\right).
\] 
\end{enumerate}
\end{corollary}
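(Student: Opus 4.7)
The plan is to reduce part \eqref{cor:l1-chain_equivalence:acyclic} to Theorem \ref{the:invertible_matrices_over_l1(G)} via the classical ``rolling up'' trick, and then to deduce part \eqref{cor:l1-chain_equivalence:equivalence} from part \eqref{cor:l1-chain_equivalence:acyclic} using the algebraic mapping cone.

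For part \eqref{cor:l1-chain_equivalence:acyclic}, I would first observe that a finite based free $L^1(G)$-chain complex $C_*$ that is acyclic is automatically $L^1(G)$-chain contractible, since a bounded exact complex of projective modules splits; one can moreover arrange the chain contraction $\gamma \colon C_* \to C_{*+1}$ to satisfy $\gamma^2 = 0$, by first constructing splittings $C_n = \ker(c_n) \oplus \gamma_0\bigl(\ker(c_{n-1})\bigr)$ from any initial contraction and then iterating. Define the based free $L^1(G)$-modules $C_{\mathrm{odd}} := \bigoplus_p C_{2p+1}$ and $C_{\mathrm{even}} := \bigoplus_p C_{2p}$ with their direct-sum bases; the vanishing $\chi(C_*) = 0$ forces them to have the same rank. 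The $L^1(G)$-homomorphism
\[
A := c + \gamma \colon C_{\mathrm{odd}} \longrightarrow C_{\mathrm{even}}
\]
is then invertible, since the identity $(c+\gamma)^2 = c\gamma + \gamma c + \gamma^2 = \id$ identifies its inverse with the analogous map $c + \gamma \colon C_{\mathrm{even}} \to C_{\mathrm{odd}}$. Thus $A$ is represented by a square invertible matrix over $L^1(G)$, to which Theorem \ref{the:invertible_matrices_over_l1(G)} applies.

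The second ingredient is the standard torsion-theoretic identity
\[
\ln {\det}_{\caln(G)}^{(2)}\bigl(r_A^{(2)}\bigr) = \rho^{(2)}\bigl(C_*^{(2)}\bigr),
\]
together with its analogue over each quotient $\caln(G/G_i)$, using the chain contraction $\gamma[i]$ of $C_*[i]$ induced by the ring map $L^1(G) \to L^1(G/G_i)$ and the matrix $A[i] = c[i] + \gamma[i]$. Combining these identifications with the conclusion of Theorem \ref{the:invertible_matrices_over_l1(G)} applied to $A$ yields part \eqref{cor:l1-chain_equivalence:acyclic}. For part \eqref{cor:l1-chain_equivalence:equivalence}, given an $L^1(G)$-chain homotopy equivalence $f \colon C_* \to D_*$, form its algebraic mapping cone $E_* := \cone(f)$ with the direct-sum basis on $E_n = C_{n-1} \oplus D_n$. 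Then $E_*$ is a finite based free $L^1(G)$-chain complex that is $L^1(G)$-chain contractible, and its differentials have the block-lower-triangular shape
\[
c^E_n = \begin{pmatrix} -c_{n-1} & 0 \\ f_{n-1} & d_n \end{pmatrix}.
\]
Block-triangular multiplicativity of the Fuglede-Kadison determinant gives
\[
\rho^{(2)}\bigl(E_*^{(2)}\bigr) = \rho^{(2)}\bigl(D_*^{(2)}\bigr) - \rho^{(2)}\bigl(C_*^{(2)}\bigr),
\]
and the same identity holds after passing to each quotient $G/G_i$. Applying part \eqref{cor:l1-chain_equivalence:acyclic} to the $L^1(G)$-acyclic complex $E_*$ then delivers the claimed convergence.

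The main obstacle is the ``determinant equals torsion'' identity $\ln {\det}_{\caln(G)}^{(2)}(r_A^{(2)}) = \rho^{(2)}(C_*^{(2)})$, since the Fuglede-Kadison determinant is not multiplicative on general products of operators. The cleanest route is to exploit the splitting $C_n = \ker(c_n) \oplus \gamma\bigl(\ker(c_{n-1})\bigr)$: in this basis the matrix $A$ factors as a block-diagonal operator (with blocks encoding the maps $c_p^{(2)}$) pre- and post-composed with upper-triangular matrices having identity blocks on the diagonal, and one invokes \cite[Lemma~3.15]{Lueck(2002)} to ensure multiplicativity along this particular factorization. The analogous block-triangular manipulation underlies the cone torsion identity in part \eqref{cor:l1-chain_equivalence:equivalence}; once these bookkeeping issues are settled, everything else is a direct appeal to Theorem \ref{the:invertible_matrices_over_l1(G)}.
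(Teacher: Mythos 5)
Your argument for part~\eqref{cor:l1-chain_equivalence:acyclic} is essentially the paper's: both identify $\rho^{(2)}\bigl(C_*^{(2)}\bigr)$ with $\ln\det^{(2)}_{\caln(G)}\bigl((c+\gamma)^{(2)}_{\odd}\bigr)$ via a chain contraction $\gamma$ (the identity you worry about is exactly \cite[Lemma~3.41 on page~146]{Lueck(2002)}), and then apply Theorem~\ref{the:invertible_matrices_over_l1(G)} to the invertible $L^1(G)$-matrix representing $(c+\gamma)_{\odd}$.

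Part~\eqref{cor:l1-chain_equivalence:equivalence} has a genuine gap. You deduce from the block lower-triangular shape of the cone differential that
\[
\rho^{(2)}\bigl(\cone(f)^{(2)}_*\bigr) \;=\; \rho^{(2)}\bigl(D_*^{(2)}\bigr) - \rho^{(2)}\bigl(C_*^{(2)}\bigr),
\]
but the Fuglede--Kadison determinant of a block lower-triangular morphism is \emph{not} the product of the determinants of the diagonal blocks once those blocks have non-trivial kernels, and the corollary imposes no injectivity hypotheses on the differentials of $C_*$ or $D_*$ (they are only chain homotopy equivalent, not acyclic). A concrete counterexample to your cone identity: take $G$ trivial, $C_* = D_* = \IC$ concentrated in degree $0$ with vanishing differentials, and $f_0 = 2\cdot\id_{\IC}$, a chain isomorphism. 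Then $\rho^{(2)}(C_*) = \rho^{(2)}(D_*) = 0$, while $\cone(f)_*$ is $\IC \xrightarrow{\;2\;} \IC$ in degrees $1,0$, giving $\rho^{(2)}(\cone(f)_*) = \ln 2 \neq 0$.

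The paper's route avoids this. It first treats chain \emph{isomorphisms}, where \cite[Lemma~3.41 on page~146]{Lueck(2002)} gives $\rho^{(2)}(D_*^{(2)}) - \rho^{(2)}(C_*^{(2)}) = \sum_p (-1)^p \ln\det^{(2)}_{\caln(G)}(f_p^{(2)})$, and Theorem~\ref{the:invertible_matrices_over_l1(G)} applies directly to each invertible $f_p$. For a general $L^1(G)$-chain homotopy equivalence $f_*$, it then observes that the based short exact sequences $0 \to C_* \to \cyl(f_*) \to \cone(f_*) \to 0$ and $0 \to D_* \to \cyl(f_*) \to \cone(C_*) \to 0$ have contractible quotients, hence split as based chain \emph{isomorphisms} $C_* \oplus \cone(f_*) \cong \cyl(f_*)$ and $D_* \oplus \cone(C_*) \cong \cyl(f_*)$ (cf.~\cite[Lemma~3.42 on page~148]{Lueck(2002)}). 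For a direct sum the differentials are block \emph{diagonal}, so $\rho^{(2)}$ is genuinely additive, and combining the isomorphism case with part~\eqref{cor:l1-chain_equivalence:acyclic} applied to $\cone(f_*)$ and $\cone(C_*)$ yields the claim. Replacing your cone step by this cylinder-plus-splitting reduction would repair the argument.
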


The proofs of Theorem~\ref{the:invertible_matrices_over_l1(G)} and Corollary~\ref{cor:l1-chain_equivalence}
will be given in Section~\ref{sec:Proof_of_some_Theorems_for_Fuglede-Kadison_determinants}.


\typeout{---   Section 15: The L2-de Rham isomorphism proof of Theorem ref(the:comparing_analytic_and_chain_complexes)---}

\section{The $L^2$-de Rham isomorphism and the proof of Theorem~\ref{the:comparing_analytic_and_chain_complexes}}
\label{sec:The_L2de_Rham_isomorphism_ad_the_proof_of_theorem_ref(the:comparing_analytic_and_chain_complexes)}

In this section we investigate the $L^2$-de Rham isomorphism in order to give the proof of
Theorem~\ref{the:comparing_analytic_and_chain_complexes}.

Let $M$ be a closed Riemannian manifold. Fix a smooth triangulation $K$ of $M$.
Consider a (discrete) group $G$ and a $G$-covering $\pr\colon \overline{M} \to
M$. The smooth triangulation $K$ of $M$
lifts to $G$-equivariant smooth triangulation $\overline{K}$ of $\overline{M}$.  Denote by
$\pr\colon \overline{K} \to K$ the associated $G$-covering. Equip $\overline{M}$
with the Riemannian metric for which $\pr \colon \overline{M} \to M$ becomes a
local isometry.

In the sequel we will consider the de Rham isomorphism
\begin{eqnarray}
  \Int^p  \colon \calh^p_{(2)}(\overline{M}) & \xrightarrow{\cong} & H^p_{(2)}(\overline{K}).
  \label{de_Rham_isomorphism}
\end{eqnarray}
from the space of harmonic $L^2$-integrable $p$-forms on $\overline{M}$ to the
$L^2$-cohomology of the free simplicial $G$-complex $\overline{K}$. It is
essentially given by integrating a $p$-form over a $p$-simplex and is an
isomorphism of finitely generated Hilbert $\caln(G)$-modules. For more details
we refer to~\cite{Dodziuk(1977)} or~\cite[Theorem~1.59 on page~52]{Lueck(2002)}.

There is the de Rham cochain map (for large enough fixed $k$)
(see~\cite{Dodziuk(1977)} or~\cite[(1.77) on page~61]{Lueck(2002)})
\begin{eqnarray}
  A^* \colon H^{k-*}\Omega^p(\overline{M}) & \to & C^*_{(2)}(\overline{K})
  \label{de_Rham_chain_map}
\end{eqnarray}
where $H^{k-*}\Omega^p(\overline{M}) $ denotes the Sobolev space of $p$-forms on
$\overline{M}$.

\begin{lemma} \label{lem:bound_for_de_Rham} Assume that for every simplex
  $\sigma$ of $K$ we can find a neighborhood $V_{\sigma}$ together with a
  diffeomorphism $\eta_{\sigma} \colon \IR^m \to V_{\sigma}$.  (This can be
  arranged by possibly passing to a $d$-fold barycentric subdivision of $K$.)  Fix
  an integer $p$ with $0 \le p \le \dim(M)$.

  Then there exist constants $L_1,L_2 > 0$, which depend on data coming from $M$
  and $K$, but  do not depend on $G$ and $\pr \colon \overline{M} \to M$, such
  that for every $\overline{\omega} \in H^{k-p}\Omega^p(\overline{M})$ we have
\[L_1\cdot ||\overline{\omega}||_{H^{k-p}} \le ||A^p(\overline{\omega})||_{L^2} 
\le L_2 \cdot ||\overline{\omega}||_{H^{k-p}},
\]
and we get for the operator norm of the operator  
$\Int^p$ of~\eqref{de_Rham_isomorphism}
\[
L_1 \le ||\Int^p|| \le L_2.
\]
  
\end{lemma}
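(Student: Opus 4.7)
The plan is to reduce the global estimates on $\overline{M}$ to finitely many local estimates on $M$ itself, exploiting that $\pr\colon\overline{M}\to M$ is a local isometry and that $\overline{K}$ is the pullback of $K$. The key observation is that each $V_\sigma$ is simply connected, so for every $\overline{\sigma}\in\overline{K}$ with $\pr(\overline{\sigma})=\sigma$ there is a unique lift $\eta_{\overline{\sigma}}\colon\IR^m\to V_{\overline{\sigma}}\subseteq\overline{M}$ of $\eta_\sigma$; moreover, the pullback under $\eta_{\overline{\sigma}}$ of the metric and of the simplex $\overline{\sigma}$ agrees with the corresponding pullback under $\eta_\sigma$ of the metric and of $\sigma$. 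Consequently every analytic quantity computed in the chart $\eta_{\overline{\sigma}}$ equals the corresponding quantity computed in $\eta_\sigma$. Additionally, the several lifts of $V_\sigma$ are pairwise disjoint (by the covering property and simply-connectedness), so the covering multiplicity of $\{V_{\overline{\sigma}}\}$ on $\overline{M}$ equals that of $\{V_\sigma\}$ on $M$, a finite number $N$ determined by $K$ alone.

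First, working on $M$ alone, I would establish the local estimate via standard de Rham--Whitney theory (see~\cite{Dodziuk(1977)} or~\cite[Section~1.4]{Lueck(2002)}): for each simplex $\sigma$ of $K$ there exist constants $\widetilde{L}_1(\sigma),\widetilde{L}_2(\sigma)>0$, depending only on $(V_\sigma,\eta_\sigma,\sigma)$, such that
\[
\widetilde{L}_1(\sigma)\,\|\omega\|_{H^{k-p}(V_\sigma)}\;\le\;|A^p(\omega)(\sigma)|\;\le\;\widetilde{L}_2(\sigma)\,\|\omega\|_{H^{k-p}(V_\sigma)}
\]
for $p$-forms $\omega$, where the upper bound follows from Sobolev trace/embedding estimates and the lower bound uses the Whitney map $W^p\colon C^*_{(2)}\to H^{k-*}\Omega^p$ as a quasi-inverse to $A^p$. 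Taking $\widetilde{L}_1:=\min_\sigma\widetilde{L}_1(\sigma)$ and $\widetilde{L}_2:=\max_\sigma\widetilde{L}_2(\sigma)$ over the finitely many simplices of $K$ gives universal constants valid at every simplex of $K$. By the first paragraph, exactly the same two-sided inequality, with the same constants, transfers to every simplex of $\overline{K}$ and every $\overline{\omega}\in H^{k-p}\Omega^p(\overline{M})$. Squaring and summing over $\overline{\sigma}\in\overline{K}$ and invoking the multiplicity bound $N$ yields
\[
L_1\,\|\overline{\omega}\|_{H^{k-p}}\;\le\;\|A^p(\overline{\omega})\|_{L^2}\;\le\;L_2\,\|\overline{\omega}\|_{H^{k-p}}
\]
with $L_2:=\sqrt{N}\,\widetilde{L}_2$ and $L_1:=\widetilde{L}_1$, depending only on $(M,K)$.

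Finally, the bound $L_1\le\|\Int^p\|\le L_2$ follows by restricting the above inequality to the subspace $\calh^p_{(2)}(\overline{M})$ of harmonic $L^2$-integrable $p$-forms, once one knows that on that subspace the $H^{k-p}$-norm and the $L^2$-norm are equivalent with uniform constants; this is a standard consequence of elliptic regularity applied to the Hodge Laplacian on the bounded-geometry Riemannian cover $\overline{M}\to M$, the constants depending only on $M$ and $k$. The main obstacle is the bookkeeping: one must verify that every constant invoked (the local Sobolev and trace constants, the operator norm of the Whitney quasi-inverse $W^p$, the multiplicity $N$, and the elliptic regularity constant for harmonic forms) originates on $(M,K)$ and is transported unchanged to $(\overline{M},\overline{K})$ by the local isometry $\pr$, with no hidden dependence on the group $G$ or on the particular covering.
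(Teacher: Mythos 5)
Your transfer step from $(M,K)$ to $(\overline{M},\overline{K})$ --- contractibility of $V_\sigma$, disjoint lifts, preserved covering multiplicity $N$, preserved local Sobolev constants --- does track the paper's argument and is the right way to see that the constants do not depend on $G$ or on the particular covering. The gap is in the lower bound, where the paper takes a genuinely different route.

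The claimed per-simplex inequality
\[
\widetilde{L}_1(\sigma)\,\|\omega\|_{H^{k-p}(V_\sigma)} \le |A^p(\omega)(\sigma)|
\]
is false: $A^p(\omega)(\sigma)=\int_\sigma\omega$ vanishes for any $p$-form whose pullback to $\sigma$ vanishes while $\omega$ is nonzero on $V_\sigma$, so the right side can be $0$ with the left side positive. By the same token, no global estimate $L_1\|\overline{\omega}\|_{H^{k-p}}\le\|A^p(\overline{\omega})\|_{L^2}$ can hold for arbitrary Sobolev forms, since $A^p$ has a large kernel. (The first displayed inequality of the lemma as printed shares this defect; what the paper's proof actually establishes is the upper bound $\|A^p\|\le L_2$, an upper bound on the Whitney map, and the two-sided bound on $\Int^p$ --- and only the last two are used.) The paper never attempts a lower bound on the cochain map $A^p$ itself. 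Instead it bounds the operator norm of $W^p\colon C^p_{(2)}(\overline{K})\to H^{k-p}\Omega^p(\overline{M})$ above by $1/L_1$, again with constants coming from $(M,K)$, and then passes to cohomology: since $A^p\circ W^p=\id$ on cochains and $H^p_{(2)}(A^*)$ is an isomorphism, $H^p_{(2)}(A^*)$ and $H^p_{(2)}(W^*)$ are mutually inverse, whence $\|H^p_{(2)}(A^*)\|\ge 1/\|H^p_{(2)}(W^*)\|\ge L_1$; and $\Int^p$ is $H^p_{(2)}(A^*)$ up to the isometric identification $\calh^p_{(2)}(\overline{M})\cong H^p_{(2)}\bigl(H^{k-*}\Omega^*(\overline{M})\bigr)$.

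Your final step has a second, independent gap. The operator $\Int^p$ is valued in the $L^2$-cohomology $H^p_{(2)}(\overline{K})$, and the norm of the class $[A^p(\overline{\omega})]$ there is the norm of its orthogonal projection onto the harmonic cochains. Restricting a cochain-level estimate to harmonic $\overline{\omega}$ and invoking norm equivalence between $H^{k-p}$ and $L^2$ on the harmonic subspace would only compare $\|A^p(\overline{\omega})\|_{L^2}$ with $\|\overline{\omega}\|_{L^2}$; it says nothing about how much of the cocycle $A^p(\overline{\omega})$ survives that projection. It is precisely the quasi-inverse $W^p$ at the cohomology level that controls this, and the elliptic-regularity argument on its own cannot replace it.
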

\begin{proof}
  Dodziuk~\cite[Lemma~3.2]{Dodziuk(1977)} proves for a given $G$-covering 
  $\pr   \colon \overline{M} \to M$ and $p \ge 0$ that the map $A^p \colon H^{k-p}
  \Omega^p(\overline{M}) \xrightarrow{\cong} C^p_{(2)}(\overline{K})$ is
  bounded, i.e., there exists a constant $L_2$ such that for $\overline{\omega}
  \in H^{k-p}\Omega^p(\overline{M})$ we have
\[
||A^p(\overline{\omega})||_{L^2} \le L_2 \cdot ||\overline{\omega}||_{H^{k-p}}.
\]
Next we will analyze Dodziuk's proof and explain why the constant $L_2$ depends
only on data coming from $M$ and $K$ and does not depend on $G$ and $\pr \colon
\overline{M} \to M$.

For every $p$-simplex $\sigma$ in $K$ we choose a relatively compact
neighborhood $U_{\sigma}$ of $\sigma$ with $U_{\sigma} \subseteq V_{\sigma}$.
Choose $N$ to be an integer such that every point $x \in M$ belongs to at most
$N$ of the sets $U_{\sigma}$, where $\sigma$ runs through the $p$-simplices of
$K$, e.g., take $N$ to be the number of $p$-simplices of $K$.
 We can apply~\cite[Lemma~3.1]{Dodziuk(1977)} to $\sigma \subseteq
V_{\sigma}$ and obtain a constant $C_{\sigma}> 0$ such that for any $p$-form
$\omega$ in $H^{k-p} \Omega^p(M)$ and any $p$-simplex $\sigma$ of $K$
\begin{eqnarray*}
  \sup_{x\in \sigma} |\omega(x)| 
  & \le &
  C_{\sigma}\cdot \bigl(||\omega||^{U_{\sigma}}_{H^{k-p}} 
  + ||\omega||^{U_{\sigma}}_{L^2}\bigr)
\end{eqnarray*}
holds.  The real number $|\omega(x)|$ is the norm of $\omega(x)$ as an element in
$\Lambda^pT_{x}^*M$, and $||\omega||^{U_{\sigma}}_{H^{k-p}}$ and
$||\omega||^{U_{\sigma}}_{L^2}$ are the Sobolev norm and $L^2$-norm of $\omega$
restricted to $U_{\sigma}$.

Let $C$ be the maximum of the numbers $C_{\sigma}$, where $\sigma$ runs through
all $p$-simplices of $K$.  Let $E$ be the maximum over the volumes of the
$p$-simplices of $K$.  Obviously the numbers $C$, $E$ and $N$ depend only on data
coming from $M$ and $K$, but do not depend on $G$ and $\pr \colon \overline{M}
\to M$.

Since $V_{\sigma}$ is contractible, the restriction of $\pr \colon \overline{M}
\to M$ to $\pr^{-1}(V_{\sigma})$ is trivial and hence $\pr^{-1}(V_{\sigma})$ is
$G$-diffeomorphic to $G \times V_{\sigma}$. Hence there are for every
$p$-simplex $\overline{\sigma} \in \overline{M}$ open neighborhoods
$U_{\overline{\sigma}}$ and $V_{\overline{\sigma}}$ which are uniquely
determined by the property that they are mapped diffeomorphically under 
$\pr \colon \overline{M} \to M$ onto $U_{\pr(\overline{\sigma})}$ and
$V_{\pr(\overline{\sigma})}$.  Notice for the sequel that
$\pr|_{V_{\overline{\sigma}}} \colon V_{\overline{\sigma}} \to
V_{\pr(\overline{\sigma})}$ and $\pr|_{U_{\overline{\sigma}}} \colon
U_{\overline{\sigma}} \to U_{\pr(\overline{\sigma})}$ are isometric
diffeomorphisms. Hence we get for every $p$-form $\overline{\omega}$ in
$H^{k-p} \Omega^p(\overline{M})$ and any $p$-simplex $\overline{\sigma}$ of
$\overline{K}$
\begin{eqnarray*}
  \sup_{\overline{x}\in \overline{\sigma}} |\omega(\overline{x})| 
  & \le &
  C \cdot \bigl(||\overline{\omega}||^{U_{\overline{\sigma}}}_{H^{k-p}} 
  + ||\overline{\omega}||^{U_{\overline{\sigma}}}_{L^2}\bigr).
\end{eqnarray*}
Here the real number $|\overline{\omega}(\overline{x})|$ is the norm of
$\overline{\omega}(\overline{x})$ as an element in
$\Lambda^pT_{\overline{x}}^*\overline{M}$, and
$||\overline{\omega}||^{V_{\overline{\sigma}}}_{H^{k-p}}$ and
$||\omega||^{V_{\overline{\sigma}}}_{L^2}$ are the Sobolev norm and $L^2$-norm
of the restriction of $\overline{\omega}$ to $V_{\overline{\sigma}}$.  One easily
checks that every point $\overline{x} \in \overline{M}$ belongs to at most $N$
of the sets $U_{\overline{\sigma}}$, where $\overline{\sigma}$ runs through the
$p$-simplices of $\overline{K}$ and that the volume of every $p$-simplex of
$\overline{K}$ is bounded by $E$.  Put
\[L_2 := \sqrt{4 \cdot C^2 \cdot E^2 \cdot N}.
\]
Then $L_2$ depends only on data coming from $M$ and $K$, but does not depend on
$G$ and $\pr \colon \overline{M} \to M$.

Next we perform essentially the same calculation as
in~\cite[Lemma~3.2]{Dodziuk(1977)}. We estimate for a $p$-simplex
$\overline{\sigma}$ of $\overline{K}$ and an element $\overline{\omega} \in H^{k-p}
\Omega^p(\overline{M})$
\begin{eqnarray*}
  \left(\int_{\overline{\sigma}} \overline{\omega}\right)^2  
  & \le & 
  \left(\sup_{\overline{x} \in \overline{\sigma}} \bigl\{|\overline{\omega}(\overline{x})|\bigr\} 
\cdot \vol(\overline{\sigma})\right)^2
  \\
  & \le &
  \left(C \cdot \bigl(||\overline{\omega}||^{U_{\overline{\sigma}}}_{H^{k-p}} 
    + ||\overline{\omega}||^{U_{\overline{\sigma}}}_{L^2}\bigr) \cdot  \vol(\overline{\sigma})\right)^2
  \\
  & \le & 
  C^2 \cdot 2 \cdot \left(\bigl(||\overline{\omega}||^{U_{\overline{\sigma}}}_{H^{k-p}}\bigr)^2
    + \bigl(\bigl||\overline{\omega}||^{U_{\overline{\sigma}}}_{L^2}\bigr)^2\right) \cdot E^2.
\end{eqnarray*}
This implies for $\overline{\omega} \in H^{k-p} \Omega^p(\overline{M})$, where
$\overline{\sigma}$ runs through the $p$-simplices of $\overline{K}$,
\begin{eqnarray*}
  \sum_{\overline{\sigma}}\left(\int_{\overline{\sigma}} \overline{\omega}\right)^2 
  & \le &
  \sum_{\overline{\sigma}} C^2 \cdot 2 \cdot 
  \left(\bigl(||\overline{\omega}||^{U_{\overline{\sigma}}}_{H^{k-p}}\bigr)^2
    + \bigl(\bigl||\overline{\omega}||^{U_{\overline{\sigma}}}_{L^2}\bigr)^2\right) \cdot E^2
  \\
  & \le &
  2 \cdot C^2 \cdot E^2 \cdot \left(\sum_{\overline{\sigma}} 
\left(\bigl(||\overline{\omega}||^{U_{\overline{\sigma}}}_{H^{k-p}}\bigr)^2
      + \bigl(||\overline{\omega}||^{U_{\overline{\sigma}}}_{L^2}\bigr)^2\right)\right)
  \\
  & \le &
  2 \cdot C^2 \cdot E^2 \cdot N \cdot  \left(\bigl(||\overline{\omega}||_{H^{k-p}}\bigr)^2
    + \bigl(\bigl||\overline{\omega}||_{L^2}\bigr)^2\right)
  \\
  & \le & 2 \cdot C^2 \cdot E^2 \cdot N \cdot 2 \cdot ||\overline{\omega}||_{H^{k-p}}^2
  \\
  & = & 
  L_2^2 \cdot ||\overline{\omega}||_{H^{k-p}}^2.
\end{eqnarray*}
We conclude that the de Rham map
\[
A^p \colon H^{k-p}\Omega^p(\overline{M}) \to C^p_{(2)}(\overline{K})
\]
is a bounded operator whose norm is less than or equal to $L_2$.

Dodziuk~\cite[Lemma~3.7]{Dodziuk(1977)} (see also~\cite[(1.78) on
page~61]{Lueck(2002)}) constructs a bounded $G$-equivariant operator
\begin{eqnarray}
  W^p \colon C^p_{(2)}(\overline{K})  & \to & H^{k-p}\Omega^p(\overline{M}),
  \label{Whitney_map}
\end{eqnarray}
and gives an upper bound for its operator norm by a number
\[
\left|\bigl\{p\text{-simplices of}\, K\bigr\}\right| \cdot
\max\big\{||W^p\sigma||_{H^{k-p}}\mid \sigma \, p\text{-simplex of}\, K\bigr\}.
\]
Define
\[
L_1 := \frac{1}{\left|\bigl\{p\text{-simplices of}\, K\bigr\}\right| 
  \cdot \max\big\{||W\sigma||_{H^{k-p}}\mid \sigma \, p\text{-simplex of}\,
  K\bigr\}}.
\] 
Obviously $L_1$ depends only on data coming from $M$ and $K$, but
not on $G$ and $\pr \colon \overline{M} \to M$.  The maps $A^p$
of~\eqref{de_Rham_chain_map} and $W^p$ of~\eqref{Whitney_map} induce bounded
$G$-operators (see~\cite[Corollary on page~162 and Corollary on
page~163]{Dodziuk(1977)})
\begin{eqnarray*}
  H^p_{(2)}(A^*) \colon H^p_{(2)}\bigl(H^{k-*}\Omega^*(\overline{M})\bigr) 
  & \to & 
  H^p_{(2)}\bigl(C^*_{(2)}(\overline{K})\bigr);
  \\
  H^p_{(2)}(W^*) \colon H^p_{(2)}\bigl(C^*_{(2)}(\overline{K})\bigr) 
  & \to &
  H^p_{(2)}\bigl(H^{k-*}\Omega^*(\overline{M})\bigr),
\end{eqnarray*}
such that we obtain for their operator norms
\begin{eqnarray*}
  \bigl|\bigl|H^p_{(2)}(A^*)\bigr|\bigr|  & \le & L_2;
  \\
  \bigl|\bigl|H^p_{(2)}(W^*)\bigr|\bigr|  & \le & \frac{1}{L_1}.
\end{eqnarray*}
Since $W^p \circ A^p = \id$ and $H^p_{(2)}(A^p)$ is an isomorphism
(see~\cite[(3.6), Lemma~3.8 and Lemma~3.10]{Dodziuk(1977)} and~\cite[(1.79)
and~(1.80) on page~61]{Lueck(2002)}, the map $H^p_{(2)}(W^*)$ is the inverse of
$H^p_{(2)}(A^*)$. This implies
\[L_1 \le \bigl|\bigl|H^p_{(2)}(A^*)\bigr|\bigr|   \le  L_2.
\]

Since the canonical inclusion
\[
i^p \colon \calh^p_{(2)} \xrightarrow{\cong} H^p_{(2)}\bigl(H^{k-*}\Omega^*(\overline{M})\bigr)
\]
is an isometric $G$-isomorphism (see~\cite[Lemma~1.75 on page~59]{Lueck(2002)})
and the map $\Int^p \colon \calh^p_{(2)} \to H^p_{(2)}(\overline{K})$ is the
composite $H^p_{(2)}(A^*)\circ i^p$, Lemma~\ref{lem:bound_for_de_Rham} follows.
\end{proof}

Now we are ready to give

\begin{proof}[Proof of Theorem~\ref{the:comparing_analytic_and_chain_complexes}]
 Let $\overline{M}$ be a Riemannian manifold without boundary that comes with a proper free cocompact isometric
$G$-action such that $b_p^{(2)}(\overline{M};\caln(G))$ vanishes for all $p \ge 0$.
Fix a smooth triangulation $K$ of 
$M = G \backslash \overline{M}$.  By possibly subdividing it we can arrange that
Lemma~\ref{lem:bound_for_de_Rham} will apply to $M$ and $K$.
The triangulation $K$ lifts to a $G$-equivariant smooth triangulation $\overline{K}$ of $\overline{M}$ and
to a $G/G_i$-equivariant smooth triangulation $\overline{K}[i]$ of $M[i] :=G_i \backslash \overline{M}$.

We assume that the Approximation Conjecture for $L^2$-torsion of chain
complexes~\ref{con:Approximation_conjecture_for_L2-torsion_of_chain_complexes}
is true. Hence we get
\begin{eqnarray}
  \rho^{(2)}\bigl(\overline{K};\caln(G)\bigr) & = & 
  \lim_{i \in I} \; \rho^{(2)}(\overline{K}[i];\caln(G/G_i)\bigr).
\end{eqnarray}
In the sequel we will use the theorem of Burghelea, Friedlander, Kappeler and
McDonald \cite{Burghelea-Friedlander-Kappeler-McDonald(1996a)} that 
the topological and the analytic $L^2$-torsion agree. Since $M$ and hence $\overline{K}$ is $L^2$-acyclic, we get from the
definitions
\begin{eqnarray*}
  \rho^{(2)}_{\an}\bigl(\overline{M};\caln(G)\bigr) 
  & = & 
  \rho^{(2)}\bigl(\overline{K};\caln(G)\bigr);
  \\
  \rho^{(2)}_{\an}\bigl(M[i];\caln(G/G_i)\bigr) 
  & = & 
  \rho^{(2)}\bigl(\overline{K}[i];\caln(G/G_i)\bigr)
  - \sum_{p \ge 0} (-1)^p \cdot {\det}_{\caln(G/G_i)}^{(2)}(\Int[i]^p),
\end{eqnarray*}
where $\Int[i]^p \colon \calh^p_{(2)}(M[i]) \xrightarrow{\cong}
H^p_{(2)}(\overline{K}[i])$ is the $L^2$-de Rham isomorphism
of~\eqref{de_Rham_isomorphism}.  Hence it suffices to show for $p \ge 0$
\begin{eqnarray}
  \lim_{i \in I} \;\ln\bigl({\det}_{\caln(G/G_i)}^{(2)}(\Int[i]^p)\bigr) & = & 0.
  \label{lim_det_de_Rham_is_zero}
\end{eqnarray}
We obtain from Lemma~\ref{lem:bound_for_de_Rham} constants $L_1> 0$ and $L_2> 0$ which
are independent of $i \in I$ such that for every $i \in I$
\[
L_1 \le ||\Int[i]^p|| \le L_2
\]
holds for the operator norm of $\Int[i]^p$.  Since
\[
b_p^{(2)}\bigl(M[i];\caln(G/G_i)\bigr) = 
\dim_{\caln(G/G_i)}\bigl( \calh^p_{(2)}(M[i])\bigr) 
=
\dim_{\caln(G/G_i)}\bigl(H^p_{(2)}(K[i])\bigr),
\]
we conclude
\begin{eqnarray*}
\ln(L_1) \cdot b_p^{(2)}\bigl(M[i];\caln(G/G_i)\bigr) 
& \le &
\ln\bigl({\det}_{\caln(G/G_i)}^{(2)}(\Int[i]^p)\bigr) 
\\
& \le &
\ln(L_2) \cdot
b_p^{(2)}(M[i];\caln(G/G_i)\bigr).
\end{eqnarray*}
Since the Approximation
Conjecture~\ref{con:Approximation_conjecture_for_L2-Betti_numbers} holds for $\overline{M}$ by
Theorem~\ref{the:The_Determinant_Conjecture_implies_the_Approximation_Conjecture_for_L2-Betti_numbers}
and we have $b_p^{(2)}(\overline{M};\caln(G)\bigr) = 0$ for $p \ge 0$ by assumption, we have
\begin{eqnarray*}
  \lim_{i \in I}  \; b_p^{(2)}\bigl(M[i];\caln(G/G_i)\bigr) & = & 0.
\end{eqnarray*}
Now~\eqref{lim_det_de_Rham_is_zero} follows. This finishes the proof of
Theorem~\ref{the:comparing_analytic_and_chain_complexes}.
\end{proof}

\begin{remark}[On the $L^2$-acyclicity assumption]
\label{rem:On_the_L2-acyclicity_assumption}
Recall that in Theorem~\ref{the:comparing_analytic_and_chain_complexes}
we require that $b_p^{(2)}(\overline{M};\caln(G)\bigr) = 0$ holds for $p \ge 0$.  This
assumption is satisfied in many interesting cases.  It is possible that this
assumption is not needed for Theorem~\ref{the:comparing_analytic_and_chain_complexes}
to be true, but our proof does not
work without it. We can drop this assumption if we can
generalize~\eqref{lim_det_de_Rham_is_zero} to
\begin{eqnarray*}
    \lim_{i \in I} \; \ln\bigl({\det}_{\caln(G/G_i)}(\Int[i]^p)\bigr) & = & \ln\bigl({\det}_{\caln(G)}(\Int^p)\bigr).
  \end{eqnarray*}
\end{remark}


\typeout{------   Section 16: A general strategy to prove the Approximation Conjecture for Fuglede-Kadison determinants ---------}

\section{A strategy to prove the Approximation Conjecture for Fuglede-Kadison 
determinants~\ref{con:Approximation_conjecture_for_Fuglede-Kadison_determinants_with_arbitrary_index}}
\label{sec:A_general_strategy_to_prove_the_Approximation_Conjecture_for_Fuglede-Kadison_determinants}


\subsection{The general setup}
\label{subsec:The_general_setup}

Throughout this section we will consider the following data:

\begin{itemize}

\item $G$ is a group, $B$ is a matrix in $M_d(\caln(G))$ and
$\tr \colon M_d(\caln(G)) \to \IC$ is a faithful finite normal trace.

\item $I$ is a directed set. For each $i \in I$ we have a group $Q_i$, a matrix
$B[i] \in M_d(\caln(Q_i))$ and a faithful finite normal trace $\tr_i\colon M_d(\caln(Q_i)) \to
\IC$  such that $\tr_i(I_d) = d$ holds for the unit matrix $I_d \in M_d(\caln(Q_i))$.

\end{itemize}

Faithful finite normal trace $\tr$ 
means that $\tr$ is $\IC$-linear, satisfies
$\tr(B_1B_2) = \tr_i(B_2B_1)$, sends $B^*B$ to a real number $\tr(B^*B) \ge
0$ such that $\tr(B^*B) = 0 \Leftrightarrow B = 0$, and for $f \in \caln(G)$, which is the supremum with respect to the usual
ordering $\le$ of positive elements of some monotone increasing net $\{f_j \mid
j \in J\}$ of positive elements in $\caln(G)$, we get $\tr(f) = \sup\{\tr(f_j)
\mid j \in J\}$. The trace $\tr$ or $\tr_i$ respectively may or may not be the von Neumann trace
(see~\cite[Definition~1.2 on page~15]{Lueck(2002)}) 
$\tr_{\caln(G)}$ or $\tr_{\caln(Q_i)}$ respectively.

Let $F\colon [0,\infty) \to [0,\infty)$ be the spectral density function of
$r_B^{(2)}\colon L^2(G)^d \to L^2(G)^d$ with respect to $\tr$ as defined in~\cite[Definition~2.1 on page~73]{Lueck(2002)}.
Let $F[i]\colon [0,\infty) \to [0,\infty)$ be the spectral density function of 
$r_{B[i]}^{(2)}\colon L^2(Q_i)^d \to L^2(Q_i)^d$  with
respect to the trace $\tr_i$. If $r_B^{(2)}$ is
positive, we get $F(\lambda) = \tr(E_{\lambda})$ for $\{E_{\lambda} \mid \lambda
\in \IR\}$ the family of spectral projections of $r_B^{(2)}$
(see~\cite[Lemma~2.3 on page~74 and Lemma~2.11~(11) on page~77]{Lueck(2002)}).
The analogous statement holds for $F[i]$.

Recall that for a directed set $I$ and a net $(x_i)_{i \in I}$ of real numbers
one defines
\begin{eqnarray}
\liminf_{i \in I} x_i%
 & := & \sup\{\inf\{x_j\mid j \in I, j \ge
i\}\mid i \in I\};
\label{liminf}
\\
\limsup_{i \in I} x_i%
& := & \inf\{\sup\{x_j\mid j \in I, j \ge i\}\mid i \in I\}.
\label{limsup}
\end{eqnarray}


\subsection{The general strategy}
\label{subsec:The_general_strategy}

To any monotone non-decreasing function
$f\colon [0,\infty) \to [0,\infty)$ we can assign a density function,
i.e., a monotone non-decreasing right-continuous function,
\begin{eqnarray*}
f^+\colon [0,\infty) \to [0,\infty), & & \hspace{5mm}\lambda \mapsto 
\lim_{\epsilon \to 0+} f(\lambda + \epsilon).
\end{eqnarray*}
Put
\begin{eqnarray*}
\underline{F}(\lambda) & := & \liminf_{i \in I} F[i](\lambda);
\\
\overline{F}(\lambda) & := & \limsup_{i \in I} F[i](\lambda).
\end{eqnarray*}

Let $\det$ and $\det_i$ be the Fuglede-Kadison determinant with respect to $\tr$ and $\tr_i$
(compare~\cite[Definition~3.11 on page~127]{Lueck(2002)}).
If $\tr$ is the von Neumann trace $\tr_{\caln(G)}$, then $\det$ is the Fuglede-Kadison determinant
$\det_{\caln(G)}$ as defined in~\cite[Definition~3.11 on page~127]{Lueck(2002)}.  We want to prove

\begin{theorem} \label{the:approxi_for_spectral_density}
Consider the following conditions, where $K > 0$ and $\kappa > 0$ are some fixed real numbers:

\renewcommand{\theenumi}{\roman{enumi}}

\begin{enumerate}

\item \label{the:approxi_for_spectral_density:uniform_bound_K}
The operator norms satisfy $||r_B^{(2)}|| \le K$ and $||r_{B[i]}^{(2)}|| \le K$ for all $i \in I$;

\item \label{the:approxi_for_spectral_density:traces_and_polynomials}
For every polynomial $p$ with real coefficients we have
\[\tr(p(B)) =  \lim_{i \in I} \;\tr_i(p(B[i]));
\]

\item \label{the:approxi_for_spectral_density:det}
We have ${\det}_i\bigl(r_{B[i]}^{(2)} \colon L^2(Q_i)^d \to L^2(Q_i)^d\bigr) \ge \kappa$ 
for each $i \in I$,

\item \label{the:approxi_for_spectral_density:positivity}
Suppose $r_B^{(2)}\colon L^2(G)^d \to L^2(G)^d$ and
$r_{B[i]}^{(2)}\colon L^2(Q_i)^d \to L^2(Q_i)^d$ for $i \in I$ are positive;

\item \label{the:approxi_for_spectral_density:integrability}
The \emph{uniform integrability condition} is satisfied, i.e., 
there exists $\epsilon > 0$ such that 
\begin{eqnarray*}
\int_{0+}^{\epsilon} \left. \sup \left\{\frac{F[i](\lambda) - F[i](0)}{\lambda} 
\, \right|\,i \in I \right\}\; d\lambda 
& <  & \infty.
\end{eqnarray*}

\end{enumerate}

\renewcommand{\theenumi}{\arabic{enumi}}
Then:

\begin{enumerate}

\item \label{the:approxi_for_spectral_density:inequality}
If conditions~\eqref{the:approxi_for_spectral_density:uniform_bound_K},~%
\eqref{the:approxi_for_spectral_density:traces_and_polynomials}~%
\eqref{the:approxi_for_spectral_density:det},
and~\eqref{the:approxi_for_spectral_density:positivity} are satisfied, then
\[
\det\bigl(r_B^{(2)} \colon L^2(G)^d \to L^2(G)^d\bigr) 
\ge  \limsup_{i \in I} {\det}_i\bigl(r_{B[i]}^{(2)}\colon L^2(Q_i)^d \to L^2(Q_i)^d\bigr);
\]

\item \label{the:approxi_for_spectral_density:equality}
If conditions~\eqref{the:approxi_for_spectral_density:uniform_bound_K},~%
\eqref{the:approxi_for_spectral_density:traces_and_polynomials}~%
\eqref{the:approxi_for_spectral_density:det},~%
\eqref{the:approxi_for_spectral_density:positivity} 
and~\eqref{the:approxi_for_spectral_density:integrability} are satisfied, then
\[
\det\bigl(r_B^{(2)} \colon L^2(G)^d \to L^2(G)^d\bigr) 
= \lim_{i \in I} {\det}_i\bigl(r_{B[i]}^{(2)}\colon L^2(Q_i)^d \to L^2(Q_i)^d\bigr).
\]
\end{enumerate}
\end{theorem}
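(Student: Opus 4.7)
My approach is to analyze the spectral density functions $F$ and $F[i]$ of the positive operators directly, exploiting the integral representation $\ln\det(A) = \int_{0^+}^{\|A\|} \ln\lambda\,dF_A(\lambda)$. First I would establish weak convergence of the spectral measures on $[0,K]$: condition (ii) gives $\int p\,dF[i] \to \int p\,dF$ for every polynomial $p$, and combined with the uniform operator norm bound from (i) and the uniform mass $F[i]([0,K]) = \tr_i(I_d) = d$, the Stone--Weierstrass theorem extends this to all continuous test functions on $[0,K]$. The Portmanteau theorem then yields $F(\lambda) \ge \limsup_i F[i](\lambda)$ for every $\lambda \ge 0$, with pointwise convergence $F[i](\lambda) \to F(\lambda)$ at every continuity point of $F$. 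Setting $\tilde F(\lambda) := F(\lambda) - F(0)$ and $\tilde F[i](\lambda) := F[i](\lambda) - F[i](0)$, applying Portmanteau to the open sets $(0,\lambda)$ further gives $\tilde F(\lambda) \le \liminf_i \tilde F[i](\lambda)$ at continuity points of $F$.

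For the inequality in assertion~(1), I would use integration by parts to write
\begin{equation*}
-\ln{\det}_i + \ln K \cdot \tilde F[i](K) = \int_0^K \frac{\tilde F[i](\lambda)}{\lambda}\,d\lambda,
\end{equation*}
and similarly for $-\ln\det$. Fatou's lemma applied to the non-negative integrand $\tilde F[i]/\lambda$ gives $\liminf_i \int_0^K \tilde F[i]/\lambda\,d\lambda \ge \int_0^K \tilde F/\lambda\,d\lambda$. The boundary terms are handled by $F[i](K) = d = F(K)$ (condition (ii) at $p = 1$) together with $\limsup_i F[i](0) \le F(0)$. Condition (iii) guarantees that $\int_0^K \tilde F[i]/\lambda\,d\lambda \le \ln K \cdot d - \ln\kappa$ is uniformly bounded above, so the Fatou application is meaningful and not vacuous. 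Assembling these pieces yields $\ln\det \ge \limsup_i \ln{\det}_i$.

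For the equality in assertion~(2), the remaining direction requires $\limsup_i \int_0^K \tilde F[i]/\lambda\,d\lambda \le \int_0^K \tilde F/\lambda\,d\lambda$. This is where condition (v) enters: it furnishes an integrable majorant $\sup_i (F[i](\lambda) - F[i](0))/\lambda$ on $(0,\epsilon]$ for the family $\{\tilde F[i]/\lambda\}_i$. On $[\epsilon,K]$ the integrand is uniformly bounded by $d/\epsilon$ and the weak convergence from the first stage gives convergence directly. A dominated convergence argument then upgrades Fatou's inequality to a true limit, and combining with the matching integration by parts identities for $\ln\det$ and $\ln{\det}_i$ completes the proof.

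The main obstacle will be the atom of $F[i]$ at zero: the Fuglede--Kadison determinant explicitly excludes the kernel contribution, yet the kernel dimensions $F[i](0)$ need not converge to $F(0)$, and only the one-sided bound $\limsup_i F[i](0) \le F(0)$ follows from Portmanteau. Tracking this discrepancy through the boundary terms of the integration by parts identity requires care. Condition (iii) is calibrated to prevent it from disrupting the one-sided inequality in~(1), while condition (v) rules out the pathological mass accumulation near zero that would otherwise obstruct the equality in~(2).
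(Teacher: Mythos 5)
Your framework — weak convergence of spectral measures plus Portmanteau, then the integral representation of $\ln\det$, Fatou for assertion~(1) and dominated convergence for assertion~(2) — matches the structure of the paper's proof (which defers the first two steps to~\cite[Theorem~13.19]{Lueck(2002)}). There is, however, a genuine gap at the step you yourself flag as ``the main obstacle.'' You note, correctly, that Portmanteau alone only gives the one-sided bound $\limsup_{i} F[i](0) \le F(0)$, and you then say condition~(iii) is ``calibrated to prevent it from disrupting the one-sided inequality in~(1)'' and ``so the Fatou application is meaningful and not vacuous'' --- but you never actually show how the pieces close. In fact they do not close with only the one-sided bound. Writing the integral identities in the form $\ln\det_i = \ln K\bigl(d - F[i](0)\bigr) - \int_{0^+}^K \tilde F[i](\lambda)/\lambda\,d\lambda$ and subtracting, Fatou and $F[i](K) = d = F(K)$ only give
\[
\ln\det - \limsup_{i\in I} \ln\det_i \;\ge\; \ln K\cdot\bigl(\liminf_{i\in I} F[i](0) - F(0)\bigr),
\]
and the right-hand side is controlled only if $\liminf_i F[i](0) \ge F(0)$ --- which is the \emph{opposite} direction from what Portmanteau provides. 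So the very identity you invoke requires the full convergence $\lim_i F[i](0) = F(0)$, which your proposal does not establish.

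The missing ingredient is that condition~(iii) in fact forces this convergence, and the mechanism deserves to be spelled out. From $\det_i(r_{B[i]}^{(2)}) \ge \kappa$ and the integral representation one gets a bound $\int_{0^+}^K \tilde F[i](\lambda)/\lambda\,d\lambda \le C := d\ln K - \ln\kappa$ that is uniform in $i$; since $\tilde F[i]$ is non-decreasing, for any $0 < \delta < K$ one has $\tilde F[i](\delta)\ln(K/\delta) \le \int_{\delta}^K \tilde F[i](\lambda)/\lambda\,d\lambda \le C$, hence $\sup_i \tilde F[i](\delta) \le C/\ln(K/\delta) \to 0$ as $\delta\to 0^+$. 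Combining this uniform smallness with the weak convergence $F[i](\delta)\to F(\delta)$ at continuity points $\delta$ and with right-continuity of $F$ yields $\liminf_i F[i](0) \ge F(0)$, and hence $\lim_i F[i](0) = F(0)$. (Equivalently: if $\liminf_i F[i](0) = F(0) - \delta_0$ with $\delta_0 > 0$, then along a suitable subnet $\tilde F[i](\lambda) \to \tilde F(\lambda) + \delta_0$ a.e., so Fatou would force $\int_{0^+}^K \tilde F[i]/\lambda\,d\lambda \to \infty$, contradicting the uniform bound from~(iii).) This is precisely what the paper asserts as $F(0) = \lim_{i\in I} F[i](0)$ by reference to Lück's Theorem~13.19, and it is what makes the final assembly of boundary terms and Fatou actually produce assertion~(1). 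Once this step is in place, the rest of your plan --- including the dominated-convergence upgrade for assertion~(2), where condition~(v) again yields $\sup_i \tilde F[i](\delta)\to 0$ by the same monotonicity trick --- goes through.
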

\begin{proof}
Completely analogously to the proof of~\cite[Theorem~13.19 on page~461]{Lueck(2002)} we prove
\begin{eqnarray}
F(\lambda) 
& = & 
\underline{F}^+(\lambda) \;= \; \overline{F}^+(\lambda) \quad \text{for} \; \lambda \in \IR;
\label{F[i]s_underlineFplus_isoverlineFplus}
\\
F(0) 
& = & 
\lim_{i \in I} F[i](0);
\label{F(0)_is_lim_i_F[i](0)}
\\
\kappa &\le &
\det\bigl(r_B^{(2)} \colon L^2(G)^d \to L^2(G)^d\bigr).
\label{ln(det(R_B)_greater_or_equal_zero}
\end{eqnarray}
The proof of~\cite[(13.22) and~(13.23) on page~462]{Lueck(2002)} carries directly over and yields
\begin{eqnarray}
\quad \quad \ln(\det(r_B^{(2)})) 
& = & 
\ln(K)\cdot (F(K) - F(0)) - \int_{0+}^K
\frac{F(\lambda) - F(0)}{\lambda} \, d\lambda; \label{det(r_B)_is_int}
\\
\quad \quad \quad \ln({\det}_i(r_{B[i]}^{(2)})) 
& = & 
\ln(K)\cdot (F[i](K) - F[i](0)) - \int_{0+}^K
\frac{F[i](\lambda) - F[i](0)}{\lambda} \, d\lambda. 
\label{det(r_B[i])_is_int}
\end{eqnarray}

We conclude from~\eqref{ln(det(R_B)_greater_or_equal_zero} and~\eqref{det(r_B)_is_int}
\begin{eqnarray}
0 & \le & \int_{0+}^K\frac{F(\lambda) - F(0)}{\lambda} d\lambda < + \infty.
\label{0_le_int_F/lambda}
\end{eqnarray}

Since  $\underline{F}$ and  $\overline{F}$  are monotone  increasing
bounded functions,  there are only countably  many elements $\lambda \in    (0,\infty)$   
such    that    $\underline{F}(\lambda)   \not= \underline{F}^+(\lambda)$  
or     $\overline{F}(\lambda)     \not= \overline{F}^+(\lambda)$  hold. 
We conclude from~\eqref{F[i]s_underlineFplus_isoverlineFplus} that  there  is a  countable set  $S
\subseteq  [0,\infty)$  such  that  for all  $\lambda  \in  [0,\infty)
\setminus S$  the limit $\lim_{i \to \infty}  F[i](\lambda)$ exists and
is equal to $F(\lambda)$. Since $S$ has measure zero and we have~\eqref{F(0)_is_lim_i_F[i](0)},
we get  almost everywhere for $\lambda \in [0,\infty)$
\begin{eqnarray}
\lim_{i \in I} \frac{F[i](\lambda) - F[i](0)}{\lambda} 
& = & 
\frac{F(\lambda) - F(0)}{\lambda}
\label{lim_isF_almost_everywhere}
\end{eqnarray}

Analogously to the proof of~\cite[(13.28) on page~463]{Lueck(2002)} one shows
\begin{eqnarray*}
\int_{0+}^K\frac{\lim_{i \in I} \bigl(F[i](\lambda) - F[i](0)\bigr)}{\lambda} d\lambda
& \le  & 
\liminf_{i \in I}  \int_{0+}^K \frac{F[i](\lambda) - F[i](0)}{\lambda} d\lambda.
\end{eqnarray*}
This implies
\begin{eqnarray}
\int_{0+}^K\frac{F(\lambda) - F(0)}{\lambda} d\lambda
& \le  & 
\liminf_{i \in I}  \int_{0+}^K \frac{F[i](\lambda) - F[i](0)}{\lambda} d\lambda.
\label{int_F/lambda_le_int_liminf}
\end{eqnarray}

Now assertion~\eqref{the:approxi_for_spectral_density:inequality} follows 
from~\eqref{det(r_B)_is_int},~\eqref{det(r_B[i])_is_int}, and~\eqref{int_F/lambda_le_int_liminf}.

Next we prove assertion~\eqref{the:approxi_for_spectral_density:equality}. 
We can apply Lebesgue's Dominated Convergence Theorem to~\eqref{lim_isF_almost_everywhere}
because of the assumption~\eqref{the:approxi_for_spectral_density:integrability} and obtain
\begin{eqnarray*}
\int_{0+}^K\frac{\lim_{i \in I} \bigl(F[i](\lambda) - F[i](0)\bigr)}{\lambda} d\lambda
& =  & 
\lim_{i \in I}  \int_{0+}^K \frac{F[i](\lambda) - F[i](0)}{\lambda} d\lambda.
\end{eqnarray*}
Now assertion~\eqref{the:approxi_for_spectral_density:equality} follows 
from~\eqref{det(r_B)_is_int}, and~\eqref{det(r_B[i])_is_int}. 
This finishes the proof of Theorem~\ref{the:approxi_for_spectral_density}.
\end{proof}


\subsection{The uniform integrability condition is not automatically satisfied}
\label{subsec:The_uniform_integrability_condition_is_not_automatically_satisfied}

The main difficulty to apply Theorem~\ref{the:approxi_for_spectral_density} to
the situations of interest is the verification of  the uniform integrability
condition~\eqref{the:approxi_for_spectral_density:integrability} appearing in
Theorem~\ref{the:approxi_for_spectral_density}.  We will illustrate by an
example that one needs extra input to ensure this condition since there are
examples where this condition is violated but all properties of the spectral
density functions which are known so far are satisfied.

Define the following sequence of functions $f_n \colon [0,1] \to [0,1]$
\[
f_n(\lambda) = 
\begin{cases}
\lambda 
& 
0 \le \lambda \le e^{-3n};
\\
\frac{(e^{-2n} -\lambda) \cdot e^{-3n}  + (\lambda- e^{-3n}) \cdot \bigl(\frac{1}
{-\ln(e^{-2n})} + e^{-2n}\bigr)}{e^{-2n}- e^{-3n}}
& 
e^{-3n} \le \lambda \le e^{-2n};
\\
\frac{1}{-\ln(\lambda)} + \lambda 
& 
e^{-2n} \le \lambda \le e^{-n};
\\
\frac{1}{-\ln(e^{-n})} + e^{-n} 
& 
e^{-n} \le \lambda \le \frac{1}{-\ln(e^{-n})} + e^{-n};
\\
\lambda 
&  
\frac{1}{-\ln(e^{-n})} + e^{-n} \le \lambda \le 1.
\end{cases}
\]

\begin{lemma}
\label{lem:addendum}\
\begin{enumerate}

\item \label{lem:addendum:(1)}
The function $f_n(\lambda)$ is monotone non-decreasing and continuous for $n \ge 1$;

\item \label{lem:addendum:(2)}
$f_n(0) = 0$ and $f_n(1) = 1$ for all $n \ge 1$;

\item \label{lem:addendum:(3)}
$\lim_{n \to \infty} f_n(\lambda) = \lambda$ for $\lambda \in [0,1]$;

\item \label{lem:addendum:(4)}
We have for all $n \ge 1$ and $\lambda \in [0,1)$
\[
\lambda \le f_n(\lambda) \le \frac{1}{-\ln(\lambda)} + \lambda  \le \frac{2}{-\ln(\lambda)}; 
\]
\item \label{lem:addendum:(5)}
We have for  $\lambda \in [0,e^{-1}]$
\[
\sup\bigl\{f_n(\lambda) \mid n \ge 0\} = \frac{1}{-\ln(\lambda)} + \lambda;
\] 
\item \label{lem:addendum:(6)}
We have
\[
\int_{0+}^1 \frac{\sup\{f_n(\lambda) \mid n \ge 0\}}{\lambda} \; d\lambda = \infty;
\]
\item  \label{lem:addendum:(7)}
We get for all $n \ge 1$
\[
\int_{0+}^1 \frac{f_n(\lambda)}{\lambda} \; d\lambda
\ge 
\ln(2) + 1;
\]
\item \label{lem:addendum:(8)}
We have
\[
\int_{0+}^1 \lim_{n \to \infty} \frac{ f_n(\lambda)}{\lambda} d \lambda 
< 
\liminf_{n \to \infty}
\int_{0+}^1 \frac{f_n(\lambda)}{\lambda} d \lambda
\le
\limsup_{n \to \infty}
\int_{0+}^1 \frac{f_n(\lambda)}{\lambda} d \lambda;
\]
\item \label{lem:addendum:(9)} 
We get for all $n \ge 1$
\[\int_{0+}^1 \frac{f_n(\lambda)}{\lambda} \,d\lambda 
\le 4.
\]
\end{enumerate}
\end{lemma}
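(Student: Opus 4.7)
The plan is to verify each of the nine claims by direct piecewise inspection of the five-piece definition of $f_n$, using properties of the auxiliary function $g(\lambda) = \frac{1}{-\ln\lambda}+\lambda$ on $(0,1)$ which appears explicitly in piece 3 and controls the behaviour of pieces 2 and 4. For (1) and (2) I would check continuity and monotonicity at each of the four gluing points $e^{-3n}$, $e^{-2n}$, $e^{-n}$, $\frac{1}{n}+e^{-n}$; the two neighbouring pieces evaluate to the same value there (namely $e^{-3n}$, $\frac{1}{2n}+e^{-2n}$, $\frac{1}{n}+e^{-n}$, and $\frac{1}{n}+e^{-n}$ respectively), and each piece is individually monotone non-decreasing (pieces 1, 2, 5 by inspection of the slopes, piece 3 because $g'(\lambda)=1+\frac{1}{\lambda(-\ln\lambda)^2}>0$, and piece 4 because it is constant). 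Claim (3) is immediate since for a fixed $\lambda>0$ one has $\lambda>\frac{1}{n}+e^{-n}$ for all large $n$, putting us in piece 5 where $f_n(\lambda)=\lambda$.

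For (4) both bounds are checked piecewise. The lower bound $\lambda\le f_n(\lambda)$ is equality on pieces 1 and 5, holds on piece 3 since $\frac{1}{-\ln\lambda}>0$, holds on piece 4 by the defining inequality $\lambda\le\frac{1}{n}+e^{-n}$, and holds on piece 2 because both endpoints satisfy it so the chord does too. The upper bound $f_n(\lambda)\le g(\lambda)$ is equality on piece 3, reduces to $0\le\frac{1}{-\ln\lambda}$ on pieces 1 and 5, follows on piece 4 from monotonicity of $g$ and agreement at $\lambda=e^{-n}$, and on piece 2 follows by comparing the chord of $f_n$ against $g$ (which agrees at the right endpoint and dominates at the left). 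The final estimate $g(\lambda)\le\frac{2}{-\ln\lambda}$ reduces to $-\lambda\ln\lambda\le 1$, true because $-\lambda\ln\lambda\le 1/e$ on $(0,1)$. For (5), given $\lambda\in(0,e^{-1}]$ we have $-\ln\lambda\ge 1$, so one can choose $n$ with $n\le-\ln\lambda\le 2n$, placing $\lambda$ in piece 3 of $f_n$ where $f_n(\lambda)=g(\lambda)$; together with (4) this pins the supremum. Claim (6) is then immediate from (5) and the classical divergence $\int_0^{e^{-1}} \frac{d\lambda}{-\lambda\ln\lambda}=\infty$, obtained via $u=-\ln\lambda$.

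The integral identities (7), (8), (9) are the computational heart. For each $n$ I would evaluate the five contributions to $\int_{0+}^1 f_n(\lambda)/\lambda\,d\lambda$: piece 1 gives $e^{-3n}$; piece 2 is the integral of an affine function $a+b\lambda$ and simplifies to $\frac{1}{2n}-\frac{1}{2(e^n-1)}+(e^{-2n}-e^{-3n})$; the substitution $u=-\ln\lambda$ turns piece 3 into $\int_n^{2n}\frac{du}{u}+(e^{-n}-e^{-2n})=\ln 2+(e^{-n}-e^{-2n})$; piece 4 contributes exactly $\bigl(\tfrac{1}{n}+e^{-n}\bigr)\ln\bigl(\tfrac{e^n}{n}+1\bigr)$; and piece 5 contributes $1-\tfrac{1}{n}-e^{-n}$. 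For (7) the combined contribution of pieces 3, 4, 5 is already at least $\ln 2+1-e^{-2n}$, and the remaining $e^{-2n}$ is absorbed by pieces 1 and 2. For (9) the crucial estimate is on piece 4: using $\ln(e^n/n+1)\le n+\ln 2-\ln n$ (valid because $e^n/n\ge 1$) and $ne^{-n}\le 1/e$ for $n\ge 1$, one gets a bound of $1+O(1/n)$, and summing the five pieces stays below $4$. Finally (8) follows by combining (3) (giving $f_n(\lambda)/\lambda\to 1$ pointwise so $\int_0^1 1\,d\lambda=1$) with (7) and (9).

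The main obstacle is the uniform upper bound $4$ in (9): piece 4 has length of order $1/n$ while its integrand is of order $n$, so the contribution is $\Theta(1)$ and must be estimated tightly; the inequality $\ln(e^n/n+1)\le n+\ln 2-\ln n$ and the subsequent arithmetic are exactly what brings the total below the threshold. A secondary subtlety is that for $n=1$ one has $\frac{1}{n}+e^{-n}>1$ so the stitching breaks down; this is handled either by interpreting piece 5 as empty and truncating piece 4 at $\lambda=1$, or by treating $n=1$ by direct computation.
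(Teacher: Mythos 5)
Your proposal is correct and follows essentially the same route as the paper: piecewise inspection using $g(\lambda)=1/(-\ln\lambda)+\lambda$ for (1)--(5), the substitution $u=-\ln\lambda$ for (6), and evaluation of the five contributions to $\int_{0+}^1 f_n(\lambda)/\lambda\,d\lambda$ for (7)--(9). One small difference: for (7) the paper uses the shortcut that $f_n-\lambda\ge 0$ by (4), so $\int_{0+}^1 f_n/\lambda\,d\lambda = 1+\int_{0+}^1 (f_n-\lambda)/\lambda\,d\lambda \ge 1+\int_{e^{-2n}}^{e^{-n}}\frac{d\lambda}{\lambda(-\ln\lambda)} = 1+\ln 2$, whereas you sum all five pieces, which also works but then requires a lower bound for the fourth contribution to offset the $-1/n$ coming from the fifth. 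More notably, your observation that for $n=1$ the break-point $1/n+e^{-n}=1+e^{-1}>1$ lies outside $[0,1]$ is a genuine point the paper overlooks: the five-piece formula and the identity $f_n(1)=1$ in (2) only hold verbatim for $n\ge 2$, and the fifth-piece contribution $1-1/n-e^{-n}$ is negative for $n=1$; your proposed fix of truncating the fourth piece at $\lambda=1$ and taking the fifth piece empty (or simply starting the sequence at $n=2$, which is all the application needs) is needed to make the argument rigorous.
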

\begin{proof}~%
\eqref{lem:addendum:(1)}
One easily checks that the definition of $f_n$ makes sense,
in particular at the values $\lambda = e^{-3n}, e^{-2n}, e^{-n}$.

The first derivative of $f_n(\lambda)$ exists with the exception of $\lambda = e^{-3n}, e^{-2n}, e^{-n},
\frac{1}{-\ln(e^{-n})} + e^{-n}$ and is given by
\[
f_n'(\lambda) = 
\begin{cases}
1 
& 
0 \le \lambda < e^{-3n};
\\
1  +  \frac{1}{2n \cdot (e^{-2n}- e^{-3n})}
& 
e^{-3n} < \lambda <  e^{-2n};
\\
\frac{1}{\lambda \cdot \ln(\lambda)^2} + 1
& 
e^{-2n} <\lambda < e^{-n};
\\
0
& 
e^{-n} \le \lambda \le \frac{1}{-\ln(e^{-n})} + e^{-n};
\\
1
&  
\frac{1}{-\ln(e^{-n})} + e^{-n} <  \lambda \le 1.
\end{cases}
\]
Hence for $n \ge 1$ the function $f_n$ is smooth with non-negative derivative
on the open intervals $(0,e^{-3n})$,
$(e^{-3n},e^{-2n})$, $(e^{-2n},e^{-n})$, and $(e^{-n},1)$,
and is continuous on the closed intervals $[0,e^{-3n}]$,
$[e^{-3n},e^{-2n}]$, $[e^{-2n},e^{-n}]$, and $[e^{-n},1]$.
Hence each $f_n$ is continuous and monotone non-decreasing.
\\[1mm]~%
\eqref{lem:addendum:(2)} This is obvious.
\\[1mm]~%
\eqref{lem:addendum:(3)} This follows since
$\lim_{n \to \infty} \frac{1}{-\ln(e^{-n})} + e^{-n} = 0$,
$f_n(\lambda) = \lambda$ for $\frac{1}{-\ln(e^{-n})} + e^{-n} \le \lambda \le 1$
and $f_n(0) = 0$ for all $n \ge 1$.
\\[1mm]~%
\eqref{lem:addendum:(4)}
We conclude $\lambda \le f_n(\lambda) \le \frac{1}{-\ln(\lambda)} + \lambda$ for
$\lambda \in [0,1)$ by inspecting the definitions since
$\lambda \le \frac{1}{-\ln(\lambda)} + \lambda$ holds and
$\frac{1}{-\ln(\lambda)} + \lambda$ is monotone non-decreasing.
We have $\lambda \le \frac{1}{-\ln(\lambda)}$ for $\lambda \in (0,1)$.
\\[1mm]~%
\eqref{lem:addendum:(5)}
From assertion~\eqref{lem:addendum:(4)} we conclude
$\sup\bigl\{f_n(\lambda) \mid n \ge 0\} \le \frac{1}{-\ln(\lambda)} + \lambda$ for  
$\lambda \in [0,1)$.
Since for $\lambda$ with $0 < \lambda \le e^{-1}$ we can find
$n \ge 1$ with $e^{-2n} \le \lambda \le e^{-n}$ and hence 
$f_n(\lambda) = \frac{1}{-\ln(\lambda)} + \lambda$ holds for that $n$, we conclude
$\sup\bigl\{f_n(\lambda) \mid n \ge 0\} = \frac{1}{-\ln(\lambda)} + \lambda$ for  
$\lambda \in [0,e^{-1}]$.
\\[1mm]~%
\eqref{lem:addendum:(6)}
We compute using assertion~\eqref{lem:addendum:(5)} for every $\epsilon \in (0,e^{-1})$
\begin{eqnarray*}
\int_{0+}^1 \frac{\sup\bigl\{f_n(\lambda) \mid n \ge 0\}}{\lambda} \; d\lambda 
& \ge &
\int_{\epsilon}^{e^{-1}} \frac{\sup\bigl\{f_n(\lambda) \mid n \ge 0\}}{\lambda} \; d\lambda 
\\
& = & 
\int_{\epsilon}^{e^{-1}} 1  + \frac{1}{\lambda\cdot (-\ln(\lambda))} \; d\lambda 
\\
& = & e^{-1} - \epsilon + \left[-\ln(-\ln(\lambda))\right]_{\epsilon}^{e^{-1}}
\\
& = & 
e^{-1} - \epsilon  - \ln(-\ln(e^{-1}) + \ln(-\ln(\epsilon))
\\
& = & 
e^{-1} - \epsilon + \ln(-\ln(\epsilon))
\\
& \ge &
\ln(-\ln(\epsilon)).
\end{eqnarray*}
Since
$\lim_{\epsilon \to 0+} \ln(-\ln(\epsilon))  =  \infty$, 
assertion~\eqref{lem:addendum:(6)} follows.
\\[1mm]~%
\eqref{lem:addendum:(7)}
We estimate for given $n \ge 1$ using the conclusion $f_n(\lambda) - \lambda \ge 0$ for $\lambda \in [0,1]$ 
from assertion~\eqref{lem:addendum:(4)}
\begin{eqnarray*}
\int_{0+}^1 \frac{f_n(\lambda)}{\lambda} \; d\lambda 
& = & 
\int_{0+}^1 \frac{f_n(\lambda) - \lambda }{\lambda} \; d\lambda 
 + 1
\\
& \ge &
\int_{e^{-2n}}^{e^{-n}} \frac{f_n(\lambda)- \lambda}{\lambda} \; d\lambda + 1
\\
& = & 
\int_{e^{-2n}}^{e^{-n}} \frac{1}{\lambda \cdot (-\ln(\lambda))} \; d \lambda + 1
\\
& = & 
\left[-\ln(-\ln(\lambda))\right]_{e^{-2n}}^{e^{-n}} + 1
\\
& = & 
-\ln(-\ln(e^{-n}) + \ln(-\ln(e^{-2n})) + 1
\\
& = & 
-\ln(n) + \ln(2n) + 1
\\
& = & 
\ln(2) + 1.
\end{eqnarray*}
\eqref{lem:addendum:(8)}
This follows from assertion~\eqref{lem:addendum:(3)},~\eqref{lem:addendum:(7)}.
\\[1mm]~%
\eqref{lem:addendum:(9)}
We estimate
\begin{eqnarray*}
\lefteqn{\int_{0+}^1 \frac{f_n(\lambda)}{\lambda} \,d\lambda}
& & 
\\
& = &
\int_{0+}^{e^{-3n}} \frac{f_n(\lambda)}{\lambda} \,d\lambda
+ \int_{e^{-3n}}^{e^{-2n}} \frac{f_n(\lambda)}{\lambda} \, d\lambda
+ \int_{e^{-2n}}^{e^{-n}} \frac{f_n(\lambda)}{\lambda} \, d\lambda
\\
& &  \hspace{30mm}
+ \int_{e^{-n}}^{\frac{1}{-\ln(e^{-n})} + e^{-n}} \frac{f_n(\lambda)}{\lambda} \,d\lambda
+ \int_{\frac{1}{-\ln(e^{-n})} +  e^{-n}}^1 \frac{f_n(\lambda)}{\lambda} \,d\lambda
\\
& = & 
\int_{0+}^{e^{-3n}} \frac{\lambda}{\lambda} \,d\lambda
+ \int_{e^{-3n}}^{e^{-2n}} \frac{(e^{-2n} -\lambda) \cdot e^{-3n}  + (\lambda- e^{-3n}) \cdot \bigl(\frac{1}
{-\ln(e^{-2n})} + e^{-2n}\bigr)}{e^{-2n}- e^{-3n}}\cdot \frac{1}{\lambda} \, d\lambda
\\
& &  \hspace{5mm}
+ \int_{e^{-2n}}^{e^{-n}} \frac{\frac{1}{-\ln(\lambda)} + \lambda}{\lambda} \, d\lambda
+ \int_{e^{-n}}^{\frac{1}{n} + e^{-n}} 
\frac{\frac{1}{-\ln(e^{-n})} + e^{-n}}{\lambda} \,d\lambda
+ \int_{\frac{1}{n} +  e^{-n}}^1 \frac{\lambda}{\lambda} \,d\lambda
\\
& = & 
\int_{0+}^{e^{-3n}} 1  \,d\lambda
+ \int_{e^{-3n}}^{e^{-2n}} 1  +  \frac{1}{2n \cdot (e^{-2n}- e^{-3n})} + \frac{e^{-3n} }{2n \cdot (e^{-2n}- e^{-3n})}\cdot \frac{1}{\lambda}  \, d\lambda
\\
& &  \hspace{5mm}
+ \int_{e^{-2n}}^{e^{-n}} 1 - \frac{1}{\ln(\lambda) \cdot \lambda} \, d\lambda
+ \int_{e^{-n}}^{\frac{1}{n} + e^{-n}} 
\frac{\frac{1}{n} + e^{-n}}{\lambda} \,d\lambda
+ \int_{\frac{1}{n} +  e^{-n}}^1 1 \,d\lambda
\\
& = & 
e^{-3n}
+ \int_{e^{-3n}}^{e^{-2n}} 1  +  \frac{1}{2n \cdot (e^{-2n}- e^{-3n})}  \, d\lambda 
+ \int_{e^{-3n}}^{e^{-2n}} \frac{e^{-3n} }{2n \cdot (e^{-2n}- e^{-3n})}\cdot \frac{1}{\lambda}  \, d\lambda
\\
& &  \hspace{5mm}
+ \int_{e^{-2n}}^{e^{-n}} 1 \, d\lambda -  \int_{e^{-2n}}^{e^{-n}} \frac{1}{\ln(\lambda) \cdot \lambda} \, d\lambda
+ \int_{e^{-n}}^{\frac{1}{n} + e^{-n}} 
\frac{\frac{1}{n} + e^{-n}}{\lambda} \,d\lambda
+ 1 - \frac{1}{n} -  e^{-n}
\\
& = & 
e^{-3n} +  \bigl(e^{-2n}- e^{-3n}\bigr) \cdot \left(1 + \frac{1}{2n \cdot (e^{-2n}- e^{-3n})}\right) 
+ \left[\frac{e^{-3n} \cdot \ln(\lambda)}{2n \cdot (e^{-2n}- e^{-3n})} \right]_{e^{-3n}}^{e^{-2n}} 
\\
& &  \hspace{5mm}
+ e^{-n} - e^{-2n} -  \left[\ln(-\ln(\lambda))\right]_{e^{-2n}}^{e^{-n}} 
+ 
\left[\left(\frac{1}{n} + e^{-n}\right) \cdot \ln(\lambda)\right]_{e^{-n}}^{\frac{1}{n} + e^{-n}} 
+ 1 - \frac{1}{n} -  e^{-n}
\\
& = & 
1 -   \frac{3}{2n}+ \frac{e^{-3n} \cdot (-2n + 3n)}{2n \cdot (e^{-2n}- e^{-3n})} 
\\
& &  \hspace{5mm}
 -  \bigl(\ln(n) - \ln(2n)\bigr)
+  \left(\frac{1}{n} + e^{-n}\right) \cdot \left(\ln\left(\frac{1}{n} + e^{-n} \right) - \ln(e^{-n})\right)
\\
& = & 
1 -   \frac{3}{2n} + \frac{1}{2 \cdot (e^{n}- 1)}   +   \ln(2) 
+  \left(\frac{1}{n} + e^{-n}\right) \cdot \ln\left(\frac{e^n}{n} + 1\right) 
\\
& \le  & 
1  + \frac{1}{2 \cdot (e - 1)}   +   \ln(2) 
+   \frac{2}{n} \cdot \ln(2 e^n) 
\\
& =  & 
1  + \frac{1}{2 \cdot (e - 1)}   +   \ln(2)  +   2 \cdot \ln(2) 
\\ & \le & 4.
\end{eqnarray*}
This finishes the proof of Lemma~\ref{lem:addendum}.
\end{proof}

\begin{remark}[Exotic behavior at zero] \label{rem:exotic_behavior_at_zero}
  The sequence of functions $(f_n)_{n \ge 0}$ has an exotic behavior close to
  zero in a small range depending on $n$.  There are no $C> 0$ and $\epsilon > 0$ 
  such that $f_n'(\lambda) \le C$ holds for all $n \ge 1$ and all 
  $\lambda \in (0,\epsilon)$ for which the derivative exists.

  This exotic behavior is responsible for the violation of the the uniform
  integrability condition, see Lemma~\ref{lem:addendum}~\eqref{lem:addendum:(6)}.
  It is very unlikely that such a sequence $(f_n)_{n \ge 0}$ actually occurs as
  the sequence of spectral density functions of the manifolds $G_i\backslash M$
  for some smooth manifold $M$ with proper free cocompact $G$-action and
  $G$-invariant Riemannian metric. The example above shows that we need to have
  more information on such sequences of spectral density functions.
\end{remark}


\subsection{Uniform estimate on spectral density functions}
\label{subsec:Uniform_estimate_on_spectral_density_functions}

The crudest way to ensure the uniform integrability condition~\eqref{the:approxi_for_spectral_density:integrability}
appearing in Theorem~\ref{the:approxi_for_spectral_density} 
is to assume a uniform gap in the spectrum, namely we have the obvious 

\begin{lemma}\label{lem:uniform_gap}
Suppose that \emph{the uniform gap in the spectrum at zero condition} is satisfied, i.e., 
there exists $\epsilon > 0$ such that for all $i \in I$ and $\lambda \in [0,\epsilon]$ we have
$F[i](\lambda) = F[i](0)$.

Then the uniform integrability condition~\eqref{the:approxi_for_spectral_density:integrability}
appearing in Theorem~\ref{the:approxi_for_spectral_density} is satisfied.
\end{lemma}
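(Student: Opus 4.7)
The plan is essentially immediate, since the lemma is merely an unpacking of definitions; the author signals this by calling it ``obvious.'' I would proceed as follows.

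First, I would take the same $\epsilon > 0$ from the uniform gap hypothesis as the $\epsilon$ required in the uniform integrability condition~\eqref{the:approxi_for_spectral_density:integrability}. By assumption, for every $i \in I$ and every $\lambda \in [0,\epsilon]$ we have $F[i](\lambda) = F[i](0)$, and hence
\[
\frac{F[i](\lambda) - F[i](0)}{\lambda} = 0
\]
for all $\lambda \in (0,\epsilon]$ and all $i \in I$.

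Taking the supremum over $i \in I$ of a collection of zero quantities still yields zero, so the integrand
\[
\sup\left\{\frac{F[i](\lambda) - F[i](0)}{\lambda} \,\bigg|\, i \in I\right\}
\]
vanishes identically on $(0,\epsilon]$. Therefore
\[
\int_{0+}^{\epsilon} \sup\left\{\frac{F[i](\lambda) - F[i](0)}{\lambda} \,\bigg|\, i \in I \right\}\, d\lambda \;=\; 0 \;<\; \infty,
\]
which is exactly the uniform integrability condition~\eqref{the:approxi_for_spectral_density:integrability}. There is no substantive obstacle: the only point worth a sentence of emphasis is that the uniform gap hypothesis is genuinely uniform in $i$, so that the supremum inside the integral --- not merely each individual integrand --- is zero near $0$. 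This is precisely what distinguishes a uniform spectral gap from a merely pointwise (in $i$) spectral gap, which would not suffice to control the $L^2$ behavior near zero illustrated by the exotic sequence $(f_n)_{n \ge 0}$ of Subsection~\ref{subsec:The_uniform_integrability_condition_is_not_automatically_satisfied}.
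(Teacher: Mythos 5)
Your proof is correct and is exactly the ``obvious'' unpacking the paper has in mind (the paper states the lemma without proof, prefaced by ``we have the obvious''). Choosing the $\epsilon$ from the uniform gap hypothesis, noting the integrand vanishes identically on $(0,\epsilon]$ because the gap holds uniformly in $i$, and concluding the integral is $0 < \infty$ is precisely the intended argument.
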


However, this is an unrealistic condition in our situation for closed aspherical manifolds
because of the following remark. 

\begin{remark}[The Zero-in-the-Spectrum Conjecture]
  \label{rem:The_Zero-in-the-Spectrum_Conjecture}
  Let $M$ be an aspherical closed manifold. If one wants to use
  Lemma~\ref{lem:uniform_gap} in connection with
  Theorem~\ref{the:approxi_for_spectral_density} to prove
  Conjecture~\ref{con:Approximation_conjecture_for_Fuglede-Kadison_determinants_with_finite_index}
  or more generally Conjecture~\ref{con:Approximation_conjecture_for_analytic_L2-torsion} 
  for $\widetilde{M}$, one has to face the fact that the assumption
  that one has in each dimension a uniform gap in the spectrum at zero implies
  that $b_p^{(2)}(\widetilde{M})$ vanishes and the $p$-th Novikov-Shubin invariant satisfies 
  $\alpha_p(\widetilde{M}) = \infty^*$ for all  $p  \ge 0$. In other words, $M$ must be a counterexample to the
  Zero-in-the-Spectrum Conjecture which is discussed in detail
  in~\cite[Chapter~12 on pages~437~ff]{Lueck(2002)}.  Such counterexample is not
  known to exist and it is evident that it is hard to find one.  Therefore the
  uniform gap in the spectrum at zero condition is not useful in this  setting.

  There are examples where Lemma~\ref{lem:uniform_gap}  does apply when one allows to twist with
  representations in favorable case, see for 
  instance~\cite{Bergeron-Venkatesh(2013),Marshall-Mueller(2013),Mueller-Pfaff(2013locsym),Mueller-Pfaff(2013asymp)}.

\end{remark}

Here is a more promising version.

\begin{theorem}[The uniform logarithmic estimate]
\label{the:the_uniform_logarithmic_estimate}
Suppose that there exists constants $C > 0 $, $0 < \epsilon < 1$ and $\delta > 0$ 
independent of $i$ such that for all $\lambda$ with $0 < \lambda \le  \epsilon$
and all $i \in I$ we have 
\begin{eqnarray*}
  F[i](\lambda) - F[i](0) \le & \frac{C}{(-\ln(\lambda))^{1 + \delta}}.
  \label{ln(lambda)(-1-1delta_n)_bound}
\end{eqnarray*}

Then the uniform integrability condition~\eqref{the:approxi_for_spectral_density:integrability}
appearing in Theorem~\ref{the:approxi_for_spectral_density} is satisfied.
\end{theorem}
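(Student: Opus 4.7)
The plan is straightforward: the hypothesis gives a pointwise bound on $F[i](\lambda) - F[i](0)$ that is uniform in $i$, and after dividing by $\lambda$ we need only check that the resulting envelope is integrable at $0$. I would begin by fixing the $\epsilon \in (0,1)$ given in the hypothesis (shrinking it if necessary so that $-\ln(\epsilon) > 0$). For every $\lambda \in (0,\epsilon]$ and every $i \in I$ one has, by assumption,
\[
\frac{F[i](\lambda) - F[i](0)}{\lambda} \;\le\; \frac{C}{\lambda \cdot (-\ln(\lambda))^{1+\delta}},
\]
and the right-hand side is independent of $i$. Taking the supremum over $i \in I$ of the left-hand side is therefore still bounded above by $\frac{C}{\lambda \cdot (-\ln(\lambda))^{1+\delta}}$.

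Next I would integrate this envelope. The substitution $u = -\ln(\lambda)$, so that $du = -\frac{d\lambda}{\lambda}$, transforms the improper integral as follows:
\[
\int_{0+}^{\epsilon} \frac{C}{\lambda \cdot (-\ln(\lambda))^{1+\delta}} \, d\lambda
\;=\; C \cdot \int_{-\ln(\epsilon)}^{\infty} \frac{du}{u^{1+\delta}}
\;=\; \frac{C}{\delta \cdot (-\ln(\epsilon))^{\delta}},
\]
where convergence at $u = \infty$ uses $\delta > 0$. Combining with the pointwise estimate gives
\[
\int_{0+}^{\epsilon} \sup\left\{ \frac{F[i](\lambda) - F[i](0)}{\lambda} \;\middle|\; i \in I \right\} d\lambda
\;\le\; \frac{C}{\delta \cdot (-\ln(\epsilon))^{\delta}} \;<\; \infty,
\]
which is precisely the uniform integrability condition~\eqref{the:approxi_for_spectral_density:integrability}.

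There is essentially no obstacle here; the argument is a one-line application of the hypothesis followed by the elementary fact that $\int_{-\ln(\epsilon)}^{\infty} u^{-1-\delta} du$ converges exactly when $\delta > 0$. This is, of course, why the exponent $1+\delta$ (rather than $1$) appears in the statement: the borderline example of Lemma~\ref{lem:addendum}, where $f_n(\lambda) \sim \frac{1}{-\ln(\lambda)} + \lambda$ near zero, is exactly the case $\delta = 0$, and it shows that the uniform estimate $\sup_i \frac{F[i](\lambda)}{\lambda} \sim \frac{1}{\lambda \cdot (-\ln(\lambda))}$ fails to be integrable. The real content of Theorem~\ref{the:the_uniform_logarithmic_estimate} is therefore not the proof, but rather its role as a clean sufficient condition: the genuinely hard task—left for applications—is to establish the uniform logarithmic estimate on the spectral density functions $F[i]$ in concrete geometric situations, which is where the analytic input must enter.
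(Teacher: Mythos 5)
Your proof is correct and follows the same route as the paper: bound the envelope $\sup_i \frac{F[i](\lambda)-F[i](0)}{\lambda}$ by $\frac{C}{\lambda(-\ln\lambda)^{1+\delta}}$ and evaluate the integral via the substitution $u=-\ln\lambda$, using $\delta>0$ for convergence. (In fact your final constant $\frac{C}{\delta(-\ln\epsilon)^{\delta}}$ is the correct one; the paper's displayed value $\delta\cdot(-\ln\epsilon)^{-\delta}$ has a harmless typographical slip in the antiderivative of $\nu^{-1-\delta}$.)
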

\begin{proof}
This follows from  the following calculation.
\begin{eqnarray*}
  \int_{+0}^{\epsilon} \frac{C}{\lambda \cdot (-\ln(\lambda))^{1 + \delta}} d \lambda 
   & \stackrel{\lambda = \exp(\mu)}{=} & 
  \int_{-\infty}^{\ln(\epsilon)} \frac{C}{\exp(\mu) \cdot (-\ln(\exp(\mu)))^{1 + \delta}}  \cdot \exp(\mu) \, d\mu
  \\
  & = & 
  \int_{-\infty}^{\ln(\epsilon)} \frac{C}{(-\mu)^{1 + \delta}}  d\mu
  \\
  & \stackrel{\mu = -\nu}{=} & 
  \int_{-\ln(\epsilon)}^{\infty} \frac{C}{\nu^{1 + \delta}}  d\nu
  \\
  & = & 
  \lim_{x \to \infty} \int_{-\ln(\epsilon)}^{x} \frac{C}{\nu^{1 + \delta}}  d\nu
  \\
  & = & 
  \lim_{x \to \infty}  - \delta \cdot \left(x^{-\delta} - (-\ln(\epsilon))^{-\delta}\right)
  \\
  & = & 
  \delta \cdot  (-\ln(\epsilon))^{-\delta}
  \\
  & < & \infty,
\end{eqnarray*}
\end{proof}

The $p$-th spectral density function $F_p(X[i])$ of $X[i]$ is defined as the spectral density function $F\bigl(c_p^{(2)}(X[i])\bigr))$
of the $p$-th differential of $C_*^{(2)}(X[i])$.
If we now consider the Setup~\ref{set:restricted}, we get 
 from~\cite[Theorem~0.3]{Lueck(1994c)} for all $p \ge 0$ at least the inequality
\begin{eqnarray}
  \frac{F_p(X[i])(\lambda) - F_p(X[i])(0)}{[G:G_i]} & \le & \frac{C}{-\ln(\lambda)}.
  \label{ln(lambda)(-1)_bound}
\end{eqnarray}
But this is not enough, since one cannot take $\delta = 0$ in Theorem~\ref{the:the_uniform_logarithmic_estimate},
namely, we have for every $C > 0$ and $0 < \epsilon < 1$
\begin{eqnarray*}
\int_{+0}^{\epsilon} \frac{C}{\lambda \cdot (-\ln(\lambda))} d \lambda 
& = &
\lim_{x \to 0+} \int_x^{\epsilon} \frac{C}{\lambda \cdot (-\ln(\lambda))} d \lambda 
\\
& = &
\lim_{x \to 0+}  C \cdot \left(- \ln(-\ln(\epsilon)) + \ln(-\ln(x))\right) 
\\
& = &
\infty.
\end{eqnarray*}

Condition~\eqref{ln(lambda)(-1-1delta_n)_bound} is implied by the stronger condition that
there exist for each $p \ge 0$ constants $C_p > 0$, $\epsilon_p > 0$ and $\alpha_p> 0$
independent of $i$ such that we have  for all $\lambda \in [0,\epsilon_p)$ and all $i = 1,2, \ldots$
\begin{eqnarray}
  \frac{F_p(X[i])(\lambda) - F_p(X[i])(0)}{[G:G_i]} & \le & C \cdot \lambda^{\alpha_p}.
  \label{lambda(alpha_n)_bound}
\end{eqnarray}

Condition~\eqref{lambda(alpha_n)_bound} is known for $p = 0$,
see~\cite[Theorem~1.1]{Koch-Lueck(2014)}.  However, there is an unpublished manuscript by
Grabowski and Virag~\cite{Grabowski-Virag(2013)}, where they show that there exists an
explicit element $a$ in the integral group ring of the wreath product $\IZ^3 \wr \IZ$ such
that the spectral density function of the associated $\caln(\IZ^3\wr \IZ)$-map $r_a \colon
\caln(\IZ^3 \wr \IZ) \to \caln(\IZ^3 \wr \IZ)$ does not satisfy
condition~\eqref{lambda(alpha_n)_bound}. This implies that there exists a closed
Riemannian manifold $M$ with fundamental group $G = \pi_1(X) = \IZ^3 \wr \IZ$ such that
for some $p$ condition~\eqref{lambda(alpha_n)_bound} is not satisfied for $X =
\widetilde{M}$ and the $p$-th Novikov-Shubin invariant of $\widetilde{M}$ is zero,
disproving a conjecture of Lott and L\"uck, see~\cite[Conjecture~7.1
and~7.2]{Lott-Lueck(1995)}.  It may still be possible that
Condition~\ref{lambda(alpha_n)_bound} and the conjecture of Lott and L\"uck hold for an
aspherical closed manifold $M$.

There is no counterexample known to the condition appearing in Theorem~\ref{the:the_uniform_logarithmic_estimate}
but the constant $\delta$ has to be depend on the group $G$, see Grabowski~\cite{Grabowski(2015large)}.


\typeout{-------   Section 17: Proof of some Theorems for Fuglede-Kadison determinants ------------------}

\section{Proof of Theorem~\ref{the:inequality_det_det}, Theorem~\ref{the:invertible_matrices_over_l1(G)},
and Corollary~\ref{cor:l1-chain_equivalence}}
\label{sec:Proof_of_some_Theorems_for_Fuglede-Kadison_determinants}

In this section we derive the proofs of Theorem~\ref{the:inequality_det_det},~%
\ref{the:invertible_matrices_over_l1(G)}    from
Theorem~\ref{the:approxi_for_spectral_density}. This needs some preparation.

Define for a matrix $B \in M_{r,s}(L^1(G))$ the real number
\begin{eqnarray}
K^G(B) & := & rs \cdot \max\{||b_{i,j}||_{L^1} \mid 1 \le i \le r, 1 \le j \le s\},
\label{KG(B)}
\end{eqnarray}
where for $a = \sum_{g \in G} \lambda_g \cdot g$ its $L^1$-norm $||a||_{L^1}$ is defined by $\sum_{g \in G} |\lambda_g|$.
The proof of the following result is analogous to the proof of~\cite[Lemma~13.33 on page~466]{Lueck(2002)}.

\begin{lemma} \label{lem_KG(B)-estimate} 
We get for $B \in M_{r,s}(L^1(G))$
\[
||r_B^{(2)} \colon L^2(G)^r \to L^2(G)^s|| \le K^G(B).
\]
\end{lemma}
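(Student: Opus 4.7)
The plan is to reduce the matrix statement to the single-entry case and then assemble the pieces via the triangle inequality using the canonical coordinate inclusions and projections. For the single-entry case, I would show that for any $a \in L^1(G)$ the right-multiplication operator $r_a^{(2)} \colon L^2(G) \to L^2(G)$ is bounded with $\|r_a^{(2)}\| \le \|a\|_{L^1}$. The main substantive point is thus an $L^1$-convolution bound (Young's inequality for the group $G$), which I would prove by hand in the present setting using only the fact that right translation is unitary.

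First I would verify the single-entry inequality. Let $a = \sum_{g \in G} \lambda_g \cdot g \in L^1(G)$, so $\|a\|_{L^1} = \sum_{g \in G} |\lambda_g| < \infty$. Choose an exhausting sequence of finite subsets $F_n \subseteq G$ and set $a_n := \sum_{g \in F_n} \lambda_g \cdot g \in \IC G$. For each fixed $g \in G$, right multiplication $r_g \colon L^2(G) \to L^2(G)$ is a unitary operator, so for $x \in L^2(G)$ we get
\[
\|x \cdot a_n\|_{L^2} \;\le\; \sum_{g \in F_n} |\lambda_g| \cdot \|x \cdot g\|_{L^2} \;=\; \Bigl(\sum_{g \in F_n} |\lambda_g|\Bigr) \cdot \|x\|_{L^2} \;\le\; \|a\|_{L^1} \cdot \|x\|_{L^2}.
\]
Hence each $r_{a_n}^{(2)}$ is bounded with norm at most $\|a\|_{L^1}$. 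Since $a_n \to a$ in $L^1(G)$ and the same estimate applied to $a - a_n$ shows $\|r_a^{(2)} - r_{a_n}^{(2)}\| \le \|a - a_n\|_{L^1} \to 0$, the operator $r_a^{(2)}$ is bounded and inherits the bound $\|r_a^{(2)}\| \le \|a\|_{L^1}$.

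For the matrix case, denote by $\iota_j \colon L^2(G) \to L^2(G)^s$ the inclusion into the $j$-th coordinate and by $\pi_i \colon L^2(G)^r \to L^2(G)$ the projection onto the $i$-th coordinate; both are $G$-equivariant with operator norm $1$. By the defining formula for $r_B^{(2)}$ we have the decomposition
\[
r_B^{(2)} \;=\; \sum_{i=1}^{r} \sum_{j=1}^{s} \iota_j \circ r_{b_{ij}}^{(2)} \circ \pi_i.
\]
Applying the triangle inequality, the single-entry bound just proved, and the trivial inequality $\|b_{ij}\|_{L^1} \le \max_{i',j'} \|b_{i'j'}\|_{L^1}$ yields
\[
\|r_B^{(2)}\| \;\le\; \sum_{i,j} \|r_{b_{ij}}^{(2)}\| \;\le\; \sum_{i,j} \|b_{ij}\|_{L^1} \;\le\; rs \cdot \max_{i,j} \|b_{ij}\|_{L^1} \;=\; K^G(B),
\]
which is the desired estimate. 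There is no real obstacle here: the only analytic ingredient is the $L^1$–$L^2$ convolution estimate for a single element, which rests on the unitarity of right translation, and everything else is bookkeeping with coordinates.
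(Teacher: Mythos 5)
Your proof is correct and is the standard argument one would expect here: the unitarity of right translation gives $\|r_a^{(2)}\|\le\|a\|_{L^1}$ for a single entry, and the triangle inequality applied to the decomposition $r_B^{(2)}=\sum_{i,j}\iota_j\circ r_{b_{ij}}^{(2)}\circ\pi_i$ yields the factor $rs$ times the maximal entry norm. The paper itself gives no proof but cites the analogous \cite[Lemma~13.33]{Lueck(2002)}, and your argument is precisely the one that reference uses, so the approaches coincide.
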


Consider the setup~\ref{set:inverse_systems}. 

\begin{lemma} \label{lem_trace_and_limit}
Consider $B \in M_{d}(L^1(G))$. Let $B[i] \in M_d(L^1(G/G_i))$ be obtained from $B$ by applying
the map $L^1(G) \to L^1(G/G_i)$ induced by the projection $\psi_i \colon G \to G/G_i$. Then
\[
\tr_{\caln(G)}(B) = \lim_{i \in I} \tr_{\caln(G/G_i)}(B[i]).
\]
\end{lemma}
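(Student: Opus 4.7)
The plan is to reduce the statement to a direct computation of matrix coefficients, using the standard fact that under the embedding $L^1(G) \hookrightarrow \caln(G)$ via right (or left) convolution, the restriction of $\tr_{\caln(G)}$ to $L^1(G)$ sends $a = \sum_{g \in G} \lambda_g \cdot g$ to $\lambda_e$. Indeed, this is immediate from the definition $\tr_{\caln(G)}(f) = \langle f(e), e\rangle_{L^2(G)}$ applied to $f = r_a$. Extending to $M_d$ by summing diagonal entries, I would obtain, for $B = (b_{jk}) \in M_d(L^1(G))$ with $b_{kk} = \sum_{g \in G} \lambda^{(k)}_g \cdot g$,
\[
\tr_{\caln(G)}(B) \;=\; \sum_{k=1}^{d} \lambda^{(k)}_e,
\qquad
\tr_{\caln(G/G_i)}(B[i]) \;=\; \sum_{k=1}^{d} \sum_{g \in G_i} \lambda^{(k)}_g,
\]
the second equality coming from the fact that pushing $b_{kk}$ forward along $\psi_i$ groups the coefficients of all $g$ in the same coset of $G_i$, so the coefficient of the identity coset is $\sum_{g \in G_i} \lambda^{(k)}_g$.

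With this reformulation, the claim reduces to showing, for each fixed $k$, that $\lim_{i \in I}\sum_{g \in G_i} \lambda^{(k)}_g = \lambda^{(k)}_e$. I would prove this by a standard $\epsilon$-argument. Fix $\epsilon > 0$. Since $b_{kk} \in L^1(G)$, the family $(|\lambda^{(k)}_g|)_{g \in G}$ is absolutely summable, so there exists a finite set $F \subseteq G \setminus \{e\}$ with $\sum_{g \notin F \cup \{e\}} |\lambda^{(k)}_g| < \epsilon$. Because $\bigcap_{i \in I} G_i = \{1\}$, for each $g \in F$ there is some $i_g \in I$ with $g \notin G_{i_g}$. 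Since $I$ is directed and $F$ is finite, we can pick $i_0 \in I$ with $G_{i_0} \subseteq G_{i_g}$ for every $g \in F$; then $F \cap G_{i_0} = \emptyset$, and because the $G_i$ form an inverse system (directed by inclusion so that $i \ge i_0$ entails $G_i \subseteq G_{i_0}$) we also have $F \cap G_i = \emptyset$ for all $i \ge i_0$. For such $i$,
\[
\left|\sum_{g \in G_i} \lambda^{(k)}_g \;-\; \lambda^{(k)}_e\right|
\;=\; \left|\sum_{g \in G_i \setminus \{e\}} \lambda^{(k)}_g\right|
\;\le\; \sum_{g \notin F \cup \{e\}} |\lambda^{(k)}_g|
\;<\; \epsilon.
\]
Summing over $k = 1,\dots,d$ and letting $\epsilon \to 0$ gives the desired convergence.

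There is no real obstacle here: the argument uses only the absolute summability built into the definition of $L^1(G)$ together with the triviality of $\bigcap_i G_i$. The only point requiring a touch of care is the passage from "$g$ eventually avoids some $G_{i_g}$" to "every element of the finite set $F$ simultaneously avoids $G_i$ for all sufficiently large $i$", which is where the directedness of $I$ and the inverse-system nature of the chain are used. This lemma is therefore a soft preparatory step; the substance of the section will appear in the subsequent uses of it, where it verifies condition~\eqref{the:approxi_for_spectral_density:traces_and_polynomials} of Theorem~\ref{the:approxi_for_spectral_density} for matrices $B$ and polynomials $p$ with $p(B) \in M_d(L^1(G))$.
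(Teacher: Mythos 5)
Your proof is correct and follows essentially the same route as the paper's: identify $\tr_{\caln(G)}(B)$ and $\tr_{\caln(G/G_i)}(B[i])$ as the sums of diagonal identity-coefficients (respectively coefficients over the kernel $G_i$), and then run a tail-truncation $\epsilon$-argument using absolute summability in $L^1(G)$ together with $\bigcap_{i} G_i = \{1\}$ and directedness of $I$. The only cosmetic difference is that you argue coordinate-by-coordinate and spell out the directedness/inverse-system step a bit more explicitly than the paper, whereas the paper chooses the finite set $S$ uniformly in the diagonal index with an $\epsilon/d$ bound; the two are interchangeable.
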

\begin{proof}
Suppose that $B$ looks like $\left(\sum_{g \in G} \lambda_g(r,s) \cdot g\right)_{r,s}$. 
Denote in the sequel by $e$ the unit element in $G$ or $G/G_i$. Then
\begin{eqnarray*}
\tr_{\caln(G)}(B) & = & \sum_{r = 1 }^d \lambda_e(r,r);
\\
\tr_{\caln(G/G_i)}(B[i]) & = & \sum_{r = 1 }^d \sum_{g \in G, \psi_i(g) = e} \lambda_g(r,r).
\end{eqnarray*}
Consider $\epsilon > 0$. We can choose a finite subset $S \subseteq G$ with $e \in S$ such that
$\sum_{g \in G, g \notin S} |\lambda_g(r,r)| < \epsilon/d$ holds for all $r \in \{1,2, \ldots , d\}$. 
Since $\bigcap_{i \in I} G_i = \{1\}$, there exists an index $i_S$ such that $\psi_i(g) = e \Rightarrow g = e$ holds for all
$g \in S$ and $i \ge i_S$. This implies for $i \ge i_S$
\begin{eqnarray*}
\bigl|\tr_{\caln(G)}(B) - \tr_{\caln(G/G_i)}(B[i])\bigr|
& = & 
\biggl|\sum_{r = 1 }^d \lambda_e(r,r) - \sum_{r = 1 }^d \sum_{g \in G, \psi_i(g) = e} \lambda_g(r,r)\biggr|
\\
& \le & 
\sum_{r = 1 }^d \; \biggl|\lambda_e(r,r) - \sum_{g \in G, \psi_i(g) = e} \lambda_g(r,r)\biggr|
\\
& = & 
\sum_{r = 1 }^d  \; \biggl|\sum_{g \in G, g \notin S,\psi_i(g) = e} \; \lambda_g(r,r)\biggr|
\\
&\le  & 
\sum_{r = 1 }^d  \;\sum_{g \in G, g \notin S,\psi_i(g) = e}  \biggl|\lambda_g(r,r)\biggr|
\\
&\le  & 
\sum_{r = 1 }^d  \; \sum_{g \in G, g \notin S}  \biggl|\lambda_g(r,r)\biggr|
\\
&\le  & 
\sum_{r = 1 }^d  \epsilon/d
\\
& = & 
\epsilon.
\end{eqnarray*}
\end{proof}

Next we give the proof of Theorem~\ref{the:inequality_det_det}.

\begin{proof}[Proof of Theorem~\ref{the:inequality_det_det}]
We have to show for  $A \in M_{r,s}(\IQ G)$.
\begin{multline}
{\det}_{\caln(G)}\big(r_A^{(2)}\colon L^2(G)^r \to L^2(G)^s \bigr)
\\ 
\ge
\limsup _{i \in I} {\det}_{\caln(G/G_i)}\big(r_{A[i]}^{(2)}\colon L^2(G/G_i)^r \to L^2(G/G_i)^s \bigr).
\label{claim_to_prove}
\end{multline}
We first deal with the special case that $A \in M_{r,s}(\IZ G)$.

We will apply Theorem~\ref{the:approxi_for_spectral_density} to the following
special situation:
\begin{itemize}
\item $B = A^*A$;
\item $Q_i = G/G_i$;
\item $B[i] = A[i]^*A[i]$;
\item$\tr$ is the von Neumann trace $\tr_{\caln(G)} \colon \caln(G) \to \IC$;
\item$\tr_i$ is the von Neumann trace $\tr_{\caln(G/G_i)} \colon \caln(G/G_i) \to \IC$.
\end{itemize}
We have to check that the conditions of
Theorem~\ref{the:approxi_for_spectral_density}~\eqref{the:approxi_for_spectral_density:inequality} are satisfied.  We obtain
Condition~\eqref{the:approxi_for_spectral_density:uniform_bound_K} appearing in
Theorem~\ref{the:approxi_for_spectral_density} 
from Lemma~\ref{lem_KG(B)-estimate} 
since the projection $L^1(G) \to L^1(G/G_i)$ has operator
norm at most $1$ and hence we get for the number $K^G(B)$ defined 
in~\eqref{KG(B)}
\[
K^{Q_i}(B[i]) \le K^G(B).
\]
Condition~\eqref{the:approxi_for_spectral_density:traces_and_polynomials}
appearing in Theorem~\ref{the:approxi_for_spectral_density} follows
from Lemma~\ref{lem_trace_and_limit}.
Condition~\eqref{the:approxi_for_spectral_density:det} appearing in
Theorem~\ref{the:approxi_for_spectral_density} follows from the
Assumption~\ref{ass:Determinant_Conjecture} that each quotient $G/G_i$
satisfies the Determinant Conjecture~\ref{con:Determinant_Conjecture}.
Condition~\eqref{the:approxi_for_spectral_density:positivity} appearing in
Theorem~\ref{the:approxi_for_spectral_density} is satisfied because of $B = A^*A$ and $B[i]=A[i]^*A[i]$. 
Hence we conclude from
Theorem~\ref{the:approxi_for_spectral_density}~\eqref{the:approxi_for_spectral_density:inequality}
\[
{\det}_{\caln(G)}\bigl(r_B^{(2)} \colon L^2(G)^r \to L^2(G)^r\bigr) 
\ge  \limsup_{i \in I} \;{\det}_{\caln(G)}\bigl(r_{B[i]}^{(2)}\colon L^2(G/G_i)^r \to L^2(G/G_i)^r\bigr).
\]

Since we get from~\cite[Lemma~3.15~(4) on page~129]{Lueck(2002)}
\begin{eqnarray*} {\det}_{\caln(G)}\bigl(r_A^{(2)}\bigr) & = &
  \sqrt{{\det}_{\caln(G)}\bigl(r_B^{(2)}\bigr)};
  \\
  {\det}_{\caln(G/G_i})\bigl(r_{A[i]}^{(2)}\bigr) & = &
  \sqrt{{\det}_{\caln(G/G_i}\bigl(r_{B[i]}^{(2)}\bigr)},
\end{eqnarray*}
equation~\eqref{claim_to_prove} follows for $A \in M_{r,s}(\IZ G)$.

Next we reduce the general case $A \in M_{r,s}(\IQ G)$ to the case above.

Consider any  real  number $m > 0$, any group $H$ and any morphism
$f \colon L^2(H)^r \to L^2(H)^s$.
We conclude  from~\cite[Lemma~1.18 on page~24 and Theorem~3.14~(1) on page~128 and 
Lemma~3.15~(3), (4) and~(7) on page~129]{Lueck(2002)} 
\begin{eqnarray*}
\lefteqn{{\det}_{\caln(H)}\bigl(f \circ m\id_{L^2(H)^r}\bigr)^2}
& &
\\ 
& = & 
{\det}_{\caln(H)}\left(\bigl(f \circ m\id_{L^2(H)^r}\bigr)^* \circ \bigl(f \circ m\id_{L^2(H)^r}\bigr)\right)
\\
& = & 
{\det}_{\caln(H)}\bigl(f^* \circ f \circ m^2\id_{L^2(H)^r}\bigr)
\\
& = & 
{\det}_{\caln(H)}\left(\bigl(\left.f^* \circ f \circ m^2\id_{L^2(H)^r}\bigr)
\right|_{\ker\bigl(f^* \circ f \circ m^2\id_{L^2(H)^r}\bigr)^{\perp}}\right)
\\
& = & 
{\det}_{\caln(H)}\left(\left.\bigl(f^* \circ f \bigr)\right|_{\ker(f^*f)^{\perp}} \circ m^2\id_{\ker(f^*f)^{\perp}}\right)
\\
& = & 
{\det}_{\caln(H)}\left(\left.\bigl(f^* \circ f \bigr)\right|_{\ker(f^*f)^{\perp}}\right) 
\cdot  {\det}_{\caln(H)}\left(m^2\id_{\ker(f^*f)^{\perp}}\right)
\\
& = & 
{\det}_{\caln(H)}\bigl(f^* \circ f \bigr)
\cdot  m^{2 \cdot \dim_{\caln(H)}\ker(f^*f)^{\perp}}
\\
& = & 
{\det}_{\caln(H)}(f)^2 \cdot m^{2r-2\dim_{\caln(H)}(\ker(f^*f))}
\\
& = & 
\left({\det}_{\caln(H)}(f) \cdot  m^{r-\dim_{\caln(H)}(\ker(f))}\right)^2.
\end{eqnarray*}
Thus we have shown
\begin{eqnarray}
{\det}_{\caln(H)}\bigl(f \circ m\id_{L^2(H)^r}\bigr)
& = & 
{\det}_{\caln(H)}\bigl(f) \cdot m^{r - \dim_{\caln(H)}(\ker(f))}.
\label{nice_estimate_for_morphisms}
\end{eqnarray}

Let $m\ge 1 $ be an integer such that $mI_r\cdot A$ belongs
to $M_{r,s}(\IZ G)$, where $m I_r$ is obtained from the identity matrix by
multiplying all entries with $m$.  If we apply~\eqref{nice_estimate_for_morphisms} 
in the case $H = G$ and $H = G/G_i$ to
$f = r_A^{(2)}$ and $f = r_{A[i]}^{(2)}$, we obtain
\begin{eqnarray*}
{\det}_{\caln(G)}\bigl(r_{mI_r\cdot A}^{(2)}\bigr) 
& = & 
{\det}_{\caln(G)}\bigl(r_{A}^{(2)}\bigr)  \cdot m^{r-\dim_{\caln(G)}(\ker(r_A^{(2)}))};
\\
{\det}_{\caln(G/G_i)}\bigl(r_{mI_r\cdot A[i]}^{(2)}\bigr) 
& = & 
{\det}_{\caln(G/G_i)}\bigl(r_{A[i]}^{(2)}\bigr)  \cdot m^{r -\dim_{\caln(G/G_i)}(\ker(r_{A[i]}^{(2)}))}.
\end{eqnarray*}
Since ${\det}_{\caln(G)}\bigl(r_{mI_r\cdot A}^{(2)}\bigr)  \ge 1$ follows from~\eqref{claim_to_prove}
and the assumption that we have ${\det}_{\caln(G/G_i)}\bigl(r_{mI_r\cdot A[i]}^{(2)}\bigr)  \ge 1$ for $i \in I$,
we get ${\det}_{\caln(G)}\bigl(r_{A}^{(2)}\bigr) > 0$. We conclude
\begin{multline}
\frac{{\det}_{\caln(G/G_i)}\bigl(r_{A[i]}^{(2)}\bigr)}{{\det}_{\caln(G)}\bigl(r_{A}^{(2)}\bigr)}
\\ = 
\frac{{\det}_{\caln(G/G_i)}\bigl(r_{mI_rA[i]}^{(2)}\bigr)}{{\det}_{\caln(G)}\bigl(r_{mI_rA}^{(2)}\bigr)}
\cdot
m^{|\dim_{\caln(G)}(\ker(r_{A}^{(2)})) -  \dim_{\caln(G/G_i)}(\ker(r_{A[i]}^{(2)}))|}.
\label{conclusion_I}
\end{multline}
We derive
\begin{eqnarray}
\lim_{i \in I} \; \left(\dim_{\caln(G)}\bigl(\ker\bigl(r_{A}^{(2)}\bigr)\bigr) - 
\dim_{\caln(G/G_i)}\bigl(\ker\bigl(r_{A[i]}^{(2)}\bigr)\bigr)\right) 
& = & 0
\label{conclusion_II}
\end{eqnarray}
from Theorem~\ref{the:The_Determinant_Conjecture_implies_the_Approximation_Conjecture_for_L2-Betti_numbers}.
Since~\eqref{claim_to_prove} holds for $mI_rA$, it holds also for $A$
by~\eqref{conclusion_I} and~\eqref{conclusion_II}. This finishes the proof of
Theorem~\ref{the:inequality_det_det}.
\end{proof}

Next we give the proof of Theorem~\ref{the:invertible_matrices_over_l1(G)}. 

\begin{proof}[Proof of Theorem~\ref{the:invertible_matrices_over_l1(G)}] 
We will apply Theorem~\ref{the:approxi_for_spectral_density} to the following
special situation:
\begin{itemize}
\item $B = A^*A$;
\item $Q_i = G/G_i$;
\item $B[i] = A[i]^*A[i]$;
\item$\tr$ is the von Neumann trace $\tr_{\caln(G)} \colon \caln(G) \to \IC$;
\item$\tr_i$ is the von Neumann trace $\tr_{\caln(G/G_i)} \colon \caln(G/G_i) \to \IC$.
\end{itemize}
We have to check that the conditions of
Theorem~\ref{the:approxi_for_spectral_density}~\eqref{the:approxi_for_spectral_density:equality} are satisfied.  
Condition~\eqref{the:approxi_for_spectral_density:traces_and_polynomials}
appearing in Theorem~\ref{the:approxi_for_spectral_density} follows
from Lemma~\ref{lem_trace_and_limit}.
Condition~\eqref{the:approxi_for_spectral_density:positivity} appearing in
Theorem~\ref{the:approxi_for_spectral_density} is satisfied because of $B =
A^*A$ and $B[i]=A[i]^*A[i]$. 

Let $B^{-1}$ be the inverse of $B$ in $\GL_d(L^1(G))$.  Put $K:= \max\{K^G(B),K^{G/G_i}(B[i])\}$. 
Since the projection $L^1(G) \to L^1(G/G_i)$ has operator
norm at most $1$, we get for the numbers $K^G(B)$ and $K^G(B^{-1})$  defined 
in~\eqref{KG(B)} and all $i \in I$
\begin{eqnarray*}
K^{G/G_i}(B[i]) & \le & K;
\\
K^{G/G_i}(B[i]^{-1}) & \le & K.
\end{eqnarray*}
hold.  We conclude $||r_{B[i]^{-1}}^{(2)}|| \le K$ and $||r_{B[i]^{-1}}^{(2)}|| \le K$ for all $i \in I$  from Lemma~\ref{lem_KG(B)-estimate}.
In particular condition~\eqref{the:approxi_for_spectral_density:uniform_bound_K} appearing in
Theorem~\ref{the:approxi_for_spectral_density} is satisfied.
Since $r_{B[i]^{-1}}^{(2)}$ is the inverse of $r_{B[i]}^{(2)}$, we conclude 
from~\cite[Lemma~2.13~(2) on page~78, Theorem~3.14~(1) on page~128 and Lemma~3.15~(6) on page~129]{Lueck(2002)}
\begin{eqnarray}
  F[i](\lambda) & = & 0 \quad \text{for all} \; \lambda < K^{-1}\;\text{and}\;  i \in I;
  \label{gap_at_zero}
  \\
   {\det}_{\caln(G/G_i)}^{(2)}\bigl(r^{(2)}_{B[i]}\bigr) & \ge & d \cdot \ln(K) \quad \text{for all}\;   i \in I.
   \label{lower_bound_for_det}
\end{eqnarray}
Hence condition~\eqref{the:approxi_for_spectral_density:det} is satisfied if we take 
$\kappa :=  d \cdot \ln(K)$.
We conclude from~\eqref{gap_at_zero}
that also condition~\eqref{the:approxi_for_spectral_density:integrability} is satisfied.
We conclude from Theorem~\ref{the:approxi_for_spectral_density}~\eqref{the:approxi_for_spectral_density:equality}
\[
{\det}_{\caln(G)}^{(2)}\bigl(r_B^{(2)} \colon L^2(G)^d \to L^2(G)^d\bigr) 
= \lim_{i \in I} \;{\det}_{\caln(G/G_i)}^{2}\bigl(r_{B[i]}^{(2)}\colon L^2(G/G_i)^d \to L^2(G/G_i)^d\bigr).
\]
Since we get from~\cite[Lemma~3.15~(4) on page~129]{Lueck(2002)}
\begin{eqnarray*} {\det}_{\caln(G)}^{(2)}\bigl(r_A^{(2)}\bigr) & = &
  \sqrt{{\det}_{\caln(G)}^{(2)}\bigl(r_B^{(2)}\bigr)};
  \\
  {\det}_{\caln(G/G_i)}^{(2)}\bigl(r_{A[i]}^{(2)}\bigr) & = &
  \sqrt{{\det}_{\caln(G/G_i)}^{(2)}\bigl(r_{B[i]}^{(2)}\bigr)},
\end{eqnarray*}
Theorem~\ref{the:invertible_matrices_over_l1(G)} follows.
\end{proof}

Next we prove Corollary~\ref{cor:l1-chain_equivalence}
\begin{proof}[Proof of
  Corollary~\ref{cor:l1-chain_equivalence}]~\eqref{cor:l1-chain_equivalence:acyclic}
  Since $C_*$ is acyclic over $L^1(G)$ and finitely generated free, we
  can choose an $L^1(G)$-chain contraction $\gamma \colon C_* \to C_{*+1}$. 
  Then $(c+ \gamma)_{\odd} \colon C_{\odd} \xrightarrow{\cong} C_{\ev}$ 
  is an isomorphism of finitely generated
  based free $L^1(G)$-modules. It induces an isomorphism of
  finitely generated Hilbert $\caln(G)$-modules 
  $(c+  \gamma)_{\odd}^{(2)} \colon C_{\odd}^{(2)} \xrightarrow{\cong} C_{\ev}^{(2)}$. 
  We conclude from~\cite[Lemma~3.41 on page~146]{Lueck(2002)}
    \[
    \rho^{(2)}\bigl(C_*^{(2)}\bigr) := \ln\left({\det}_{\caln(G)}^{(2)}\bigl((c+
      \gamma)_{\odd}^{(2)} \colon C_{\odd}^{(2)} \xrightarrow{\cong}
        C_{\ev}^{(2)}\bigr)\right).
      \]
  Analogously we prove for each $i \in I$
  \[
    \rho^{(2)}\bigl(C[i]_*^{(2)}\bigr) := \ln\left({\det}_{\caln(G/G_i)}^{(2)}\bigl((c[i]+
      \gamma[i])_{\odd}^{(2)} \colon C[i]_{\odd}^{(2)} \xrightarrow{\cong}
        C[i]_{\ev}^{(2)}\bigr)\right).
      \]
Now assertion~\eqref{cor:l1-chain_equivalence:acyclic} follows from Theorem~\ref{the:invertible_matrices_over_l1(G)}.
      \\[1mm]~\eqref{cor:l1-chain_equivalence:equivalence} We begin with the case of an isomorphism
     $f_* \colon C_* \xrightarrow{\cong} D_*$ of finitely generated based free $L^1(G)$-chain complexes. 
     We conclude from~\cite[Lemma~3.41 on page~146]{Lueck(2002)} for all $i \in I$
     \begin{eqnarray*}
     \rho^{(2)}\bigl(D_*^{(2)}\bigr) - \rho^{(2)}\bigl(C_*^{(2)}\bigr) & = & \sum_{p \ge 0} (-1)^p \cdot \ln\bigl({\det}_{\caln(G)}^{(2)}(f_p^{(2)})\bigr);
     \\
     \rho^{(2)}\bigl(D[i]_*^{(2)}\bigr) - \rho^{(2)}\bigl(C[i]_*^{(2)}\bigr) & = & \sum_{p \ge 0} (-1)^p \cdot \ln\bigl({\det}_{\caln(G/G_i)}^{(2)}(f[i]_p^{(2)})\bigr).
     \end{eqnarray*}
     Now the claim follows in this special case from Theorem~\ref{the:invertible_matrices_over_l1(G)}.

    Finally we consider an $L^1(G)$-chain homotopy equivalence   $f_* \colon C_* \xrightarrow{\simeq} D_*$.
    Let $\cyl(f_*)$ be its mapping cylinder and $\cone(f_*)$ be its mapping cone.
    Let $\cone(C_*)$ be the mapping cone of $C_*$. We obtain based exact sequences of $L^1(G)$-chain complexes
    $$0 \to C_* \to \cyl(f_*) \to \cone(f_*) \to 0$$
    and
    $$0 \to D_* \to \cyl(f_*) \to \cone(C_*) \to 0.$$
     Since $f_*$ is a $L^1(G)$-chain homotopy equivalence, $\cone(f_*)$ is contractible.
    Since $\cone(C_*)$ is contractible, we can find isomorphisms of $L^1(G)$-chain complexes
    (cf.~\cite[Lemma~3.42 on page~148]{Lueck(2002)})
    \begin{eqnarray*}
    u_* \colon C_* \oplus \cone(f_*) & \xrightarrow{\cong} &\cyl(f_*);
    \\
    v_* \colon D_* \oplus \cone(C_*) & \xrightarrow{\cong} & \cyl(f_*).
   \end{eqnarray*}
   Since we have already treated the case of a chain isomorphism, we conclude
   \begin{multline*}
   \rho^{(2)}\left(\bigl(C_* \oplus \cone(f_*)\bigr)^{(2)}\right) - \rho^{(2)}\bigl(\cyl(f_*)^{(2)}\bigr)
   \\ = \;
   \lim_{i \in I} \rho^{(2)}\left(\bigl(C[i]_* \oplus \cone(f[i]_*)\bigr)^{(2)}\right) - \rho^{(2)}\bigl(\cyl(f[i]_*)^{(2)}\bigr);
  \end{multline*}
  and 
   \begin{multline*}
 \rho^{(2)}\left(\bigl(D_* \oplus \cone(C_*)\bigr)^{(2)}\right) - \rho^{(2)}\bigl(\cyl(f_*)^{(2)}\bigr)
   \\ =  \;
   \lim_{i \in I} \rho^{(2)}\left(\bigl(D[i]_* \oplus \cone(C[i]_*)\bigr)^{(2)}\right) - \rho^{(2)}\bigl(\cyl(f[i]_*)^{(2)}\bigr).
 \end{multline*}
This implies
  \begin{multline*}
   \rho^{(2)}\bigl(C_*^{(2)}\bigr) + \rho^{(2)}\bigl(\cone(f_*^{(2)})\bigr) - \rho^{(2)}\bigl(D_*^{(2)}\bigr) - \rho^{(2)}\bigl(\cone(C_*)^{(2)}\bigr) 
   \\
   = \;
   \lim_{i \in I} \left(\rho^{(2)}\bigl(C[i]_*^{(2)}\bigr) + \rho^{(2)}\bigl(\cone(f[i]_*)^{(2)}\bigr) 
   - \rho^{(2)}\bigl(D[i]_*^{(2)}\bigr) - \rho^{(2)}\bigl(\cone(C[i]_*)^{(2)}\bigr)\right).
\end{multline*}
We conclude from assertion~\eqref{cor:l1-chain_equivalence:acyclic} 
\begin{eqnarray*}
\rho^{(2)}\bigl(\cone(f_*^{(2)})\bigr) & = &   \lim_{i \in I} \rho^{(2)}\bigl(\cone(f[i]_*)^{(2)}\bigr);
\\
\rho^{(2)}\bigl(\cone(C_*^{(2)})\bigr) & = &   \lim_{i \in I} \rho^{(2)}\bigl(\cone(C[i]_*)^{(2)}\bigr).
\end{eqnarray*}
This implies
\begin{eqnarray*}
   \rho^{(2)}\bigl(C_*^{(2)}\bigr)  - \rho^{(2)}\bigl(D_*^{(2)}\bigr) 
   & = & 
   \lim_{i \in I} \left(\rho^{(2)}\bigl(C[i]_*^{(2)}\bigr)  - \rho^{(2)}\bigl(D[i]_*^{(2)}\bigr) \right).
\end{eqnarray*}
This finishes the proof of  Corollary~\ref{cor:l1-chain_equivalence}.
\end{proof}


\typeout{-------   Section 18:  Proof of some the Theorem about_approximating_by_Z-torsion -------------}

\section{Proof of Theorem~\ref{the:about_approximating_by_IZ-torsion}}
\label{sec:Proof_of_Theorem_ref(the:about_approximating_by_IZ-torsion)}

Next we want to prove Theorem~\ref{the:about_approximating_by_IZ-torsion}. First
we deal with homotopy invariance  and with the relationship
between $L^2$-torsion and integral torsion.

\begin{lemma}
  \label{lem:homotopy_invariance}
  Let $G$ be a group for which the Determinant
  Conjecture~\ref{con:Determinant_Conjecture} is true.  Let $f_* \colon D_* \to
  E_*$ be a $\IZ G$-chain homotopy equivalence equivalence of finite based free
  $\IZ G$-chain complexes.  Suppose that $D_*^{(2)}$ or $E_*^{(2)}$ is
  $L^2$-acyclic. Then both $D_*^{(2)}$ and $E_*^{(2)}$ are $L^2$-acyclic and
\[\rho^{(2)}\bigl(D_*^{(2)}\bigr) = \rho^{(2)}\bigl(E_*^{(2)}\bigr).
\]
\end{lemma}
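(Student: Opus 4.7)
The plan is to reduce the problem to showing that the $L^2$-torsion of any finite based free contractible $\IZ G$-chain complex vanishes, and then to invoke the Determinant Conjecture twice — once on a representing matrix and once on its inverse.

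First I would dispose of the $L^2$-acyclicity claim: since $f_*$ is a $\IZ G$-chain homotopy equivalence, the induced map $f_*^{(2)} := f_* \otimes_{\IZ G} \id_{L^2(G)} \colon D_*^{(2)} \to E_*^{(2)}$ is a chain homotopy equivalence of finite Hilbert $\caln(G)$-chain complexes. Consequently it induces isomorphisms $H_p^{(2)}(D_*^{(2)}) \xrightarrow{\cong} H_p^{(2)}(E_*^{(2)})$ of Hilbert $\caln(G)$-modules, so both complexes have the same $L^2$-Betti numbers in every degree, and $L^2$-acyclicity of one is equivalent to that of the other. In particular both torsions $\rho^{(2)}(D_*^{(2)})$ and $\rho^{(2)}(E_*^{(2)})$ are defined in the situation of the lemma.

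Next I would form the mapping cone $C_* := \cone(f_*)$, a finite based free $\IZ G$-chain complex which is $\IZ G$-contractible because $f_*$ is a chain homotopy equivalence. The canonical based short exact sequence $0 \to E_* \to \cone(f_*) \to \Sigma D_* \to 0$ consists of $L^2$-acyclic complexes, and the additivity of $L^2$-torsion on based exact sequences of $L^2$-acyclic finite Hilbert chain complexes (\cite[Theorem~3.35~(1) on page~142]{Lueck(2002)}), together with $\rho^{(2)}(\Sigma D_*^{(2)}) = -\rho^{(2)}(D_*^{(2)})$, yields
\[
\rho^{(2)}\bigl(\cone(f_*)^{(2)}\bigr) \;=\; \rho^{(2)}\bigl(E_*^{(2)}\bigr) - \rho^{(2)}\bigl(D_*^{(2)}\bigr).
\]
Thus it suffices to show that $\rho^{(2)}(C_*^{(2)}) = 0$ for any finite based free $\IZ G$-chain complex $C_*$ that is contractible as a $\IZ G$-chain complex.

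To prove this final vanishing, I would choose a $\IZ G$-chain contraction $\gamma \colon C_* \to C_{*+1}$ and consider the $\IZ G$-homomorphism $A := (c + \gamma)_{\odd} \colon C_{\odd} \to C_{\ev}$, where $C_{\odd} = \bigoplus_n C_{2n+1}$ and $C_{\ev} = \bigoplus_n C_{2n}$. Standard arguments (cf.~\cite[Lemma~3.41 on page~146]{Lueck(2002)}) give that $A$ is a $\IZ G$-isomorphism between free $\IZ G$-modules of equal rank $r$, that its inverse is also represented by a matrix in $M_r(\IZ G)$, and that
\[
\rho^{(2)}\bigl(C_*^{(2)}\bigr) \;=\; \ln\Bigl({\det}^{(2)}_{\caln(G)}\bigl(r_A^{(2)}\bigr)\Bigr).
\]
Since $A$ and $A^{-1}$ both lie in $M_r(\IZ G)$, the Determinant Conjecture~\ref{con:Determinant_Conjecture} applied to each yields ${\det}^{(2)}_{\caln(G)}(r_A^{(2)}) \ge 1$ and ${\det}^{(2)}_{\caln(G)}(r_{A^{-1}}^{(2)}) \ge 1$. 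Multiplicativity of the Fuglede-Kadison determinant on invertible operators (\cite[Theorem~3.14~(1) on page~128]{Lueck(2002)}) forces both determinants to equal $1$, hence $\rho^{(2)}(C_*^{(2)}) = 0$, completing the proof.

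The steps are all essentially routine given the machinery already in place; the principal point where care is required is the book-keeping for the based additivity formula along the mapping cone sequence, ensuring that the bases chosen on $\cone(f_*)$ are precisely the direct-sum bases inherited from $D_*$ and $E_*$ so that no correction term from the torsion of the short exact sequence itself appears. Once this compatibility is respected, the two applications of the Determinant Conjecture — one to $A$ and one to $A^{-1}$ — close the argument.
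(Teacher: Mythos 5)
Your proposal is correct, and it is in substance the same argument the paper invokes by citing \cite[Theorem~3.93~(1) on page~161 and Lemma~13.6 on page~456]{Lueck(2002)}: the cited theorem reduces the difference of $L^2$-torsions to the Fuglede--Kadison determinant of the Whitehead torsion of $f_*$, and the cited lemma shows the Determinant Conjecture forces that determinant to equal $1$ (precisely by the two-sided application to $A$ and $A^{-1}$ that you spell out). You have simply unwound those citations into a self-contained argument via the mapping cone, the chain contraction, and the invertibility of $(c+\gamma)_{\odd}$ over $\IZ G$ (which holds because $(c+\gamma)^2 = 1 + \gamma^2$ with $\gamma^2$ nilpotent), together with the observation that the canonical direct-sum bases on $\cone(f_*)$ make the short exact sequence based so that no extra torsion term appears. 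All the steps check out.
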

\begin{proof}
  This follows from~\cite[Theorem~3.93~(1) on page~161 and Lemma~13.6 on
  page~456]{Lueck(2002)}.
\end{proof}

\begin{notation}\label{not:closure_of_submodule}
  Let $A$ be a finitely generated free abelian group and let $B \subseteq A$ be a
  subgroup.  Define the \emph{closure} of $B$ in $A$ to be the subgroup
  \begin{eqnarray*}
    \overline{B}  
    & = & \{x \in A \mid n\cdot x \in B \; \text{for some non-zero integer}\; n\}.
  \end{eqnarray*}
\end{notation}

Notice that $A/\overline{B}$ and $M_f:= M/\tors(M)$ are finitely generated free and we have
$\overline{\ker(f)} = \ker(f)$ for a homomorphism $f \colon A_0 \to A_1$ of
finitely generated free abelian groups. The proof of the next result 
can be found in~\cite[Lemma~2.11]{Lueck(2013l2approxfib)}.

\begin{lemma} \label{lem:Fuglede-Kadison_and_tors_for_G_is_trivial} 
  Let $u  \colon \IZ^r \to \IZ^s$ be a homomorphism of abelian groups. Let 
   $j \colon \ker(u) \to \IZ^r$ be the inclusion
   and $\pr \colon \IZ^s \to \coker(u)_f$ be the canonical
  projection.  Choose $\IZ$-basis for $\ker(u)$ and $\coker(u)_f$.

  Then ${\det}_{\caln(\{1\})}\bigl(j^{(2)}\bigr)$ and
  ${\det}_{\caln(\{1\})}\bigl(\pr^{(2)}\bigr)$ are independent of
  the choice of the $\IZ$-basis for $\ker(u)$ and $\coker(u)_f$, and we have
  \begin{eqnarray*}
    {\det}_{\caln(\{1\})}(u^{(2)}) & = &
    {\det}_{\caln(\{1\})}\bigl(j^{(2)}\bigr) \cdot
    \bigl|\tors(\coker(u))\bigr|
    \cdot {\det}_{\caln(\{1\})}\bigl(\pr^{(2)}\bigr);
  \end{eqnarray*}
  and
  \[
   \begin{array}{lclcl}
    1 & \le & {\det}_{\caln(\{1\})}(j^{(2)}) & \le & {\det}_{\caln(\{1\})}(u^{(2)});
    \\
    1 & \le &  {\det}_{\caln(\{1\})}\bigl(\pr^{(2)}\bigr) & \le & {\det}_{\caln(\{1\})}(u^{(2)});
    \\
    1 & \le & \bigl|\tors(\coker(u))\bigr| & \le & {\det}_{\caln(\{1\})}(u^{(2)}).
  \end{array}
  \]
  \end{lemma}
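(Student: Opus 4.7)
The strategy is to reduce to a Smith-like normal form using the primitivity of $\ker(u)$ and $\overline{\im(u)}$, and then compute each Fuglede-Kadison determinant directly via Gram-determinants using orthogonal decompositions of $\IC^r$ and $\IC^s$.

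First I would handle independence under change of $\IZ$-basis. Any two $\IZ$-bases of $\ker(u)$ differ by a matrix $P \in \GL_{r-k}(\IZ)$, where $k := \rk(\im u)$, so the new map is $j^{(2)}\circ P^{(2)}$, whose Gram determinant equals $|\det P|^2 \cdot \det((j^{(2)})^* j^{(2)}) = \det((j^{(2)})^* j^{(2)})$; the analogous argument applies to $\pr^{(2)}$. By this invariance it suffices to verify the main identity for one convenient choice of bases. Since $\ker(u)$ and $\overline{\im(u)}$ are primitive sublattices (hence direct summands) of $\IZ^r$ and $\IZ^s$, one can extend the prescribed $\IZ$-bases to bases $\{a_1,\ldots,a_r\}$ of $\IZ^r$ with $\{a_{k+1},\ldots,a_r\}$ spanning $\ker(u)$, and $\{b_1,\ldots,b_s\}$ of $\IZ^s$ with $\{b_1,\ldots,b_k\}$ spanning $\overline{\im(u)}$; take as basis of $\coker(u)_f$ the images $\{\overline{b_{k+1}},\ldots,\overline{b_s}\}$. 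In these bases, $u$ has block matrix $\begin{pmatrix} U & 0 \\ 0 & 0 \end{pmatrix}$ with $U \in M_k(\IZ)$ nonsingular, and the identification $\overline{\im(u)}/\im(u) \cong \IZ^k/U\IZ^k$ gives $|\det U| = |\tors(\coker u)|$.

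Next I would compute each determinant via the orthogonal decompositions $\IC^r = W \oplus W'$ with $W = \ker(u^{(2)}) = \ker(u)\otimes_{\IZ}\IC$, and $\IC^s = V \oplus V'$ with $V = \overline{\im(u)}\otimes_{\IZ}\IC$. Let $A \in \GL_r(\IZ)$ and $B \in \GL_s(\IZ)$ have the $a_i$ and $b_j$ as columns; split $A = (A_1|A_2)$ and $B = (B_1|B_2)$ after column $k$. Then $u^{(2)}$ restricts to an isomorphism $\tilde u \colon W' \to V$ with $\det(u^{(2)}) = \det(\tilde u)$. Direct computation gives $\det(j^{(2)})^2 = \det(A_2^* A_2)$. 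For $\pr^{(2)}$, its restriction $\pr^{(2)}|_{V'}$ sends the $V'$-basis $B_2' := B_2 - P_V B_2$ (with $P_V$ the orthogonal projection onto $V$) to the standard basis of $\IC^{s-k}$; combined with the Schur complement identity $\det(B^* B) = \det(B_1^* B_1)\cdot \det((B_2')^* B_2') = 1$, this yields $\det(\pr^{(2)})^2 = \det(B_1^* B_1)$. Similarly, $A_1' := A_1 - P_W A_1$ is a basis of $W'$ with $\det((A_1')^* A_1') = 1/\det(A_2^* A_2)$. From the block form, $u^{(2)}(a_i) = B_1\cdot U_{:,i}$ for $i \le k$, and since $P_W A_1 \subseteq \ker(u^{(2)})$, $\tilde u(A_1') = B_1 U$ inside $V$. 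Expressing $A_1'$ and $B_1$ in orthonormal bases of $W'$ and $V$ and taking determinants of the resulting $k\times k$ matrix equation yields
\[
\det(\tilde u) = \sqrt{\det(B_1^* B_1)}\cdot |\det U|\cdot \sqrt{\det(A_2^* A_2)} = \det(\pr^{(2)})\cdot |\tors(\coker u)|\cdot \det(j^{(2)}).
\]

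For the lower bounds, $|\tors(\coker u)| \ge 1$ trivially. By the Cauchy--Binet formula, $\det(A_2^* A_2) = \sum_S |\det A_2[S,:]|^2$ summed over $(r-k)$-element subsets $S$ of the rows; these are squared integer minors, and at least one is nonzero since $A_2$ has full rank, so $\det(j^{(2)}) \ge 1$. The same argument applied to $B_1$ gives $\det(\pr^{(2)}) \ge 1$. The three remaining inequalities follow by combining the main identity with these lower bounds. The main technical obstacle is the identification $\det(\pr^{(2)})^2 = \det(B_1^* B_1)$: because $\pr^{(2)}$ is surjective rather than injective, one must carefully analyze its action on the (non-orthonormal) $V'$-basis $B_2'$ and then use the Schur complement identity to re-express the resulting Gram determinant in terms of $B_1$.
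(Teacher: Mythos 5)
Your proof is correct, and it is self-contained. The paper itself does not give a proof of this lemma (it only cites Lemma~2.11 of L\"uck's 2013 \emph{GAFA} paper), so the comparison is necessarily with that external source rather than with this text. Your route -- exploit the primitivity of $\ker(u)$ and $\overline{\im(u)}$ to extend the prescribed integral bases to unimodular bases of $\IZ^r$ and $\IZ^s$, put $u$ in block form $\left(\begin{smallmatrix} U & 0 \\ 0 & 0 \end{smallmatrix}\right)$ with $|\det U| = |\tors(\coker u)|$, identify $\det'(j^{(2)})^2 = \det(A_2^*A_2)$ and $\det'(\pr^{(2)})^2 = \det(B_1^*B_1)$ via the Schur complement and unimodularity of $A,B$, assemble the product identity through the effective isomorphism $\tilde u\colon W'\to V$, and obtain the lower bounds from Cauchy--Binet applied to integral matrices of full rank -- is the same underlying computation as in L\"uck's source, just organized a bit more explicitly in terms of Gram matrices. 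Two small points worth recording in a final write-up: (i) when you invoke basis-independence for $\pr^{(2)}$, either argue via $\det'(f)=\det'(f^*)$ or via the multiplicativity rule for $\det'$ under left composition with an isomorphism, since the target Hilbert structure, not the map, is what changes; (ii) make explicit that the additional choice of an integral basis $B_1$ of $\overline{\im(u)}$ and of integral lifts $B_2$ of the given basis of $\coker(u)_f$ does not affect $\det(B_1^*B_1)$, again by unimodularity of the base change. With those two remarks added, the argument is complete.
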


The point of the next lemma is that the chain complexes live over $\IZ G$ but the chain homotopy
equivalence has only to exist over $\IQ G$.

\begin{lemma} \label{lem:invariance_of_homological_conjecture_under_Q-homotopy_equivalence}
Let $C_*$ and $D_*$ be two finite free $\IZ G$-chain complexes. Suppose
that $C_* \otimes_{\IZ} \IQ$ and $D_* \otimes_{\IZ} \IQ$ are $\IQ G$-chain homotopy
equivalent and that $C_*^{(2)}$ is $L^2$-acyclic. Then $D_*^{(2)}$ is $L^2$-acyclic and
\begin{eqnarray*}
\rho^{(2)}\bigl(D_*^{(2)}\bigr) - \rho^{(2)}\bigl(C_*^{(2)}\bigr)
& = & 
\lim_{i \in I} \;\frac{\rho^{\IZ}\bigl(D[i]_*\bigr) - \rho^{\IZ}\bigl(C[i]_*\bigr)}{[G:G_i]}.
\end{eqnarray*}
\end{lemma}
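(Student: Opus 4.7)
The plan is to reduce the statement to Theorem~\ref{the:about_approximating_by_IZ-torsion}~\eqref{the:about_approximating_by_IZ-torsion:acyclic_in_each_degree} applied to the mapping cone of an integral lift of the given $\IQ G$-chain homotopy equivalence. First, I would verify that $D_*^{(2)}$ is $L^2$-acyclic: tensoring the hypothesized equivalence $C_* \otimes_\IZ \IQ \simeq D_* \otimes_\IZ \IQ$ with $L^2(G)$ over $\IQ G$ yields a chain homotopy equivalence $C_*^{(2)} \simeq D_*^{(2)}$ of finite Hilbert $\caln(G)$-chain complexes. Second, I would clear denominators: fix a $\IQ G$-chain homotopy equivalence $g_* \colon C_* \otimes \IQ \to D_* \otimes \IQ$ with $\IQ G$-chain homotopy inverse $u_*$, and choose positive integers $N_g, N_u$ so that $f_* := N_g g_*$ and $h_* := N_u u_*$ have entries in $\IZ G$. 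This yields $\IZ G$-chain maps $f_* \colon C_* \to D_*$ and $h_* \colon D_* \to C_*$ whose rationalizations remain $\IQ G$-chain homotopy equivalences, and in particular $h_* \circ f_*$ is $\IQ G$-chain homotopic to $N_g N_u \cdot \id_{C_*}$.

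Next, set $E_* := \cone(f_*)$. Since its rationalization $E_* \otimes \IQ = \cone(f_* \otimes \IQ)$ is contractible, $H_p(E_*) \otimes_\IZ \IQ = 0$ for every $p$, so Theorem~\ref{the:about_approximating_by_IZ-torsion}~\eqref{the:about_approximating_by_IZ-torsion:acyclic_in_each_degree} applies and gives
\[
\rho^{(2)}\bigl(E_*^{(2)}; \caln(G)\bigr) = \lim_{i \in I} \frac{\rho^{\IZ}(E[i]_*)}{[G:G_i]}.
\]
The based short exact sequence $0 \to D_* \to E_* \to \Sigma C_* \to 0$ of $\IZ G$-chain complexes tensors up to an exact sequence of $L^2$-acyclic Hilbert $\caln(G)$-chain complexes, and the standard additivity of $L^2$-torsion in this $L^2$-acyclic setting yields $\rho^{(2)}(E_*^{(2)};\caln(G)) = \rho^{(2)}(D_*^{(2)};\caln(G)) - \rho^{(2)}(C_*^{(2)};\caln(G))$. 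The whole problem therefore reduces to showing
\[
\lim_{i \in I} \frac{\rho^{\IZ}(D[i]_*) - \rho^{\IZ}(C[i]_*) - \rho^{\IZ}(E[i]_*)}{[G:G_i]} = 0.
\]

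The hard part will be this last identity, which I would establish by analyzing integral torsion in $\cone(f[i]_*) = E[i]_*$ at each finite level. Since $f_* \otimes \IQ$ remains a chain homotopy equivalence after further tensoring with $\IZ[G/G_i]$, each induced map $f[i]_{n*}\colon H_n(C[i]_*) \to H_n(D[i]_*)$ has finite kernel $K_n^{[i]}$ and finite cokernel $Q_n^{[i]}$, and the long exact sequence of the cone gives short exact sequences $0 \to Q_n^{[i]} \to H_n(E[i]_*) \to K_{n-1}^{[i]} \to 0$. Splitting the four-term sequence $0 \to K_n^{[i]} \to H_n(C[i]_*) \to H_n(D[i]_*) \to Q_n^{[i]} \to 0$ at the image and running snake-lemma torsion-versus-free bookkeeping on each half produces the identity
\[
\ln|\tors H_n(D[i]_*)| - \ln|\tors H_n(C[i]_*)| = \ln|Q_n^{[i]}| - \ln|K_n^{[i]}| - \ln\bigl|\det(f[i]_{n*}|_f)\bigr|,
\]
where $f[i]_{n*}|_f$ denotes the map induced on free quotients. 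Summing with alternating signs collapses to
\[
\rho^{\IZ}(D[i]_*) - \rho^{\IZ}(C[i]_*) - \rho^{\IZ}(E[i]_*) = -\sum_{n \ge 0} (-1)^n \ln \bigl|\det(f[i]_{n*}|_f)\bigr|.
\]
To control the right-hand side I would exploit that $(h[i]_{n*}|_f) \circ (f[i]_{n*}|_f) = N_g N_u \cdot \id$ on the free abelian group $H_n(C[i]_*)_f$ of rank $r_n[i] := b_n(C[i]_*;\IQ)$, so $|\det(f[i]_{n*}|_f)|$ is a positive integer dividing $(N_g N_u)^{r_n[i]}$, which forces $0 \le \ln|\det(f[i]_{n*}|_f)| \le r_n[i] \ln(N_g N_u)$. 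Since $C_*^{(2)}$ is $L^2$-acyclic, Theorem~\ref{the:approx_Betti_char_zero} gives $r_n[i]/[G:G_i] \to b_n^{(2)}(C_*^{(2)}) = 0$, so each term of the alternating sum is $o([G:G_i])$ and the desired limit vanishes.
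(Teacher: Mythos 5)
Your proposal is correct and follows essentially the same strategy as the paper's proof: clear denominators to obtain an integral chain map $f_*$ that is a $\IQ G$-chain homotopy equivalence rationally; apply the rationally-acyclic result (assertion~\eqref{the:about_approximating_by_IZ-torsion:acyclic_in_each_degree} of Theorem~\ref{the:about_approximating_by_IZ-torsion}, which the paper re-derives directly from Corollary~\ref{cor:l1-chain_equivalence} and Lemma~\ref{lem:rho(2)-rhoZ}) to the mapping cone; and bound the resulting error term by a quantity exponential in $\rk_\IZ H_n(C[i]_*)$, which is $o([G:G_i])$ by $L^2$-acyclicity and the approximation theorem (the paper cites~\cite[Theorem~0.1]{Lueck(1994c)}; your citation to Theorem~\ref{the:approx_Betti_char_zero} is the $CW$-version of the same statement, which applies after realizing $C_*$ as a cellular chain complex).

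Where you differ from the paper is in the bookkeeping of the error term, and your version is arguably tidier. The paper passes through the mapping cylinder $\cyl(f_*)$ (as a $\IZ G$-chain-homotopic replacement for $D_*$), uses the exact sequence $0 \to C_* \to \cyl(f_*) \to \cone(f_*) \to 0$, clears denominators a second time to get an integer $m$ with a $\IZ G$-chain homotopy from $m\cdot\id_{\cone(f_*)}$ to $0$, and bounds $\bigl|\coker\bigl(H_p(C[i]_*)_f \to H_p(\cyl(f_*)[i])_f\bigr)\bigr|$ by $m^{\rk_\IZ H_p(\cyl(f_*)[i])}$ via $m$-annihilation and an epimorphism from $(\IZ/m)^{\rk}$. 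You instead work directly with $D_*$, use the short exact sequence $0 \to D_* \to \cone(f_*) \to \Sigma C_* \to 0$, clear denominators on both $g_*$ and its homotopy inverse $u_*$, and derive the clean closed formula $\rho^\IZ(D[i]_*) - \rho^\IZ(C[i]_*) - \rho^\IZ(E[i]_*) = -\sum_n (-1)^n \ln\bigl|\det(f[i]_{n*}|_f)\bigr|$, then bound via the divisibility $\bigl|\det(f[i]_{n*}|_f)\bigr| \cdot \bigl|\det(h[i]_{n*}|_f)\bigr| = (N_g N_u)^{r_n[i]}$. Note that your coker-of-free-parts quantity and the paper's are the same invariant — both are $\bigl|\det(f[i]_{n*}|_f)\bigr|$ — so the two bounds are of identical shape; you simply obtain the bound from the existence of a rational homotopy inverse rather than from an annihilator of the cone homology. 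Both arguments are valid; the underlying mechanism (the $\IQ G$-homotopy equivalence has bounded ``integral degree'', so the error per homology group is at most $(\text{const})^{\rk}$) is the same.
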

\begin{proof}
  Let $g_* \colon C_* \otimes_{\IZ} \IQ \to D_* \otimes_{\IZ} \IQ$ be a $\IQ
  G$-chain homotopy equivalence.  Since $C_*$ and $D_*$ are finite based free
  $\IZ G$-chain complexes, we can find a $\IZ G$-chain map $f_* \colon C_* \to
  D_*$ and an integer $l$ such that $f_* \otimes_{\IZ} \IQ = l \cdot
  g_*$. Obviously $f_* \otimes_{\IZ} \IQ$ is  a $\IQ G$-chain homotopy
  equivalence.  In the sequel we abbreviate $C_*' := \cyl(f_*)$ and $C_*'' :=
  \cone(f_*)$.  By the chain homotopy invariance of integral torsion and of
  $L^2$-torsion (see Lemma~\ref{lem:homotopy_invariance}) it suffices to prove
  the claim for $C_*$ and $C_*'$ instead of $C_*$ and $D_*$.

We have the obvious exact sequence of
finite based free $\IZ G$-chain complexes 
\[0 \to C_* \xrightarrow{i_*} C_*' \xrightarrow{p_*} C_*'' \to 0.
\]

Since $f_* \otimes_{\IZ} \IQ$ is a $\IQ G$-chain homotopy equivalence, we can choose a
$\IQ G$-chain contraction $\gamma_*\colon C_*''\otimes_{\IZ} \IQ \to
C_{*+1}''\otimes_{\IZ} \IQ$.  Since each $C_*''$ is a finite free  $\IZ G$-chain complex,
we can find an integer $m$
and $\IZ G$-maps $\delta_p \colon C_p \to C_{p+1}''$ such that $m \cdot \gamma_p
= \delta_p \otimes_{\IZ} \id_{\IQ}$ holds for all $p \ge 0$. Hence $\delta_*
\colon C_*'' \to C_{*+1}''$ is a $\IZ G$-chain homotopy from $m \cdot \id_{C_*}$
to the zero homomorphism.  Moreover, $\delta[i]_* \colon C''[i]_* \to
C''[i]_{*+1}$ is a $\IZ [G/G_i]$-chain homotopy from $m \cdot \id_{C''[i]_*}$ to
the zero homomorphism for all $i \in I$. Hence multiplication with $m$
annihilates $H_p(C''[i])$ for all $n \ge 0$ and $p \in I$.

 We have the long exact homology sequence
\[
\cdots \to H_p(C[i]_*) \to H_p(C'[i]_*) \to H_p\bigl(C''[i]_*\bigr) \to H_{p-1}(C[i]_*) \to \cdots.
\]
The group $H_p\bigl(C''[i]_*\bigr)$ is a finite abelian group for each $p \ge 0$.
We obtain the following commutative diagram with exact rows
\begin{eqnarray}
&&
\label{exact_sequence_of_chain_complexes_new}
\\
\xymatrix{
& \vdots \ar[d] & \vdots \ar[d] & \vdots \ar[d] & 
\\
0 \ar[r]
& 
\tors\bigl(H_p(C[i]_*)\bigr) \ar[r] \ar[d]
&
H_p(C[i]_*) \ar[r] \ar[d] 
&
H_p(C[i]_*)_f \ar[r] \ar[d] 
&
0
\\
0 \ar[r]
& \tors\bigl(H_p(C'[i]_*)\bigr) \ar[r] \ar[d]
&
H_p(C'[i]_*) \ar[r] \ar[d] 
&
H_p(C'[i]_*)_f \ar[r] \ar[d] 
&
0
\\
0 \ar[r]
&
\tors\bigl(H_p(C''[i]_*)\bigr)  \ar[d]  \ar[r]^{\cong} 
&
H_p(C''[i]_*) \ar[r]\ar[d] 
&
0 \ar[r]\ar[d] 
&
0
\\
0 \ar[r]
& 
\tors\bigl(H_{p-1}(C[i]_*)\bigr) \ar[r] \ar[d]
&
H_{p-1}(C[i]_*) \ar[r] \ar[d] 
&
H_{p-1}(C[i]_*)_f \ar[r] \ar[d] 
&
0
\\
& \vdots & \vdots& \vdots  & 
}
\nonumber
\end{eqnarray}
We view it as a short exact sequence of $\IZ$-chain complexes and hence can
consider the associated long homology sequence.  Notice that the chain complex
given by the middle column is acyclic. Hence we obtain isomorphisms
\begin{multline}
\ker\left(\tors\bigl(H_p(C'[i]_*)\bigr) \to \tors\bigl(H_p(C''[i]_*)\bigr)\right)\left/
\im\left(\tors\bigl(H_p(C[i]_*)\bigr) \to \tors\bigl(H_p(C'[i]_*)\bigr)\right)\right.
\\
 \cong 
\ker\left(H_p(C[i]_*)_f \to H_p(C'[i]_*)_f\right),
\label{ker/coker_I}
\end{multline}
\begin{multline}
\ker\left(\tors\bigl(H_p(C''[i]_*)\bigr)  \to \tors\bigl(H_{p-1} (C[i]_*)\bigr) \right)
\left/
\im\left(\tors\bigl(H_p(C'[i]_*)\bigr) \to \tors\bigl(H_p(C''[i]_*)\bigr)\right)\right.
\\
 \cong 
\coker\left(H_p(C[i]_*)_f \to H_p(C'[i]_*)_f\right),
\label{ker/coker_II}
\end{multline}
and
\begin{multline}
\ker\left(\tors\bigl(H_p(C[i]_*)\bigr) \to \tors\bigl(H_p(C'[i]_*)\bigr)\right)\left/
\im\left(\tors\bigl(H_{p+1}(C[i]''_*)\bigr) \to \tors\bigl(H_p(C[i]_*)\bigr)\right)\right.
\\
 \cong 0.
\label{ker/coker_III}
\end{multline}
Obviously $(H_p(C[i]_*)_f$ and hence $\ker\left(H_p(C[i]_*)_f \to H_p(C'[i]_*)_f\right)$ are torsion-free.
On the other hand $\ker\left(H_p(C[i]_*)_f \to H_p(C'[i]_*)_f\right)$ is finite,
since $\tors(H_p(C'[i]_*))$ is finite and $\ker\left(H_p(C[i]_*) \to H_p(C'[i]_*)\right)$ is a quotient 
of $H_{p+1}(C''[i]_*)$ and hence finite. Hence $\ker\left(H_p(C[i]_*)_f \to H_p(C'[i]_*)_f\right)$ is trivial.
We conclude from~\eqref{ker/coker_I}
\begin{multline}
\ker\left(\tors\bigl(H_p(C'[i]_*)\bigr) \to \tors\bigl(H_p(C''[i]_*)\bigr)\right)
\\ 
=   \im\left(\tors\bigl(H_p(C[i]_*)\bigr) \to \tors\bigl(H_p(C'[i]_*)\bigr)\right).
\label{kernel(tors)_is_im(tors)}
\end{multline}
The cokernel of the map $H_p(C[i]_*) \to H_p(C'[i]_*)$ is a submodule
of $H_{p}(C''[i]_*)$ and hence annihilated by multiplication with $m$. 
The cokernel of $H_p(C[i]_*)_f \to H_p(C'[i]_*)_f$ is a quotient of
the cokernel of $H_p(C[i]_*)\to H_p(C'[i]_*)$. We conclude that
$\coker\bigl(H_p(C[i]_*)_f \to H_p(C'[i]_*)_f\bigr)$ is annihilated by multiplication with $m$.
Hence we obtain an epimorphism
\[
H_p(C'[i]_*)_f/m \cdot H_p(C'[i]_*)_f \to \coker\bigl(H_p(C[i]_*)_f \to H_p(C'[i]_*)_f\bigr).
\]
This implies
\begin{eqnarray*}
  \left|\coker\big(H_p(C[i]_*)_f \to H_p(C'[i]_*)_f\bigr)\right|
  & \le & m^{\rk_{\IZ}\left(H_p(C'[i]_*)\right)}.
\end{eqnarray*}
Since $C_*^{(2)}$ is $L^2$-acyclic, and $C_*\otimes_{\IZ} \IQ$ and
$C_*'\otimes_{\IZ} \IQ$ are $\IQ G$-chain homotopy equivalent, $(C')_*^{(2)}$ is
$L^2$-acyclic.  We conclude from~\cite[Theorem~0.1]{Lueck(1994c)} for all $p
\ge 0$
\begin{eqnarray*}
\lim_{i \in I} \frac{\rk_{\IZ}\bigl(H_p(C_*'[i])\bigr)}{[G:G_i]} = 0.
\end{eqnarray*}
Since $m$ is independent of $p$, we conclude
\begin{eqnarray}
  \lim_{i \in I} \frac{\ln\left(\left|\coker\big(H_p(C[i]_*)_f 
\to H_p(C'[i]_*)_f\bigr)\right|\right)}{[G:G_i]} = 0.
  \label{approxi_for_ln(rk_ZH_k(C[i]_toH_k(Cprime[i])}
\end{eqnarray}
Taking the logarithm of the order of a finite abelian group is additive under
short exact sequences of finite abelian groups. Hence we get for any
finite-dimensional chain complex $E_*$ of finite abelian groups
\[\
\sum_{p \ge 0} (-1)^p \cdot |E_p| = \sum_{p \ge 0} (-1)^p \cdot |H_p(E_*)|.
\]
If we apply this to the left column in the
diagram~\eqref{exact_sequence_of_chain_complexes_new}, we conclude
from~\eqref{ker/coker_II},~\eqref{ker/coker_III},
and~\eqref{kernel(tors)_is_im(tors)}
\begin{eqnarray*}
\lefteqn{\left|\, \sum_{p \ge 0} (-1)^p \cdot \ln\left(\left|\tors\bigl(H_p(C[i]_*))\right|\right) 
- \sum_{p \ge 0} (-1)^p \cdot \ln\left(\left|\tors\bigl(H_p(C'[i]_*)\bigr)\right| \right)\right.}
& & 
\\
& &
\left. + \sum_{p \ge 0} (-1)^p \cdot \ln\left(\left|\tors\bigl(H_p(C''[i]_*)\right|\right)\,\right|
\\
& & \hspace{30mm} = \; 
\left|\,\sum_{p \ge 0} (-1)^p \cdot 
\ln\left(\left|\coker\left(H_p(C[i]_*)_f \to H_p(C'[i]_*)_f\right)\right|\right)\,\right|
\\
& & \hspace{30mm} \le  \; 
\sum_{p \ge 0} \ln\left(\left|\coker\left(H_p(C[i]_*)_f \to H_p(C'[i]_*)_f\right)\right|\right).
\end{eqnarray*}
This together with~\eqref{approxi_for_ln(rk_ZH_k(C[i]_toH_k(Cprime[i])} implies
\begin{eqnarray}
\lim_{i \in I} \left(\frac{\rho^{\IZ}(C[i]_*)}{[G:G_i]} - \frac{\rho^{\IZ}(C'[i]_*)}{[G:G_i]}  
+ \frac{\rho^{\IZ}(C''[i]_*)}{[G:G_i]}\right)
& = & 0.
\label{additivity_approxi_for_three_rhoZ-s}
\end{eqnarray}
We conclude from~\cite[Lemma~3.68 on page~153]{Lueck(2002)}
\begin{eqnarray}
\rho^{(2)}\bigl(C^{(2)}_*\bigr) - \rho^{(2)}\bigl((C')^{(2)}_*\bigr) + \rho^{(2)}\bigl((C'')^{(2)}_*\bigr) 
& = & 0.
\label{additivity_for_three_rho(2)-s}
\end{eqnarray}
Hence it suffices to show
\begin{eqnarray}
\rho^{(2)}\bigl((C'')^{(2)}_*\bigr) 
& = & 
\lim_{i \in I}  \frac{\rho^{\IZ}(C''[i]_*)}{[G:G_i]}.
\label{equality_for_C_prime_prime}
\end{eqnarray}
We conclude from Corollary~\ref{cor:l1-chain_equivalence}~\eqref{cor:l1-chain_equivalence:acyclic}
\begin{eqnarray*}
\rho^{(2)}\bigl((C'')^{(2)}_*\bigr) 
& = & 
\lim_{i \in I}  \frac{\rho^{(2)}\bigl(C''[i]_*^{(2)}\bigr)}{[G:G_i]}.
\end{eqnarray*}
Since $H_p\bigl((C''[i])_*\bigr) \otimes_{\IZ} \IQ$ vanishes for all $p\ge 0$ and $i \in I$,
\eqref{equality_for_C_prime_prime} follows from~Lemma~\ref{lem:rho(2)-rhoZ}.
This finishes the proof of 
Lemma~\ref{lem:invariance_of_homological_conjecture_under_Q-homotopy_equivalence}.
\end{proof}

Now we are ready to prove Theorem~\ref{the:about_approximating_by_IZ-torsion}.
\begin{proof}[Proof of Theorem~\ref{the:about_approximating_by_IZ-torsion}]%
~\eqref{the:about_approximating_by_IZ-torsion:inequality}
Notice that 
\begin{eqnarray*}
\frac{\ln\bigl({\det}_{\caln(\{1\})}(f[i]^{(2)})\bigr)}{[G:G_i]} 
& = & 
\ln\bigl({\det}_{\caln(G/G_i)}(f[i]^{(2)})\bigr)
\end{eqnarray*}
holds by~\cite[Theorem~3.14~(5) on page~128]{Lueck(2002)}.
Theorem~\ref{the:inequality_det_det} implies
\begin{eqnarray*}
\ln\bigl({\det}_{\caln(G)}(f^{(2)})\bigr)
& \ge  & 
\limsup_{i \in I} \frac{\ln\bigl({\det}_{\caln(\{1\})}(f[i]^{(2)})\bigr)}{[G:G_i]}.
\end{eqnarray*}
Now apply Lemma~\ref{lem:Fuglede-Kadison_and_tors_for_G_is_trivial}.
\\[1mm]~\eqref{the:about_approximating_by_IZ-torsion:acyclic_in_each_degree}
Obviously it suffices to prove the claim for chain complexes.
Notice that 
\begin{eqnarray*}
\frac{\rho^{(2)}\bigl(C[i]_*^{(2)}\bigr)}{[G:G_i]}
& = & 
\rho^{(2)}\bigl(C[i]_*^{(2)};\caln(G/G_i)\bigr)
\end{eqnarray*}
holds by~\cite[Theorem~3.35~(7) on page~143]{Lueck(2002)}.
We conclude from Corollary~\ref{cor:l1-chain_equivalence}~\eqref{cor:l1-chain_equivalence:acyclic}
\begin{eqnarray*}
\rho^{(2)}\bigl(C^{(2)}_*\bigr) 
& = & 
\lim_{i \in I}  \frac{\rho^{(2)}\bigl(C[i]_*^{(2)}\bigr)}{[G:G_i]}.
\end{eqnarray*}
Since $H_p\bigl(C[i]_*\bigr) \otimes_{\IZ} \IQ$ vanishes for all $p \ge 0$ and $i \in I$,
assertion~\eqref{the:about_approximating_by_IZ-torsion:acyclic_in_each_degree}
follows from~Lemma~\ref{lem:rho(2)-rhoZ}
\\[1mm]~\eqref{the:about_approximating_by_IZ-torsion:G_is_Z}
Obviously it suffices to prove the chain complex version.
Let $C_*$ be a finite based free $\IZ[\IZ]$-chain complex
that is $L^2$-acyclic.  If $\IQ[\IZ]_{(0)}$ is the quotient field of the
integral domain $\IQ[\IZ]$, then $H_k(C_*) \otimes_{\IZ[\IZ]} \IQ[\IZ]_{(0)}$
is trivial for $k \ge 0$ because  of~\cite[Lemma~1.34~(1) on page~35]{Lueck(2002)}.
Since $\IQ[\IZ]$ is a principal ideal domain, we can find non-negative
integers $t_k$ and pairwise prime
irreducible elements $p_{k,1}$, $p_{k,2}$, \ldots , $p_{k,t_k}$ in $\IQ[\IZ]$
and natural numbers $m_{k,1}$, $m_{k,2}$, \ldots $m_{k,t_k}$ such that we have
isomorphisms of $\IQ[\IZ]$-modules
\[
H_k(C_*) \otimes_{\IZ} \IQ
\cong H_k(C_*) \otimes_{\IZ[\IZ]} \IQ[\IZ] \cong
\bigoplus_{j=1}^{t_k} \IQ[\IZ]/(p_{k,j}^{m_{k,j}}).
\] 
By multiplying the elements
$p_{k,j}$ with some natural number and a power of the generator $t \in \IZ$, 
we can arrange that the
elements $p_{k,1}$, $p_{k,2}$, \ldots , $p_{k,t_k}$ belong to $\IZ[t]$.
Since $\IQ[\IZ]/(p_{k,j}^{m_{k,j}}) \cong
\left(\IZ[\IZ]/(p_{k,j}^{m_{k,j}})\right)\otimes_{\IZ} \IQ$, there is a map of
$\IZ[\IZ]$-modules
\[\xi_k \colon \bigoplus_{j=1}^{t_k} \IZ[\IZ]/(p_{k,j}^{m_{k,j}}) \to H_k(C_*)
\]
which becomes an isomorphism of $\IQ[\IZ]$-modules after applying $-
\otimes_{\IZ} \IQ$.  By possibly enumerating the polynomials $p_{k,j}$ we can
arrange, that for some integer $s_k$ with $0 \le s_k \le t_k +1$ a polynomial
$p_{k,j}$ has some  root of unity as a
root if and only if $j\le s_k$. Consider $j \in\{1,2,\ldots, s_k\}$. Let
$d_{k,j} \ge 2$ be the natural number for which $p_{k,j}$ has a primitive
$d_{k,j}$-th root of unity as zero.  Recall the $d$-th cyclotomic polynomial
$\Phi_d$ is a polynomial over $\IZ[t]$ with $\Phi_{d_{k,j}}(0) = \pm 1$ and is
irreducible over $\IQ[t]$.  Hence we can find a unit in $u \in \IQ[\IZ]$ such that
$u \cdot \Phi_{d_{k,j}} = p_{k,j}$.  Every unit in $\IQ[\IZ] = \IQ[t,t^{-1}]$ is
of the shape $rt^l$ for some $r \in \IQ, r \not=0$ and $l \in \IZ$.  Since
$p_{k,j}$ is a polynomial in $\IZ[t]$, we can arrange $p_{k,j} =
\Phi_{d_{k,j}}$. To summarize, we have achieved that $p_{k,j}$ is
$\Phi_{d_{k,j}}$ for $j \in\{1,2,\ldots, s_k\}$ and that no 
root of unity  is a root of $p_{k,j}$ for 
$j \in\{s_k+1, s_k +2, \ldots , t_k\}$.

Let $F^{k,j}_*$ for $j \in\{1,2,\ldots, t_k\}$ be the $\IZ[\IZ]$-chain complex
which is concentrated in dimensions $(k+1)$ and $k$ and whose $(k+1)$-th
differential is the $\IZ[\IZ]$-homomorphism $\IZ[\IZ] \xrightarrow{p_{k,j}} \IZ[\IZ]$ 
given by multiplication with $p_{k,j}$. There is an obvious
identification of $\IZ[\IZ]$-modules
\[
H_k\bigl(F^{k,j}_*\bigr) \cong \IZ[\IZ]/(p_{k,j})
\] 
and 
$H_i\bigl(F^{k,j}_*\bigr) = 0$ for $ i \not= k$.
Since $F^{k,j}_*$ has projective chain modules and is concentrated in dimensions
$(k+1)$ and $k$ and we have the exact sequence of $\IZ[\IZ]$-modules $C_{k+1}
\xrightarrow{c_{k+1}} \ker(c_k) \to H_k(C_*)$, we can construct a
$\IZ[\IZ|$-chain map
\[f^{k,j}_* \colon F^{k,j}_* \to C_*
\]
such that $H_k\bigl(f^{k,j}_*\bigr)$ agrees with the restriction of $\xi_k$
to the $j$-th summand.  Define a $\IZ[\IZ]$-chain map
\[
f_*:= \bigoplus_{k \ge 0} \bigoplus_{j = 1}^{t_k} f^{k,j}_* \colon   
\bigoplus_{k \ge 0} \bigoplus_{j = 1}^{t_k} F^{k,j}_*  \to C_*.
\]
By construction $H_k(f_*) \otimes_{\IZ} \IQ$ is bijective for all $k \ge 0$.

We conclude from 
Lemma~\ref{lem:invariance_of_homological_conjecture_under_Q-homotopy_equivalence}
that we can assume without loss of generality
\[C_* = \bigoplus_{k \ge 0} \bigoplus_{j = 1}^{t_k} F^{k,j}_*.
\]
Obviously assertion~\eqref{the:about_approximating_by_IZ-torsion:G_is_Z}  is
satisfied for a direct sum $D_* \oplus E_*$ of two based free
$L^2$-acyclic $\IZ[\IZ]$-chain complexes if both $D_*$ and $E_*$
satisfy assertion~\eqref{the:about_approximating_by_IZ-torsion:G_is_Z}.
Hence we only have to treat the case, where $C_*$ is concentrated in
dimension $0$ and $1$ and its first differential is given by $p \cdot
\id \colon \IZ[\IZ] \to \IZ[\IZ]$ for some non-trivial polynomial $p$
such that either $p$ is of the shape $\phi_d^m$ for some natural
numbers $d$ and $m$ or no root of
unity is a root of $p$.

We begin with the case where $p$ is of the shape $\phi_d^m$ for some natural
numbers $d$ and $m$. Then all roots of $p$ have norm $1$ and hence
\[
\ln\bigl(\rho^{(2)}(C_*)\bigr) =  
\ln\bigl({\det}_{\caln(\IZ)}\bigl(p \cdot \id \colon L^2(\IZ) \to L^2(\IZ)\bigr)\bigr) = 0
\]
by~\cite[(3.23) on page~136]{Lueck(2002)}. Now the claim follows from
assertion~\eqref{the:about_approximating_by_IZ-torsion:inequality}.

Finally we treat the case, where no root of unity is a root of $p$.
Fix $i \in I$. Put $n = [\IZ:\IZ_i]$. Then $\IZ/\IZ_i = \IZ/n$. 
For $l \in \IZ/n$ let $\IC_l$ be the unitary
$\IZ/n$-representation whose underlying Hilbert space is $\IC$ and on which the
generator in $\IZ/n$ acts by multiplication with $\zeta_n^l$, where we put
$\zeta_n := \exp(2 \pi i/n)$. We obtain a unitary $\IZ/n$-isomorphism
\[
\omega \colon \bigoplus_{l \in \IZ/n} \IC_l \xrightarrow{\cong} \IC[\IZ/n].
\]
The following diagram of Hilbert $\caln(\IZ/n)$-modules commutes
\[\xymatrix{
  \bigoplus_{l \in \IZ/n}\IC_l \ar[d]_{\bigoplus_{l \in \IZ/n}
    p(\zeta_n^l)}\ar[r]^{\omega}_{\cong} & \IC[\IZ/n] \ar[d]^{p[i]}
  \\
  \bigoplus_{l \in \IZ/n} \IC_l\ar[r]^{\omega}_{\cong} & \IC[\IZ/n] }
\]
Hence $p[i]^{(2)} \colon \IZ[\IZ/n]^{(2)} \to \IZ[\IZ/n]^{(2)}$ is an isomorphism.
Therefore  $p[i]\colon \IZ[\IZ/n] \to \IZ[\IZ/n]$ is rationally an isomorphism.
Now the claim follows from 
assertion~\eqref{the:about_approximating_by_IZ-torsion:acyclic_in_each_degree}.
This finishes the proof of Theorem~\ref{the:about_approximating_by_IZ-torsion}.
\end{proof}


\typeout{------------------------ Section 19: Miscellaneous --------------------}

\section{Miscellaneous}
\label{sec:Miscellaneous}

We briefly mention some variations of the problems considered here or some other prominent open conjectures about
$L^2$-invariants.

\subsection{Approximation for lattices}
\label{subsec:Approximation_for_lattices}

In our setting we approximate the universal covering of a closed manifold or compact
CW-complex by a tower of finite coverings corresponding to the normal chain 
$(G_i)_ {i \ge  0}$ of normal subgroups of $G$ with finite index and trivial intersection.

One can also look at a uniformly discrete sequence of lattices $(G_i)_{i \ge 0}$ in a
connected center-free semisimple Lie group $L$ without compact factors and study the
quotients $M[i] = X/G_i$, where $X$ is the associated symmetric space $L/K$ for $K
\subseteq L$ a maximal compact subgroup.  There is a notion of BS-convergence for lattices
which generalizes our setting. One can ask whether for such a convergence sequence of
cocompact lattices the sequence $\frac{b_n(M[i];\IQ)}{\vol(M[i])}$ converges to the
$L^2$-Betti number of $X$. This setup and various convergence questions are systematically
examined in the papers by
Abert-Bergeron-Biringer-Gelander-Nikolov-Raimbault-Samet~\cite{Abert-Bergeron-Biringer-Gelander-Nokolov_Raimbault-Samet(2011),
  Abert-Bergeron-Biringer-Gelander-Nokolov_Raimbault-Samet(2012)}. 

Another paper containing interesting information
about these questions is Bergeron-Lipnowski~\cite{Bergeron-Lipnowski(2014)}.

\subsection{Twisting with representations}
\label{subsec:Twisting_with_representations}

We have already mentioned that one can twist the analytic torsion with special representations.
This has in favorite  situations the effect that one obtains a uniform gap for the spectrum of
the Laplace operators and can prove the desired approximations results, see
Remark~\ref{rem:The_Zero-in-the-Spectrum_Conjecture}.  For more information we refer for
instance
to~\cite{Bergeron-Venkatesh(2013),Marshall-Mueller(2013),Mueller-Pfaff(2013locsym),Mueller-Pfaff(2013asymp)}.

In~\cite{Lueck(2015twisting)} twisted $L^2$-torsion for finite $CW$-complex $X$ with
$b_n^{(2)}(\widetilde{X}) = 0$ for all $n \ge 0$ is introduced for finite-dimensional
representations which are given by restricting finite-dimensional
$\IZ^d$-representations with any homomorphism
$\pi_1(M) \to \IZ^d$. In particular one can twist the $L^2$-torsion for a given element
$\phi \in H^1(X;\IZ)$ with the $1$-dimensional representation whose underlying complex
vector space is $\IC$ and on which $g \in \pi_1(X)$ acts by multiplication with
$t^{\phi(g)}$. This yields the $L^2$-torsion function $(0,\infty) \to \IR$ whose value at
$1$ is the $L^2$-torsion itself.  The proof that this function is well-defined is based on
approximation techniques. This function seem to contain very interesting information, in
particular  for $3$-manifolds, see for
instance~\cite{Dubois-Friedl-Lueck(2014Alexander),Dubois-Friedl-Lueck(2015symmetric),
Dubois-Friedl-Lueck(2015flavors),Dubois-Friedl-Lueck(2015CRMASP),Lueck(2015twisting)}.
In particular one can
read off the Thurston norm of $\phi$ from the asymptotic behavior at $0$ and $\infty$ if
$X$ is a connected compact orientable $3$-manifold with infinite fundamental group and
empty or toroidal boundary which is not $S^1 \times D^2$, see \cite{Friedl-Lueck(2015l2+Thurston),Liu(2015)}.

\subsection{Atiyah's Question}
\label{subsec:Atiyahs_question}

Atiyah~\cite[page 72]{Atiyah(1976)} asked the question, whether the $L^2$-Betti numbers
$b_p^{(2)}(\widetilde{M})$ for a closed Riemannian manifold $M$ are always rational
numbers. Meanwhile it is known that the answer can be negative, see for
instance~\cite{Austin(2009),Grabowski(2010),Pichot-Schick-Zuk(2015)}.  However, the
following problem, often referred to as the strong Atiyah Conjecture, remains open.

\begin{question}\label{que:Atiyahs_strong_conjecture}
Let $G$ be a group for which there exists natural number $d$ such that the order of any finite subgroup
divides $d$. Then:

\begin{enumerate}

\item \label{que:Atiyahs_strong_conjecture:matrices}
For any $A \in M_{m,n}(\IZ G)$ we get for the von Neumann dimension of the kernel of the
induced $G$-equivariant bounded operator $r_A^{(2)} \colon L^2(G)^m \to L^2(G)^n$
\[
d \cdot \dim_{\caln(G)}\bigl(\ker(r_A^{(2)})\bigr) \in \IZ;
\]

\item \label{que:Atiyahs_strong_conjecture:manifolds}
For every closed manifold $M$ with $G \cong \pi_1(M)$ and $n \ge 0$ we have
\[d \cdot b_n^{(2)}(\widetilde{M}) \in \IZ.
\]
\end{enumerate}
\end{question}
Notice that we can choose $d = 1$ if $G$ is torsion-free. For a discussion, a survey on the literature  and the status of this
Question~\ref{que:Atiyahs_strong_conjecture}, we refer for instance to~\cite[Chapter~10]{Lueck(2002)}.

The Approximation Conjecture~\ref{con:Approximation_conjecture_for_L2-Betti_numbers},
which is known by Remark~\ref{rem:status_of_Determinant_Conjecture} and 
Theorem~\ref{the:The_Determinant_Conjecture_implies_the_Approximation_Conjecture_for_L2-Betti_numbers}
for a large class of groups, can be used to enlarge the class of groups for which
the answer to part~\eqref{que:Atiyahs_strong_conjecture:matrices} of Question~\ref{que:Atiyahs_strong_conjecture}
is positive. Namely, if $G$ is torsion-free and possesses a chain of normal subgroups $(G_i)_{i \ge 0}$ 
 with  trivial intersection $\bigcap_{i \ge 0} G_i = \{1\}$ such that
the answer to part~\eqref{que:Atiyahs_strong_conjecture:matrices} of Question~\ref{que:Atiyahs_strong_conjecture}
is positive for each quotient $G/G_i$, then
the answer to part~\eqref{que:Atiyahs_strong_conjecture:matrices} of Question~\ref{que:Atiyahs_strong_conjecture}
is positive for each quotient $G/G_i$. Here it becomes important that we could drop the condition that
each $G/G_i$ is finite. An example for $G$ is a finitely generated free group 
whose descending central series gives such a chain  $(G_i)_{i \ge 0}$  with torsion-free nilpotent quotients $G/G_i$.

Notice that Conjecture~\ref{con:Homological_growth_and_L2-torsion_for_aspherical_manifolds}
implies a positive answer to  part~\eqref{que:Atiyahs_strong_conjecture:manifolds}
of Question~\ref{que:Atiyahs_strong_conjecture} if $M$ is an aspherical
closed manifold.

One can  ask an analogous question in the mod $p$ case as soon as one has
a replacement for the $L^2$-Betti number in the mod $p$ case. In some special cases this replacement exists and the answer is positive,
 see for instance Theorem~\ref{the:dim_approximation_over_fields} 
for torsion-free elementary amenable groups, 
and Theorem~\ref{the:BLLS} for torsion-free $G$ taking into account that the 
$n$th mod $p$ $L^2$-Betti numbers $b_n^{(2)}(\overline{X};F)$
occurring  in~\cite[Definition~1.3]{Bergeron-Linnell-Lueck-Sauer(2014)} is an integer for torsion-free $G$.

\subsection{Simplicial volume}
\label{subsec:Simplicial_volume}

The following conjecture is discussed in~\cite[Chapter~14]{Lueck(2002)}.

\begin{conjecture}[Simplicial volume and $L^2$-invariants]
  \label{con:simplicial_volume_and_L2-invariants}
  Let $M$ be an aspherical closed orientable manifold of dimension $\ge 1$.  Suppose that
  its simplicial volume $||M||$ vanishes. Then
  \begin{eqnarray*}
    b_p^{(2)}(\widetilde{M}) & = & 0 \hspace{5mm} \mbox{ for } p \ge 0;
    \\
    \rho^{(2)}(\widetilde{M}) & = & 0.
  \end{eqnarray*}
\end{conjecture}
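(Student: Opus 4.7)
The plan is to approach the Betti number part and the torsion part by separate strategies, since only the former is tied to existing quantitative bounds relating simplicial volume to $L^2$-invariants. For the vanishing of $b_p^{(2)}(\widetilde{M})$ I would work via the integral foliated simplicial volume $|M|$ introduced by Schmidt as an intermediate invariant. By a theorem of Sauer one has an estimate of the form
\[
\sum_{p \ge 0} b_p^{(2)}(\widetilde{M}) \;\le\; C_d \cdot |M|
\]
for a dimension constant $C_d$, so the first task reduces to showing that $\|M\| = 0$ implies $|M| = 0$ for aspherical $M$. Gromov's equality conjecture $\|M\| = |M|$ for aspherical closed manifolds is not known in general, so I would first dispose of the cases where this identity is available, namely manifolds with amenable fundamental group, manifolds admitting non-trivial $S^1$-actions, and graph manifolds in dimension three (and more generally manifolds admitting a sufficiently fine amenable open cover, via Gromov's vanishing-finiteness theorem).

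For the general case, my strategy would be to exploit Gromov's vanishing theorem, which yields from $\|M\| = 0$ an amenable open cover of $M$ of multiplicity at most $d = \dim(M)$. The aim is to promote this topological amenability input into a parametrized version of $|M|$: replace the standard probability space in the definition of $|M|$ by a Poisson boundary–style measured groupoid built from the amenable cover, and use the Connes–Feldman–Weiss theorem to spread the fundamental class into arbitrarily small parametrized $\ell^1$-norm. Combined with Sauer's inequality, applied in this parametrized setting, this should give $b_p^{(2)}(\widetilde{M}) = 0$ for all $p$.

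For the $L^2$-torsion statement, I expect to have to work much harder, because no unconditional inequality of the form ``$|\rho^{(2)}(\widetilde{M})| \le C \cdot \|M\|$'' is currently available. Here I would use the framework of Section~\ref{sec:torsion_invariants} together with the Approximation Conjecture for $L^2$-torsion (Conjecture~\ref{con:Approximation_Conjecture_for_Milnor_torsion}): once the Betti number part is established, $\widetilde{M}$ is $L^2$-acyclic, so (conjecturally) $\rho^{(2)}_{\an}(\widetilde{M}) = \lim_i \rho^{\IZ}(M[i])/[G:G_i]$. I would then try to bound $\rho^{\IZ}(M[i])$ by an expression of the form $C_d \cdot \|M[i]\|_{\IZ}$, or by the stable integral simplicial volume, using the integral cycle producing the fundamental class — the key idea being that the $\ell^1$-efficiency of a fundamental cycle should control the size of integer torsion in middle homology via a universal coefficient / Smith normal form estimate. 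Since $\|M[i]\|_{\IZ}/[G:G_i]$ is controlled by the stable integral simplicial volume, and the latter should again be forced to vanish by $\|M\| = 0$, this would complete the argument.

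The principal obstacle, and the one I expect to consume essentially all of the work, is the implication $\|M\| = 0 \Rightarrow |M| = 0$ for arbitrary aspherical $M$; this is a famous open problem on its own, and even partial progress seems to require a genuinely new technique for converting topological amenability of a cover into a measured-equivalence-theoretic statement at the level of the universal cover. A secondary but serious obstacle is setting up an unconditional bound on integral torsion by integral simplicial volume, since unlike in the Betti number case there is no known ``torsion Poincaré inequality'' of the right shape; bypassing this by the approximation route still leaves one dependent on Conjecture~\ref{con:Approximation_Conjecture_for_Milnor_torsion}, which is itself open outside the cases collected in Theorem~\ref{the:Groups_containing_a_normal_infinite_nice_subgroups}.
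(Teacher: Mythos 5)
This statement is a \emph{conjecture}, not a theorem; the paper offers no proof of it and you cannot expect one to exist, since the paper (in the two paragraphs immediately following the conjecture) explicitly surveys it as an open problem. What the paper records is: (i) the Betti number assertion is known when the \emph{minimal} volume (a stronger hypothesis than vanishing simplicial volume) is zero, by Sauer~\cite[Second Corollary of Theorem~A]{Sauer(2009amen)}; and (ii) the integral foliated simplicial volume of Schmidt~\cite{Schmidt(2005)} and the stable integral simplicial volume~\cite{Francaviglia-Frigerio-Martelli(2012),Loeh-Pagliantini(2014)} are candidate intermediaries, ``reflecting a kind of approximation conjecture for the simplicial volume.'' Your proposal is precisely such a research sketch, and you are honest that it is not a proof, so the right thing to say is that your strategy is consistent with the paper's own remarks rather than in conflict with them.

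Two points of substance. First, the inequality relating $L^2$-Betti numbers to the integral foliated simplicial volume $|M|$ is due essentially to Gromov, carried out in Schmidt's thesis~\cite{Schmidt(2005)}, rather than to Sauer; Sauer's theorem is the amenable-cover/minimal-volume bound, which is what the paper actually cites. Both results are in play here, but you should keep the attributions straight because they are genuinely different inputs: Schmidt's inequality leaves you with the open implication $\lVert M \rVert = 0 \Rightarrow |M| = 0$ (Gromov's equality problem for aspherical manifolds), whereas Sauer's result sidesteps $|M|$ entirely but demands the stronger hypothesis of vanishing minimal volume. You identify the former as ``the principal obstacle'' — correctly so, and this is exactly why the conjecture is still open; your ``Poisson boundary–style measured groupoid'' idea for promoting Gromov's amenable cover to a parametrized vanishing of $|M|$ is of the right flavor (it is what Sauer does in his setting), but as you say, converting a topological multiplicity bound into a measured-equivalence statement is a genuine missing technique, not a routine step.

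Second, for the torsion half your route is circular in a way you should flag even more strongly: deducing $\rho^{(2)}(\widetilde{M}) = 0$ via Conjecture~\ref{con:Approximation_Conjecture_for_Milnor_torsion} replaces one open conjecture by another, and in addition requires the ``torsion Poincar\'e inequality'' $\rho^{\IZ}(M[i]) \lesssim \lVert M[i]\rVert_{\IZ}$, for which no mechanism is known. The universal-coefficient/Smith-normal-form heuristic you gesture at does not obviously give this, because the number of simplices in an efficient integral cycle controls the \emph{rank} of chain groups but not the size of the elementary divisors, which is exactly what drives $\ln|\tors H_*|$. There is no known analogue of the Gromov--Schmidt inequality at the level of torsion, and this is precisely why assertion~\eqref{con:Homological_growth_and_L2-torsion_for_aspherical_manifolds:tors} of Conjecture~\ref{con:Homological_growth_and_L2-torsion_for_aspherical_manifolds} and the torsion half of the present conjecture are regarded as substantially harder than the Betti number statements. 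In short: your plan correctly maps the terrain, but every nontrivial arrow in it is itself an open problem, and the paper says as much.
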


If the closed orientable manifold $M$ has a self-map $f \colon M \to M$ of degree different
from $-1$, $0$, $1$, then one easily checks that its simplicial
volume $||M||$ vanishes. If its minimal volume is zero, i.e., for every $\epsilon > 0$ one
can find a Riemannian metric on $M$ whose sectional curvature is pinched between $-1$ and
$1$ and for which the volume of $M$ is less or equal to $\epsilon$, then its simplicial
volume $||M||$ vanishes. This follows from~\cite[page~37]{Gromov(1982)}. 

If one replaces in Conjecture~\ref{con:simplicial_volume_and_L2-invariants}
the simplicial volume by the minimal volume, whose vanishing implies the vanishing of the simplicial volume,
then the claim for the $L^2$-Betti numbers in  Conjecture~\ref{con:simplicial_volume_and_L2-invariants} has been proved by 
Sauer~\cite[Second Corollary of Theorem~A]{Sauer(2009amen)}.

There are versions of the simplicial volume such as the integral foliated simplicial volume and stable
integral simplicial volume which are related to Conjecture~\ref{con:simplicial_volume_and_L2-invariants}
and may be helpful for a possible proof,  and reflect a kind of approximation conjecture for the simplicial volume,
see for instance~\cite{Francaviglia-Frigerio-Martelli(2012), Loeh-Pagliantini(2014),Schmidt(2005)}.

More information about the simplicial volume and the literature can be found for instance
in~\cite{Gromov(1982)},~\cite{Loeh(2011Atlas)},~\cite[Section~14.1]{Lueck(2002)}.

\subsection{Entropy, Fuglede-Kadison determinants and amenable exhaustions}
\label{subsec:Entropy_and_L2-torsion}

In  recent years the connection between entropy and Fuglede-Kadison determinant has
been investigated in detail, see for
instance~\cite{Deninger(2006Fuglentramen),Deninger(2009entropy),Li(2012Annals),Li-Thom(2014)}.
In particular the amenable exhaustion approximation result for Fuglede-Kadison
determinants of Li-Thom~\cite[Theorem~0.7]{Li-Thom(2014)} for amenable groups $G$  is very interesting,
where the Fuglede-Kadison determinant of a matrix over $\IZ G$ is approximated by 
finite-dimensional analogues of its ``restrictions'' to  finite F\"olner subsets of the group $G$.

\subsection{Lehmer's problem}
\label{subsec:Lehmers_problem}

Let $p(z) \in \IC[\IZ] = \IC[z,z^{-1}]$ be a non-trivial element.  Its \emph{Mahler
  measure} is defined by
\begin{eqnarray}
  M(p) &:= &  \exp\left(\int_{S^1} \ln(|p(z)|) d\mu\right).
  \label{Mahler_measure}
\end{eqnarray}
By Jensen's inequality we have
\begin{eqnarray}
  \int_{S^1} \ln(|p(z)|) d\mu
  & = &
  \sum_{\substack{i=1,2, \ldots, r\\|a_i| > 1}} \ln(|a_i|),
  \label{Jensen}
\end{eqnarray}
if we write $p(z)$ as a product
\[
p(z) = c \cdot z^k \cdot \prod_{i=1}^r (z - a_i)
\]
for an integer $r \ge 0$, non-zero complex numbers $c$, $a_1$, $\ldots$, $a_r$ and an
integer $k$.  This implies $M(p) \ge 1$.

\begin{problem}[Lehmer's Problem]\label{pro:Lehmers_problem}
  Does there exist a constant $\Lambda > 1$ such that for all non-trivial elements $p(z)
  \in \IZ[\IZ] = \IZ[z,z^{-1}]$ with $M(p) \not= 1$ we have
  \[
  M(p) \ge \Lambda?
  \]
\end{problem}

\begin{remark}[Lehmer's polynomial]
  \label{rem:Lehmers_polynomial}
  There is even a candidate for which the minimal Mahler measure is attained, namely,
  \emph{Lehmer's polynomial}
  \[
  L(z) := z^{10} + z^9 - z^7 - z^6 -z^5 -z^4- z^3 + z +1.
  \]
  It is conceivable that for any non-trivial element $p \in \IZ[\IZ]$ with $M(p) \not= 1$
  \[
  M(p) \ge M(L) = 1.17628 \ldots \] holds.
\end{remark}

For a survey on Lehmer's problem were refer for instance 
to~\cite{Boyd-Lind-Villegas-Deninger(1999),Boyd(1981speculations),Carrizosa(2009),Smyth(2008)}.

Consider an element $p = p(z) \in \IC[\IZ] = \IC[z,z^{-1}]$. It defines a
bounded $\IZ$-operator $r^{(2)}_p \colon L^2(\IZ) \to  L^2(\IZ)$ by multiplication
  with $p$. Suppose that $p$ is not zero. Then the Fuglede-Kadison determinant of $r_p^{(2)}$
agrees  with  the Mahler measure of $p$ by~\cite[(3.23) on page~136]{Lueck(2002)}.

\begin{definition}[Lehmer's constant of a group]
  \label{def:Lehmers_constant_of_a_group}
  Define \emph{Lehmer's constant} $\Lambda(G)$ of a group $G$
  \[
  \Lambda(G) \in [1,\infty)
  \]
  to be the infimum of the set of Fuglede-Kadison determinants
  \[ {\det}^{(2)}_{\caln(G)}\bigl(r_A^{(2)} \colon \caln(G)^r \to \caln(G)^s\bigr),
  \]
  where $A$ runs through all $(r,s)$-matrices $A \in M_{r,s}(\IZ G)$ for all $r,s \in \IZ$
  with $r,s \ge 1$ for which ${\det}^{(2)}_{\caln(G)}(r_A^{(2)}) > 1$ holds.

  If we only allow square matrices $A$ such that $r_A^{(2)} \colon \caln(G)^r \to \caln(G)^r$ is
  injective and ${\det}^{(2)}_{\caln(G)}(r_A^{(2)}) > 1$, then we denote the corresponding
  infimum by
  \[
  \Lambda^w(G) \in [1,\infty)
  \]
\end{definition}

Obviously we have $\Lambda(G) \le \Lambda^w(G)$. We suggest the following 
generalization of Lehmer's problem to arbitrary groups.

\begin{problem}[Lehmer's problem for arbitrary groups]
  \label{pro:Lehmers_problem_for_arbitrary_groups}
  For which groups $G$ is $\Lambda(G) > 1$ or  $\Lambda^w(G) > 1$?
\end{problem}

For a discussion and results on these problems see~\cite[Question~4.7]{Chung-Thom(2015)} and~\cite{Lueck(2015_lehmer_general)}.



\begin{thebibliography}{100}

\bibitem{Abert-Bergeron-Biringer-Gelander-Nokolov_Raimbault-Samet(2011)}
M.~Abert, N.~Bergeron, I.~Biringer, T.~Gelander, N.~Nikolov, J.~Raimbault, and
  I.~Samet.
\newblock On the growth of {B}etti numbers of locally symmetric spaces.
\newblock {\em C. R. Math. Acad. Sci. Paris}, 349(15-16):831--835, 2011.

\bibitem{Abert-Bergeron-Biringer-Gelander-Nokolov_Raimbault-Samet(2012)}
M.~Abert, N.~Bergeron, I.~Biringer, T.~Gelander, N.~Nikolov, J.~Raimbault, and
  I.~Samet.
\newblock On the growth of {$L^2$}-invariants for sequences of lattices in
  {L}ie groups.
\newblock Preprint, arXiv:1210.2961 [math.RT], 2012.

\bibitem{Abert-Jaikin-Zapirain-Nikolov(2011)}
M.~Ab{\'e}rt, A.~Jaikin-Zapirain, and N.~Nikolov.
\newblock The rank gradient from a combinatorial viewpoint.
\newblock {\em Groups Geom. Dyn.}, 5(2):213--230, 2011.

\bibitem{Abert-Nikolov(2012)}
M.~Ab{\'e}rt and N.~Nikolov.
\newblock Rank gradient, cost of groups and the rank versus {H}eegaard genus
  problem.
\newblock {\em J. Eur. Math. Soc. (JEMS)}, 14(5):1657--1677, 2012.

\bibitem{Aschenbrenner-Friedl-Wilton(2015)}
M.~{Aschenbrenner}, S.~{Friedl}, and H.~{Wilton}.
\newblock {\em {3-manifold groups.}}
\newblock Z\"urich: European Mathematical Society (EMS), 2015.

\bibitem{Atiyah(1976)}
M.~F. Atiyah.
\newblock Elliptic operators, discrete groups and von {N}eumann algebras.
\newblock {\em Ast\'erisque}, 32-33:43--72, 1976.

\bibitem{Austin(2009)}
T.~Austin.
\newblock Rational group ring elements with kernels having irrational
  dimension.
\newblock Preprint, arXiv:0909.2360v2, 2009.

\bibitem{Baumslag(1962)}
G.~Baumslag.
\newblock On generalised free products.
\newblock {\em Math. Z.}, 78:423--438, 1962.

\bibitem{Bergeron-Linnell-Lueck-Sauer(2014)}
N.~{Bergeron}, P.~{Linnell}, W.~{L\"uck}, and R.~{Sauer}.
\newblock {On the growth of Betti numbers in $p$-adic analytic towers.}
\newblock {\em {Groups Geom. Dyn.}}, 8(2):311--329, 2014.

\bibitem{Bergeron-Lipnowski(2014)}
N.~Bergeron and M.~Lipnowski.
\newblock Twisted limit formula for torsion and cyclic base change.
\newblock Preprint, arXiv:1409.6749 [math.NT], 2014.

\bibitem{Bergeron-Sengun-Venkatesh(2016)}
N.~Bergeron, M.~H. {\c{S}}eng{\"u}n, and A.~Venkatesh.
\newblock Torsion homology growth and cycle complexity of arithmetic manifolds.
\newblock {\em Duke Math. J.}, 165(9):1629--1693, 2016.

\bibitem{Bergeron-Venkatesh(2013)}
N.~Bergeron and A.~Venkatesh.
\newblock The asymptotic growth of torsion homology for arithmetic groups.
\newblock {\em J. Inst. Math. Jussieu}, 12(2):391--447, 2013.

\bibitem{Boyd-Lind-Villegas-Deninger(1999)}
D.~Boyd, D.~Lind, F.~R. Villegas, and C.~Deninger.
\newblock The many aspects of {M}ahler's measure.
\newblock Report about the workshop the many aspects of {M}ahler measure,
  {A}pril 2003, Birs, Banff, 2003.

\bibitem{Boyd(1981speculations)}
D.~W. Boyd.
\newblock Speculations concerning the range of {M}ahler's measure.
\newblock {\em Canad. Math. Bull.}, 24(4):453--469, 1981.

\bibitem{Bridson-Kochlukova(2013)}
M.~R. Bridson and D.~H. Kochlukova.
\newblock Volume gradients and homology in towers of residually-free groups.
\newblock Preprint, arXiv:1309.1877 [math.GR], 2013.

\bibitem{Burghelea-Friedlander-Kappeler-McDonald(1996a)}
D.~Burghelea, L.~Friedlander, T.~Kappeler, and P.~McDonald.
\newblock Analytic and {R}eidemeister torsion for representations in finite
  type {H}ilbert modules.
\newblock {\em Geom. Funct. Anal.}, 6(5):751--859, 1996.

\bibitem{Calegari-Emerton(2009bounds)}
F.~Calegari and M.~Emerton.
\newblock Bounds for multiplicities of unitary representations of cohomological
  type in spaces of cusp forms.
\newblock {\em Ann. of Math. (2)}, 170(3):1437--1446, 2009.

\bibitem{Calegari-Emerton(2011)}
F.~Calegari and M.~Emerton.
\newblock Mod-{$p$} cohomology growth in {$p$}-adic analytic towers of
  3-manifolds.
\newblock {\em Groups Geom. Dyn.}, 5(2):355--366, 2011.

\bibitem{Carrizosa(2009)}
M.~Carrizosa.
\newblock Survey on {L}ehmer problems.
\newblock {\em S\~ao Paulo J. Math. Sci.}, 3(2):317--327, 2009.

\bibitem{Casson-Bleiler(1988)}
A.~J. Casson and S.~A. Bleiler.
\newblock {\em Automorphisms of surfaces after {N}ielsen and {T}hurston}.
\newblock Cambridge University Press, Cambridge, 1988.

\bibitem{Cheeger(1979)}
J.~Cheeger.
\newblock Analytic torsion and the heat equation.
\newblock {\em Ann. of Math. (2)}, 109(2):259--322, 1979.

\bibitem{Chung-Thom(2015)}
N.-P. Chung and A.~Thom.
\newblock Some remarks on the entropy for algebraic actions of amenable groups.
\newblock {\em Trans. Amer. Math. Soc.}, 367(12):8579--8595, 2015.

\bibitem{Deninger(2006Fuglentramen)}
C.~Deninger.
\newblock Fuglede-{K}adison determinants and entropy for actions of discrete
  amenable groups.
\newblock {\em J. Amer. Math. Soc.}, 19(3):737--758 (electronic), 2006.

\bibitem{Deninger(2009Mahler)}
C.~Deninger.
\newblock Mahler measures and {F}uglede-{K}adison determinants.
\newblock {\em M\"unster J. Math.}, 2:45--63, 2009.

\bibitem{Deninger(2009entropy)}
C.~Deninger.
\newblock {$p$}-adic entropy and a {$p$}-adic {F}uglede-{K}adison determinant.
\newblock In {\em Algebra, arithmetic, and geometry: in honor of {Y}u. {I}.
  {M}anin. {V}ol. {I}}, volume 269 of {\em Progr. Math.}, pages 423--442.
  Birkh\"auser Boston Inc., Boston, MA, 2009.

\bibitem{Dodziuk(1977)}
J.~Dodziuk.
\newblock de {R}ham-{H}odge theory for ${L}\sp{2}$-cohomology of infinite
  coverings.
\newblock {\em Topology}, 16(2):157--165, 1977.

\bibitem{Dubois-Friedl-Lueck(2014Alexander)}
J.~Dubois, S.~Friedl, and W.~L\"uck.
\newblock The {$L^2$}--{A}lexander torsion of 3-manifolds.
\newblock Preprint, arXiv:1410.6918 [math.GT], to appear in in Journal of
  Topology, 2014.

\bibitem{Dubois-Friedl-Lueck(2015symmetric)}
J.~Dubois, S.~Friedl, and W.~L{\"u}ck.
\newblock The {L}2--{A}lexander torsion is symmetric.
\newblock {\em Algebr. Geom. Topol.}, 15(6):3599--3612, 2015.

\bibitem{Dubois-Friedl-Lueck(2015CRMASP)}
J.~Dubois, S.~Friedl, and W.~L{\"u}ck.
\newblock The {$L^2$}-{A}lexander torsions of 3-manifolds.
\newblock {\em C. R. Math. Acad. Sci. Paris}, 353(1):69--73, 2015.

\bibitem{Dubois-Friedl-Lueck(2015flavors)}
J.~Dubois, S.~Friedl, and W.~L\"uck.
\newblock Three flavors of twisted knot invariants.
\newblock In {\em Introduction to modern mathematics}, volume~33 of {\em
  Advanced Lecture in Mathematics}, pages 143--170. International Press, 2015.

\bibitem{Elek-Szabo(2005)}
G.~Elek and E.~Szab{\'o}.
\newblock Hyperlinearity, essentially free actions and {$L\sp 2$}-invariants.
  {T}he sofic property.
\newblock {\em Math. Ann.}, 332(2):421--441, 2005.

\bibitem{Elek-Szabo(2006)}
G.~Elek and E.~Szab{\'o}.
\newblock On sofic groups.
\newblock {\em J. Group Theory}, 9(2):161--171, 2006.

\bibitem{Ershov-Lueck(2014)}
M.~Ershov and W.~L{\"u}ck.
\newblock The first $l^2$-{B}etti number and approximation in arbitrary
  characteristics.
\newblock {\em Documenta}, 19:313Ã¢â‚¬â€œ--331, 2014.

\bibitem{Farrell-Lueck-Steimle(2010)}
F.~T. Farrell, W.~L{\"u}ck, and W.~Steimle.
\newblock Obstructions to fibering a manifold.
\newblock {\em Geom. Dedicata}, 148:35--69, 2010.

\bibitem{Francaviglia-Frigerio-Martelli(2012)}
S.~Francaviglia, R.~Frigerio, and B.~Martelli.
\newblock Stable complexity and simplicial volume of manifolds.
\newblock {\em J. Topol.}, 5(4):977--1010, 2012.

\bibitem{Friedl-Lueck(2015l2+Thurston)}
S.~Friedl and W.~L\"uck.
\newblock The {$L^2$}-torsion function and the {T}hurston norm of
  $3$-manifolds.
\newblock Preprint, arXiv:1510.00264 [math.GT], 2015.

\bibitem{Friedl-Lueck(2016l2_universal)}
S.~Friedl and W.~L\"uck.
\newblock Universal {$L^2$}-torsion, polytopes and applications to
  {$3$}-manifold.
\newblock in preparation, 2016.

\bibitem{Funke(2013)}
F.~Funke.
\newblock Approximation of {$L^2$}-{B}etti numbers.
\newblock Master Thesis, Bonn, 2013.

\bibitem{Gaboriau(2000b)}
D.~Gaboriau.
\newblock Co\^ut des relations d'\'equivalence et des groupes.
\newblock {\em Invent. Math.}, 139(1):41--98, 2000.

\bibitem{Gaboriau(2002a)}
D.~Gaboriau.
\newblock Invariants {$l\sp 2$} de relations d'\'equivalence et de groupes.
\newblock {\em Publ. Math. Inst. Hautes \'Etudes Sci.}, 95:93--150, 2002.

\bibitem{Gaboriau(2002b)}
D.~Gaboriau.
\newblock On orbit equivalence of measure preserving actions.
\newblock In {\em Rigidity in dynamics and geometry (Cambridge, 2000)}, pages
  167--186. Springer, Berlin, 2002.

\bibitem{Girao(2013)}
D.~Gir{\~a}o.
\newblock Rank gradient of small covers.
\newblock {\em Pacific J. Math.}, 266(1):23--29, 2013.

\bibitem{Girao(2014)}
D.~Gir{\~a}o.
\newblock Rank gradient in co-final towers of certain {K}leinian groups.
\newblock {\em Groups Geom. Dyn.}, 8(1):143--155, 2014.

\bibitem{Grabowski(2010)}
L.~Grabowski.
\newblock On {T}uring machines, groupoids and {A}tiyah problem.
\newblock Preprint, arXiv:1004.2030v2 [math.GR], 2010.

\bibitem{Grabowski(2015large)}
{\L}.~Grabowski.
\newblock Group ring elements with large spectral density.
\newblock {\em Math. Ann.}, 363(1-2):637--656, 2015.

\bibitem{Grabowski-Virag(2013)}
L.~Grabowski and B.~Vir\'ag.
\newblock Random walks on {L}amplighters via random {S}chr\"odinger operators.
\newblock Preprint, 2013.

\bibitem{Gromov(1982)}
M.~Gromov.
\newblock Volume and bounded cohomology.
\newblock {\em Inst. Hautes \'Etudes Sci. Publ. Math.}, 56:5--99 (1983), 1982.

\bibitem{Gromov(1993)}
M.~Gromov.
\newblock Asymptotic invariants of infinite groups.
\newblock In {\em Geometric group theory, Vol.\ 2 (Sussex, 1991)}, pages
  1--295. Cambridge Univ. Press, Cambridge, 1993.

\bibitem{Hirzebruch(1970)}
F.~Hirzebruch.
\newblock The signature theorem: reminiscences and recreation.
\newblock In {\em Prospects in mathematics (Proc. Sympos., Princeton Univ.,
  Princeton, N.J., 1970)}, pages 3--31. Ann. of Math. Studies, No. 70.
  Princeton Univ. Press, Princeton, N.J., 1971.

\bibitem{Kazhdan(1975)}
D.~A. Kajdan.
\newblock On arithmetic varieties.
\newblock In {\em Lie groups and their representations (Proc. Summer School,
  Bolyai J\'anos Math. Soc., Budapest, 1971)}, pages 151--217. Halsted, New
  York, 1975.

\bibitem{Kashaev(1997)}
R.~M. Kashaev.
\newblock The hyperbolic volume of knots from the quantum dilogarithm.
\newblock {\em Lett. Math. Phys.}, 39(3):269--275, 1997.

\bibitem{Kitano-Morifuji(2008)}
T.~Kitano and T.~Morifuji.
\newblock {$L^2$}-torsion invariants and the {M}agnus representation of the
  mapping class group.
\newblock In {\em Groups of diffeomorphisms}, volume~52 of {\em Adv. Stud. Pure
  Math.}, pages 31--49. Math. Soc. Japan, Tokyo, 2008.

\bibitem{Kitano-Morifuji-Takasawa(2004surface_bundle)}
T.~Kitano, T.~Morifuji, and M.~Takasawa.
\newblock {$L^2$}-torsion invariants of a surface bundle over {$S^1$}.
\newblock {\em J. Math. Soc. Japan}, 56(2):503--518, 2004.

\bibitem{Koch-Lueck(2014)}
H.~{Koch} and W.~{L\"uck}.
\newblock {On the spectral density function of the Laplacian of a graph.}
\newblock {\em {Expo. Math.}}, 32(2):178--189, 2014.

\bibitem{Lackenby(2005expanders)}
M.~Lackenby.
\newblock Expanders, rank and graphs of groups.
\newblock {\em Israel J. Math.}, 146:357--370, 2005.

\bibitem{Le(2014)}
T.~Le.
\newblock Growth of homology torsion in finite coverings and hyperbolic volume.
\newblock Preprint, arXiv:1412.7758 [math.GT], 2014.

\bibitem{Le(2013)}
T.~T.~Q. L{\^e}.
\newblock Growth of regulators in finite abelian coverings.
\newblock {\em Algebr. Geom. Topol.}, 13(4):2383--2404, 2013.

\bibitem{Li(2012Annals)}
H.~Li.
\newblock Compact group automorphisms, addition formulas and
  {F}uglede-{K}adison determinants.
\newblock {\em Ann. of Math. (2)}, 176(1):303--347, 2012.

\bibitem{Li-Thom(2014)}
H.~Li and A.~Thom.
\newblock Entropy, determinants, and {$L^2$}-torsion.
\newblock {\em J. Amer. Math. Soc.}, 27(1):239--292, 2014.

\bibitem{Linnell-Lueck-Sauer(2011)}
P.~Linnell, W.~L{\"u}ck, and R.~Sauer.
\newblock The limit of {$\Bbb F_p$}-{B}etti numbers of a tower of finite covers
  with amenable fundamental groups.
\newblock {\em Proc. Amer. Math. Soc.}, 139(2):421--434, 2011.

\bibitem{Linnell-Lueck(2016)}
P.~A. Linnell and W.~L\"uck.
\newblock Localization, {W}hitehead groups, and the {A}tiyah conjecture.
\newblock Preprint, arXiv:1602.06906 [math.KT], 2016.

\bibitem{Liu(2015)}
X.~Liu.
\newblock Degree of {$L^2$}-{A}lexander torsion for 3-manifolds.
\newblock Preprint, arXiv:1509.08866 [math.GT], 2015.

\bibitem{Loeh(2011Atlas)}
C.~L{\"o}h.
\newblock Simplicial volume,.
\newblock Bull. Manifold Atlas., 2011.

\bibitem{Loeh-Pagliantini(2014)}
C.~L{\"o}h and C.~Pagliantini.
\newblock Integral foliated simplicial volume of hyperbolic {$3$}-manifolds.
\newblock Preprint, arXiv:1403.4518 [math.GT], 2014.

\bibitem{Lott-Lueck(1995)}
J.~Lott and W.~L{\"u}ck.
\newblock ${L}\sp 2$-topological invariants of $3$-manifolds.
\newblock {\em Invent. Math.}, 120(1):15--60, 1995.

\bibitem{Lueck(1994c)}
W.~L{\"u}ck.
\newblock Approximating ${L}\sp 2$-invariants by their finite-dimensional
  analogues.
\newblock {\em Geom. Funct. Anal.}, 4(4):455--481, 1994.

\bibitem{Lueck(1994b)}
W.~L{\"u}ck.
\newblock ${L}\sp 2$-{B}etti numbers of mapping tori and groups.
\newblock {\em Topology}, 33(2):203--214, 1994.

\bibitem{Lueck(1997a)}
W.~L{\"u}ck.
\newblock Hilbert modules and modules over finite von {N}eumann algebras and
  applications to ${L}\sp 2$-invariants.
\newblock {\em Math. Ann.}, 309(2):247--285, 1997.

\bibitem{Lueck(1998a)}
W.~L{\"u}ck.
\newblock Dimension theory of arbitrary modules over finite von {N}eumann
  algebras and ${L}\sp 2$-{B}etti numbers. {I}. {F}oundations.
\newblock {\em J. Reine Angew. Math.}, 495:135--162, 1998.

\bibitem{Lueck(2002)}
W.~L{\"u}ck.
\newblock {\em {$L\sp 2$}-{I}nvariants: {T}heory and {A}pplications to
  {G}eometry and \mbox{{$K$}-{T}heory}}, volume~44 of {\em Ergebnisse der
  Mathematik und ihrer Grenzgebiete. 3.~Folge. A Series of Modern Surveys in
  Mathematics [Results in Mathematics and Related Areas. 3rd Series. A Series
  of Modern Surveys in Mathematics]}.
\newblock Springer-Verlag, Berlin, 2002.

\bibitem{Lueck(2009algview)}
W.~L{\"u}ck.
\newblock {$L^2$}-invariants from the algebraic point of view.
\newblock In {\em Geometric and cohomological methods in group theory}, volume
  358 of {\em London Math. Soc. Lecture Note Ser.}, pages 63--161. Cambridge
  Univ. Press, Cambridge, 2009.

\bibitem{Lueck(2013l2approxfib)}
W.~L{\"u}ck.
\newblock Approximating {$L^2$}-invariants and homology growth.
\newblock {\em Geom. Funct. Anal.}, 23(2):622--663, 2013.

\bibitem{Lueck(2015_lehmer_general)}
W.~L{\"u}ck.
\newblock {L}ehmer's problem for arbitrary groups.
\newblock in preparation, 2015.

\bibitem{Lueck(2015twisting)}
W.~L\"uck.
\newblock Twisting {$L^2$}-invariants with finite-dimensional representations.
\newblock Preprint, arXiv:1510.00057 [math.GT], 2015.

\bibitem{Lueck-Osin(2011)}
W.~L{\"u}ck and D.~Osin.
\newblock Approximating the first {$L^2$}-{B}etti number of residually finite
  groups.
\newblock {\em J. Topol. Anal.}, 3(2):153--160, 2011.

\bibitem{Lueck-Schick(2003)}
W.~L{\"u}ck and T.~Schick.
\newblock Various {$L\sp 2$}-signatures and a topological \mbox{{$L\sp
  2$}-signature} theorem.
\newblock In {\em High-dimensional manifold topology}, pages 362--399. World
  Sci. Publishing, River Edge, NJ, 2003.

\bibitem{Lueck-Schick(2005)}
W.~L{\"u}ck and T.~Schick.
\newblock Approximating {$L\sp 2$}-signatures by their compact analogues.
\newblock {\em Forum Math.}, 17(1):31--65, 2005.

\bibitem{Marshall-Mueller(2013)}
S.~Marshall and W.~M{\"u}ller.
\newblock On the torsion in the cohomology of arithmetic hyperbolic
  3-manifolds.
\newblock {\em Duke Math. J.}, 162(5):863--888, 2013.

\bibitem{McMullen(1996)}
C.~T. McMullen.
\newblock {\em Renormalization and 3-manifolds which fiber over the circle}.
\newblock Princeton University Press, Princeton, NJ, 1996.

\bibitem{Milnor(1966)}
J.~Milnor.
\newblock Whitehead torsion.
\newblock {\em Bull. Amer. Math. Soc.}, 72:358--426, 1966.

\bibitem{Mueller(1978)}
W.~M{\"u}ller.
\newblock Analytic torsion and ${R}$-torsion of {R}iemannian manifolds.
\newblock {\em Adv. in Math.}, 28(3):233--305, 1978.

\bibitem{Mueller-Pfaff(2013locsym)}
W.~M{\"u}ller and J.~Pfaff.
\newblock Analytic torsion and {$L^2$}-torsion of compact locally symmetric
  manifolds.
\newblock {\em J. Differential Geom.}, 95(1):71--119, 2013.

\bibitem{Mueller-Pfaff(2013asymp)}
W.~M{\"u}ller and J.~Pfaff.
\newblock On the asymptotics of the {R}ay-{S}inger analytic torsion for compact
  hyperbolic manifolds.
\newblock {\em Int. Math. Res. Not. IMRN}, 13:2945--2983, 2013.

\bibitem{Murakami-Murakami(2001)}
H.~Murakami and J.~Murakami.
\newblock The colored {J}ones polynomials and the simplicial volume of a knot.
\newblock {\em Acta Math.}, 186(1):85--104, 2001.

\bibitem{Osin(2011_rankgradient)}
D.~Osin.
\newblock Rank gradient and torsion groups.
\newblock {\em Bull. Lond. Math. Soc.}, 43(1):10--16, 2011.

\bibitem{Pestov(2008)}
V.~G. Pestov.
\newblock Hyperlinear and sofic groups: a brief guide.
\newblock {\em Bull. Symbolic Logic}, 14(4):449--480, 2008.

\bibitem{Pichot-Schick-Zuk(2015)}
M.~Pichot, T.~Schick, and A.~Zuk.
\newblock Closed manifolds with transcendental {$L^2$}-{B}etti numbers.
\newblock {\em J. Lond. Math. Soc. (2)}, 92(2):371--392, 2015.

\bibitem{Raimbault(2012abelian)}
J.~Raimbault.
\newblock Exponential growth of torsion in abelian coverings.
\newblock {\em Algebr. Geom. Topol.}, 12(3):1331--1372, 2012.

\bibitem{Ranicki(1992)}
A.~A. Ranicki.
\newblock {\em Algebraic ${L}$-theory and topological manifolds}.
\newblock Cambridge University Press, Cambridge, 1992.

\bibitem{Sauer(2009amen)}
R.~Sauer.
\newblock Amenable covers, volume and {$L^2$}-{B}etti numbers of aspherical
  manifolds.
\newblock {\em J. Reine Angew. Math.}, 636:47--92, 2009.

\bibitem{Sauer(2016)}
R.~Sauer.
\newblock Volume and homology growth of aspherical manifolds.
\newblock {\em Geom. Topol.}, 20(2):1035--1059, 2016.

\bibitem{Schafer(1970)}
J.~A. Schafer.
\newblock Topological {P}ontrjagin classes.
\newblock {\em Comment. Math. Helv.}, 45:315--332, 1970.

\bibitem{Schick(2001b)}
T.~Schick.
\newblock ${L}\sp 2$-determinant class and approximation of ${L}\sp 2$-{B}etti
  numbers.
\newblock {\em Trans. Amer. Math. Soc.}, 353(8):3247--3265 (electronic), 2001.

\bibitem{Schlage-Puchta(2012)}
J.-C. Schlage-Puchta.
\newblock A {$p$}-group with positive rank gradient.
\newblock {\em J. Group Theory}, 15(2):261--270, 2012.

\bibitem{Schmidt(1995)}
K.~Schmidt.
\newblock {\em Dynamical systems of algebraic origin}, volume 128 of {\em
  Progress in Mathematics}.
\newblock Birkh\"auser Verlag, Basel, 1995.

\bibitem{Schmidt(2005)}
M.~Schmidt.
\newblock {\em {$L\sp 2$}-Betti Numbers of $\mathcal {R}$-Spaces and the
  Integral Foliated Simplicial Volume}.
\newblock Ph{D}~thesis, WWU~M\"unster, 2005.
\newblock Available online at
  http://nbn-resolving.de/urn:nbn:de:hbz:6-05699458563.

\bibitem{Sengun(2011)}
M.~H. {\c{S}}eng{\"u}n.
\newblock On the integral cohomology of {B}ianchi groups.
\newblock {\em Exp. Math.}, 20(4):487--505, 2011.

\bibitem{Silver-Williams(2002)}
D.~S. Silver and S.~G. Williams.
\newblock Mahler measure, links and homology growth.
\newblock {\em Topology}, 41(5):979--991, 2002.

\bibitem{Smyth(2008)}
C.~Smyth.
\newblock The {M}ahler measure of algebraic numbers: a survey.
\newblock In {\em Number theory and polynomials}, volume 352 of {\em London
  Math. Soc. Lecture Note Ser.}, pages 322--349. Cambridge Univ. Press,
  Cambridge, 2008.

\bibitem{Sun(2015)}
H.~Sun.
\newblock Virtual homological torsion of closed hyperbolic 3-manifolds.
\newblock {\em J. Differential Geom.}, 100(3):547--583, 2015.

\bibitem{Wall(1967)}
C.~T.~C. Wall.
\newblock Poincar\'e complexes. {I}.
\newblock {\em Ann. of Math. (2)}, 86:213--245, 1967.

\end{thebibliography}


\end{document}